\pgfplotsset{compat=1.18}
\numberwithin{equation}{section}
\newcommand{\Pb}{\mathbb{P}}
\newcommand{\Id}{\mathbbm{1}}
\newcommand{\R}{\mathbb{R}}
\newcommand{\N}{\mathbb{N}}
\newcommand{\Z}{\mathbb{Z}}
\newcommand{\I}{{\rm i}}
\DeclareMathOperator{\sgn}{sgn}
\newcommand{\bra}[1]{\left\langle #1 \right|}
\newcommand{\ket}[1]{\left| #1 \right\rangle}
\newcommand{\braket}[2]{\left\langle #1  \left| #2  \right\rangle \right.}
\newcommand{\brabarket}[3]{\left\langle #1 \left| #2 \right| #3 \right\rangle}
\newcommand{\ketbra}[2]{\left| #1 \right\rangle \left\langle #2 \right|}
\newcommand{\zcd}{\raisebox{-0.5\height}{\includegraphics[scale=0.04]{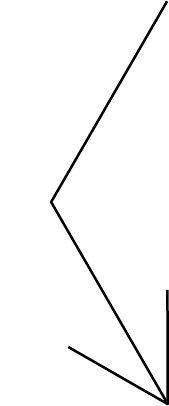}}}
\newcommand{\wcu}{\raisebox{-0.5\height}{\includegraphics[scale=0.04]{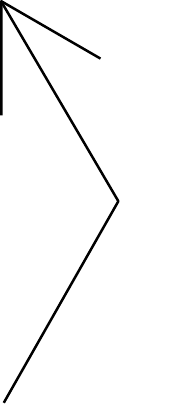}}}
\DeclareMathAlphabet{\mathpzc}{OT1}{pzc}{m}{it}
\newtheorem{prop}{Proposition}[section]
\newtheorem{thm}[prop]{Theorem}
\newtheorem{lem}[prop]{Lemma}
\newtheorem{defin}[prop]{Definition}
\newtheorem{con}[prop]{Conjecture}
\newtheorem{cla}[prop]{Claim}
\newtheorem{remark}[prop]{Remark}
\title{Stationary Half-space Geometric Last Passage Percolation}
\author{
    Jiyue Zeng\\
}
\thanks{Email: jz3524@columbia.edu} 
\date{\today}
\begin{document}

\begin{abstract}
We consider the half-space geometric Last Passage Percolation model starting with stationary measures. We obtain exact formulas for LPP value along the diagonal $(N,N)$ across the entire phase diagram. We also obtain the limits of these distributions under critical scaling which should yield the one-point distribution of the half-space KPZ fixed point starting from stationary initial conditions.
\end{abstract}

\maketitle

\setcounter{tocdepth}{1}

\tableofcontents

\section{Introduction}
The Kardar-Parisi-Zhang (KPZ) equation describes the stochastic growth of the height function $h(t,x)$ for a one-dimensional interface that undergoes a microscopic growth process driven by white noise. Several discrete models belong to the KPZ universality class \cite{KPZ}, including the directed random polymer model, Last Passage Percolation (LPP) model (which can be viewed as the zero-temperature limit of the polymer model), interacting particle systems such as the exclusion process, and vertex models. These models exhibit universal limit shape and fluctuation behavior under the typical KPZ scaling, where fluctuations occur on a $t^{1/3}$ scale, and non-trivial correlations persist over a $t^{2/3}$ scale. 

In the half-space environment, the boundary parameter plays a crucial role. Several half-space models and the half-line KPZ equation have been studied or conjectured with the characterization of their asymptotic distributions, see \cite{Baik_2018, MacdonaldProcess, Betea_2020, borodin2016directed, gueudre2012,Krajenbrink_2020,Sasamoto_2004}. More recently, authors of \cite{imamura2022solvable} have solved models related to the half-space KPZ equation with derivations of exact Fredholm Pfaffian formulas.
The stationary measures of these models have become a particularly active area of research. It has been found that for a fixed boundary parameter, there exists a one-parameter family of stationary measures determined by the drift of the initial conditions. These stationary measures have been explicitly constructed across various models.
For example, \cite{OpenKPZstationary,Barraquand_2023}  characterize the stationary measure for the open KPZ equation; \cite{stripStationary} determines the stationary measure for LPP and polymer models on a strip; \cite{BI23} proves the stationary measure for the log-gamma polymer in half-space; \cite{BI23,KPZBarraquand} prove the stationary measure for the half-space KPZ equation; and \cite{das2025convergence} proves the convergence of half-space log-gamma polymer to the stationary measure along anti-diagonal path; and there are extensive works on stationary open ASEP, for example \cite{nestoridi2024approximating, hegde2024large, yang2025limits, wang2025asymmetric}.

In this paper, we concentrate on one such model—the stationary geometric LPP and analyze the distribution of the height function (or last passage time) at the origin over the entire phase diagram. We identify the KPZ scaling asymptotics and characterize the fluctuations in the system. The distributions we obtain in the limit are expected to be the one-point distribution formulas for the half-space KPZ fixed point with stationary initial condition. Analogues to the full-space KPZ fixed point found by \cite{FixedPoint}, the half-space KPZ fixed point has recently been solved in \cite{Xincheng2024}.

\subsection{Half-space geometric last passage percolation model with initial condition}
Given any initial condition $(G(N,1))_{N\in \Z_{\geq 1}}$ and random variables $\omega_{N,M}$ for all $N\geq M$, $N,M \in \Z_{\geq 2}$, we define the LPP recurrence relation for $G(N,M)$ as follows:
\begin{equation}\label{recurrence}
    \begin{cases}
        G(N,M) = \omega_{N,M} + \max(G(N-1,M),\, G(N,M-1)), &\text{ for }N>M\geq 2,\\
        G(N,N) = \omega_{N,N} + G(N,N-1), &\text{ for } N\geq 2.
    \end{cases}
\end{equation}
We say a random variable $\omega$ is distributed as $\mathrm{Geom}(q)$ if $\Pb(\omega = k) = (1-q)q^k,$ for all $k \in \Z_{\geq 0}.$
Let $\sqrt{q}\in (0,1)$ and $r\in (0,1/\sqrt{q})$ be two parameters, where $\sqrt{q}$ is called the bulk parameter and $r$ is called the boundary parameter. We assume the weights are all independent and distributed as follows:
\begin{equation}\label{weights}
\begin{cases}
    \omega_{N,M} \sim \text{Geom}(q), &\text{ for } N> M\geq 2,\\
    \omega_{N,N} \sim \text{Geom}(r\sqrt{q}), &\text{ for } N\geq 2.
\end{cases}
\end{equation}
This recurrence relation can be solved using LPP, which involves tracing paths backward from $(M,N)$ to the initial data and maximizing over both the paths and the starting points. The precise definition will be provided in section \ref{LPPdefinition}.
If we choose the initial condition such that $G(1,1) = 0$ and $G(N,1) - G(N-1,1)$ are independent $\text{Geom}(q)$ random variables, then the solution to $G(N,M)$ will be the point to point LPP time.

The following definition of stationarity is taken from \cite[section 1.2.1]{BI23}.
\begin{defin}
    We say that the law of a process $(\mathcal{I}(x))_{x\in \Z_{\geq 0}}$ is a stationary measure for the half-space geometric LPP model if the solution to \eqref{recurrence} with initial condition satisfying $G(1+\cdot , 1) - G(1,1) \stackrel{(d)}{=} \mathcal{I}(\cdot)$ has the property that $(G(M+x, M) - G(M,M))_{x\in \Z_{\geq 0}}$ is independent of $M$ and is equal to $(\mathcal{I}(x))_{x\in \Z_{\geq 0}}$ in distribution.
\end{defin}
The above definition of stationary measure is only stated for the horizontal line, but can be generalized to any down-right path. We define a $\text{Geom}(x)$ random walk to be a discrete random process whose increments are independent, identically distributed geometric random variables with rate $x$. In recent decades, there has been active research on the stationary measures of models within the KPZ universality class.
In particular, recent works \cite{KPZBarraquand, BI23} have identified the stationary measure for the half-space KPZ equation as the Hariya-Yor process, see \cite[Definition 1.2]{BI23}, \cite[(5)]{KPZBarraquand}. This result was originally discovered in \cite{KPZBarraquand} through the limit of the interval KPZ equation based on the work \cite{OpenKPZstationary} and was later derived in \cite{BI23} as the intermediate disorder limit of the stationary measure for the half-space log-gamma polymer model. Similarly, a two-parameter family of stationary measures for half-space geometric and exponential LPP was proved in \cite[Proposition 3.2]{BI23}; however, the explicit form of the stationary measure for half-space geomeric LPP was not provided there. Therefore, we present it in Definition \ref{stationaryMeasure}. 
\begin{defin}\label{stationaryMeasure}
    For any fixed $\sqrt{q} \in (0,1),$ $s\in (\sqrt{q},1]$, $r \in (0,1/s],$ we define
    \begin{equation}
        \mathcal{I}_{r,s}(x) := \max\bigg\{R_2(x),\,\, \max_{1\leq k \leq x, k\in \Z} \{R_1(k)+ R_2(x-k+1) - Y\}\bigg\}, \quad \text{ for } x \in \Z_{\geq 1},
    \end{equation}
    where $R_1$ and $R_2$ are independent $\text{Geom}(\sqrt{q}/s)$ and $\text{Geom}(\sqrt{q}s)$ random walks with $R_1(0) = 0$ and $R_2(0) = 0$, and $Y \sim$ $\text{Geom}(rs)$ is independent. We let $\mathcal{I}_{r,s}(0) = 0$. If $rs=1$, we define $Y = \infty$.
\end{defin}

\begin{prop}\label{stationaryMeasureProp}
    For any fixed $\sqrt{q} \in (0,1),$ $s\in (\sqrt{q},1),$ $r \in (0,1/s],$ the process $(\mathcal{I}_{r,s}(x))_{x\in\Z_{\geq 0}}$ is stationary for the half-space geometric LPP model $\eqref{recurrence}$.
\end{prop}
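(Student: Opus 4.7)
I will prove stationarity by induction on $M$, establishing the one-step preservation of the law $\mathcal{I}_{r,s}$ under the LPP dynamics. The base case is the hypothesis $(G(1+x,1) - G(1,1))_{x \geq 0} \stackrel{(d)}{=} \mathcal{I}_{r,s}(\cdot)$. For the inductive step, assuming the claim holds at level $M$, I would expand the recurrence \eqref{recurrence} to write the diagonal-anchored row at level $M+1$ as a max-plus functional of the diagonal-anchored row at level $M$ together with a fresh column of independent geometric weights: one boundary weight $\omega_{M+1,M+1} \sim \text{Geom}(r\sqrt{q})$ and bulk weights $\omega_{M+1+x,M+1} \sim \text{Geom}(q)$ for $x \geq 1$. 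The one-step invariance then reduces to a statement about how an infinite row with law $\mathcal{I}_{r,s}$ transforms under this max-plus action.

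The key step is to interpret $\mathcal{I}_{r,s}$ as the diagonal marginal of a coupled two-line ensemble: one layer corresponds to the bulk-drift walk $R_2 \sim \text{Geom}(\sqrt{q}s)$ and the other to the boundary-drift walk $R_1 \sim \text{Geom}(\sqrt{q}/s)$, joined at the diagonal through the weight $Y \sim \text{Geom}(rs)$. The max over split indices $1 \leq k \leq x$ selects the point at which a path transitions from the boundary layer to the bulk layer. The invariance of the joint law of $(R_1, R_2, Y)$ under the max-plus transform that advances to the next row should follow from a Burke-type distributional identity exploiting the parameter compatibility $(\sqrt{q}/s)(\sqrt{q}s) = q$ and the specific product structure governing the boundary weight $\omega_{M+1,M+1}$. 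Concretely, I would couple $\omega_{M+1,M+1}$ with $Y$ (both involving the boundary parameter $r$) and verify that the transformed triple $(R_1', R_2', Y')$ obtained after one LPP step has the same joint law as $(R_1, R_2, Y)$.

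\textbf{Main obstacle.} The principal difficulty is tracking how the optimal split index $k$ in the defining maximum shifts under the LPP dynamics. A case analysis will be required, separating the regime in which the argmax stays at $k=0$ (path avoids the boundary) from the regime in which the argmax is positive, and resolving each case through a Burke-type identity such as $\max(X,Y) - X \stackrel{(d)}{=} Y'$ for appropriately coupled geometric variables. The degenerate case $rs=1$, where $Y=\infty$ and $\mathcal{I}_{r,s}$ collapses to $R_2$, requires a separate but straightforward verification. An alternative route, should the direct calculation prove unwieldy, is to identify $\mathcal{I}_{r,s}$ as the zero-temperature limit of the half-space log-gamma polymer stationary measure proved in \cite{BI23}, where the main technical content becomes the justification of the distributional limit. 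Either way, once one-step invariance is established, induction on $M$ closes the proof.
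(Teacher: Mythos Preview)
Your proposal takes a genuinely different route from the paper. The paper does not attempt a direct Burke-type coupling. Instead, it (i) introduces an auxiliary inhomogeneous LPP model $G_{r,s}^{\mathrm{stat}}$ with diagonal parameter $r$, first-row parameter $1/s$, and second-row parameter $s$ (equation~\eqref{eq:stat_wts}); (ii) cites \cite[Proposition 3.2]{BI23} for the stationarity of that model, whose proof relies on the permutation-invariance of row parameters in the Pfaffian Schur process---an algebraic/integrable argument, not a probabilistic coupling; and (iii) identifies $\mathcal{I}_{r,s}$ with the increment process $\big(G_{r,s}^{\mathrm{stat}}(2+x,2)-G_{r,s}^{\mathrm{stat}}(2,2)\big)_{x\geq 0}$ via an explicit decomposition of LPP paths according to where they jump from row 1 to row 2. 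Your Burke-type plan is more elementary and self-contained in spirit, but the two-parameter half-space Burke identity you would need is not standard and is precisely where the difficulty lies---you correctly flag this as the main obstacle, but there is no indication it can be resolved by the case analysis you sketch. The paper's route sidesteps this entirely by outsourcing the invariance to the Schur-process symmetry. Your alternative route (zero-temperature limit of the log-gamma stationary measure from \cite{BI23}) is viable in principle, but \cite{BI23} actually treats the geometric case directly via the same Schur-type structure, so it is not really a detour.
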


Proposition \ref{stationaryMeasureProp}  is taken from \cite[Proposition 3.2]{BI23}. Its proof is explained in Proposition \ref{I_r,sStationary} and the subsequent paragraph. We call $\mathcal{I}_{r,s}$ the \emph{two-parameter stationary measure} because of its dependence on $r$ (boundary parameter) and $s$ (related to the drift of the initial condition at infinity). For any fixed boundary parameter $r$, there is a family of stationary measures $\mathcal{I}_{r,s}$ depending on $s$. By definition, $\mathcal{I}_{r,1/r}(\cdot)$ is a $\text{Geom}(\sqrt{q}/r)$ random walk, which is the  \emph{product stationary measure}. Under the special case $r=s$, $\mathcal{I}_{r,r}$ is also a $\text{Geom}(\sqrt{q}/r)$ random walk. This will be explained later in \eqref{permutationInvariance} and \eqref{equalr,r}.
The product stationary measure for the half-space exponential LPP model was established in \cite[Lemma 2.1]{Betea_2020}.

\subsection{Phase Diagram}
We propose the following conjecture, similar to the conjecture for the half-space log-gamma polymer in \cite[Conjecture 1.9]{BI23}.
We make it clear that $\{(r,s)|r \in (0,1/\sqrt{q})$, $s\in (\sqrt{q},\min(1,1/r)]\}=\{(r,s)| s\in (\sqrt{q},1], r\in (0,1/s]\}.$
\begin{con}\label{conjecture}
    For given $\sqrt{q} \in (0,1),$ $r \in (0,1/\sqrt{q}),$ $\{\mathcal{I}_{r,s}\}_{s \in (\sqrt{q}, \min\{1,1/r\}]}$ constitutes all extremal stationary measures for $\eqref{recurrence}$. For any initial condition $G(N,1)$ such that $\lim_{N\rightarrow\infty} G(N,1)/N = \rho \in R_{>0}$, we have the following. Let \begin{equation}\label{relation}
        s = \frac{(1+\rho)\sqrt{q}}{\rho}.
    \end{equation}
    \begin{itemize}
        \item(High density) For $\log(s) \leq -\log(r)$ and $\log(s) \leq 0,$ the process $\left(G(x+N, N) - G(N,N)\right)_{x\in \Z_{\geq 0}}$ converges weakly as $N \rightarrow \infty$ to $(\mathcal{I}_{r,s}(x))_{x\in \Z_{\geq 0}}$.
        \item(Maximal current) For $-\log(r) \geq 0$ and $\log(s) \geq 0$, the process $\left(G(x+N, N) - G(N,N)\right)_{x\in \Z_{\geq 0}}$ 
        converges weakly as $N \rightarrow \infty$ to $(\mathcal{I}_{r,1}(x))_{x\in \Z_{\geq0}}$.
        \item(Low density) For $-\log(r) \leq 0$ and $\log(s) \geq -\log(r),$ the process $\left(G(x+N, N) - G(N,N)\right)_{x\in \Z_{\geq 0}}$ converges weakly as $N \rightarrow \infty$ to $(\mathcal{I}_{r,1/r}(x))_{x\in \Z_{\geq 0}}$, i.e., a Geom$(\sqrt{q}/r)$ random walk.
    \end{itemize}
\end{con}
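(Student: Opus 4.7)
The plan is to prove convergence by a coupling and attractivity argument, adapted to the half-space LPP setting from the template developed for TASEP-type interacting particle systems. The starting point is monotonicity of the recurrence \eqref{recurrence}: if two solutions $G_1,G_2$ use identical bulk weights $\omega_{N,M}$ for $N>M$ and identical diagonal weights $\omega_{N,N}$, and have initial rows satisfying $G_1(\cdot,1)\le G_2(\cdot,1)$ pointwise, then $G_1(N,M)\le G_2(N,M)$ for every $N\ge M\ge 1$. Combined with Proposition \ref{stationaryMeasureProp}, which guarantees that an initial row distributed as $\mathcal{I}_{r,s'}$ produces an increment process $(G(N+x,N)-G(N,N))_{x\ge 0}\stackrel{(d)}{=}\mathcal{I}_{r,s'}$ for every $N$, a pointwise sandwich of the true initial row between two stationary initial rows with parameters $s^\pm$ straddling the phase-determined target $s^*$ pins down the diagonal limit up to an error controlled by $|s^+-s^-|$.

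For the high-density regime, set $s^*=(1+\rho)\sqrt{q}/\rho$. By the strong law for the random walks $R_1,R_2$ in Definition \ref{stationaryMeasure}, one checks that $\mathcal{I}_{r,s^*}(x)/x\to \sqrt{q}/(s^*-\sqrt{q})=\rho$ almost surely. Given $\varepsilon>0$, pick admissible $s^+<s^*<s^-$ with corresponding drifts $\rho^->\rho>\rho^+$. Using the LLN for $G(\cdot,1)$ and for $\mathcal{I}_{r,s^\pm}$, one builds a coupling of $G(\cdot,1)$ with shifted copies of the stationary rows so that $\mathcal{I}_{r,s^+}-C^+\le G(\cdot,1)\le \mathcal{I}_{r,s^-}+C^-$ for almost-surely finite random constants $C^\pm$. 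Monotonicity propagates the sandwich to the diagonal. Sending $N\to\infty$ and then $\varepsilon\downarrow 0$, together with continuity of $s\mapsto \mathcal{I}_{r,s}$, gives weak convergence to $\mathcal{I}_{r,s^*}$ on every finite window.

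The low-density and maximal-current regimes follow the same template, with targets $\mathcal{I}_{r,1/r}$ and $\mathcal{I}_{r,1}$ respectively. In both, the LLN drift of the initial condition lies outside the range of drifts of admissible $\mathcal{I}_{r,s}$, and the boundary (resp.\ bulk) dominates. This should be formalized via a second-class particle tracking the difference between the true process and a stationary coupling from the target measure: the characteristic drifts away from the diagonal at a strictly positive speed, so the memory of the mismatched initial condition is pushed out of any bounded window around $(N,N)$ as $N\to\infty$, leaving only the stationary target. The speed can be extracted from the limit shape for half-space LPP with geometric weights, which is one of the inputs one would need to make this quantitative.

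The main obstacle is constructing the pointwise sandwich on the first row. Because $\mathcal{I}_{r,s}$ is not a random walk but the maximum of $R_2$ with a boundary-shifted reflection of $R_1$, aligning it pointwise with a generic $G(\cdot,1)$ requires finer information than the LLN; one likely needs an explicit queuing or second-class-particle construction, building on the derivation of Proposition \ref{stationaryMeasureProp} in \cite{BI23}. A secondary but substantial difficulty is the first assertion of the conjecture—that the $\mathcal{I}_{r,s}$ are extremal and exhaust all stationary measures—which would require an ergodic-theoretic argument on the tail $\sigma$-algebra of the diagonal increment process, together with a contraction estimate showing that distinct $s$ produce distinct asymptotic drifts along the diagonal.
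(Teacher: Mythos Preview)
The statement you are addressing is labeled a \emph{Conjecture} in the paper, and the paper provides no proof of it. The surrounding text offers only heuristic discussion of the phase diagram (boundary dominance versus initial-row dominance versus bulk competition) and notes the analogy with \cite[Conjecture 1.9]{BI23} for the half-space log-gamma polymer. There is therefore nothing to compare your attempt against: the paper does not claim to establish this result, and its main theorems (Theorems \ref{FiniteTimeFormulaTheorem} and \ref{theorem:limit}) concern only the one-point distribution of $G(N,N)$ \emph{under} the stationary initial condition $\mathcal{I}_{r,s}$, not convergence to it from generic initial data.

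Your proposal is a reasonable \emph{strategy outline} rather than a proof, and you already flag its two principal gaps. The monotonicity-plus-sandwich template is indeed the standard route for full-space LPP and exclusion processes, but the obstacles you name are genuine and remain open here. In particular, the pointwise sandwich step fails in the naive form because $\mathcal{I}_{r,s}$ is not a random walk: its increments are correlated through the running maximum in Definition \ref{stationaryMeasure}, so one cannot simply shift a copy of it to dominate an arbitrary initial row with prescribed drift. The queuing/second-class-particle machinery you gesture at would need to be developed specifically for the half-space geometry with this non-product stationary family, and no such construction appears in the paper or in the cited literature. Likewise, extremality and completeness of $\{\mathcal{I}_{r,s}\}$ are asserted without argument and would require separate ergodic-theoretic work. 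In short: your sketch identifies a plausible program, but the conjecture remains open and the paper makes no attempt to close it.
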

As shown in \eqref{relation}, $s$ is not the slope of the initial condition; rather, its connection to the slope is explicitly given by the formula.
We have created the phase diagram that illustrates the above conjecture.
We remark that it is natural to assume $\rho >0$ for the LPP path to make use of the initial condition, for example, if $\rho \leq 0,$ then $\max(G(N,1),G(N-1,2)) = G(N-1,2)$ for $N$ large. The requirement $\rho > 0$ translates into $s >\sqrt{q}$ in the following figure. Additionally, we note that the High density regime can access the full phase diagram: it reaches the Maximal current phase through the line $0<r<1, s=1$ and can access the Low density phase through the line $rs=1$, $s<1$.
\begin{figure}
\centering
    \begin{tikzpicture}[scale=1.35]
        \fill[green!20] (0,0) -- (1.9,0) -- (1.9,1.9) -- (0,1.9) -- (0,0) -- cycle;
        \fill[red!20,opacity=0.3] (0,0) -- (0,1.9) -- (-1.9,1.9) -- (-1.9,-1.9) -- (0,0) -- cycle;
        \fill[blue!15] (0,0) -- (1.9,0) -- (1.9,-1.9) -- (0,0) -- cycle;
        \fill[blue!15] (0,0) -- (1.9,-1.9) -- (-1.9,-1.9) -- (0,0) -- cycle;
        
        \draw[thin,->] (-2,0) -- (2,0)node[right] {\footnotesize{$\mathsmaller{-\log(r)}$}};
        \draw[thin,->] (0,-2.2) -- (0,2)node[right] 
        {\footnotesize{$\mathsmaller{\log(s)}$}};

        \draw[thick] (0,0) -- (0,2);
        \draw[thick] (0,0) -- (2,0);
        
        \draw[thick] (-1.9,-1.9) -- (0,0);
        \draw[thin, dashed] (0,0) -- (1.9,1.9) node[right] {\footnotesize{$\mathsmaller{rs=1}$}};
        \draw[thin,dashed] (1.9,-1.9) -- (-1.9,1.9) node[left] {\footnotesize{$\mathsmaller{r=s}$}};
        \draw[thin, dashed] (-1.9,-1.9) -- (1.9,-1.9); \node at (1.2,-2) {\footnotesize{$\mathsmaller{s=\sqrt{q}}$}};
        \node at (-0.1,-0.15) {\footnotesize{$0$}};
        
        \node[text = {red}] at (0.6,-1){\footnotesize{HD, $\mathcal{I}_{r,s}$}};
        \node[text = {red}] at (1,1){\footnotesize{MC, $\mathcal{I}_{r,1}$}};
        \node[text = {red}] at (-1.1,0.5){\footnotesize{LD, $\mathcal{I}_{r,1/r}$}};

        \draw[thin,->]
        (2.5,0.8) node[above]{\footnotesize $F^{\mathrm{MC}}_{r,1}$}to[out=180,in=80,looseness=1.2] (1,0.05);
    
      \draw[thin,->]
        (-2.6,-1.05) -- (-1.1,-1.05)
        node[near start,above]{\footnotesize $F^{LD}_r$};
    
      \draw[thin,->]
        (2.8,-1) node[above]{\footnotesize $F^{\mathrm{LD}}_{r}$}to[out=180,in=55] (1.35,-1.3);

    \node[text = {black}] at (1.4,-0.5){\footnotesize{ $F_{r,s}^{HD}$}};
    \node[text = {black}] at (0,-1.55){\footnotesize{ $F_{r,s}^{HD}$}};
        
    \end{tikzpicture}
    \captionof{figure}{This picture describes the phase diagram of the geometric LPP model given boundary parameter and drift parameter. Depending on where $(-\log(r), \log(s))$ lies in the diagram, the process $G(\cdot,N) - G(N,N)$ converges to one of the three spatial processes, $\mathcal{I}_{r,s},$ $\mathcal{I}_{r,1/r},$ or $\mathcal{I}_{r,1},$ which is claimed in Conjecture $\ref{conjecture}$. On the full line $r=s$ and the half line $rs=1, r\geq 1, s\leq 1$, the process converges to the \text{Geom}$(\sqrt{q}/r)$ random walk. \textbf{MC}, \textbf{HD}, and \textbf{LD} represent Maximal current (green), High density (blue), and Low density (red) phases. We characterize the distribution of $G(N,N)$ in the High density phase, i.e., $F_{r,s}^{HD}$, and on the two boundaries where HD meets the other phases: along the MC boundary we obtain $F_{r,1}^{MC}$, and along the LD boundary we obtain $F_{r}^{LD}$. In particular, we have $F_{r}^{LD}$ on the line $rs=1$, $s<1$ in HD, which will be explained further in \ref{degenerate1}.}
    \label{phaseDiagram}
\end{figure}

This phase diagram $\ref{phaseDiagram}$ is derived by matching the parameters of the geometric LPP model with those of the log-gamma polymer model, as outlined in \cite[Definition 2.5]{BI23}. In the log-gamma polymer model, $u$ is the boundary parameter and $v$ is the first row parameter. We found that such correspondence is 
\begin{equation}
\begin{aligned}
    u = -\log(r),&\quad v = \log(s).\\
\end{aligned}
\end{equation}

The concept of the phase diagram is taken from that in the half-space interacting particle systems, such as, ASEP.
We explain the meaning of each phase in the LPP setting in a very heuristic way:
\begin{itemize}
    \item In the High density regime, the LPP paths mainly gather weights along the initial condition (the first row). The conditions $\log(s) \leq - \log(r)$ and $\log(s)\leq 0$ imply that $s < 1$ and $r \leq 1/s$, which means that the initial condition dominates the weights on the diagonal.
    \item Due to symmetry, a similar situation should occur in the Low density phase, where LPP paths
    primarily gather their weights along the diagonal and do not venture much into the bulk.
    \item In the Maximal current regime, the fluctuations are qualitatively different. There is a competition between the diagonal's attractiveness and the initial condition. The LPP paths tend to stay within the bulk for most of their trajectory, balancing between these influences.
\end{itemize}

\subsection{Main results}
We find the distribution of $G(N,N)$ under the stationary initial condition $\mathcal{I}_{r,s}$ with a random shift at $(1,1)$ and also compute its asymptotic limit under the critical scaling. We use $\stackrel{(d)}{=}$ to denote equality in distribution.

Fix any $\sqrt{q}\in (0,1).$ Let $r,s\in \R$, $r\neq \sqrt{q}$, $s\in (\sqrt{q},1/\sqrt{q})$. Let $R_1^s$ and $R_2^s$ be independent $\text{Geom}(\sqrt{q}/s)$ and $\text{Geom}(\sqrt{q}s)$ random walks starting from $0$. Let $Y^s$ be a $\text{Geom}(rs)$ random variable independent of $R_1^s,R_2^s$. Let $\mathcal{I}_{r,s}$ be the stationary process introduced in Definition \ref{stationaryMeasure}, constructed from $R_1^s,R_2^s,Y^s.$ Let $G(N,N)$ be the LPP model defined by \eqref{recurrence} and \eqref{weights}. Let $c_0 = \frac{2\sqrt{q}(1+\sqrt{q})}{(1-\sqrt{q})^3}.$ We introduce a few notations of CDFs for different phases. Variables $\tilde{r}$ and $\tilde{s}$ in the subscript will be explained in Theorem \ref{theorem:limit}.
\begin{itemize}
    \item Consider the High density phase, i.e., $s\in (\sqrt{q},1)$ and $r \in (0,s)\cup(s,1/s)$. We start with the two-parameter stationary initial condition, i.e., $G(1+\cdot,1) - G(1,1)=\mathcal{I}_{r,s}(\cdot)$ and $G(1,1) = Y^s.$ Then we define
    \begin{equation}\label{HighCDF}
        \begin{aligned}
            F_{r,s}^{HD}(d,N) := \Pb(G(N,N)\leq d), \quad \widetilde{F}_{\tilde{r},\tilde{s}}^{HD}(\tilde{d}) := \lim_{N\rightarrow \infty}\Pb\left(\frac{G(N,N) - \frac{2\sqrt{q}}{1-\sqrt{q}}N }{(c_0N/2)^{1/3}} \leq \tilde{d}\right).
        \end{aligned}
    \end{equation}
    \item Consider the Maximal current phase, i.e., $ s = 1$ and $r\in (0,1).$ We start with the special two-parameter stationary initial condition, i.e., $G(1+\cdot,1) - G(1,1)=\mathcal{I}_{r,1}(\cdot)$ and $G(1,1) = Y^1.$ 
    Then we define 
    \begin{equation}\label{MaxCDF}
        \begin{aligned}
            F_{r,1}^{MC}(d,N) := \Pb(G(N,N)\leq d), \quad \widetilde{F}_{\tilde{r},0}^{MC}(\tilde{d}) := \lim_{N\rightarrow \infty}\Pb\left(\frac{G(N,N) - \frac{2\sqrt{q}}{1-\sqrt{q}}N }{(c_0N/2)^{1/3}} \leq \tilde{d}\right).
        \end{aligned}
    \end{equation}
    \item Consider the Low density phase, i.e., $r = 1/s$ or $r = s$. We start with the product stationary initial condition (or Brownian initial condition), i.e., $G(1+\cdot,1) \stackrel{(d)}{=} \mathcal{I}_{r,r}(\cdot)\stackrel{(d)}{=} \mathcal{I}_{r,1/r}(\cdot)$. Then we define
    \begin{equation}\label{LowCDF}
        \begin{aligned}
            F_{r}^{LD}(d,N) := \Pb(G(N,N)\leq d), \quad \widetilde{F}_{\tilde{r}}^{LD}(\tilde{d}) := \lim_{N\rightarrow \infty}\Pb\left(\frac{G(N,N) - \frac{2\sqrt{q}}{1-\sqrt{q}}N }{(c_0N/2)^{1/3}} \leq \tilde{d}\right).
        \end{aligned}
    \end{equation}
\end{itemize}

All these functions are indicated in the phase diagram \ref{phaseDiagram}.

\begin{thm}\label{FiniteTimeFormulaTheorem}

We characterize the distribution of $G(N,N)$ under three phases:
\begin{itemize}
    \item[$(1)$]  (High density phase) For any $N\in \Z_{\geq 1}$, $d\in \Z_{\geq 0}$, and any pair of $(r,s)$ satisfying $r \in (0,s) \cup (s,1/s)$, $s\in (\sqrt{q},1)$, we have \begin{equation}\label{finiteTimeFormulaTheorem}
        F_{r,s}^{HD}(d,N) = 
        \frac{s}{s-r} {\psi(d,N+1)} - \frac{r+s}{s-r}{\psi(d-1,N+1)} + \frac{r}{s-r}{\psi(d-2,N+1)},
    \end{equation}
    where $\psi(d,N)$ is defined in Definition \ref{FiniteTimeFormula}.
    \item[$(2)$] (Maximal current phase) Fix $s = 1.$ For any $N\in \Z_{\geq 1}$, $d\in \Z_{\geq 0}$, and any $r \in (0,1)$, we have
    \begin{equation}
        F_{r,1}^{MC}(d,N) = 
        \frac{1}{1-r}\eta(d,N+1) - \frac{1+r}{1-r}\eta(d-1,N+1) + \frac{r}{1-r}\eta(d-2,N+1),
    \end{equation}
    where $\eta(d,N)$ is defined in Definition \ref{MaximalFinite}.
    \item[$(3)$] (Low density phase) For any $N \in \Z_{\geq 1}$, $d\in \Z_{\geq 0}$ and any $r$ satisfying $r\in (\sqrt{q},1/\sqrt{q})$, we have
    \begin{equation}
        F_{r}^{LD}(d,N) = 
        \theta(d,N) - \theta(d-1,N),
    \end{equation}
    where $\theta(d,N)$ is defined in Definition \ref{oneParamResult_Finite}.
\end{itemize}
\end{thm}

In Theorem \ref{FiniteTimeFormulaTheorem} (1) and (2), the shift of a geometric random variable, $G(1,1) = Y,$ is essential and necessary for the exact computations of $G(N,N)$, which will become clear when we introduce the half-space geometric LPP model with special weights in section $6$.
We isolate the case $r=1/s$ in Theorem \ref{FiniteTimeFormulaTheorem} (3) because this is the coexistence line where the High density phase and Low density phase overlap, which corresponds to having the special product stationary initial condition $\mathcal{I}_{r,1/r}$. We also separate the case $r = s$ because this is the special case in the High density regime where the initial condition degenerates to the product stationary initial condition. For Low density phase, Theorem \ref{FiniteTimeFormulaTheorem} (3), there is no random shift at $(1,1).$

\begin{thm}\label{theorem:limit}
    We characterize the asymptotic limit of the rescaled $G(N,N)$ under three phases:
    \begin{itemize}
    \item[$(1)$] (High density phase) Fix any $\tilde{s} <0$ and $\tilde{r} \in (-\infty, \tilde{s})\cup (\tilde{s}, -\tilde{s})$. Consider the critical scaling
    \begin{equation}
        s = 1 + (c_0N/2)^{-1/3}\tilde{s}, \quad r = 1+ (c_0N/2)^{-1/3}\tilde{r}.
    \end{equation} Then we have
    \begin{equation}\label{limit,L}
        \widetilde{F}_{\tilde{r},\tilde{s}}^{HD}(\tilde{d}) = \frac{1}{\tilde{s} - \tilde{r}}\partial^2_{\tilde{d}} \mathbf{\mathcal{L}}(\tilde{d}) + \partial_{\tilde{d}}\mathcal{L}(\tilde{d}), 
    \end{equation}
    where $\mathcal{L}(\tilde{d})$ is a twice-differentiable function defined in section \ref{HighAsymptotic}.
   \item[$(2)$] (Maximal current phase) Fix any $\tilde{r}<0$. Let $s = 1$, i.e., $\tilde{s}= 0.$ Consider the scaling $$r = 1+(c_0N/2)^{-1/3}\tilde{r}.$$ Then we have
    \begin{equation}
        \widetilde{F}_{\tilde{r},0}^{MC}(\tilde{d}) = -\frac{1}{ \tilde{r}}\partial^2_{\tilde{d}} \Phi(\tilde{d}) + \partial_{\tilde{d}}\Phi(\tilde{d}), 
    \end{equation}
    where $\Phi(\tilde{d})$ is a twice-differentiable function defined in section \ref{MaximalAsymptotic}.
    \item[$(3)$](Low density phase) Fix $\tilde{r}\in \R$. Consider the scaling $$r = 1+(c_0N/2)^{-1/3}\tilde{r}.$$ Then we have
    \begin{equation}
        \widetilde{F}_{\tilde{r}}^{LD}(\tilde{d}) = \partial_{\tilde{d}}\Xi(\tilde{d}),
    \end{equation}
    where $\Xi$ is a differentiable function defined in section \ref{LowAsymptotic}.
    \end{itemize}
\end{thm}

Under the critical scaling, the random shift of the geometric random variable at $(1,1)$ has a nontrivial limiting effect, which is offset by moving away from the diagonal on the scale of $\mathcal{O}(N^{2/3}).$ Alternatively, one can consider different parameter scalings to obtain some Gaussian-type fluctuations. However, we found that taking critical scaling was easier. The condition $\tilde{s}<0$ in Theorem \ref{theorem:limit} (1)
arises from the fact that $s<1$.

Our limiting distribution should characterize the one-point distribution for the half-space KPZ fixed point under the two-parameter stationary initial condition. In \cite[(97)]{KPZbeyondBrownian}, the authors predicted the stationary measure for the half-space KPZ fixed point, and it can be obtained as the scaling limit of $I_{r,s}.$
We note that deriving the off-diagonal distribution ($G(N, M)$ in the High density phase) from our formula ($F_{r,s}^{HD}$) is straightforward, and a similar asymptotic limit can be obtained in that case. It would also be interesting to derive a formula for the multi-point joint distribution, which would connect to the half-space two-parameter stationary Airy process as predicted in \cite[(93)]{KPZbeyondBrownian}. By contrast, our maximal–current formula $F_{r,1}^{MC}$ does not extend to off–diagonal points, since its derivation relies on a symmetry that holds only on the diagonal.

Before outlining our proof strategy, we briefly review the relevant literature. In \cite[Theorem 2.4]{Betea_2020}, the diagonal distribution for the half-space product stationary exponential LPP at finite time is obtained (i.e., Low density phase), along with its asymptotic distribution under critical scaling. Authors of \cite{Betea_2020} also constructed the half-space product stationary Airy process, see \cite{StatAiryprocess}. In related work \cite{StationaryKPZ}, the distribution of the height function $h(0,t)$ for the half-space stationary KPZ equation with Brownian initial condition was found, providing an alternative representation of the distribution presented in \cite[Theorem 2.7]{Betea_2020}. Additionally, in the physics paper \cite{KPZbeyondBrownian}, the distribution of $h(t,0)$ for the half-space stationary KPZ equation with Hariya–Yor initial condition was obtained. However, the authors were unable to characterize the distribution in the region $u+v<0$ and $u>v$, which in our variables corresponds to $rs<1$, $r>s$, whereas in our setting we obtain the formula $\widetilde{F}_{\tilde{r},\tilde{s}}^{HD}$.

\subsection{Our strategy}
We explain how we derived these distribution formulas.
For the two-parameter stationary model (High density phase), we start with the introduction of path-wise definition of LPP and then we consider a version of the LPP model with inhomogeneous geometric weights, which has a nice Pfaffian structure. After specializing the parameters, the inhomogeneous model transforms into the two-parameter stationary model as in \eqref{approxModel}, which was originally constructed in \cite{BI23}. Then we apply the shift argument outlined in \eqref{eq:shift} to get a prelimiting Fredholm Pfaffian formula for the distribution of the stationary model. Our next step is to rewrite the Fredholm Pfaffian and reorganize terms in the kernel to enable taking the limit. Then we rely heavily on analytic continuation to find the distribution formula and Hadamard's bound to ensure the convergence of the Fredholm Pfaffian. Our approach is inspired by \cite{Betea_2020}, which handles the special case of the product stationary measure (Low density phase) in the exponential case. As seen from our formula, implementing a similar idea in the two-parameter case is significantly harder due to the presence of many more terms that need to be renormalized and many more poles that need to be taken out of the contour integrals.
The details of difficulties that arise solely in the two-parameter case are explained and compared with the product stationary case in section \ref{difference}. In the Maximal current phase, we exploit a symmetry of the geometric LPP model that swaps the diagonal parameter with the second-row parameter, reducing the problem to the product stationary case. Consequently, the techniques of \cite{Betea_2020} apply directly in this regime.

We use the finite-time distribution as an intermediate tool and rely heavily on the steepest descent method for finding the asymptotic limit of the diagonal distribution under the critical scaling for all three phases.

\subsection{Outline.} 
In section $2$, we introduce basic notations used throughout the paper. Section 3 provides definition of distribution functions for High density phase and its corresponding asymptotic limits. Section 4 provides distribution functions for Maximal current phase and its corresponding asymptotic limits. Section 5 provides finite time distribution functions for Low current phase and its corresponding asymptotic limits. Section 6 introduces the inhomogeneous geometric LPP model and its Fredholm Pfaffian structure. We also explain the shift argument used to obtain a pre-limiting formula.
In section 7, we apply analytic continuation to the pre-limiting formula to obtain the diagonal distribution of the stationary LPP model. Theorem \ref{FiniteTimeFormulaTheorem} (1) is proved at the end of this section. In section 8, we use the steepest descent method to find the asymptotic limit of the kernel and prove the convergence of the Fredholm Pfaffian. We also compute the asymptotic limits of all terms in the formula. We prove Theorem \ref{theorem:limit} (1) at the end of this section. 

In section 9, we derive a general approach to find finite time diagonal distribution formulas for both Maximal current phase and Low density phase. Theorem \ref{FiniteTimeFormulaTheorem} (2) and (3) are proved at the end of this section. Using a symmetry of the half-space geometric LPP model, we reduce the two-parameter stationary case to the product stationary analysis, which covers the special case $s=1$, and then prove convergence to the corresponding asymptotic distributions under critical scaling, that is Theorem \ref{theorem:limit} (2) and (3), in sections 10 and 11.

Appendix A provides verifications of our two-parameter finite time distribution formula under High density phase. Appendix B provides Matlab estimations of our finite time formula for the Maximal current phase.

\subsection{Acknowledgements.}
The author sincerely thanks her advisor, Ivan Corwin, for suggesting this intriguing problem and for his invaluable guidance throughout the project. The author wants to thank Patrik Ferrari, Matteo Mucciconi, Guillaume Barraquand, Milind Hegde, Zhengye Zhou, Xinyi Zhang, Min Liu for helpful discussions. This research is supported by Ivan Corwin's NSF grant DMS-2246576 and his Simons Investigator grant 929852.

\bigskip
\noindent\textbf{Conflict of interest and data availability statement}

There is no conflict of interest to declare.  
This manuscript does not contain or use any data.

\section{Basic notations}

\label{section:Finite}
In this paper, we will have two kinds of limits. We consistently use $\widehat{F}$, ${F}^M$, ${F}^L$(respectively) to denote the limit $\lim_{t\rightarrow 1/s}F(t)$ for High density, Maximal current and Low density regimes, respectively, and we consistently use $\widetilde{F}$, $\widetilde{F}^M$, $\widetilde{F}^L$ (for the asymptotic limit) to denote the limit of $\widehat{F}$, ${F}^M$, ${F}^L$ under the critical scaling in the High density, Maximal current, and Low density regimes, respectively. 
We use the bracket notation to denote the scalar product on $\ell^2(\{d+1,d+2,\dots\})$, that is
\begin{equation}\label{ellInnerProduct}
\begin{aligned}
    &\braket{f}{g}=\braket{f(k)}{g(k)} = \sum_{k = d+1}^{\infty} f(k)g(k),\\
    &\brabarket{f}{h}{g} = \brabarket{f(k)}{h(k,\ell)}{g(\ell)} = \sum_{k = d+1}^{\infty}\sum_{\ell = d+1}^{\infty}f(k)h(k,\ell)g(\ell).
\end{aligned}
\end{equation}
When the function has a hat or nothing on top of it, the bracket is the scalar product on $\ell^2(\{d+1,d+2,\dots\})$.
We use $\ketbra{f}{g}$ to denote the outer product kernel
\begin{equation}
    \ketbra{f}{g}(k,\ell) = f(k)g(\ell).
\end{equation}
We will remove the parameters $(k,\ell)$ when the context is clear.
Let $\Gamma_{I}$ denote any simple counter-clockwise contour around the set of points $I$. This notation is adopted from \cite{Betea_2020}.

We use the same bracket notation as in the discrete case to denote the scalar product on $L^2((\tilde{d}, \infty))$. 
\begin{equation}\label{LInnerProduct}
\begin{aligned}
    &\braket{\widetilde{f}}{\widetilde{g}} = \int_{\tilde{d}}^{\infty} \widetilde{f}(X)\widetilde{g}(X) dX,\\
    &\brabarket{\widetilde{f}}{\widetilde{h}}{\widetilde{g}} = \int_{\tilde{d}}^{\infty}\int_{\tilde{d}}^{\infty} \widetilde{f}(X)\widetilde{h}(X,Y)\widetilde{g}(Y) dXdY.\\
\end{aligned}
\end{equation}
In the case when the function has a tilde on top of it, the bracket is the scalar product on $L^2((\tilde{d}, \infty))$.
As in \cite[Figure 3]{Betea_2020}, we use ${}_{a}\zcd\, {}_{b}$ to denote any down-oriented path coming in a straight line from $e^{i\pi/3}\infty$ to a point $x \in \R$, which lies on the right of $a$ and on the left of $b$, and leaving in a straight line to $e^{-\I\pi/3}\infty$. We use ${}_{c}\wcu\, {}_{d}$ to denote any up-oriented path coming in a straight line from $e^{-2\pi\I/3}\infty$ to a point $y \in \R,$ which lies on the right of $c$ and on the left of $d$, and leaving in a straight line to $e^{2\pi\I/3}\infty$.

\section{Definition of distribution functions for High density phase}

\subsection{Finite time diagonal distribution for High density phase}\label{HighFinite}
\begin{defin}
\label{FiniteTimeFormula}
    Let $s,\sqrt{q},r$ be real numbers such that $\sqrt{q} \in (0,1)$, $s \in (\sqrt{q},1)$, $r \in (0,s)\cup (s,1/s),$ and $r\neq \sqrt{q}$. We define
    \begin{equation}\label{finiteDis}
    \begin{aligned}
    &\psi(d,N) :=
    \mathrm{Pf}(J - \widehat{K})\Bigg(\widehat{\mu}_d + \widehat{\nu}_{d} + \widehat{\mathcal{A}}_d + \widehat{\mathcal{B}}_d + \widehat{\mathcal{C}}_d + \widehat{\mathcal{D}}_d + \widehat{\mathcal{E}}_d
    + (d+2) - (N-2)\frac{\sqrt{q}(1/s+s-2\sqrt{q})}{(1-\sqrt{q}/s)(1-\sqrt{q}s)}\\
    &- \frac{s(1-r^2)}{(1-sr)(s-r)}\Bigg)- \mathrm{Pf}(J - \widehat{K})
    + 
    \mathrm{Pf}\left(J - \widehat{K} - \ket{\begin{array}{c}
         g_1 \\
         -d_2
    \end{array}}\bra{\widehat{V}_1 \quad \widehat{V}_2} - \ket{\begin{array}{c}
         \widehat{V}_1 \\
         \widehat{V}_2
    \end{array}}\bra{-g_1 \quad d_2}\right)\\
    \end{aligned}
    \end{equation}
    for $d\in \Z_{\geq 0}$ and $\psi(d,N) = 0$ for $d< 0$.
    The Fredholm Pfaffian is taken over $\ell^2(\{ d+1,d+2,\dots\} ).$
\end{defin}

We need the following auxiliary functions to define all terms used in the formula \eqref{finiteDis}. All brakets $\braket{\cdot}{\cdot}$ and $\brabarket{\cdot}{\cdot}{\cdot}$ in this section are inner products over $\ell^2(\{d+1,\dots\})$ as defined in \eqref{ellInnerProduct}. The following functions take inputs $k,\ell \in \Z_{\geq 0}$ and the input $x\in\R$.
\begin{equation}\label{defOfF}
    H(x) = \frac{(1-\sqrt{q}/x)^{N-2}}{(1-\sqrt{q}x)^{N-2}}, \quad f^x(k) = \frac{x^{k+1}}{H(x)}, \quad E(k,\ell) = \begin{cases}
            -r^{k-\ell-1} &\text{ if } k > \ell,\\
            0 &\text{ if } k = \ell,\\
             r^{\ell-k-1} &\text{ if } k < \ell.
    \end{cases}
\end{equation}
\begin{equation}\label{defOfG&R}
    \begin{aligned}
        G_{x}(k) = \oint \limits_{\Gamma_{1/\sqrt{q}}} \frac{dz}{2\pi\I} \frac{H(z)}{z^{k+2}}\frac{(z-x)(z-r)}{(1-xz)(z^2-1)},& \quad R_{x}(k) = -\oint \limits_{\Gamma_{\sqrt{q}}}\frac{dw}{2\pi\I}\frac{w^{k+1}}{H(w)}\frac{(1-xw)}{(w-x)(w-r)}.\\
    \end{aligned}
\end{equation}

\begin{equation}
    \begin{aligned}
        &\widehat{P}(\ell) = \oint\limits_{\Gamma_{\sqrt{q}}} \frac{dw}{2\pi\I} \frac{w^{\ell+1}}{H(w)}\frac{1}{(1-wr)}, \quad \widehat{Q}(k) = -\oint \limits_{\Gamma_{1/\sqrt{q}}} \frac{dz}{2\pi\I}\frac{H(z)}{z^{k+2}}\frac{(1-rz)}{(1-z^2)},\\
        &\widehat{B}(k,\ell) = -\oint \limits_{\Gamma_{\sqrt{q}}} \frac{dw}{2\pi\I}\oint \limits_{\Gamma_{1/\sqrt{q}}} \frac{dz}{2\pi\I} \frac{w^{\ell+1}}{z^{k+2}}\frac{H(z)}{H(w)}\frac{(zw-1)}{(z-w)(1-zr)(w-r)}. 
    \end{aligned}
\end{equation}

\begin{equation}\label{Ahat}
    \begin{aligned}
        \widehat{A}_{11}(k,\ell) =& -\oint \limits_{\Gamma_{\sqrt{q}}}\frac{dw}{2\pi\I} \oint \limits_{\Gamma_{1/\sqrt{q}}}\frac{dz}{2\pi\I}\frac{w^{\ell+1}}{z^{k+2}}\frac{H(z)}{H(w)}\frac{(zw-1)(z-r)(1-wr)}{(z^2-1)(1-w^2)(z-w)},\\
        \widehat{A}_{12}(k,\ell) =& -\widehat{A}_{21}(\ell,k) = - \oint \limits_{\Gamma_{\sqrt{q}}} \frac{dw}{2\pi\I} \oint \limits_{\Gamma_{1/\sqrt{q}}}\frac{dz}{2\pi\I}\frac{w^{\ell+1}}{z^{k+2}}\frac{H(z)}{H(w)}\frac{(zw-1)(z-r)}{(z^2-1)(w-r)(z-w)} + \ketbra{\widehat{Q}}{f^r}(k,\ell),\\
        \widehat{A}_{22}(k,\ell) =&\ketbra{f^r}{\widehat{P}}(k,\ell) -\ketbra{\widehat{P}}{f^r}(k,\ell)+\widehat{B}(k,\ell).\\
    \end{aligned}
\end{equation}

\begin{equation}\label{Khat}
    \widehat{G} = J^{-1}\widehat{K}(k,\ell)= \!\begin{pmatrix}
        -\widehat{K}_{21}(k,\ell) & -\widehat{K}_{22}(k,\ell)\\
        \widehat{K}_{11}(k,\ell) & \widehat{K}_{12}(k,\ell)
    \end{pmatrix},\,\,\,
    \widehat{K} = \!\begin{pmatrix}
        \widehat{K}_{11}(k,\ell)  & \widehat{K}_{12}(k,\ell)\\
        \widehat{K}_{21}(k,\ell) & \widehat{K}_{22}(k,\ell)
    \end{pmatrix},
\end{equation}
where
\begin{equation}
    \begin{aligned}
        \widehat{K}_{11}(k,\ell) = \widehat{A}_{11}(k,\ell),\quad
        \widehat{K}_{12}(k,\ell) = -\widehat{K}_{21}(\ell,k) = \widehat{A}_{12}(k,\ell),\quad
        \widehat{K}_{22}(k,\ell) = \widehat{A}_{22}(k,\ell)+ E(k,\ell).\\
    \end{aligned}
\end{equation}

\begin{equation}
\begin{aligned}
    &\widehat{\mu}_d = \frac{H(s)}{(1-sr)s^{d+1}}\!\!\oint \limits_{\Gamma_{\sqrt{q}}} \!\!\frac{dw}{2\pi\I}\frac{w^{d+2}}{H(w)}\frac{(1-ws)(1-wr)}{(w-s)^2(1-w^2)},\\
    &\widehat{\nu}_d = \frac{(s-r)}{(1-s^2)}\frac{H(s)}{s^{d+1}}\!\!\oint \limits_{\Gamma_{\sqrt{q}}}\!\!\frac{dw}{2\pi\I}\frac{w^{d+2}}{H(w)}\frac{(1-sw)}{(w-r)(w-s)^2} + \frac{(1-sr)}{(s-r)(1-s^2)}\frac{r^{d+2}}{s^{d+1}}\frac{H(s)}{H(r)}.
\end{aligned} 
\end{equation}

\begin{equation}\label{defOfd_2,g_1}
    \begin{aligned}
        &g_1(k) =  \!\!\!\oint \limits_{\Gamma_{1/\sqrt{q},1/s}}\!\!\! \frac{dz}{2\pi\I} \frac{H(z)(z-s)(z-r)}{z^{k+2}(1-sz)(z^2-1)} =G_s(k) - (1-sr)f^s(k),\\
        &d_2(k) =\!\!\! \oint \limits_{\Gamma_{\sqrt{q},s,r}}\!\!\! \frac{dw}{2\pi\I} \frac{w^{k+1}(1-sw)}{H(w)(w-s)(w-r)} = \frac{(1-s^2)}{(s-r)}f^s(k) - \frac{(1-sr)}{(s-r)}f^r(k) - R_{s}(k),\\
        &\widehat{\mathsf{J}}(k) = -\oint \limits_{\Gamma_{\sqrt{q}}}\!\! \frac{dw}{2\pi\I}\!\! \oint \limits_{\Gamma_{1/\sqrt{q}}} \!\!\frac{dz}{2\pi\I} \frac{w^{d+2}H(z)}{z^{k+2}H(w)}\frac{(zw-1)(z-r)}{(z^2-1)(w-s)(w-r)(z-w)}.
    \end{aligned}
\end{equation}

\begin{equation}
    \begin{aligned}
        \widehat{\mathcal{A}}_d = &\frac{(s-r)}{(1-s^2)}\frac{H(s)}{s^{d+1}}\braket{\widehat{\mathsf{J}}}{d_2}
        -\frac{1}{(1-s^2)}\frac{H(s)r^{d+2}}{H(r)s^{d+1}}\braket{\widehat{Q}}{d_2} + \frac{s}{(1-s^2)}\braket{G_{1/s}}{d_2}.\\
        \widehat{\mathcal{B}}_d =&\frac{s}{(1-s^2)}\braket{R_{1/s}}{g_1} + \frac{s(s-r)}{(1-s^2)(1-sr)}\braket{f^r}{g_1}
        + \frac{1}{(1-s^2)}\frac{H(s)r^{d+2}}{H(r)s^{d+1}}\braket{\widehat{P}}{g_1}\\
        &+\frac{(s-r)}{(1-s^2)}\brabarket{f^{1/s}}{ \widehat{B}}{g_1}
        -\frac{(s-r)}{(1-s^2)}\braket{f^{1/s}}{\widehat{P}}\braket{f^r}{g_1}.\\
        \widehat{\mathcal{C}}_d =& \frac{1}{(1-s^2)}\frac{H(s)}{s^{d+1}}\oint \limits_{\Gamma_{1/\sqrt{q}}} \frac{dz}{2\pi\I}\frac{H(z)}{z^{d+2}} \frac{(z-s)^2}{(1-sz)^2(z^2-1)}.\\
        \end{aligned}
    \end{equation}
    \begin{equation}
        \begin{aligned}
        \widehat{\mathcal{D}}_{d} 
        =& \frac{1}{(1-sr)}\brabarket{f^{1/s}}{\widehat{A}_{11}}{d_2} + \frac{s}{(1-s^2)}\braket{G_{1/s}}{d_2}.\\
        \widehat{\mathcal{E}}_d = & \frac{1}{(1-sr)}\brabarket{f^{1/s}}{ \widehat{A}_{12}}{g_1}
        +\frac{(s-r)s}{(1-s^2)(1-sr)}\braket{f^r}{g_1} + \frac{s}{(1-s^2)}\braket{R_{1/s}}{g_1}.
    \end{aligned}
\end{equation}

For the next two functions $\widehat{V}_1$ and $\widehat{V}_2$, we need the following definitions:
\begin{equation}\label{cdot}
\begin{aligned}
    &\bra{f(k)}\mathcal{F}(k,\ell) =\bra{f}\mathcal{F}(\ell) := \sum_{k = d+1}^{\infty} f(k)\mathcal{F}(k,\ell),\\
    &\mathcal{H}(k,\ell) = \mathcal{F}(k,u)\cdot \mathcal{G}(u,\ell) := \sum_{u = d+1}^{\infty} \mathcal{F}(k,u)\mathcal{G}(u,\ell),\\
    &\bra{f}\mathcal{F}\cdot \mathcal{G}(\ell) := \sum_{k = d+1}^{\infty} f(k)\mathcal{H}(k,\ell).
\end{aligned}
\end{equation}

\begin{equation}
    \begin{aligned}
        &\widehat{V}_1(\ell) = \frac{(s-r)}{(1-s^2)}\frac{H(s)}{s^{d+1}}\bra{\widehat{\mathsf{J}}\, }\widehat{A}_{21}(\ell)
        -\frac{1}{(1-s^2)}\frac{r^{d+2}H(s)}{s^{d+1}H(r)}\bra{ \widehat{Q}}\widehat{A}_{21}(\ell)
        -\frac{1}{(1-s^2)}\frac{r^{d+2}H(s)}{s^{d+1}H(r)} \bra{ \widehat{P}}\widehat{A}_{11}(\ell)\\
        &+\frac{(s-r)}{(1-s^2)} \braket{f^{1/s}}{\widehat{P}} \bra{f^{r}}\widehat{A}_{11}(\ell) - \frac{(s-r)}{(1-s^2)} \bra{f^{1/s}}\widehat{B}\cdot\widehat{A}_{11}(\ell)\\
        &+\frac{s}{(1-s^2)}G_{1/s}(\ell)- \frac{2s}{(1-s^2)}\bra{R_{1/s}}\widehat{A}_{11}(\ell)
        -\frac{2s(s-r)}{(1-s^2)(1-sr)}\bra{f^r}\widehat{A}_{11}(\ell)\\
        &+\frac{(s-r)H(s)}{(1-s^2)(1-sr)}\bra{\left(\frac{r^{k-d-1}}{s^{d+1}} + \frac{(1-s^2)}{(s-r)s^{k+1}}\right)}\widehat{A}_{11}(k,\ell)+\frac{2s}{(1-s^2)}\bra{G_{1/s}}\widehat{A}_{21}(\ell)\\
        &+\frac{1}{(1-sr)}\bra{f^{1/s}}\left(\widehat{A}_{11}\cdot\widehat{A}_{21}\right)(\ell) -\frac{1}{(1-sr)}\bra{f^{1/s}}\left(\widehat{A}_{12}\cdot\widehat{A}_{11}\right)(\ell).\\
    \end{aligned}
\end{equation}

\begin{equation}
    \begin{aligned}
        &\widehat{V}_2(\ell) = \frac{(s-r)}{(1-s^2)}\frac{H(s)}{s^{d+1}}\bra{ \widehat{\mathsf{J}}\,} \left(\widehat{A}_{22} + E\right)(\ell)
        -\frac{1}{(1-s^2)}\frac{r^{d+2}H(s)}{s^{d+1}H(r)}\bra{\widehat{Q}} \left(\widehat{A}_{22} + E\right)(\ell)\\
        &-\frac{1}{(1-s^2)} \frac{r^{d+2}H(s)}{s^{d+1}H(r)}\bra{\widehat{P}}\widehat{A}_{12}(\ell)+\frac{(s-r)}{(1-s^2)} \braket{f^{1/s}}{\widehat{P}}\bra{f^r}\widehat{A}_{12}(\ell) - \frac{(s-r)}{(1-s^2)} \bra{f^{1/s}}\widehat{B}\cdot\widehat{A}_{12}(\ell)\\
        &-\frac{2s}{1-s^2}\bra{R_{1/s}}\widehat{A}_{12}(\ell) - \frac{2s(s-r)}{(1-s^2)(1-sr)}\bra{f^r}\widehat{A}_{12}(\ell) +\frac{2s}{(1-s^2)}\bra{G_{1/s}}\left(\widehat{A}_{22} + E\right)(\ell)\\
        &+\frac{(s-r)H(s)}{(1-sr)(1-s^2)}\bra{\left(\frac{r^{k-d-1}}{s^{d+1}} + \frac{(1-s^2)}{(s-r)s^{k+1}}\right)}\widehat{A}_{12}(k,\ell)
        +\frac{(s-r)s}{(1-sr)(1-s^2)}f^r(\ell) + \frac{s}{(1-s^2)}R_{1/s}(\ell)\\
        &+\frac{1}{(1-sr)}\bra{f^{1/s}}\left(\widehat{A}_{11}\cdot \widehat{A}_{22}\right)(\ell) + \frac{1}{(1-sr)}\bra{f^{1/s}}\left(\widehat{A}_{11}\cdot E\right)(\ell)
        - \frac{1}{(1-sr)}\bra{f^{1/s}}\left(\widehat{A}_{12}\cdot\widehat{A}_{12}\right)(\ell).
    \end{aligned}
\end{equation}

\begin{remark}\label{StoRlimit}
    When $r=s<1$, the two-parameter stationary LPP model is still well-defined. It would be interesting to find the $s\rightarrow r$ limit of our formula. Our formula does not apply when $r = 1/s$. Likewise, the case $s = 1$ (Maximal current) cannot be easily obtained as limits of $F_{r,s}^{HD}$ and will be treated by a separate argument in section 9.
\end{remark}

\subsection{Asymptotic limits under High density phase}\label{HighAsymptotic}
\begin{defin}\label{thm:limit}
    Let $\tilde{s}, \tilde{r}, \tilde{d} \in \R$ be parameters such that $\tilde{s} <0$ and $\tilde{r} \in (-\infty, \tilde{s}) \cup (\tilde{s}, -\tilde{s})$. We define \begin{equation}\label{scalingDis}
    \begin{aligned}
        \mathbf{\mathcal{L}}(\tilde{d}) := \mathrm{Pf}(J - \widetilde{K})\Bigg(\widetilde{\mu}_{\tilde{d}} + \widetilde{\nu}_{\tilde{d}} +& \widetilde{\mathcal{A}}_{\tilde{d}} + \widetilde{\mathcal{B}}_{\tilde{d}} + \widetilde{\mathcal{C}}_{\tilde{d}} + \widetilde{\mathcal{D}}_{\tilde{d}} + \widetilde{\mathcal{E}}_{\tilde{d}}
        +\tilde{d} -  2\tilde{s}^2
        - \frac{2\tilde{r}}{\tilde{s}^2 - \tilde{r}^2} \Bigg)\\
        &- \mathrm{Pf}(J - \widetilde{K})
    + 
    \mathrm{Pf}\left(J - \widetilde{K} - \ket{\begin{array}{c}
         \widetilde{g}_1 \\
         -\widetilde{d}_2
    \end{array}}\bra{\widetilde{V}_1 \quad \widetilde{V}_2} - \ket{\begin{array}{c}
         \widetilde{V}_1 \\
         \widetilde{V}_2
    \end{array}}\bra{-\widetilde{g}_1 \quad \widetilde{d}_2}\right)
    \end{aligned}
    \end{equation}
    and the Fredholm Pfaffian is taken over $L^2((\tilde{d},\infty)).$
\end{defin}

We explain all the function used in $\eqref{scalingDis}.$ All brakets $\braket{\cdot}{\cdot}$ and $\brabarket{\cdot}{\cdot}{\cdot}$ are inner products over $L^2((\tilde{d},\infty))$ as defined in \eqref{LInnerProduct}.
The following functions take inputs $X,Y,U$ in $\R$ and $\tilde{y}, \tilde{d} \in \R.$
\begin{equation}\label{fandE}
    \widetilde{f}^{\tilde{y}}(U) = e^{-\frac{\tilde{y}^3}{3} + \tilde{y}U}, \quad \widetilde{E}(X,Y) = \begin{cases}
            -e^{\tilde{r}(X-Y)} &\text{ if } X > Y,\\
            0 &\text{ if } X = Y,\\
             e^{\tilde{r}(Y-X)} &\text{ if } X < Y.
    \end{cases}  
\end{equation}

\begin{equation}
    \begin{aligned}
        \widetilde{G}_{\tilde{y}}(X) = -\int\limits_{{}_{0,\pm\tilde{y}}\zcd\, {}_{}}\! \frac{d\zeta}{2\pi\I}e^{\frac{\zeta^3}{3} - X\zeta}\frac{(\zeta - \tilde{y})(\zeta - \tilde{r})}{(\zeta + \tilde{y})(2\zeta)}, \quad \widetilde{R}_{\tilde{y}}(X) = \int\limits_{{}_{}\wcu\, {}_{0,\tilde{r},\tilde{y}}}\! \frac{d\omega}{2\pi\I} e^{-\frac{\omega^3}{3} + X\omega}\frac{(\omega + \tilde{y})}{(\omega - \tilde{y})(\omega - \tilde{r})}.
    \end{aligned}
\end{equation}

\begin{equation}
    \begin{aligned}
        &\widetilde{P}(X) = -\int\limits_{{}_{}\wcu\, {}_{0,\tilde{s}}}\! \frac{d\omega}{2\pi\I} \frac{e^{-\frac{\omega^3}{3} + X\omega}}{(\omega + \tilde{r})}, \quad \widetilde{Q}(X) = -\int\limits_{{}_{0,-\tilde{s}}\zcd\, {}_{}}\! \frac{d\zeta}{2\pi\I} e^{\frac{\zeta^3}{3} - X\zeta}\frac{(\zeta + \tilde{r})}{2\zeta},\\
        &\widetilde{B}(X,Y) = \int\limits_{{}_{}\wcu\, {}_{\tilde{r},\zeta,0}}\! \frac{d\omega}{2\pi\I}\!\int\limits_{{ }_{0,-\tilde{r}, -\tilde{s}}\zcd } \!\frac{d\zeta}{2\pi\I} e^{\frac{\zeta^3}{3} - \frac{\omega^3}{3}  - X\zeta + Y\omega}\frac{(\zeta + \omega)}{(\zeta - \omega)(\zeta + \tilde{r})(\omega - \tilde{r})}.
    \end{aligned}
\end{equation}

\begin{equation}
\begin{aligned}
    \widetilde{A}_{11}(X,Y) = &-  \!\int\limits_{{}_{}\wcu\, {}_{0,\zeta,\tilde{s}}}\! \frac{d\omega}{2\pi\I}\!\int\limits_{{ }_{0,-\tilde{s}}\zcd } \!\frac{d\zeta}{2\pi\I}e^{\frac{\zeta^3}{3} - \frac{\omega^3}{3}  - X\zeta + Y\omega} \frac{(\zeta + \omega)(\zeta - \tilde{r})(\omega + \tilde{r})}{4\zeta\omega(\zeta - \omega)}, \\
    \widetilde{A}_{12}(X,Y) =& -\widetilde{A}_{21}(Y,X) = -  \!\int\limits_{{}_{}\wcu\, {}_{0,\zeta,\tilde{r}}}\! \frac{d\omega}{2\pi\I}\!\int\limits_{{ }_{0, -\tilde{s}}\zcd } \!\frac{d\zeta}{2\pi\I} e^{\frac{\zeta^3}{3} - \frac{\omega^3}{3}  - X\zeta + Y\omega} \frac{(\zeta + \omega)(\zeta - \tilde{r})}{(2\zeta)(\omega - \tilde{r})(\zeta - \omega)} + \ketbra{\widetilde{Q}}{\widetilde{f}^{\tilde{r}}}(X,Y),\\
    \widetilde{A}_{22}(X,Y) = & \ketbra{\widetilde{f}^{\tilde{r}}}{\widetilde{P}}(X,Y) - \ketbra{\widetilde{P}}{\widetilde{f}^{\tilde{r}}}(X,Y) + \widetilde{B}(X,Y).
    \end{aligned}
\end{equation}

\begin{equation}
    \begin{aligned}
        \widetilde{\mathcal{G}} = J^{-1}\widetilde{K} = \begin{pmatrix}
            -\widetilde{K}_{21} & \widetilde{K}_{22}\\
            \widetilde{K}_{11} & \widetilde{K}_{12}
        \end{pmatrix}, \quad \widetilde{K} = \begin{pmatrix}
            \widetilde{K}_{11} & \widetilde{K}_{12}\\
            \widetilde{K}_{21} & \widetilde{K}_{22}
        \end{pmatrix},
    \end{aligned}
\end{equation}
where
\begin{equation}
    \begin{aligned}
        \widetilde{K}_{11}(X,Y) = \widetilde{A}_{11}(X,Y),\quad \widetilde{K}_{12}(X,Y) = -\widetilde{K}_{21}(Y,X) =\widetilde{A}_{12}(X,Y), \quad \widetilde{K}_{22}(X,Y) = \widetilde{A}_{22}(X,Y) + \widetilde{E}(X,Y).
    \end{aligned}
\end{equation}

\begin{equation}
    \begin{aligned}
    &\widetilde{\mu}_{\tilde{d}} = \frac{e^{\frac{\tilde{s}^3}{3}- \tilde{d}\tilde{s} }}{(\tilde{s} + \tilde{r})}\!\int\limits_{{}_{}\wcu\, {}_{\tilde{s},0}} \!\frac{d\omega}{2\pi\I} \, e^{- \frac{\omega^3}{3} + \tilde{d}\omega} \frac{(\omega+\tilde{s})(\omega+\tilde{r})}{(\omega - \tilde{s})^2(2\omega)},\\
    &\widetilde{\nu}_{\tilde{d}} = \frac{(\tilde{s} - \tilde{r})}{(2\tilde{s})} e^{\frac{\tilde{s}^3}{3}- \tilde{d}\tilde{s} }\int\limits_{{}_{}\wcu\, {}_{\tilde{s},\tilde{r}}} \!\frac{d\omega}{2\pi\I}\,
    e^{ - \frac{\omega^3}{3} +\tilde{d}\omega}\frac{(\omega+\tilde{s})}{(\omega-\tilde{s})^2(\omega-\tilde{r})} + \frac{(\tilde{s}+ \tilde{r})}{2\tilde{s}(\tilde{s} - \tilde{r})}e^{\frac{\tilde{s}^3}{3} - \frac{\tilde{r}^3}{3} - \tilde{d}\tilde{s} + \tilde{d}\tilde{r}}.\\
    \end{aligned}
\end{equation}

\begin{equation}
\begin{aligned}
        &\widetilde{g}_1(X) = -\int\limits_{{}_{0}\zcd\, {}_{-\tilde{s}}} \!\frac{d\zeta}{2\pi\I} e^{\frac{\zeta^3}{3} - X\zeta}\frac{(\zeta - \tilde{s})(\zeta - \tilde{r})}{2\zeta (\zeta + \tilde{s})} = \widetilde{G}_{\tilde{s}}(X) + (\tilde{s}+\tilde{r})\widetilde{f}^{\tilde{s}}(X),\\
        &\widetilde{d}_2(X) = -\int\limits_{{}_{\tilde{r},\tilde{s}}\wcu\, {}_{}} \!\frac{d\omega}{2\pi\I} e^{-\frac{\omega^3}{3} + X\omega} \frac{(\omega + \tilde{s})}{(\omega - \tilde{s})(\omega - \tilde{r})} = -\frac{2\tilde{s}}{(\tilde{s}- \tilde{r})}\widetilde{f}^{\tilde{s}}(X) + \frac{(\tilde{s} + \tilde{r})}{(\tilde{s} - \tilde{r})}\widetilde{f}^{\tilde{r}}(X) - \widetilde{R}_{\tilde{s}}(X).\\
        &\widetilde{\mathsf{J}}(X) = -\int\limits_{{}_{}\wcu\, {}_{\tilde{s},\tilde{r},0}}\! \frac{d\omega}{2\pi\I}\!\int\limits_{{ }_{0,\omega,-\tilde{s}}\zcd } \!\frac{d\zeta}{2\pi\I} e^{\frac{\zeta^3}{3} - \frac{\omega^3}{3} - X\zeta + \tilde{d}\omega}\frac{(\zeta + \omega)(\zeta - \tilde{r})}{(2\zeta)(\omega - \tilde{s})(\omega - \tilde{r})(\zeta - \omega)}.
    \end{aligned}
\end{equation}

\begin{equation}
\begin{aligned}
    \widetilde{\mathcal{A}}_{\tilde{d}} = &-\frac{(\tilde{s}-\tilde{r})}{2\tilde{s}}e^{\frac{\tilde{s}^3}{3} - \tilde{d}\tilde{s}}\braket{\widetilde{\mathsf{J}}}{\widetilde{d}_2}
    + \frac{e^{\frac{\tilde{s}^3}{3} - \frac{\tilde{r}^3}{3} - \tilde{d}\tilde{s} + \tilde{d}\tilde{r}}}{2\tilde{s}}\braket{\widetilde{Q}}{\widetilde{d}_2} -\frac{1}{2\tilde{s}}\braket{\widetilde{G}_{-\tilde{s}}}{\widetilde{d}_2}.\\
    \widetilde{\mathcal{B}}_{\tilde{d}} =&-\frac{1}{2\tilde{s}}\braket{\widetilde{R}_{-\tilde{s}}}{\widetilde{g}_1} + \frac{(\tilde{s} - \tilde{r})}{(2\tilde{s})(\tilde{s} + \tilde{r})}\braket{\widetilde{f}^{\tilde{r}}}{\widetilde{g}_1}
    -\frac{1}{2\tilde{s}}e^{\frac{\tilde{s}^3}{3} - \frac{\tilde{r}^3}{3} - \tilde{d}\tilde{s} + \tilde{d}\tilde{r}}\braket{\widetilde{P}}{\widetilde{g}_1}\\
    &-\frac{(\tilde{s} - \tilde{r})}{(2\tilde{s})}\brabarket{\widetilde{f}^{-\tilde{s}}}{ \widetilde{B}}{\widetilde{g}_1}
    +\frac{(\tilde{s}- \tilde{r})}{2\tilde{s}}\braket{\widetilde{f}^{-\tilde{s}}}{\widetilde{P}}\braket{\widetilde{f}^{\tilde{r}}}{\widetilde{g}_1}.\\
    \widetilde{\mathcal{C}}_{\tilde{d}} =& -\frac{e^{\frac{\tilde{s}^3}{3} - \tilde{d}\tilde{s}}}{(2\tilde{s})}\int\limits_{{ }_{0, -\tilde{s}}\zcd } \!\frac{d\zeta}{2\pi\I} e^{\frac{\zeta^3}{3}  - \tilde{d}\zeta}\frac{(\zeta-\tilde{s})^2}{(\zeta + \tilde{s})^2(2\zeta)}.\\
            \end{aligned}
    \end{equation}
    \begin{equation}
        \begin{aligned}
    \widetilde{\mathcal{D}}_{\tilde{d}}
    =& -\frac{1}{(\tilde{s} + \tilde{r})}\brabarket{\widetilde{f}^{-\tilde{s}}}{\widetilde{A}_{11}}{\widetilde{d}_2} - \frac{1}{2\tilde{s}}\braket{\widetilde{G}_{-\tilde{s}}}{\widetilde{d}_2}.\\
    \widetilde{\mathcal{E}}_{\tilde{d}} = & -\frac{1}{(\tilde{s} + \tilde{r})}\brabarket{\widetilde{f}^{-\tilde{s}}}{ \widetilde{A}_{12}}{\widetilde{g}_1}
    +\frac{(\tilde{s} - \tilde{r})}{2\tilde{s}(\tilde{s} + \tilde{r})}\braket{\widetilde{f}^{\tilde{r}}}{\widetilde{g}_1} - \frac{1}{2\tilde{s}}\braket{\widetilde{R}_{-\tilde{s}}}{\widetilde{g}_1}.
    \end{aligned}
\end{equation}

For the next two functions $\widetilde{V}_1$ and $\widetilde{V}_2$, we need the following definitions:
\begin{equation}
    \begin{aligned}
        &\bra{\widetilde{f}(X)}{\widetilde{\mathcal{F}}}(X,Y) = \bra{\widetilde{f}}{\widetilde{\mathcal{F}}}(Y) := \int_{\tilde{d}}^{\infty} \widetilde{f}(X) \widetilde{\mathcal{F}}(X,Y) dX,\\
        &\widetilde{\mathcal{H}}(X,Y) = \widetilde{\mathcal{F}}(X,U)\times \widetilde{\mathcal{G}}(U,Y) := \int_{\tilde{d}}^{\infty} \widetilde{\mathcal{F}}(X,U) \widetilde{\mathcal{G}}(U,Y) dU,\\
        &\bra{\widetilde{f}}\widetilde{\mathcal{F}}\times \widetilde{\mathcal{G}}(Y) := \int_{\tilde{d}}^{\infty} \widetilde{f}(X)\widetilde{\mathcal{H}}(X,Y) dX.
    \end{aligned}
\end{equation}

\begin{equation}
    \begin{aligned}
        &\widetilde{V}_1(Y) = -\frac{(\tilde{s}-\tilde{r})}{(2\tilde{s})}e^{\frac{\tilde{s}^3}{3} - \tilde{d}\tilde{s}}\bra{\widetilde{\mathsf{J}}\,}\widetilde{A}_{21}(Y)
        +\frac{e^{\frac{\tilde{s}^3}{3} - \frac{\tilde{r}^3}{3} - \tilde{d}\tilde{s} + \tilde{d}\tilde{r}}}{2\tilde{s}}\bra{\widetilde{Q}}\widetilde{A}_{21}(Y)
        +\frac{e^{\frac{\tilde{s}^3}{3} - \frac{\tilde{r}^3}{3} - \tilde{d}\tilde{s} + \tilde{d}\tilde{r}}}{2\tilde{s}} \bra{\widetilde{P}}\widetilde{A}_{11}(Y)\\
        &-\frac{(\tilde{s}-\tilde{r})}{2\tilde{s}} \braket{\widetilde{f}^{-\tilde{s}}}{\widetilde{P}}\bra{\widetilde{f}^{\tilde{r}}}\widetilde{A}_{11}(Y) + \frac{(\tilde{s}-\tilde{r})}{2\tilde{s}} \bra{\widetilde{f}^{-\tilde{s}}} \widetilde{B}\times\widetilde{A}_{11}(Y)
        -\frac{1}{2\tilde{s}}\widetilde{G}_{-\tilde{s}}(Y)
        + \frac{1}{\tilde{s}}\bra{\widetilde{R}_{-\tilde{s}}}\widetilde{A}_{11}(Y)\\
        &-\frac{(\tilde{s}-\tilde{r})}{\tilde{s}(\tilde{s}+\tilde{r})}\bra{\widetilde{f}^{\tilde{r}}}\widetilde{A}_{11}(Y)
        +\frac{(\tilde{s}-\tilde{r})}{(2\tilde{s})(\tilde{s}+\tilde{r})}\bra{\left(e^{\tilde{r}(X - \tilde{d})}e^{\frac{\tilde{s}^3}{3} - \tilde{d}\tilde{s}} - \frac{2\tilde{s}}{(\tilde{s}-\tilde{r})}e^{\frac{\tilde{s}^3}{3} - X\tilde{s}}\right)}\widetilde{A}_{11}(X,Y)-\frac{1}{\tilde{s}}\bra{\widetilde{G}_{-\tilde{s}}}\widetilde{A}_{21}(Y)\\
        &-\frac{1}{(\tilde{s}+ \tilde{r})}\bra{\widetilde{f}^{-\tilde{s}}}\left(\widetilde{A}_{11}\times\widetilde{A}_{21}\right)(Y) +\frac{1}{(\tilde{s}+\tilde{r})}\bra{\widetilde{f}^{-\tilde{s}}}\left(\widetilde{A}_{12}\times\widetilde{A}_{11}\right)(Y).\\
    \end{aligned}
\end{equation}

\begin{equation}
    \begin{aligned}
        &\widetilde{V}_2(Y) = -\frac{(\tilde{s}-\tilde{r})}{2\tilde{s}}e^{\frac{\tilde{s}^3}{3} - \tilde{d}\tilde{s}}\bra{\widetilde{\mathsf{J}}\,} \left(\widetilde{A}_{22} + \widetilde{E}\right)(Y)
        +\frac{e^{\frac{\tilde{s}^3}{3} - \frac{\tilde{r}^3}{3} -\tilde{d}\tilde{s} + \tilde{d}\tilde{r}}}{2\tilde{s}}\bra{\widetilde{Q}} \left(\widetilde{A}_{22}
        + \widetilde{E}\right)(Y)\\
        &+\frac{e^{\frac{\tilde{s}^3}{3} - \frac{\tilde{r}^3}{3} - \tilde{d}\tilde{s} + \tilde{d}\tilde{r}}}{2\tilde{s}}\bra{ \widetilde{P}}\widetilde{A}_{12}(Y)
        - \frac{(\tilde{s} - \tilde{r})}{2\tilde{s}}\braket{\widetilde{f}^{-\tilde{s}}}{\widetilde{P}}\bra{\widetilde{f}^{\tilde{r}}}\widetilde{A}_{12}(Y) + \frac{(\tilde{s} - \tilde{r})}{2\tilde{s}}\bra{\widetilde{f}^{-\tilde{s}}}\widetilde{B}\times \widetilde{A}_{12}(Y)\\
        &+\frac{1}{\tilde{s}}\bra{\widetilde{R}_{-\tilde{s}}}\widetilde{A}_{12}(Y) - \frac{(\tilde{s} - \tilde{r})}{\tilde{s}(\tilde{s} + \tilde{r})}\bra{\widetilde{f}^{\tilde{r}}}\widetilde{A}_{12}(Y) -\frac{1}{\tilde{s}}\bra{\widetilde{G}_{-\tilde{s}}}\left(\widetilde{A}_{22} + \widetilde{E}\right)(Y)\\
        &+\frac{(\tilde{s} - \tilde{r})}{(\tilde{s} + \tilde{r})(2\tilde{s})}\bra{\left(e^{\tilde{r}(X-\tilde{d})}e^{\frac{\tilde{s}^3}{3} - \tilde{d}\tilde{s}}- \frac{2\tilde{s}}{(\tilde{s}-\tilde{r})} e^{\frac{\tilde{s}^3}{3} - X\tilde{s}}\right)}\widetilde{A}_{12}(X,Y)
        +\frac{(\tilde{s}-\tilde{r})}{(\tilde{s} + \tilde{r})(2\tilde{s})}\widetilde{f}^{\tilde{r}}(Y) - \frac{1}{2\tilde{s}}\widetilde{R}_{-\tilde{s}}(Y)\\
        &-\frac{1}{(\tilde{s}+\tilde{r})}\bra{\widetilde{f}^{-\tilde{s}}}\left(\widetilde{A}_{11} \times\widetilde{A}_{22}\right)(Y) - \frac{1}{(\tilde{s} + \tilde{r})}\bra{\widetilde{f}^{-\tilde{s}}}\left(\widetilde{A}_{11}\times \widetilde{E}\right)(Y) 
        + \frac{1}{(\tilde{s} + \tilde{r})}\bra{\widetilde{f}^{-\tilde{s}}}\left(\widetilde{A}_{12}\times\widetilde{A}_{12}\right)(Y).
    \end{aligned}
\end{equation}

\section{Definition of distribution functions for Maximal current phase}
\subsection{Finite time diagonal distribution for Maximal current phase}
\begin{defin}\label{MaximalFinite}
    Let $\sqrt{q}\in (0,1)$, $r\in (0,1).$ Let $N \in \Z_{\geq 2}$ and $d\in \Z$. Define
    \begin{equation}
        \eta(d,N) := \mathrm{Pf}\left(J - \overline{\mathsf{K}}^{M}\right)\left(e_{M}^{1,r}(d) - 1\right) + \mathrm{Pf}\left(J - \overline{\mathsf{K}}^{M} - \ketbra{\begin{array}{c}
            \phi_2^{M}\\\phi_1^{M}
        \end{array}}{-g_1^{M} \quad g_2^{M}} - \ketbra{\begin{array}{c}
            -g_1^{M}\\
            g_2^{M}
        \end{array}}{-\phi_2^{M} \,\,\, -\phi_1^{M}}\right),
    \end{equation}
    for $d \geq 0$ and $\eta(d,N) = 0$ for $d<0$. The Pfaffian is taken over ${\ell^{2}(\{d+1,\dots\}}).$
\end{defin}

We now define all functions used in the above definition. The following functions take inputs $k,\ell \in \Z_{\geq 0}$ and the input $x\in\R$.
\begin{equation}
     H(x) = \left(\frac{1-\sqrt{q}/x}{1-\sqrt{q}x}\right)^{N-2},\quad e_{M}^{1,r}(d) =\oint \limits_{\Gamma_{\sqrt{q},1,r}} \frac{dw}{2\pi\I} \frac{w^{d+2}(1-wr)}{H(w)(w-1)^2(w-r)}, \quad \mathsf{h}_{M}(d,k) = 
        2k-2d-1.
\end{equation}

\begin{equation}
    \begin{aligned}
         &\mathsf{g}_4^{M}(k) = \!\!\!\!\oint \limits_{\Gamma_{1/\sqrt{q}, 1/r,1} }\!\!\!\!\!\!\frac{dz}{2\pi\I}\frac{H(z)(z-r)(z+1)}{z^{k+2}(1-zr)(z-1)^2}, \quad \mathsf{g}_3^{M}(k) = \oint \limits_{\Gamma_{1/\sqrt{q},1/r}}
        \frac{dz}{2\pi\I}\frac{(z-r)H(z)}{z^{k+2}(1-zr)(z-1)}.
    \end{aligned}
\end{equation}
\begin{equation}
    \begin{aligned}
        &g_1^{M}(k) = \oint \limits_{\Gamma_{1/\sqrt{q},1/r}} \frac{dz}{2\pi\I} \frac{H(z)}{z^{k+2}}\frac{(z-r)}{(1-rz)(z+1)},\quad
        g_2^{M}(k) =  \oint \limits_{\Gamma_{\sqrt{q},1,r}} \frac{dw}{2\pi\I} \frac{w^{k+1}}{H(w)}\frac{(1-wr)}{(w-r)(w-1)},
    \end{aligned}
\end{equation}

\begin{equation}
    \begin{aligned}
        &{\Omega}_1^{M}(k) = \frac{1}{(2\pi\I)^2} \oint \limits_{\Gamma_{\sqrt{q},r}} dw \oint \limits_{\Gamma_{1/\sqrt{q},1/r,1}} dz\frac{w^{d+1}}{z^{k+1}}\frac{H(z)R(z)}{H(w)R(w)}\frac{(zw-1)}{(1-z)(w-1)^2(z-w)},\\
        &{\Omega}_2^{M}(k) = \frac{1}{(2\pi\I)^2}\oint \limits_{\Gamma_{\sqrt{q},r}} dw \oint \limits_{\Gamma_{1/\sqrt{q},1/r}}dz\frac{w^{d+1}}{z^{k+1}}\frac{H(z)R(z)}{H(w)R(w)}\frac{(zw-1)}{(w-1)^2(z+1)(z-w)}.
    \end{aligned}
\end{equation}

\begin{equation}
        \begin{aligned}
            \phi_1^{M}(k) = {\Omega}_1^{M}(k) - \mathsf{g}_4^{M}(k) - \mathsf{h}_{M}(d,k), \quad\quad
            \phi_2^{M}(k) = {\Omega}_2^{M}(k) + \mathsf{g}_3^{M}(k).
        \end{aligned}
\end{equation}

\begin{equation}\label{one-param}
    \begin{aligned}
            \overline{\mathsf{K}}_{11}^{M}(k,\ell) &:= \frac{-1}{(2\pi\I)^2} \oint \limits_{\Gamma_{\sqrt{q},r}} dw \oint \limits_{\Gamma_{1/\sqrt{q},1/r}} dz \frac{w^{\ell}}{z^{k+1}} \frac{H(z)R(z)}{H(w)R(w)} \frac{(zw-1)}{(z+1)(1+w)(z-w)},\\
            \overline{\mathsf{K}}_{12}^{M}(k,\ell) &:= \frac{-1}{(2\pi\I)^2} \oint\limits_{\Gamma_{\sqrt{q},r,1}}\!\!\! dw \!\!\!\oint\limits_{\Gamma_{1/\sqrt{q},1/r}}\!\!\!dz \frac{w^{\ell}}{z^{k+1}} \frac{H(z)R(z)}{H(w)R(w)}\frac{(zw-1)}{(w-1)(z+1)(z-w)},\\
                \end{aligned}
\end{equation}
\begin{equation}
    \begin{aligned}
\overline{\mathsf{K}}_{22}^{M}(k,\ell) &:= \frac{-1}{(2\pi\I)^2} \oint \limits_{\Gamma_{\sqrt{q}}} dw \oint \limits_{\Gamma_{1/\sqrt{q},1/r,1}} dz \,\overline{h}_{22}^{M}(z,w)
    +\frac{-1}{(2\pi\I)^2} \oint \limits_{\Gamma_{r}} dw \oint \limits_{\Gamma_{1/\sqrt{q},1}} dz \, \overline{h}_{22}^{M}(z,w)\\
    &
    + \frac{-1}{(2\pi\I)^2} \oint \limits_{\Gamma_{1}} dw \oint \limits_{\Gamma_{1/\sqrt{q},1/r}} dz\, \overline{h}_{22}^{M}(z,w) -\sgn(k-\ell),
        \end{aligned}
    \end{equation}
    where $$\overline{h}_{22}^{M}(z,w) = \frac{w^{\ell}H(z)R(z)(zw-1)}{z^{k+1}H(w)R(w)(1-z)(w-1)(z-w)}, \quad R(z) = \frac{(1-r/z)}{(1-rz)}.$$

\subsection{Asymptotic limits under Maximal current phase}
\begin{defin}\label{MaximalAsymptotic}
    Let $\sqrt{q}\in (0,1)$, $\tilde{r}< 0,$ and $\tilde{d}\in \R$ be parameters. Consider the scaling
    \begin{equation}
        r = 1 + (c_0N/2)^{-1/3}\tilde{r}.
    \end{equation}
    \begin{equation}
    \begin{aligned}
        \Phi(\tilde{d}) := &\mathrm{Pf}(J - \widetilde{\mathpzc{K}}^{M})\left(\widetilde{\mathpzc{e}}^{\tilde{r}}_{M}(\tilde{d}) - 1\right)
        +\mathrm{Pf}\left(J - \widetilde{\mathpzc{K}}^M - \ketbra{\begin{array}{c} \widetilde{\phi}_2^{M} \\ \widetilde{\phi}_1^{M} \end{array}}{-\widetilde{g}_1^{M}\,\,\,\widetilde{g}_2^{M}} - \ketbra{\begin{array}{c}
            -\widetilde{g}_1^{M} \\
            \widetilde{g}_2^{M}
        \end{array}}{-\widetilde{\phi}_2^{M}\,\,\, -\widetilde{\phi}_1^{M}} \right).
    \end{aligned}
    \end{equation}
    The Fredholm Pfaffian is taken over $L^{2}((\tilde{d}, \infty))$.
\end{defin}

We define all functions used in the above definition. The following functions take inputs $X,Y,U$ in $\R$ and $\tilde{y}, \tilde{d} \in \R.$
\begin{equation}
    \begin{aligned}
         \widetilde{f}_{M}^{\tilde{y}}(U) = e^{-\frac{\tilde{y}^3}{3} + \tilde{y}U}, \quad \widetilde{\mathpzc{e}}_{M}^{\tilde{r}}(\tilde{d}) = \int\limits_{{}_{\tilde{r},0}\wcu\, {}_{}}\! \frac{d\omega}{2\pi\I} e^{-\frac{\omega^3}{3} + \tilde{d}\omega}\frac{-(\omega+\tilde{r})}{\omega^2(\omega - \tilde{r})}, \quad\widetilde{\mathsf{h}}_{M}(\tilde{d}, X) = 2X- 2\tilde{d}.
    \end{aligned}
\end{equation}

\begin{equation}
    \begin{aligned}
        \widetilde{\mathsf{g}}_4^{M}(X) = -\!\!\!\int\limits_{{ }_{}\zcd {}_{0, -\tilde{r}}} \!\!\!\frac{d\zeta}{2\pi\I} e^{\frac{\zeta^3}{3} - X\zeta} \frac{2(\zeta - \tilde{r})}{(\zeta + \tilde{r})\zeta^2}, \quad \widetilde{\mathsf{g}}_3^{M}(X) = -\!\int\limits_{{ }_{0}\zcd {}_{-\tilde{r}}} \frac{d\zeta}{2\pi\I} e^{\frac{\zeta^3}{3} - X\zeta} \frac{(\zeta - \tilde{r})}{(\zeta + \tilde{r})\zeta}.
    \end{aligned}
\end{equation}

\begin{equation}
    \begin{aligned}
        \widetilde{g}_1^{M}(X) = -\int\limits_{{ }_{0}\zcd {}_{ -\tilde{r}}} \!\frac{d\zeta}{2\pi\I} e^{\frac{\zeta^3}{3} - X\zeta}\frac{(\zeta - \tilde{r})}{2(\tilde{r}+\zeta)}, \quad \widetilde{g}_2^{M}(X) = -\int\limits_{{ }_{\tilde{r}, 0}\wcu {}_{}} \!\frac{d\omega}{2\pi\I} e^{-\frac{\omega^3}{3} + X\omega}\frac{(\omega + \tilde{r})}{(\omega - \tilde{r})\omega}.
    \end{aligned}
\end{equation}

\begin{equation}
\begin{aligned}
    &\widetilde{\phi}_1^{M}(X) = -\!\int\limits_{{}_{\tilde{r}}\wcu\, {}_{0,\zeta}}\! \frac{d\omega}{2\pi\I}\!\int\limits_{{ }_{\omega}\zcd { }_{0,-\tilde{r}}} \!\frac{d\zeta}{2\pi\I} e^{\frac{\zeta^3}{3} - \frac{\omega^3}{3} - X\zeta + \tilde{d}\omega} \frac{(\zeta - \tilde{r})}{(\zeta + \tilde{r})}\frac{(\omega+ \tilde{r})}{(\omega - \tilde{r})} \frac{(\zeta + \omega)}{\zeta\omega^2(\zeta - \omega)} - \widetilde{\mathsf{g}}_4^{M}(X) - \widetilde{\mathsf{h}}_{M}(\tilde{d},X),\\
    &\widetilde{\phi}_2^{M}(X) = \!\int\limits_{{}_{\tilde{r}}\wcu\, {}_{0,\zeta}}\! \frac{d\omega}{2\pi\I}\!\int\limits_{{ }_{\omega,0}\zcd {}_{-\tilde{r}}} \!\frac{d\zeta}{2\pi\I} e^{\frac{\zeta^3}{3} - \frac{\omega^3}{3} - X\zeta + Y\omega} \frac{(\zeta - \tilde{r})}{(\zeta + \tilde{r})}\frac{(\omega+ \tilde{r})}{(\omega - \tilde{r})} \frac{(\zeta + \omega)}{2\omega^2(\zeta - \omega)} + \widetilde{\mathsf{g}}_3^{M}(X).
\end{aligned}
\end{equation}

\begin{equation}
    \begin{aligned}
        \widetilde{\mathpzc{K}}_{11}^M(X,Y) &= -\int\limits_{{}_{\tilde{r}}\wcu\, {}_{0}}\! \frac{d\omega}{2\pi\I}\!\int\limits_{{ }_{0}\zcd {}_{-\tilde{r}}} \!\frac{d\zeta}{2\pi\I} e^{\frac{\zeta^3}{3} - \frac{\omega^3}{3} - X\zeta + Y\omega} \frac{(\zeta - \tilde{r})(\omega + \tilde{r})}{(\zeta + \tilde{r})(\omega - \tilde{r})}\frac{(\zeta + \omega)}{4(\zeta - \omega)}\\
        \widetilde{\mathpzc{K}}_{12}^M(X,Y) &= -\int\limits_{{}_{\tilde{r}, 0}\wcu\, {}_{\zeta}}\! \frac{d\omega}{2\pi\I}\!\int\limits_{{ }_{0,\omega}\zcd {}_{-\tilde{r}}} \!\frac{d\zeta}{2\pi\I} e^{\frac{\zeta^3}{3} - \frac{\omega^3}{3} - X\zeta + Y\omega} \frac{(\zeta - \tilde{r})(\omega + \tilde{r})}{(\zeta + \tilde{r})(\omega - \tilde{r})}\frac{(\zeta + \omega)}{2\omega(\zeta - \omega)} \\
\end{aligned}
\end{equation}

\begin{equation}
    \begin{aligned}
        &\widetilde{\mathpzc{K}}_{22}^M(X,Y) = \int\limits_{{}_{}\wcu\, {}_{\zeta,0}}\! \frac{d\omega}{2\pi\I}\!\int\limits_{{ }_{\omega}\zcd {}_{-\tilde{r},0}} \!\frac{d\zeta}{2\pi\I}\, e^{\frac{\zeta^3}{3} - \frac{\omega^3}{3} - X\zeta + Y\omega} \frac{(\zeta - \tilde{r})(\omega + \tilde{r})}{(\zeta + \tilde{r})(\omega - \tilde{r})}\frac{(\zeta + \omega)}{\zeta\omega(\zeta - \omega)}-\int\limits_{{ }_{0}\zcd {}_{-\tilde{r}}} \!\frac{d\zeta}{2\pi\I}\, e^{\frac{\zeta^3}{3} - X\zeta}\frac{(\zeta - \tilde{r})}{\zeta(\zeta + \tilde{r})} \\ &+\int\limits_{{}_{\tilde{r}}\wcu\, {}_{0,\zeta}}\! \frac{d\omega}{2\pi\I}\!\int\limits_{{ }_{\omega,0}\zcd {}_{}} \!\frac{d\zeta}{2\pi\I}\, e^{\frac{\zeta^3}{3} - \frac{\omega^3}{3} - X\zeta + Y\omega} \frac{(\zeta - \tilde{r})(\omega + \tilde{r})}{(\zeta + \tilde{r})(\omega - \tilde{r})}\frac{(\zeta + \omega)}{\zeta\omega(\zeta - \omega)}+2e^{-\frac{\tilde{r}^3}{3} + Y\tilde{r}}
         - \sgn(X - Y).
    \end{aligned}
\end{equation}

\section{Definitions of distribution functions for Low density phase}
\subsection{Finite time diagonal distribution for Low density phase}
\begin{defin}\label{oneParamResult_Finite}
    Let $\sqrt{q}\in (0,1)$, $r\in (\sqrt{q},1/\sqrt{q}).$ Let $N \in \Z_{\geq 1}$. We define
    \begin{equation}
    \begin{aligned}
        \theta(d,N) := \mathrm{Pf}\left(J - \overline{\mathsf{K}}^{L}\right)\left(e_{L}^{r}(d) - 1\right)+ \mathrm{Pf}\left(J - \overline{\mathsf{K}}^{L}
        - \ketbra{\begin{array}{c}
            \phi_2^{\text{L}}\\\phi_1^{\text{L}}
        \end{array}}{-g_1^{\text{L}} \quad g_2^{\text{L}}} - \ketbra{\begin{array}{c}
            -g_1^{\text{L}}\\
            g_2^{\text{L}}
        \end{array}}{-\phi_2^{\text{L}} \,\,\, -\phi_1^{\text{L}}}\right),
    \end{aligned}
    \end{equation}
    for $d\in \Z_{\geq 0}$ and $\theta(d,N) = 0$ for $d<0$. The Fredholm Pfaffian is taken over ${\ell^{2}(\{d+1,\dots\}}).$
\end{defin}

We now define all functions used in the above definition. The following functions take inputs $k,\ell \in \Z_{\geq 0}$ and the input $x\in\R$.
\begin{equation}
     \overline{H}(x) = \left(\frac{1-\sqrt{q}/x}{1-\sqrt{q}x}\right)^{N-1},\quad f_{L}^x(k) = \frac{x^{k}}{\overline{H}(x)},\quad e_{L}^r(d) =  \frac{\overline{H}(r)}{r^{d}}\oint \limits_{\Gamma_{\sqrt{q},r}}\frac{dw}{2\pi\I} \frac{w^{d+1}}{\overline{H}(w)(w-r)^2},
\end{equation}
\begin{equation}\begin{aligned}
{\Omega}_1^{L}(k) &= \frac{-1}{(2\pi\I)^2}\frac{\overline{H}(r)}{r^{d}} \oint \limits_{\Gamma_{\sqrt{q}}} dw \oint \limits_{\Gamma_{1/\sqrt{q},r}} dz \frac{w^{d+1}}{z^{k+1}} \frac{\overline{H}(z)}{\overline{H}(w)}\frac{(zw-1)}{(r-w)(r-z)(wr-1)(z-w)},\\
{\Omega}_2^{L}(k) &= \frac{-1}{(2\pi\I)^2} \frac{\overline{H}(r)}{r^d}\oint \limits_{\Gamma_{\sqrt{q}}} dw \oint \limits_{\Gamma_{1/\sqrt{q}}} dz \frac{w^{d+1}}{z^{k+1}} \frac{\overline{H}(z)}{\overline{H}(w)}\frac{(zw-1)(zr-1)}{(r-w)(z^2-1)(wr-1)(z-w)}.\\
    \end{aligned}
\end{equation}
\begin{equation}
    \begin{aligned}
         &\mathsf{g}_{4}^{L}(k) = -\oint \limits_{\Gamma_{1/\sqrt{q}, 1/r,r} }\frac{dz}{2\pi\I}\frac{r\overline{H}(z)(z^2-1)}{z^{k+1}(z-r)^2(1-rz)}, \quad \mathsf{g}_{3}^{L}(k) = \oint \limits_{\Gamma_{1/\sqrt{q}}}
        \frac{dz}{2\pi\I}\frac{r\overline{H}(z)}{z^{k+1}(z-r)},\\
        &\mathsf{h}_{L}(d,k) = \begin{cases}
            \overline{H}(r)\frac{r^{k-2d-1} - r^{-k+1}}{(r^2-1)} + r^{-k-1}(k-d)\overline{H}(r) &\text{if }  r\neq 1,\\
            2k-2d-1 &\text{if } r = 1.
        \end{cases}
    \end{aligned}
\end{equation}
\begin{equation}
    \begin{aligned}
        g_1^{L}(k) = \frac{1}{2\pi\I} \oint \limits_{\Gamma_{1/\sqrt{q}}} \frac{\overline{H}(z)(z-r)}{z^{k+1}(z^2-1)} dz,\quad
    g_2^{L}(k) = \frac{1}{2\pi\I} \oint \limits_{\Gamma_{\sqrt{q},r}} \frac{w^k}{\overline{H}(w)(w-r)} dw,
    \end{aligned}
\end{equation}
\begin{equation}
        \begin{aligned}
            \phi_1^{L}(k) &=  \Omega_1^{L}(k) - \mathsf{g}_{4}^{L}(k) - \mathsf{h}_{L}(d,k), \quad\quad
            \phi_2^{L}(k) = \Omega_2^{L}(k) + \mathsf{g}_{3}^{L}(k).\\
        \end{aligned}
\end{equation}

        \begin{equation}
        \begin{aligned}
            \overline{\mathsf{K}}_{11}^{L}(k,\ell) &:= \frac{-1}{(2\pi\I)^2} \oint \limits_{\Gamma_{\sqrt{q}}} dw \oint \limits_{\Gamma_{1/\sqrt{q}}} dz \frac{w^{\ell}}{z^{k+1}} \frac{\overline{H}(z)}{\overline{H}(w)} \frac{(rz-1)(r-w)(zw-1)}{(z^2-1)(1-w^2)(z-w)},\\
            \overline{\mathsf{K}}_{12}^{L}(k,\ell) &:= \frac{-1}{(2\pi\I)^2} \oint\limits_{\Gamma_{\sqrt{q},1/r}}\!\!\! dw \!\!\!\oint\limits_{\Gamma_{1/\sqrt{q}}}\!\!\!dz \frac{w^{\ell}}{z^{k+1}} \frac{\overline{H}(z)}{\overline{H}(w)}\frac{(zr-1)(zw-1)}{(wr-1)(z^2-1)(z-w)},\\
            \end{aligned}
    \end{equation}   
    \begin{equation}
    \begin{aligned}
    \overline{\mathsf{K}}_{22}^{L}(k,\ell) &:= \frac{-1}{(2\pi\I)^2} \oint \limits_{\Gamma_{\sqrt{q}}} dw \oint \limits_{\Gamma_{r,1/\sqrt{q}}} dz \frac{w^{\ell}\overline{H}(z)(zw-1)}{z^{k+1}\overline{H}(w)(r-z)(wr-1)(z-w)}\\ &
            + \frac{-1}{(2\pi\I)^2} \oint \limits_{\Gamma_{1/r}} dw \oint \limits_{\Gamma_{1/\sqrt{q}}} dz \frac{w^{\ell}\overline{H}(z)(zw-1)}{z^{k+1}\overline{H}(w)(r-z)(wr-1)(z-w)} -\sgn(k-\ell) r^{-|k-\ell|-1}.
        \end{aligned}
    \end{equation}

\subsection{Asymptotic limits under Low density phase}\label{LowAsymptotic}
\begin{defin}
    Let $\sqrt{q}\in (0,1)$, $\tilde{r}, \tilde{d}\in \R$ be parameters. Consider the scaling
    \begin{equation}
        r = 1 + (c_0N/2)^{-1/3}\tilde{r}.
    \end{equation}
    \begin{equation}
    \begin{aligned}
        \Xi(\tilde{d}) := &\mathrm{Pf}(J - \overline{\mathcal{A}})\mathpzc{e}^{-\tilde{r},0}(\tilde{d}) - \mathrm{Pf}(J - \overline{\mathcal{A}})\\
        &+\mathrm{Pf}\left(J - \overline{\mathcal{A}} - \ketbra{\begin{array}{c} \mathpzc{h}_2^{-\tilde{r},0} \\ \widetilde{\mathsf{h}}_{1,L}^{-\tilde{r},0} \end{array}}{-\mathpzc{g}_1^{-\tilde{r},0} \,\,\,\mathpzc{g}_2^{-\tilde{r},0}} - \ketbra{\begin{array}{c}
            -\mathpzc{g}_1^{-\tilde{r},0} \\\mathpzc{g}_2^{-\tilde{r},0}
        \end{array}}{-\mathpzc{h}_2^{-\tilde{r},0}\,\,\, -\widetilde{\mathsf{h}}_{1,L}^{-\tilde{r},0}} \right).
    \end{aligned}
    \end{equation}
    The Pfaffian is taken over $L^{2}((\tilde{d}, \infty))$.
\end{defin}
In the above definition, we set $\overline{\mathcal{A}} = \begin{pmatrix}
        \overline{\mathcal{A}}_{11} & \overline{\mathcal{A}}_{12}\\
        \overline{\mathcal{A}}_{21} &\overline{\mathsf{A}}_{22}^{L}
    \end{pmatrix}$. All functions used in the definition, $\overline{\mathcal{A}}_{11}$, $\overline{\mathcal{A}}_{12}$, $\overline{\mathcal{A}}_{21}$, $\mathpzc{e}^{-\tilde{r},0}$, $\mathpzc{g}_1^{-\tilde{r},0}$, $\mathpzc{g}_2^{-\tilde{r},0},$ $\mathpzc{h}_2^{-\tilde{r},0}$, are identical to \cite[(2.25), (2.26), (2.27), (2.30)]{Betea_2020} with $\delta = -\tilde{r}$, $u=0$ and $S = \tilde{d},$ where $\delta, u, S$ are variables used there. However, two functions differ in our setting, which we define as
\begin{equation}
\begin{aligned}
\overline{\mathsf{A}}_{22}^{L}(X,Y) &:=  \int\limits_{{}_{}\wcu\, {}_{-\tilde{r},\zeta,0}}\! \frac{d\omega}{2\pi\I}\!\int\limits_{{ }_{}\zcd { }_{\tilde{r}}} \!\frac{d\zeta}{2\pi\I} e^{\frac{\zeta^3}{3} - \frac{\omega^3}{3}  - X\zeta + Y\omega} \frac{(\zeta + \omega)}{(\zeta - \tilde{r})(\omega + \tilde{r})(\zeta - \omega)}\\
&+\int\limits_{{}_{-\tilde{r}}\wcu\, {}_{}}\! \frac{d\omega}{2\pi\I}\!\int\limits_{{ }_{0,\tilde{r}, \omega}\zcd } \!\frac{d\zeta}{2\pi\I} e^{\frac{\zeta^3}{3} - \frac{\omega^3}{3}  - X\zeta + Y\omega} \frac{(\zeta + \omega)}{(\zeta - \tilde{r})(\omega + \tilde{r})(\zeta - \omega)} -\sgn(X-Y)e^{-\tilde{r}|X-Y|},\\
\widetilde{\mathsf{h}}_{1,L}^{-\tilde{r},0}(X) &:= \int_{\tilde{d}}^{\infty} \widetilde{\mathcal{A}}_{22}(X,V)\mathpzc{f}^{\tilde{r},0}(V) dV - \mathpzc{g}_4^{-\tilde{r},0}(X) - \mathpzc{j}^{-\tilde{r},0}(\tilde{d},X),
\end{aligned}
\end{equation}
where $\mathpzc{g}_4^{-\tilde{r},0},$ $\mathpzc{f}^{\tilde{r},0},$ $\mathpzc{j}^{-\tilde{r},0}$ are defined in \cite[(2.26)]{Betea_2020} with the same parameter choices: $\delta = -\tilde{r},$ $u = 0$ and $S = \tilde{d}.$ 
The reasons for these differences will be explained in Lemma \ref{KernelPtwiseLimit} and Lemma \ref{FunctionPtwiseLimit}. We make it clear that under the special case $\tilde{r} = 0,$ $\mathpzc{j}^{0,0} = 2X  -2\tilde{d}$.

\section{Finite time diagonal distribution of the stationary LPP under High density phase}

\subsection{Inhomogeneous half-space geometric last passage percolation model}\label{LPPdefinition}
We provide another definition (path version) of the half-space geometric last passage percolation (LPP) model.
Let $q_0, q_1,q_2,
\dots$ be positive real parameters such that $q_0q_i \in (0,1)$ for all $i \geq 1$ and $q_iq_j \in (0,1)$ for all $i\neq j \geq 1.$ We define a set $\mathcal{D} = \{ (i,j) \in \Z^2 | 1 \leq j \leq i\}.$ Let $(\omega_{i,j})_{(i,j)\in \mathcal{D}}$ be independent random variables with $\omega_{i,j} \stackrel{(d)}{=} \text{Geo}(q_iq_j).$ We call $q_0$ the diagonal parameter and $q_i$ the ith-row parameter. An \emph{up-right path} $\pi$ from a point $A$ to a point $E$ is a sequence of points $(\pi(0),\pi(1),\ldots,\pi(n))$ in $\Z^2$ such that \mbox{$\pi(k+1)-\pi(k)\in \{(0,1),(1,0)\}$}, with $\pi(0)=A$ and $\pi(n)=E$, and where $n$ is called the length $\ell(\pi)$ of $\pi$.
We define the half-space last passage time from $(1,1)$ to the point $(N,M)$ for $N\geq M$, denoted as $\mathbf{G}(N,M)$, to be
\begin{equation}
\mathbf{G}(N,M) = \max_{\pi:(1,1)\to (N,M)} \sum_{(i,j)\in\pi} \omega_{i,j}
\end{equation}
where the maximum is taken over all up-right paths in $\mathcal{D}$ from $(1,1)$ to $(N,M)$. 
To relate this definition to the recurrence relation in the first chapter, $\mathbf{G}(N,M)$ is the solution to \ref{recurrence} with initial condition $G(1,1) = \omega_{1,1}$ and $G(N,1) - G(N-1,1) = \omega_{N,1}$ for $N \geq 2.$

\subsection{Half-space stationary geometric LPP model}
In this section, we analyze the special case within the high-density regime in which the two-parameter stationary model reduces to the product stationary model.

\subsubsection{High density phase with two-parameter stationary initial condition}
We set $q_0 = r$, $q_1 = 1/s$, $q_2 = s$ and $q_i = \sqrt{q}$ for $i\geq 3.$ Notice that $q_1q_2 =1$ would cause $\omega_{2,1}$ to explode. To avoid this, we set $\omega_{2,1} = 0.$ Since all LPP paths pass through $(1,1),$ the weight $\omega_{1,1}$
is not relevant when considering the difference in last passage times. Therefore, we set $\omega_{1,1} =0$, which removes the requirement of $r/s<1.$ This is why we can consider $r\in (s,1/s).$ Then the two-parameter stationary LPP model has the following weights:
\begin{equation} \label{eq:stat_wts}
  \omega_{i, j} = \begin{cases}
      \mathrm{Geo}\left( rs \right), & \textrm{if }i=j=2,\\
      \mathrm{Geo}\left( r\sqrt{q} \right), & \textrm{if }i=j\geq 3,\\
    \mathrm{Geo}\left( \sqrt{q}/s \right), & \textrm{if }j=1, i\geq 3, \\
    \mathrm{Geo}\left( s\sqrt{q} \right), & \textrm{if }j=2, i\geq 3, \\
    0, & \textrm{if}\ i=j=1, \\
    0, & \textrm{if}\ i=2,j=1, \\
    \mathrm{Geo}(q), &\textrm{otherwise},
  \end{cases}
\end{equation}
where $\sqrt{q} \in (0,1)$, $s \in (\sqrt{q}, 1)$, and $r\in (0,1/s)$. This model is shown in Figure $\ref{fig:2paramStationaryModel}.$ We use $G_{r,s}^{\mathrm{stat}}(N,M)$ to denote the last passage time from $(1,1)$ to $(N,M)$. We exclude $s=1$ from the present discussion because our formula $F_{r,s}^{HD}$ does not cover this case. We handle $s=1$ by a separate method in section \ref{Max&LowPhase}.

\begin{figure}
\centering
\begin{tikzpicture}
\draw[very thin, gray] (-0.5,-0.5) grid (4.5,4.5);
    \foreach \x in {1,...,4} {
        \foreach \y in {0,1,...,\x} {
            \fill (\x,\y) circle (2pt);
        }
    }
    \fill (0,0) circle (2pt);
    \draw[thick,-] (0,0) -- (4,0);
    \draw[thick,-] (0,0) -- (4,4);
    \draw[thick,-] (1,1) -- (4,1);
    \fill (0,0) circle (1pt);
    \node at (0,-0.25) {$0$};
     
    \node at (1,-0.25) {$0$};
    \node at (-0.5,0) {\footnotesize{$(1,1)$}};
    \node at (0.5,1) {\footnotesize{$(2,2)$}};
    \node at (1.2,0.82) {\small{$rs$}};
    \fill (1,1) circle (1pt);
    \node at (3,-0.28) {\small{$\sqrt{q}/s$}};
    \node at (3,0.72) {\small{$\sqrt{q}s$}};
    \node at  (1.55,2.1) {\small{$r\sqrt{q}$}};
    \node at (3.1,1.75) {\small{$q$}};
\end{tikzpicture}
\caption{Geometric LPP model as described in $\eqref{eq:stat_wts}$.}
\label{fig:2paramStationaryModel}
\end{figure}

The last passage time $G_{r,s}^{\text{stat}}$ is stationary in the following sense.
\begin{prop}[\textnormal{Proposition 3.2 \cite{BI23}}]\label{I_r,sStationary}
    Let $k \in \Z_{\geq 1}$ and consider points $\mathbf{P}_{1} = (n_1,m_1),\dots, \mathbf{P}_{k} = (n_k,m_k)$ along a down-right path in the octant. Assume that $r,s,\sqrt{q}$ satisfies the relations $ \sqrt{q}\in (0,1),$ $s\in (\sqrt{q},1/\sqrt{q}),$ $r\in (0,1/s)$. Then given $N\geq 2,$ we have $G_{r,s}^{\mathrm{stat}}$ is stationary in the sense that the law of $\left(G_{r,s}^{\mathrm{stat}}{((N,N) + \mathbf{P}_{i})} - G_{r,s}^{\mathrm{stat}}{((N,N) + \mathbf{P}_{1})} \right)_{1\leq i\leq k}$ does not depend on $N$.
\end{prop}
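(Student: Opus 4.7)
My plan is to prove the stronger statement that the horizontal profile $H_N(\cdot) := G^{\mathrm{stat}}_{r,s}(N + \cdot, N) - G^{\mathrm{stat}}_{r,s}(N, N)$ has a joint law that does not depend on $N \geq 2$, and then to deduce the proposition from it. Once this is in place, for any fixed displacements $\mathbf{P}_i$ on a down-right path, the joint distribution $(G^{\mathrm{stat}}_{r,s}((N,N)+\mathbf{P}_i) - G^{\mathrm{stat}}_{r,s}((N,N)+\mathbf{P}_1))_{1 \leq i \leq k}$ can be reconstructed from $H_N$ together with the bulk weights $(\omega_{N+a,N+b})$ strictly below the horizontal line $\{j=N\}$, by repeatedly applying the LPP recursion $\eqref{recurrence}$. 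Since both the law of $H_N$ (by the reduction) and the law of those fresh $\mathrm{Geo}(q)$ weights are independent of $N$, so is the target joint distribution.

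To establish the $N$-invariance of $H_N$, I proceed by induction on $N$. For the base case $N = 2$, a direct enumeration of up-right paths from $(1,1)$ to $(x+2, 2)$ using Section \ref{LPPdefinition} (each such path has a single up-step at some column $k \in \{2, \ldots, x+2\}$) combined with the weight specialization $\eqref{eq:stat_wts}$ gives
\begin{equation*}
H_2(x) \;=\; \max\Bigl\{R_2(x),\ \max_{1 \leq j \leq x}\bigl[R_1(j) + (R_2(x) - R_2(j-1)) - Y\bigr]\Bigr\},
\end{equation*}
where $Y = \omega_{2,2}$ and $R_1(k) = \sum_{i=1}^{k}\omega_{i+2,1}$, $R_2(k) = \sum_{i=1}^{k}\omega_{i+2,2}$. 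For the inductive step, assume $H_N \stackrel{(d)}{=} H_2$ as processes. The LPP recursion expresses
\begin{equation*}
H_{N+1}(0) = 0, \qquad H_{N+1}(x) = \omega_{N+1+x,N+1} + \max\bigl(H_{N+1}(x-1),\, H_N(x+1) - \omega_{N+1,N+1} - H_N(1)\bigr)
\end{equation*}
for $x \geq 1$, where $\omega_{N+1,N+1} \sim \mathrm{Geo}(r\sqrt q)$ and $(\omega_{N+1+x,N+1})_{x \geq 1} \sim \mathrm{Geo}(q)$ are fresh and independent of $H_N$. The inductive claim is that $H_{N+1} \stackrel{(d)}{=} H_N$.

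This last step is a Burke-type output identity for half-space geometric LPP and is the main obstacle. My preferred route is to construct an explicit measure-preserving bijection on the underlying weight configurations that maps the input data $(H_N, \omega_{N+1,N+1}, (\omega_{N+1+x,N+1})_{x\geq 1})$ to an output $(G^{\mathrm{stat}}_{r,s}(N+1,N+1) - G^{\mathrm{stat}}_{r,s}(N,N),\ H_{N+1})$ with the prescribed joint law. The local ingredient driving such a bijection is the two-variable Burke swap for independent geometric random variables --- the identity that, for $A \sim \mathrm{Geo}(a)$, $B \sim \mathrm{Geo}(b)$ independent with $ab<1$, one can build deterministic functions $(A', B') = \Phi(A, B)$ that have the swapped law $\mathrm{Geo}(b) \otimes \mathrm{Geo}(a)$ --- and its half-space boundary version absorbing the diagonal weight $\omega_{N+1,N+1}$, both of which are encoded in the half-space geometric RSK framework employed in \cite{BI23}. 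Composing these moves across the new row should propagate stationarity from level $N$ to level $N+1$. If the combinatorial route becomes cumbersome, an alternative would be to match joint Laplace transforms of $H_N$ and $H_{N+1}$ using the Pfaffian structure developed in Section 3, thereby reducing $H_{N+1} \stackrel{(d)}{=} H_N$ to an algebraic identity between two explicit Pfaffians.
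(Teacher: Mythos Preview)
The paper does not supply its own proof; the proposition is quoted directly from \cite[Proposition~3.2]{BI23}, with only explanatory remarks afterward on how $\mathcal{I}_{r,s}$ is read off from the model \eqref{eq:stat_wts}. So there is no in-paper argument to benchmark your proposal against beyond that citation.

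Your reduction is sound: once the horizontal profile $H_N$ has $N$-independent law, the increments at $(N,N)+\mathbf{P}_i$ are deterministic functionals of $H_N$ together with the fresh weights in rows $N+1,N+2,\dots$ (you wrote ``below'' the line $\{j=N\}$ but your indices $\omega_{N+a,N+b}$ show you mean above; note also that some of those weights lie on the diagonal and are $\mathrm{Geo}(r\sqrt q)$ rather than bulk $\mathrm{Geo}(q)$, though this does not affect $N$-independence). Your recursion expressing $H_{N+1}$ in terms of $H_N$ and the new row is correct.

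Where your sketch understates the difficulty is the Burke step itself. Composing local two-variable geometric swaps along a row is exactly what preserves a \emph{product} geometric initial profile in full space; but $\mathcal{I}_{r,s}$ is not a product measure --- it is a last-passage functional of two geometric walks and an independent $Y\sim\mathrm{Geo}(rs)$ --- and the diagonal move through $\omega_{N+1,N+1}\sim\mathrm{Geo}(r\sqrt q)$ has to interact correctly with that hidden structure. Showing that one half-space row update preserves this specific law is precisely the content of \cite[Proposition~3.2]{BI23}, and it is not assembled from elementary local swaps alone. Your Pfaffian fallback is in fact closer to the mechanism the paper itself invokes: the joint law of last-passage times along a horizontal segment is a marginal of a Pfaffian Schur process and is symmetric under permutation of the row parameters (see the discussion around \eqref{permutationInvariance} and the reference to \cite[Proposition~3.10]{Baik_2018}), and this symmetry is what drives the $N$-independence in \cite{BI23}.
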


We use the stationary LPP model $\eqref{eq:stat_wts}$ to explain how $\mathcal{I}_{r,s}$ is constructed. Intuitively speaking, $R_2(x)$ refers to the case when the up-right path jumps from $(2,1)$ to $(2,2)$ to the second row and $\max_{1\leq k\leq x} R_1(k) + R_2(x-k+1)-Y$ refers to up-right paths that stay on the first row for $k$ steps and jump from $(k+2,1)$ to $(k+2,2)$ to the second row. Taking the maximum over $k$ corresponds to maximization over all up-right paths that that do not make an immediate jump at $(2,1)$. Therefore, we notice that $\left(G_{r,s}^{\text{stat}}(2+N,2) - G_{r,s}^{\text{stat}}(2,2)\right)_{N \in \Z_{\geq 0}} \stackrel{(d)}{=} \left(\mathcal{I}_{r,s}(N)\right)_{N \in \Z_{\geq 0}}$. Since two LPP models \eqref{weights} and \eqref{eq:stat_wts} have the same recurrence relation $\eqref{recurrence}$, we get the following lemma:

\begin{lem}\label{twoparamEqual}
Fix any pair of $(r,s)$ that satisfies $s\in (\sqrt{q},1)$ and $r \in (0,s)\cup (s,1/s)$. Let $N\in \Z_{\geq 1}$, $d\geq 0$. Recall the definition of $F_{r,s}^{HD}(d,N)$ in \eqref{HighCDF}. Then we have $F_{r,s}^{HD}(d,N) = \Pb\left(G_{r,s}^{\mathrm{stat}}(N+1,N+1)\leq d\right)$.
\end{lem}
This shift to $(N+1,N+1)$ for $G_{r,s}^{\mathrm{stat}}$ explains why we take $N+1$ for the function $\psi$ in Theorem \ref{FiniteTimeFormulaTheorem} (1).

\subsubsection{Low density phase with product stationary initial condition}
We define the product stationary LPP model as follows:
\begin{equation} \label{one-paramStatModel}
  \omega_{i, j} = \begin{cases}
      0, & \textrm{if }i=j=1,\\
      \mathrm{Geo}\left( r\sqrt{q} \right), & \textrm{if }i=j\geq 2,\\
    \mathrm{Geo}\left( \sqrt{q}/r \right), & \textrm{if }j=1, i\geq 2, \\
    \mathrm{Geo}(q), &\textrm{otherwise}.
  \end{cases}
\end{equation}
\begin{figure}
\centering
\begin{tikzpicture}
\draw[very thin, gray] (-0.5,-0.5) grid (3.5,3.5);
    \foreach \x in {1,...,3} {
        \foreach \y in {0,1,...,\x} {
            \fill (\x,\y) circle (2pt);
        }
    }
    \fill (0,0) circle (2pt);
    \draw[thick,-] (0,0) -- (3,0);
    \draw[thick,-] (0,0) -- (3,3);
    \draw[thick,-] (1,1) -- (3,1);
    \fill (0,0) circle (1pt);
    \node at (0,-0.25) {$0$};
     
    \node at (1,-0.25) {$\sqrt{q}/r$};
    \node at (-0.5,0) {\footnotesize{$(1,1)$}};
    \node at (0.5,1) {\footnotesize{$(2,2)$}};
    \node at (1.2,0.8) {\small{$r\sqrt{q}$}};
    \fill (1,1) circle (1pt);
    \node at (3,0.72) {\small{$q$}};
    \node at  (1.55,2.1) {\small{$r\sqrt{q}$}};
    \node at (3.1,1.75) {\small{$q$}};
\end{tikzpicture}
\caption{Geometric LPP model as described in $\eqref{one-paramStatModel}$.}
\label{fig:StationaryModel}
\end{figure}
Let $G_{r}^{Low}(N,M)$ denote the last passage time from $(1,1)$ to $(N,M)$ under the model \eqref{one-paramStatModel} as shown in Figure \ref{fig:StationaryModel}. The following lemma is an easy observation.
\begin{lem}\label{oneparamEqual}
    Fix any $r$ satisfying $r\in (\sqrt{q},1/\sqrt{q})$. Let $N \in \Z_{\geq 1}$, $d\geq 0$. Consider the last passage time $G(N,N)$ that satisfies \eqref{recurrence} with $G(1,1) = 0$ and $G(1+\cdot, 1) \stackrel{(d)}{=}\mathcal{I}_{r,1/r}(\cdot)$. Then we have that $G(N,N) \stackrel{(d)}{=} G_{r}^{Low}(N,N)$, i.e., $F_r^{LD} (d,N) = \Pb (G_r^{Low}(N,N) \leq d).$
\end{lem}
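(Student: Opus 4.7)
The plan is to recognize both sides as the same LPP configuration written two different ways, and invoke uniqueness of the solution of the recurrence \eqref{recurrence}. The only subtle point is the degeneracy of $\mathcal{I}_{r,1/r}$ on the coexistence line $rs=1$.

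First I would note that in Definition \ref{stationaryMeasure}, when $s=1/r$ we have $rs=1$, so by convention $Y = \infty$, which forces every term $R_1(k)+R_2(x-k+1)-Y$ to be $-\infty$. Hence the definition collapses to $\mathcal{I}_{r,1/r}(x) = R_2(x)$, where $R_2$ is a $\text{Geom}(\sqrt{q}/r)$ random walk since $\sqrt{q}s = \sqrt{q}/r$ when $s=1/r$. Therefore the initial data $G(1+\cdot,1)$ specified in the lemma is exactly a $\text{Geom}(\sqrt{q}/r)$ random walk starting at $0$.

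Next I would compare the two weight systems. The path-wise last passage time $\mathsf{G}_r^{\text{stat}}$ defined from \eqref{one-paramStatModel} automatically satisfies the LPP recurrence \eqref{recurrence}, and inherits from its first-row weights the initial data $\mathsf{G}_r^{\text{stat}}(1,1)=0$ and $\mathsf{G}_r^{\text{stat}}(i,1)-\mathsf{G}_r^{\text{stat}}(i-1,1) = \omega_{i,1} \sim \text{Geom}(\sqrt{q}/r)$ i.i.d.\ for $i\geq 2$, while the bulk weights $\omega_{i,j} \sim \text{Geom}(q)$ (for $i>j\geq 2$) and the diagonal weights $\omega_{i,i} \sim \text{Geom}(r\sqrt{q})$ (for $i\geq 2$) coincide exactly with those specified in \eqref{weights}. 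Since \eqref{recurrence} determines both $G(N,N)$ and $\mathsf{G}_r^{\text{stat}}(N,N)$ as the same deterministic function of initial data and bulk-plus-diagonal weights, and the joint laws of these inputs match under both setups by independence, the claimed equality in distribution follows immediately.

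There is no substantive obstacle here: the statement is really a dictionary between the recurrence description (random initial data on the first row plus bulk and diagonal weights) and the path-version description (the initial data absorbed into first-row weights $\omega_{i,1}$). The only step requiring a moment of thought is handling the convention $Y=\infty$ on the line $rs=1$, which is precisely why this lemma is stated separately from the two-parameter analogue Lemma \ref{twoparamEqual}.
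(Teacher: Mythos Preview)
Your argument is correct and matches the paper's intent: the paper simply calls this lemma ``an easy observation'' without giving a proof, and what you have written is exactly the observation being alluded to --- that $\mathcal{I}_{r,1/r}$ degenerates (via $Y=\infty$) to a $\mathrm{Geom}(\sqrt{q}/r)$ random walk, which is precisely the first-row initial data of the model \eqref{one-paramStatModel}, so that both $G$ and $\mathsf{G}_r^{\mathrm{stat}}$ solve the same recurrence with the same (in law) inputs. You have spelled out in full the details the paper leaves implicit.
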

We explain how two-parameter stationary LPP model $\eqref{eq:stat_wts}$ degenerates to the product stationary LPP model $\eqref{one-paramStatModel}$ under two special cases $rs=1$ and $r=s.$ 

When $s=1/r,$ the two-parameter stationary model $\eqref{eq:stat_wts}$ is not well-defined due to the explosion of a geometric random variable at $(2,2)$. Such an explosion would force all LPP paths to go through $(1,1)-(2,1)-(2,2)$. If we further subtract the variable $\omega_{2,2}$, then we see that as $s\rightarrow 1/r$\begin{equation}\label{degenerate1}
\begin{aligned}
&G_{r,s}^{\text{stat}}(N+1,M+1) - G_{r,s}^{\text{stat}}(2,2) \xRightarrow{(d)} G_{r}^{Low}(N,M),\\
&G_{r,s}^{\text{stat}}(N+2,2) - G_{r,s}^{\text{stat}}(2,2) \xRightarrow{(d)} G_{r}^{Low}(N+1,1) \stackrel{(d)}{=} \mathcal{I}_{r,1/r}(N).
\end{aligned}
\end{equation}

Consider the case $s=r$. Let $G_{r,1/s}^{\text{stat}}(N,M)$ denote the last passage time with $q_1,q_2$ swapped, i.e., the first row parameter is $s$ and the second row parameter is $1/s$. Under such a swap, $\omega_{2,2}\overset{(d)}{=}\text{Geo}(r/s)$ and will explode when $s\rightarrow r$, which forces the LPP paths to go through $(2,2).$ The non-trivial symmetry property of the Pfaffian Schur process (see the proof of \cite[Proposition 3.2]{BI23} and \cite[Proposition 3.10]{Baik_2018}) tells that for any integers $N\in\Z_{\geq 0}$ and $M\in \Z_{\geq 2}$, $(G_{r,s}^{\text{stat}}(M,M), \dots, G_{r,s}^{\text{stat}}(M+N,M))$ stays invariant under the exchange of $q_1,q_2$. This tells that for all $N\in \Z_{\geq 0},$ \begin{equation}\label{permutationInvariance}
G_{r,s}^{\text{stat}}(2+N,2) - G_{r,s}^{\text{stat}}(2,2)\stackrel{(d)}{=} G_{r,1/s}^{\text{stat}}(2+N,2) - G_{r,1/s}^{\text{stat}}(2,2).
\end{equation} Since both sides of \eqref{permutationInvariance} are well-defined as $s\rightarrow r$, we get
\begin{equation}\label{equalr,r}
\begin{aligned}
&G_{r,s}^{\text{stat}}(2+N,2) - G_{r,s}^{\text{stat}}(2,2)\stackrel{(d)}{=} \mathcal{I}_{r,s}(N) \xRightarrow{(d)} \mathcal{I}_{r,r}(N) \text{ as }s\rightarrow r,\\
&G_{r,1/s}^{\text{stat}}(2+N,2) - G_{r,1/s}^{\text{stat}}(2,2)\stackrel{(d)}{=} \mathcal{I}_{r,1/s}(N) \xRightarrow{(d)} \mathcal{I}_{r,1/r}(N) \text{ as }s\rightarrow r.
\end{aligned}
\end{equation}
Hence, we get $(\mathcal{I}_{r,r}(N))_{N\in \Z_{\geq 0}} \stackrel{(d)}{=} (\mathcal{I}_{r,1/r}(N))_{N\in \Z_{\geq 0}}$ by the uniqueness of the limit of weak convergence. As a result, we have that for $N,M\in \Z_{\geq 1}$, $N\geq M,$
\begin{equation}\label{degenerate2}
\begin{aligned}
    &G_{r,r}^{\text{stat}}(N+1,M+1) - G_{r,r}^{\text{stat}}(2,2) \stackrel{(d)}{=}G_{r}^{Low}(N,M)\\
\end{aligned}
\end{equation}
since $G_{r,r}^{\text{stat}}$ and $G_{r}^{Low}$ satisfy the same recurrence relation \eqref{recurrence} and we just proved that they have the same initial condition.
Hence, to get distribution of $G(N,N),$ it suffices to compute distributions of $G_{r,s}^{\text{stat}}$ for $r \in (0,s) \cup (s,1/s)$ and $G_{r}^{Low}.$ Thanks to \cite{Betea_2020}, there is already a method available for analyzing the product stationary case, the details of which are deferred to section \ref{Max&LowPhase}.

Next, we discuss a prelimiting integrable model for the two-parameter stationary LPP model.

\subsection{From inhomogeneous to stationary}
We define a modified LPP model with special parameters that is related to the two-parameter stationary LPP model. Let $r$ denote the diagonal parameter, $t$ denote the first row parameter, $s$ denote the second row parameter, and $\sqrt{q}$ denote the bulk parameter.
We define the following weights:
\begin{equation} \label{approxModel}
  \omega_{i, j} = \begin{cases}
      \mathrm{Geo}\left( rs \right), & \textrm{if } i=j=2,\\
      \mathrm{Geo}\left( r\sqrt{q} \right), & \textrm{if }i=j\geq 3,\\
      \mathrm{Geo}\left( st \right), & \textrm{if }j=1, i= 2,\\
    \mathrm{Geo}\left( t\sqrt{q} \right), & \textrm{if }j=1, i\geq 3, \\
    \mathrm{Geo}\left( s\sqrt{q} \right), &\textrm{if } j=2, i\geq 3, \\
    \mathrm{Geo}\left( rt \right), & \textrm{if}\ i=j=1, \\
    \mathrm{Geo}(q), &\textrm{otherwise},
  \end{cases}
\end{equation}
where parameters satisfy that $s \in (\sqrt{q}, 1)$, $t\in (0,1)$, $r\in (0,\min(1/s,1/t,1/\sqrt{}q))$ and $\sqrt{q} \in (0,1).$ We assume that $r\neq \sqrt{q},s$ (the case $r=\sqrt{q}$ is simpler) and $t\neq s,\sqrt{q},r$ (because we will take $t\rightarrow 1/s$ in the end). For the stationary model, $s$ can take value in $(\sqrt{q},1)\cup(1,1/\sqrt{q})$. The reason for considering only half of this interval is a symmetry property which was mentioned before and will also be explained in Remark~\ref{Remark:Symmetry}.
Let $L_{N,N}$ be the last passage time from $(1,1)$ to $(N,N)$ under the model $\eqref{approxModel}$.

\begin{figure}[htbp]
\centering
\begin{tikzpicture}
\draw[very thin, gray] (-0.5,-0.5) grid (4.5,4.5);
    \foreach \x in {1,...,4} {
        \foreach \y in {0,1,...,\x} {
            \fill (\x,\y) circle (2pt);
        }
    }
    \fill (0,0) circle (2pt);
    \draw[thick,-] (0,0) -- (4,0);
    \draw[thick,-] (0,0) -- (4,4);
    \draw[thick,-] (1,1) -- (4,1);
    \fill (0,0) circle (1pt);
    \node at (0,-0.2) {$rt$};
     \fill (1,0) circle (1pt);
    \node at (1,-0.2) {$st$};
    \node at (-0.5,0) {\footnotesize{$(1,1)$}};
    \node at (0.5,1) {\footnotesize{$(2,2)$}};
    \node at (1.2,0.82) {\small{$rs$}};
    \fill (1,1) circle (1pt);
    \node at (3,-0.28) {\small{$\sqrt{q}t$}};
    \node at (3,0.72) {\small{$\sqrt{q}s$}};
    \node at  (1.55,2.1) {\small{$r\sqrt{q}$}};
    \node at (3.1,1.75) {\small{$q$}};
\end{tikzpicture}
\caption{Geometric LPP model as described in $\ref{approxModel}$.}
\label{fig:StationaryModelApprox}
\end{figure}

\subsubsection{\textbf{Shift argument: connection to the stationary model}}
    Define the new last passage percolation 
    \begin{equation}\label{Gtilde}
    \widetilde{G}_{N,N} = L_{N,N} - \omega_{1,1} - \omega_{2,1}.
    \end{equation}
    We notice that ${G}_{r,s}^{\text{stat}}(N,N)$ is the $t\rightarrow 1/s$ limit of $\widetilde{G}_{N,N}.$ Based on Lemma \ref{twoparamEqual}, it is obvious that $\lim_{t\rightarrow 1/s} \widetilde{G}_{N+1,N+1} = G(N,N)$ with the initial condition $\mathcal{I}_{r,s}$ and $Y^s$ as specified in \eqref{HighCDF}. The shift argument provides a way to express the distribution of $\widetilde{G}(N,N)$ in terms of the distribution of $L_{N,N}.$ We recall that $\omega_{1,1} \sim \mathrm{Geo}(rt)$ and $\omega_{1,2} \sim \mathrm{Geo}(st).$ Assume $N \geq 2.$
    
    Notice that $(\widetilde{G}_{N,N}, \omega_{1,1}, \omega_{1,2})$ are independent. We perform a discrete Laplace transform and a change of variable $u = d-k$ in the following computation.
    \begin{equation}
    \begin{aligned}
        \sum_{d = 0}^\infty \Pb (\widetilde{G}_{N,N} + \omega_{1,1} + \omega_{2,1} \leq d) e^{-dx} &= \sum_{d=0}^\infty \sum_{k = 0}^{d}\Pb(\widetilde{G}_{N,N} + \omega_{1,1} \leq d-k) (st)^k (1-st) e^{-dx}\\
        &=\sum_{k=0}^\infty \sum_{u = 0}^{\infty}\Pb(\widetilde{G}_{N,N} + \omega_{1,1} \leq u) (st)^k (1-st) e^{-(u+k)x}\\
        &= \frac{(1-st)}{(1-e^{-x}st)} \sum_{u = 0}^\infty \Pb(\widetilde{G}_{N,N} + \omega_{1,1} \leq u)e^{-ux}\\
        &= \frac{(1-st)}{(1-e^{-x}st)}\frac{(1-rt)}{(1-e^{-x}rt)}\sum_{w=0}^{\infty}\Pb(\widetilde{G}_{N,N} \leq w)e^{-wx},
    \end{aligned}
    \end{equation}
    where the factor $\frac{(1-rt)}{(1-e^{-x}rt)}$ arises from removing $\omega_{1,1}.$
    Moving the coefficients to the left gives
    \begin{equation}
        \begin{aligned}
            & \sum_{d = 0}^\infty \Pb(\widetilde{G}_{N,N} \leq d)e^{-dx} = \frac{(1-e^{-x}st)}{(1-st)}\frac{(1-e^{-x}rt)}{(1-rt)} \sum_{d= 0}^\infty \Pb(L_{N,N} \leq d) e^{-dx} \\
            &=\frac{1}{(1-st)(1-rt)}\big(\Pb(L_{N,N} \leq 0) + \Pb(L_{N,N} \leq 1) e^{-x}\big)- \frac{(rt + st)}{(1-st)(1-rt)} \Pb(L_{N,N} \leq 0)e^{-x}\\
            &+ \sum_{d = 2}^\infty \left(\frac{\Pb(L_{N,N} \leq d) }{(1-st)(1-rt)}+ \frac{-(rt + st)\Pb(L_{N,N} \leq d-1)}{(1-st)(1-rt)}  + \frac{(rst^2)\Pb(L_{N,N} \leq d-2)}{(1-st)(1-rt)}\right)e^{-xd}.
        \end{aligned}
    \end{equation}
    Therefore, for $d \geq 2,$ we have
    \begin{equation}\label{eq:shift}
    \Pb(\widetilde{G}_{N,N} \leq d) =\frac{\Pb(L_{N,N} \leq d) }{(1-st)(1-rt)} - \frac{(rt + st)\Pb(L_{N,N} \leq d-1)}{(1-st)(1-rt)} + \frac{(rst^2)\Pb(L_{N,N} \leq d-2)}{(1-st)(1-rt)}.
    \end{equation}

    We also check that
    \begin{equation}
        \Pb(\widetilde{G}_{N,N} = 0)(1-st)(1-rt) = \Pb(L_{N,N} = 0),
    \end{equation}
    and 
    \begin{equation}
    \Pb(\widetilde{G}_{N,N} \leq 1) = \frac{\Pb(L_{N,N} \leq 1)}{(1-rt)(1-st)} - \frac{(rt+st)\Pb(L_{N,N} \leq 0)}{(1-rt)(1-st)}.
    \end{equation}

We remark that this shift argument has been used several times in other papers, for example, \cite[Section 2]{Baik_2010}, \cite[Section 3.2.1]{Betea_2020}, and \cite{Ferrari_2006TASEP}.
\begin{remark}\label{Remark:Symmetry}
    Due to the nontrivial symmetry property that the distribution of $L_{N,N}$ is invariant under permutations of the first-row and second-row parameters, i.e. $s$ and $t$, we can restrict to the case when the second row has parameter $s <1$ and the first row has parameter $1/s$ in the limit. The essential difficulty is to find the limit of $(1-st)^{-1} \Pb(L_{N,N} \leq d)$ as $t\rightarrow 1/s.$
\end{remark}

\subsection{Fredholm Pfaffian}
The following definition of Fredholm Pfaffian is taken from \cite[Definition 2.3]{Baik_2018}.
\begin{defin}
    For a $2\times 2$-matrix valued  skew-symmetric kernel,
	$$ K(x,y) = \begin{pmatrix}
	K_{11}(x,y) & K_{12}(x,y)\\
	K_{21}(x, y) & K_{22}(x,y)
	\end{pmatrix},\ \ x,y\in \mathbb{X},$$
	we define its Fredholm Pfaffian  by the series expansion
	\begin{equation}
    \mathrm{Pf}\big(J+K\big)_{\mathbb{L}^2(\mathbb{X},\mu)} = 1+\sum_{k=1}^{\infty} \frac{1}{k!}
	\int_{\mathbb{X}} \dots \int_{\mathbb{X}}  \mathrm{Pf}\Big( K(x_i, x_j)\Big)_{i,j=1}^k \mathrm{d}\mu^{\otimes k}(x_1 \dots x_k),
	\label{eq:defFredholmPfaffian}
	\end{equation}
	provided the series converges, and we recall that for a skew-symmetric  $2k\times 2k$ matrix $A$, its Pfaffian is defined by
	\begin{equation}
	\mathrm{Pf}(A) = \frac{1}{2^k k!} \sum_{\sigma\in\mathcal{S}_{2k}} \mathrm{sign}(\sigma) a_{\sigma(1)\sigma(2)}a_{\sigma(3)\sigma(4)} \dots a_{\sigma(2k-1)\sigma(2k)}.
	\label{def:pfaffian}
	\end{equation}
	The kernel $J$ is defined by
	$$ J(x,y)  = \Id_{x=y}
	\begin{pmatrix}
	0 & 1\\-1 & 0
	\end{pmatrix} .$$
\end{defin}
For the finite $N$ distribution, we consider Fredholm Pfaffians over $\ell^2(\{d+1,d+2,\dots\})$ for $d \in \Z_{\geq 0}$ with $\mu$ being the counting measure. For the asymptotic distribution, we consider Fredholm Pfaffians over $L^2((\tilde{d}, \infty))$ for $\tilde{d} \in \R$ with $\mu$ being the Lebesgue measure.
We will remove the subscript of the Fredholm Pfaffian to simplify the notation when the definition is clear from the context.

\subsection{Decomposition of the kernel}

We introduce several functions to simplify the notation. Let
\begin{equation}
    H(z) = \left(\frac{1-\sqrt{q}/z}{1-\sqrt{q}z}\right)^{N-2}, \quad S(z) = \frac{1-s/z}{1-sz}, \quad T(z) = \frac{1-t/z}{1-tz}, \quad F(z) = H(z)S(z)T(z).
\end{equation}
It is clear that $H(z^{-1}) = (H(z))^{-1},$ $S(z^{-1}) = (S(z))^{-1},$ and $T(z^{-1})=(T(z))^{-1}.$ 

The correlation kernel and diagonal distribution of the model $\eqref{approxModel}$ was originally constructed in \cite[Section 4.2]{Baik_2018} and also stated in \cite[Theorem C.1]{Betea_2020}. We rewrite the kernel in our setting as follows.

\begin{prop}\label{PfKernel}
    The distribution of $L_{N,N}$ is a Fredholm Pfaffian
    \begin{equation}
        \Pb(L_{N,N} \leq d) = \mathrm{Pf}(J - K)_{\ell^2(\{ d+1, d+2,\dots \})}
    \end{equation}
    with the $2\times 2$ matrix correlation kernel $K : \Z^2 \rightarrow \mathrm{Mat}_2(\R)$ given by
    \begin{equation}\label{Kernel}
    \begin{aligned}
        K_{11}(k,\ell) &= \frac{1}{(2\pi\I)^2} \int \frac{dz}{z^k} \int \frac{dw}{w^{\ell}} F(z)F(w) \frac{(z-w)(z-r)(w-r)}{(z^2-1)(w^2-1)(zw-1)},\\
        K_{12}(k,\ell) = -K_{21}(\ell,k) &= \frac{1}{(2\pi\I)^2} \int \frac{dz}{z^k} \int \frac{dw}{w^{-\ell+1}} \frac{F(z)}{F(w)}\frac{(zw-1)(z-r)}{(z^2-1)(z-w)(w-r)},\\
        K_{22}(k,\ell) &= \frac{1}{(2\pi\I)^2} \int \frac{dz}{z^{-k+1}} \int \frac{dw}{w^{-\ell+1}}\frac{1}{F(z)F(w)}\frac{(z-w)}{(zw-1)(z-r)(w-r)},
    \end{aligned}
    \end{equation}
    where the contours are positively oriented circles around the origin satisfying the following conditions:
    \begin{itemize}
   \item for $K_{11}$, $1 < |z|, |w| < \min \{1/t,1/s,1/\sqrt{q}\}$;
   \item for $K_{12}$, $\max \{ r,t,s,\sqrt{q}\} < |w| < |z|$ and $1 < |z| < \min\{1/t,1/s,1/\sqrt{q}\}$;
   \item for $K_{22}$, $\max \{r,t,s,\sqrt{q}\} < |w|, |z|$ and $1 < |zw|$.
  \end{itemize} 
  All the integration contours are positively oriented.
\end{prop}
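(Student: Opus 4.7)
The plan is to prove Proposition \ref{PfKernel} by specializing the known kernel formula for half-space Pfaffian Schur processes, following \cite{Betea_2020, Baik_2018, BI23}. The model \eqref{approxModel} is a half-space geometric LPP whose diagonal marginal, via the symmetric (half-space) RSK correspondence, is a Pfaffian Schur measure, so the proposition should follow by identifying parameters and reading off the kernel directly from the ambient general formula.

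\emph{Identification of the Pfaffian Schur measure.} I would set $a_1 = t$, $a_2 = s$, $a_3 = \cdots = a_N = \sqrt{q}$ and $b = r$. Then the off-diagonal weight at $(i,j)$ with $i > j$ is $\mathrm{Geo}(a_i a_j)$ and the diagonal weight at $(i,i)$ is $\mathrm{Geo}(a_i b)$, matching exactly the row/diagonal specializations that produce a Pfaffian Schur measure under half-space RSK. The positions $\{\lambda_i - i : i \geq 1\}$ of the associated Young diagram form a Pfaffian point process on $\Z$ and $L_{N,N} = \lambda_1$. Because Pfaffian point processes have gap probabilities given by Fredholm Pfaffians,
\begin{equation*}
\Pb(L_{N,N} \leq d) \,=\, \Pb(\lambda_1 \leq d) \,=\, \mathrm{Pf}(J - K)_{\ell^2(\{d+1, d+2, \ldots\})},
\end{equation*}
and the problem reduces to identifying the three kernel entries $K_{11}, K_{12}, K_{22}$.

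\emph{Extraction of the kernel.} The general double contour integral formula for the correlation kernel of this measure given in \cite[Theorem C.1]{Betea_2020} has the row data entering the integrand only through the symmetric function
\begin{equation*}
F(z) \,=\, \prod_{i=1}^N \frac{1 - a_i/z}{1 - a_i z} \,=\, H(z)\, S(z)\, T(z),
\end{equation*}
while the boundary parameter $b = r$, together with the skew-symmetric Pfaffian pairing, produces the rational prefactors $(z-r)(w-r)/[(z^2-1)(w^2-1)(zw-1)]$ in $K_{11}$ and their analogues in $K_{12}, K_{22}$. Substituting the dictionary $a_1 \leftrightarrow t$, $a_2 \leftrightarrow s$, $a_i \leftrightarrow \sqrt{q}$ for $i \geq 3$, and $b \leftrightarrow r$ into the cited formula yields precisely the three expressions in \eqref{Kernel}.

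\emph{Contour conditions.} The contours must be chosen so that each $z^{-k}$ factor with $k \geq d+1 \geq 1$ is extracted as a Laurent coefficient (forcing $|z| > 1$ in $K_{11}, K_{12}$, and symmetrically $|z|, |w| > \max\{r, t, s, \sqrt{q}\}$ in $K_{22}$), the poles of $F(z)$ outside the unit circle are not crossed (yielding $|z|, |w| < \min\{1/t, 1/s, 1/\sqrt{q}\}$), the $w$-contour in $K_{12}$ sits strictly inside the $z$-contour so that $z = w$ lies outside, and $|zw| > 1$ in $K_{22}$ handles the factor $(zw-1)^{-1}$. The parameter hypotheses $s \in (\sqrt{q}, 1)$, $t \in (0,1)$, $r \in (0, \min(1/s, 1/t, 1/\sqrt{q}))$ together with the avoidance conditions $r \neq \sqrt{q}, s$ and $t \neq r, s, \sqrt{q}$ guarantee that such contours exist simultaneously. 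The main obstacle is not conceptual, since the cited theorem provides the kernel in closed form, but rather bookkeeping: matching the parameter dictionary, confirming the orientation and nesting, and checking that no degeneration of the kernel occurs at the admissible parameter values. The genuinely delicate analytic work only begins in Section 4, when the limit $t \to 1/s$ is taken and the three terms produced by the shift identity \eqref{eq:shift} must be regrouped to cancel the apparent singularity.
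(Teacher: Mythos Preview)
Your proposal is correct and matches the paper's approach exactly: the paper does not give an independent proof of this proposition but simply states that it is a rewriting of \cite[Theorem C.1]{Betea_2020} (see also \cite{BI23}) under the parameter specialization $a_1=t$, $a_2=s$, $a_3=\cdots=a_N=\sqrt{q}$, $b=r$. Your write-up is in fact slightly more detailed than the paper's, since you spell out the RSK identification and the reasoning behind each contour condition, but the underlying argument is identical.
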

In the following, we compute each entry of the kernel to prepare for evaluating this limit.
We define some functions for further decomposition of the kernel.

\begin{equation}\label{PQB}
    \begin{aligned}
        &P(\ell) = \oint\limits_{\Gamma_{\sqrt{q}}} \frac{dw}{2\pi\I} \frac{w^{\ell+1}}{H(w)}\frac{(1-sw)(1-tw)}{(w-s)(w-t)(1-wr)}, \quad
        Q(k) = -\oint \limits_{\Gamma_{1/\sqrt{q}}} \frac{dz}{2\pi\I}\frac{H(z)}{z^{k+2}}\frac{(z-s)(z-t)(1-rz)}{(1-sz)(1-tz)(1-z^2)},\\
        &B(k,\ell) = -\oint \limits_{\Gamma_{\sqrt{q}}} \frac{dw}{2\pi\I}\oint \limits_{\Gamma_{1/\sqrt{q}}} \frac{dz}{2\pi\I} \frac{w^{\ell-1}}{z^{k}}\frac{F(z)}{F(w)}\frac{(zw-1)}{(z-w)(1-zr)(w-r)}, \quad f^x(k) = \frac{x^{k+1}}{H(x)}.
    \end{aligned}
\end{equation}

\begin{equation}
    \begin{aligned}
        &{A}_{11}(k,\ell) = -\oint \limits_{\Gamma_{\sqrt{q}}}\frac{dw}{2\pi\I} \oint \limits_{\Gamma_{1/\sqrt{q}}}\frac{dz}{2\pi\I}\frac{w^{\ell-1}}{z^{k}}\frac{H(z)S(z)T(z)}{H(w)S(w)T(w)}\frac{(zw-1)(z-r)(1-wr)}{(z^2-1)(1-w^2)(z-w)},\\
        &{A}_{12}(k,\ell) = -{A}_{21}(\ell,k) = - \!\oint \limits_{\Gamma_{\sqrt{q}}} \!\!\frac{dw}{2\pi\I}\!\! \oint \limits_{\Gamma_{1/\sqrt{q}}}\!\!\frac{dz}{2\pi\I}\frac{w^{\ell-1}}{z^{k}}\frac{H(z)S(z)T(z)}{H(w)S(w)T(w)}\frac{(zw-1)(z-r)}{(z^2-1)(w-r)(z-w)}\\
        &+ \frac{(1-sr)(1-tr)}{(s-r)(t-r)}\ketbra{Q}{f^r}(k,\ell),\\
        &{A}_{22}(k,\ell) =\frac{(1-sr)(1-tr)}{(s-r)(t-r)}\left(\ketbra{f^r}{P}(k,\ell) -\ketbra{P}{f^r}(k,\ell)\right)+B(k,\ell).\\
    \end{aligned}
\end{equation}
See definitions of $g_1,d_2,G_t,G_s,R_t,R_s$ in \eqref{defOfG&R} and \eqref{defOfd_2,g_1}. Then we modify the kernel following the same arguments as in \cite[Lemma C.2, C.3]{Betea_2020}.
\begin{lem}\label{lem:rewrite11}
    The kernel entry $K_{11}$ can be expressed as
    \begin{equation}
        \begin{aligned}
            K_{11}(k,\ell) &= \overline{K}_{11}(k,\ell)
            +(1-st)\frac{(1-tr)}{(t-s)}\left(\ketbra{g_1}{f^t} - \ketbra{f^t}{g_1}\right).
        \end{aligned}
    \end{equation}
    where
    \begin{equation}
        \overline{K}_{11}(k,\ell) = A_{11}(k,\ell) + (1-st)\frac{(1-sr)}{(t-s)}\left(\ketbra{f^s}{G_t} - \ketbra{G_t}{f^s}\right).   
    \end{equation}
\end{lem}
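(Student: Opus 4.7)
The plan is to deduce the identity by contour deformation, adapting the approach of \cite[Lemmas C.2, C.3]{Betea_2020} (which handles the product-stationary case) to the two-parameter setting. Starting from the double-integral expression for $K_{11}$ in \eqref{Kernel} with contours $|z|,|w|\in(1,\min\{1/s,1/t,1/\sqrt{q}\})$, the first step is the change of variable $w\mapsto 1/w$ in the inner integral. Using $H(1/w)=1/H(w)$, $S(1/w)=1/S(w)$, $T(1/w)=1/T(w)$, and carefully tracking the Jacobian $-dw/w^{2}$ together with the orientation reversal of the inverted circle, $F(z)F(w)$ turns into $F(z)/F(w)$ and the remaining rational factors rearrange into precisely the integrand appearing in $A_{11}$. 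After this substitution the $w$-contour sits in the annulus $(\sqrt{q},1)$ while the $z$-contour remains in $(1,1/\sqrt{q})$.

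The second step is a residue decomposition. Because the substituted integrand decays like $z^{-(k+1)}$ as $z\to\infty$, the $z$-integral over $|z|=1+\epsilon$ equals minus the sum of residues at the poles lying outside, namely $z\in\{1/s,1/t,1/\sqrt{q}\}$. The $w$-integral, evaluated by direct Cauchy, picks up the enclosed poles of $1/F(w)$ at $w\in\{\sqrt{q},s,t\}$. Together these decompose $K_{11}$ into nine signed double-residue terms indexed by pairs $(z_{*},w_{*})\in\{1/s,1/t,1/\sqrt{q}\}\times\{\sqrt{q},s,t\}$. The ``bulk'' term $(1/\sqrt{q},\sqrt{q})$ is precisely $A_{11}(k,\ell)$ (its $-$ sign absorbs the overall minus of the decomposition), which is the first piece of $\overline{K}_{11}$.

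Each intermediate residue factors cleanly. A residue at $z=1/s$ produces $f^{s}(k)=s^{k+1}/H(s)$ times an algebraic prefactor whose key ingredients are $T(1/s)=s(1-st)/(s-t)$ (supplying the $(1-st)$), the residue $-(1-s^{2})/s$ of $S$ at $1/s$, and $(z-r)|_{z=1/s}=(1-sr)/s$ (supplying $(1-sr)$); the residual $w$-integral around $\Gamma_{\sqrt{q}}$ is identified with $G_{t}(\ell)$ from \eqref{defOfG&R}. The symmetric residue at $w=s$ yields $f^{s}(\ell)$ times a single $z$-integral around $\Gamma_{1/\sqrt{q}}$, which under $z\mapsto 1/z$ is related to $G_{t}(k)$ up to sign. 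Combining these paired residues produces the antisymmetric rank-one correction $\tfrac{(1-st)(1-sr)}{t-s}(\ketbra{f^{s}}{G_{t}}-\ketbra{G_{t}}{f^{s}})$. The analogous pair at $z=1/t,w=t$ yields $\tfrac{(1-st)(1-tr)}{t-s}(\ketbra{g_{1}}{f^{t}}-\ketbra{f^{t}}{g_{1}})$ with $g_{1}$ as defined in \eqref{defOfd_2,g_1}.

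The main obstacle is the bookkeeping of the four remaining double-residue contributions, indexed by $(z_{*},w_{*})\in\{1/s,1/t\}\times\{s,t\}$. The diagonal pairs $(1/s,s)$ and $(1/t,t)$ vanish identically because the residue in $z$ brings down a factor $(w-s)$ (respectively $(w-t)$) which cancels the simple $w$-pole of $1/F(w)$. The cross pairs $(1/s,t)$ and $(1/t,s)$ produce constant-in-$(k,\ell)$ rank-one terms proportional to $\ketbra{f^{s}}{f^{t}}$ and $\ketbra{f^{t}}{f^{s}}$; by the $(z,w)$-antisymmetry of the original $K_{11}$ integrand these combine antisymmetrically and must match analogous residue contributions arising from the internal contour integrals defining $G_{t}$ and $g_{1}$, which when re-expressed through secondary residues at $w\in\{s,t\}$ produce the same $\ketbra{f^{s}}{f^{t}}$-structure. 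Verifying these cancellations, together with the careful tracking of signs introduced by the $w\mapsto 1/w$ substitution and by the use of decay at infinity, is the principal algebraic task; once done, the identity of the lemma follows.
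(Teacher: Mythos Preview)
Your approach is essentially the paper's: change $w\mapsto 1/w$, deform the $z$-contour outward to enclose $\{1/\sqrt q,1/s,1/t\}$ and the $w$-contour inward to enclose $\{\sqrt q,s,t\}$, then sort residue contributions. Where you diverge is in the bookkeeping of the cross terms, and there your description is not quite right.

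The paper does not use a full $3\times 3$ grid followed by cancellations. Instead, after noting that $(zw-1)$ kills the pairs $(1/s,s)$ and $(1/t,t)$, it groups the $w$-contour for the $z=1/t$ residue as $\Gamma_{\sqrt q,s}$ (and symmetrically the $z$-contour for the $w=t$ residue as $\Gamma_{1/\sqrt q,1/s}$), which directly produces $g_1$ in the $\ell$- (resp.\ $k$-) slot, since by definition $g_1$ is the integral over $\Gamma_{1/\sqrt q,1/s}$ and equals $G_s-(1-sr)f^s$. Thus the cross terms $(1/t,s)$ and $(1/s,t)$ are absorbed \emph{entirely} into the $g_1$ pieces. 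Your claim that they match ``residue contributions arising from the internal contour integrals defining $G_t$'' is incorrect: $G_t$ is an integral over $\Gamma_{1/\sqrt q}$ only and carries no $f^s$- or $f^t$-type secondary residue. Likewise, for the $z=1/s$ residue the remaining $w$-integral stays over $\Gamma_{\sqrt q}$ alone and is identified with $G_t(\ell)$ --- there is no cross contribution to absorb on that side. Once you reassign both cross terms to $g_1$ rather than splitting them between $G_t$ and $g_1$, your argument closes and coincides with the paper's.
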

\begin{proof}
    We first do a change of variable $w \rightarrow w^{-1}$.
    Then we see that the integrand of $K_{11}$ behaves like $w^{N+\ell-1}$ as $w \rightarrow 0$ and has no pole at $w = 0$ for any $\ell \geq 1.$ The integrand behaves like $z^{-(N+k+1)}$ as $z \rightarrow \infty$ and has no pole at $z = \infty.$ So we can deform the contour of $w$ to include $\sqrt{q}, s, t$ and deform the contour of $z$ to include $1/\sqrt{q},1/s,1/t.$ Because of the term $(zw-1)$, there is no contribution from poles at $(z,w) = (1/s,s)$ and $(z,w) = (1/t,t).$ Then
\begin{equation}
        \begin{aligned}
        K_{11} (k,\ell) &= \frac{-1}{(2\pi\I)^2} 
        \oint\limits_{\Gamma_{\sqrt{q}}}\, dw \!\!\!\oint\limits_{\Gamma_{1/s,1/t,1/\sqrt{q}}} \!\!\!\!\!\!dz \frac{w^{\ell-1}}{z^k}\frac{h_{11}(z,w)}{z-w} + \frac{-1}{(2\pi\I)^2} 
        \oint\limits_{\Gamma_{s}}\, dw \!\!\!\oint\limits_{\Gamma_{1/t,1/\sqrt{q}}} \!\!\!\!\!\!dz \frac{w^{\ell-1}}{z^k}\frac{h_{11}(z,w)}{z-w}\\
        &+\frac{-1}{(2\pi\I)^2} 
        \oint\limits_{\Gamma_{t}} dw \!\!\!\oint\limits_{\Gamma_{1/s,1/\sqrt{q}}} \!\!\!\!\!\!dz\frac{w^{\ell-1}}{z^k}\frac{h_{11}(z,w)}{z-w},\\
        \end{aligned}
    \end{equation}
    where $$h_{11}(z,w) = \frac{H(z)S(z)T(z)}{H(w)S(w)T(w)}\frac{(zw-1)(z-r)(1-wr)}{(z^2-1)(1-w^2)}.$$
    In the following, we use $\cdots$ as an abbreviation for the integrand $\frac{w^{\ell-1}}{z^k}\frac{h_{11}(z,w)}{z-w}$. The integrand has simple poles at $z = 1/s,1/t$ and $w = t,s,$ so we compute the contributions from each pair of these poles. The pair $(z,w) = (1/\sqrt{q},\sqrt{q})$ is intentionally omitted and retained in the form of a double contour integral (denoted as $A_{11}$), as these are not simple poles.  Then we have
    \begin{equation}
        \begin{aligned}
            \oint\limits_{\Gamma_{\sqrt{q}}} dw \!\!\oint\limits_{\Gamma_{1/s}} dz \cdots= (1-st)\frac{(1-sr)}{(t-s)}\ketbra{f^s}{G_t}, &\quad \oint\limits_{\Gamma_{s}} dw \!\!\oint\limits_{\Gamma_{1/\sqrt{q}}} dz \cdots = -(1-st)\frac{(1-sr)}{(t-s)}\ketbra{G_t}{f^s},\\
            \oint\limits_{\Gamma_{\sqrt{q},s}} dw \!\!\oint\limits_{\Gamma_{1/t}} dz \cdots= -(1-st)\frac{(1-tr)}{(t-s)}\ketbra{f^t}{g_1}, &\quad \oint\limits_{\Gamma_{t}} dw \!\!\oint\limits_{\Gamma_{1/s,1/\sqrt{q}}} dz \cdots = (1-st)\frac{(1-tr)}{(t-s)}\ketbra{g_1}{f^t}.\\
        \end{aligned}
    \end{equation}
\end{proof}

For the following two lemmas about $K_{12}$ and $K_{22}$, we first assume $r<1$ and then analytically extend to $r>1$.
\begin{lem}\label{lem:rewrite12}
    The kernel entry $K_{12}$ can be expressed as 
    \begin{equation}
        \begin{aligned}
            K_{12}(k,\ell) &= \overline{K}_{12}(k,\ell) 
            + (1-st)\frac{(1-t^2)}{(t-s)(t-r)}\ketbra{g_1}{f^t} + (1-st)\frac{(1-tr)}{(t-s)}\ketbra{f^t}{d_2},
        \end{aligned}
    \end{equation}
    where
    \begin{equation}\label{overlineK_12}
    \begin{aligned}
        \overline{K}_{12}(k,\ell) &= A_{12}(k,\ell) - (1-st)\frac{(1-s^2)}{(t-s)(s-r)}\ketbra{G_t}{f^s}\\ 
        &+ (1-st)\frac{(1-tr)(1-sr)}{(t-r)(t-s)}\ketbra{f^s}{f^r} + (1-st)\frac{(1-sr)}{(t-s)}\ketbra{f^s}{R_t}.\\
    \end{aligned}
    \end{equation}
    By anti-symmetry, we have that $K_{21}(k,\ell) = - K_{12}(\ell,k)$ and let $\overline{K}_{21}(k,\ell) = - \overline{K}_{12}(\ell,k)$.
\end{lem}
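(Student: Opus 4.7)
The proof follows the same contour-splitting strategy as Lemma \ref{lem:rewrite11}, now applied to $K_{12}$. Unlike the $K_{11}$ case, no initial change of variables $w \to w^{-1}$ is needed: the $K_{12}$ contour $\max\{r,t,s,\sqrt{q}\} < |w| < |z|$ already places the relevant $w$-poles inside the $w$-contour, namely the simple poles $w \in \{r, s, t\}$ (from $1/(w-r)$, $S(w)^{-1}$, $T(w)^{-1}$) and the order $N-2$ pole $w = \sqrt{q}$ (from $H(w)^{-1}$). In the $z$ variable, the poles outside the original contour are the simple ones $z \in \{1/s, 1/t\}$ and the order $N-2$ pole $z = 1/\sqrt{q}$, all arising from $F(z)$. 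Since the integrand decays like $z^{-(N+k+2)}$ as $|z| \to \infty$, I push the $z$-contour outward past these exterior poles, yielding $\ic \oint_{\text{orig}_z} = -\sum_{z_0 \in \{1/\sqrt{q}, 1/s, 1/t\}} \ic \oint_{\Gamma_{z_0}}$, and split the $w$-contour as $\oint_{\text{orig}_w} = \sum_{w_0 \in \{\sqrt{q}, r, s, t\}} \oint_{\Gamma_{w_0}}$.

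These two decompositions together express $K_{12}$ as a signed sum of twelve double contour integrals around pole pairs $(w_0, z_0)$. The pairs $(s, 1/s)$ and $(t, 1/t)$ vanish because the numerator factor $(zw - 1)$ is zero at those points. The pair $(\sqrt{q}, 1/\sqrt{q})$ is kept as a genuine double contour integral, producing the first piece of $A_{12}$. For the pair $(r, 1/\sqrt{q})$, the simple residue at $w = r$ features a clean cancellation $(z - r)/(z - w)\big|_{w = r} = 1$, leaving a $z$-integrand proportional to $F(z)(zr - 1)/(z^2 - 1)$ that matches $-Q(k)$ as defined in \eqref{PQB}; combined with the identity $r^{\ell-1}/F(r) = f^r(\ell)(1 - sr)(1 - tr)/[(s - r)(t - r)]$, this reproduces exactly the $\ketbra{Q}{f^r}$ correction absorbed into $A_{12}$.

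The remaining nine non-vanishing pairs each produce a rank-one outer product; they combine into the six $\ketbra{\cdot}{\cdot}$ terms on the right-hand side of the lemma via the definitions of the auxiliary functions $G_t, g_1, P, R_t, d_2$ in \eqref{defOfG&R} and \eqref{defOfd_2,g_1} — these functions are designed to absorb the appropriate single-variable contour integral (when the paired pole is the high-order one at $\sqrt{q}$ or $1/\sqrt{q}$) together with the simple residues at $r, s, t$ or $1/s, 1/t$. Explicitly, the characteristic $(1 - st)$ factor appearing in every extra term emerges from $T(1/s) = t(1-st)/(t-s)$ and $S(1/t) = s(1-st)/(s-t)$, while rational prefactors such as $(1 - s^2)/[(t - s)(s - r)]$ arise from combining $\mathrm{Res}_{w = s}\, S(w)^{-1} = s(1 - s^2)$ with the evaluation of the remaining factors at $w = s$.

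The main obstacle is careful bookkeeping of signs and rational prefactors across these residue computations; contour orientation, the sign flips from $(r - s)(r - t) = (s - r)(t - r)$, and the partial-fraction expansions of $S, T$ at the relevant poles each contribute subtle factors that must be tracked. A useful consistency check is that every term lying outside $\overline{K}_{12}$ carries the factor $(1 - st)$ — essential for the subsequent $t \to 1/s$ limit in the shift argument \eqref{eq:shift} — whereas the pieces forming $A_{12}$ (the double integral and the $\ketbra{Q}{f^r}$ correction) do not.
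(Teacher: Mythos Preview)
Your approach is correct and matches the paper's proof: deform the $z$-contour outward (no pole at $\infty$) and the $w$-contour inward (no pole at $0$), discard the pairs $(s,1/s)$ and $(t,1/t)$ via the $(zw-1)$ factor, keep $(\sqrt q,1/\sqrt q)$ as a double integral, and compute the remaining residues pairwise to produce the rank-one terms. A few minor slips to clean up: the decay at infinity is $z^{-(N+k+1)}$ rather than $z^{-(N+k+2)}$; the function $P$ does not enter the $K_{12}$ decomposition (it belongs to $K_{22}$); and $T(1/s) = s(1-st)/(s-t)$, not $t(1-st)/(t-s)$.
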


\begin{proof}
    Since the integrand of $K_{12}$ behaves like $w^{N+\ell-1}$ as $w \rightarrow 0$, there is no pole at $w=0.$ Similarly, there is no pole at $z= \infty$ because the integrand behaves like $z^{-(k+N+1)}$ as $z \rightarrow \infty$. We can deform the contour of $w$ to include only $r,s,t,\sqrt{q}$ and deform the contour of $z$ to include only $1/s,1/t,1/\sqrt{q}.$ Because of the term $(zw-1),$ there is no contribution from poles at $(z,w) = (1/s,s)$ and $(z,w) = (1/t,t)$. Then
        \begin{equation}
        \begin{aligned}
        K_{12} (k,\ell) &= \frac{-1}{(2\pi\I)^2} 
        \oint\limits_{\Gamma_{\sqrt{q},r, s, t}}\!\!\! dw \oint\limits_{\Gamma_{1/\sqrt{q}}}\!\!\!dz \frac{w^{\ell-1}}{z^k}\frac{h_{12}(z,w)}{z-w} + \frac{-1}{(2\pi\I)^2} 
        \oint\limits_{\Gamma_{r,t,\sqrt{q}}}\!\!\! dw \oint\limits_{\Gamma_{1/s}}\!\!dz \frac{w^{\ell-1}}{z^k}\frac{h_{12}(z,w)}{z-w}\\
        &+ \frac{-1}{(2\pi\I)^2} 
        \oint\limits_{\Gamma_{r,s,\sqrt{q}}}\!\!\! dw \oint\limits_{\Gamma_{1/t}}\!\!dz \frac{w^{\ell-1}}{z^k}\frac{h_{12}(z,w)}{z-w},\\
    \end{aligned}
\end{equation}
where
$$h_{12}(z,w) = \frac{H(z)S(z)T(z)}{H(w)S(w)T(w)}\frac{(zw-1)(z-r)}{(z^2-1)(w-r)}.$$
In the following, $\cdots$ serves as shorthand for the integrand $\frac{w^{\ell-1}}{z^{k}}\frac{h_{12}(z,w)}{z-w}.$
The pair $(z,w) = (1/\sqrt{q},\sqrt{q})$ is preserved in the form of a double contour integral and appears as one of the terms in $A_{12}.$ Then we compute the contributions from all other pairs of simple poles:
    \begin{equation}
    \begin{aligned}
        &\oint \limits_{\Gamma_{t}} dw \!\!\oint \limits_{\Gamma_{1/\sqrt{q},1/s}} \!\! dz \cdots
        =(1-st)\frac{(1-t^2)}{(t-s)(t-r)} \ketbra{g_1}{f^t},\quad\oint \limits_{\Gamma_{s,r,\sqrt{q}}} dw \!\oint \limits_{\Gamma_{1/t}} \!dz \cdots 
        = (1-st)\frac{(1-tr)}{(t-s)}\ketbra{f^t}{d_2},\\
        &\oint \limits_{\Gamma_{r}} \!dw \!\!\oint \limits_{\Gamma_{1/s}} \!\! dz \cdots
        =(1-st)\frac{(1-tr)(1-sr)}{(t-r)(t-s)} \ketbra{f^s}{f^r},\quad
        \oint \limits_{\Gamma_{\sqrt{q}}} dw \!\oint \limits_{\Gamma_{1/s}} \! dz \cdots
        = (1-st) \frac{(1-sr)}{(t-s)} \ketbra{f^s}{R_t},\\
        &\oint \limits_{\Gamma_{s}} dw \!\!\!\oint \limits_{\Gamma_{1/\sqrt{q}}} \!\!\! dz \cdots
        = -(1-st) \frac{(1-s^2)}{(t-s)(s-r)}\ketbra{G_t}{f^s}, \quad \oint \limits_{\Gamma_{r}} dw \!\!\!\oint \limits_{\Gamma_{1/\sqrt{q}}} \!\!\! dz \cdots
        = \frac{(1-sr)(1-tr)}{(s-r)(t-r)}\ketbra{Q}{f^r}.
    \end{aligned}
    \end{equation}
\end{proof}

\begin{lem}\label{lem:rewrite22}
    The kernel entry $K_{22}$ can be expressed as 
    \begin{equation}
        \begin{aligned}
            K_{22}(k,\ell) &= \overline{K}_{22}(k,\ell) 
            + (1-st)\frac{(1-t^2)}{(t-s)(t-r)}\left(\ketbra{f^t}{d_2} - \ketbra{d_2}{f^t}\right),
        \end{aligned}
    \end{equation}
    where
    \begin{equation}
        \begin{aligned}
            \overline{K}_{22}(k,\ell) &= A_{22}(k,\ell) + E(k,\ell) + (1-st)\frac{(1-s^2)}{(t-s)(s-r)}\left(\ketbra{f^s}{R_t} - \ketbra{R_t}{f^s}\right)\\
            &+ (1-st)\frac{(1-tr)(1-s^2)}{(t-s)(s-r)(t-r)}\left(\ketbra{f^s}{f^r} - \ketbra{f^r}{f^s}\right).\\
        \end{aligned}
    \end{equation}
\end{lem}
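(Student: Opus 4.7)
The proof strategy parallels that of Lemmas~\ref{lem:rewrite11} and~\ref{lem:rewrite12}. Starting from the contour integral
\[
K_{22}(k,\ell) = \frac{1}{(2\pi\I)^2}\oint\oint z^{k-1}w^{\ell-1}\,\frac{1}{F(z)F(w)}\,\frac{z-w}{(zw-1)(z-r)(w-r)}\,dz\,dw
\]
with $|z|,|w|>\max\{r,s,t,\sqrt{q}\}$ and $|zw|>1$, I would first apply the change of variable $z\mapsto 1/z$, which turns $1/(F(z)F(w))$ into $F(z)/F(w)$ (matching the form of $B$) and places the new $z$-contour close to the origin. I would then deform the new $z$-contour outward to $\Gamma_{1/\sqrt{q}}$ and the $w$-contour inward to $\Gamma_{\sqrt{q}}$, collecting residues at the simple poles crossed on the way, namely $z\in\{1/s,1/t,1/r,w\}$ and $w\in\{s,t,r,1/z\}$. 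The leftover double contour integral after the deformation is exactly $B(k,\ell)$, which sits inside $A_{22}$.

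Because $K_{22}$ is antisymmetric under $(z,w)\leftrightarrow(w,z)$, the residues at a pole pair $(\alpha,\beta)$ and its mirror $(\beta,\alpha)$ automatically combine into the antisymmetric outer-product kernels $\ketbra{f^x}{g}-\ketbra{g}{f^x}$ displayed in the statement, and the factor $(1-st)$ attached to the terms outside $A_{22}$ emerges from $(zw-1)$ evaluated at pole pairs mixing $s$- and $t$-type singularities, exactly as in Lemmas~\ref{lem:rewrite11} and~\ref{lem:rewrite12}. The $r$-pole residues together with the cross $\sqrt{q}$-residue produce the $\ketbra{f^r}{P}-\ketbra{P}{f^r}$ contribution absorbed into $A_{22}$; the $s$-pole residues and their cross-$r$ residues give the $\ketbra{f^s}{R_t}-\ketbra{R_t}{f^s}$ and $\ketbra{f^s}{f^r}-\ketbra{f^r}{f^s}$ terms of $\overline{K}_{22}$; the $t$-pole residues give the $\ketbra{f^t}{d_2}-\ketbra{d_2}{f^t}$ contribution that lies outside $\overline{K}_{22}$; and the remaining cross-residues (in particular those between the $r$-poles not absorbed by $P$) are collected into the kernel $E(k,\ell)$.

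The main obstacle is bookkeeping. Since both $z$ and $w$ sit on outer contours in $K_{22}$, each of them passes through the same set of poles $\{r,s,t,\sqrt{q}\}$ under the deformation, roughly doubling the number of residue pairings to track compared with $K_{11}$ and $K_{12}$; the antisymmetric matching $(\alpha,\beta)\leftrightarrow(\beta,\alpha)$ must be carefully aligned with the sign produced by the $(z-w)$ factor so that the kernels assemble into the correct $\ketbra{\cdot}{\cdot}-\ketbra{\cdot}{\cdot}$ combinations. The pole pair $(1/\sqrt{q},\sqrt{q})$ is of order $N-2$ in each variable and cannot be reduced to a simple residue; it is therefore retained in the double-integral form appearing in $B$. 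The explicit scalar prefactors $(1-s^2),(1-t^2),(1-sr),(1-tr)$ displayed in the statement serve as an after-the-fact consistency check at each residue evaluation.
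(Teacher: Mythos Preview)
Your overall strategy matches the paper's: change of variable $z\mapsto 1/z$, then deform the $z$-contour outward to $\Gamma_{1/\sqrt q,1/s,1/t,1/r}$ and the $w$-contour inward to $\Gamma_{\sqrt q,s,t,r}$, and identify the residue contributions with the displayed rank-one pieces while keeping the $(1/\sqrt q,\sqrt q)$ pair as the double integral $B$. The bookkeeping remarks about antisymmetric pairing are correct.

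There is, however, a genuine gap in your account of where $E(k,\ell)$ comes from. After the change of variable the contours satisfy $|z|<|w|$, so when you push $z$ outward and $w$ inward the two contours must pass through each other; shrinking the $w$-contour therefore crosses the simple pole at $w=z$ of the factor $1/(z-w)$. That diagonal residue is precisely
\[
E(k,\ell)=\frac{1}{2\pi\I}\oint_{\Gamma_{0,r}}\frac{z^{\ell-k-1}(1-z^2)}{(z-r)(1-rz)}\,dz=-\sgn(k-\ell)\,r^{|k-\ell|-1},
\]
and it is the one structural feature distinguishing the $K_{22}$ computation from $K_{11}$ and $K_{12}$ (where the starting configuration already has $|z|>|w|$, so no crossing occurs). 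Your sentence attributing $E$ to ``cross-residues between the $r$-poles not absorbed by $P$'' is incorrect: the pole pair $(z,w)=(1/r,r)$ contributes nothing because the numerator factor $(zw-1)$ vanishes there, and there is no pole at $w=1/z$ either (again $(zw-1)$ is a zero, not a pole). Once the $w=z$ residue is correctly isolated as $E$, the remaining simple pole pairs match exactly the rank-one terms you listed.
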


\begin{proof}
    A change of variable $z\rightarrow z^{-1}$  performed on $\eqref{Kernel}$ yields
    $$K_{22} = \frac{1}{(2\pi\I)^2} \oint dw \oint dz \frac{w^{\ell-1}}{z^k} \frac{H(z)S(z)T(z)}{H(w)S(w)T(w)}\frac{(zw-1)}{(z-w)(1-rz)(w-r)}.$$
    Before the change of variable, the contours can be taken as circles with $|z| = |w| > \max\{\sqrt{q}, r,s,t,1\}$.
    After the change of variable, we take $\max\{r,s,t,\sqrt{q}\}<|z|<|w| < \min \{1/s,1/t,1/\sqrt{q}, 1/r\}$.
    Because of the term $(z-w)$, when shrinking the $w$ contour around $\{\sqrt{q}, r, s, t\}$, we collect the residue at $w = z$, which gives
    $$E(k,\ell) = \frac{1}{2\pi\I}\oint \limits_{\Gamma_{0,r}}dz \frac{z^{\ell-k-1}(1-z^2)}{(z-r)(1-rz)}.$$
    We can compute
    \begin{equation}
        E(k,\ell)=
        \begin{cases}
            \frac{1}{-r} + \frac{1-r^2}{r(1-r^2)} = 0 &\text{ if } k = \ell,\\
            -\frac{1}{2\pi\I}\oint \limits_{\Gamma_{1/r}}\!\!dz \frac{z^{\ell-k-1}(1-z^2)}{(z-r)(1-rz)} = -r^{k-\ell-1} &\text{ if } k > \ell,\\
            \frac{1}{2\pi\I}\oint \limits_{\Gamma_{r}}\!dz \frac{z^{\ell-k-1}(1-z^2)}{(z-r)(1-rz)} = r^{\ell-k-1} &\text{ if } k < \ell.
        \end{cases}
    \end{equation}
    For the case $k> \ell$, we consider the outside poles and there is no pole at $z = \infty.$ The case $k < \ell$ follows from the fact that $0$ is no longer a pole.
    In conclusion, we have $E(k,\ell) = -\sgn{(k-l)}r^{|k-\ell|-1}.$
    Then restricting the contour of $w$ to $\{\sqrt{q},r,s,t\}$ and the contour of $z$ to $\{1/s,1/t,1/\sqrt{q},1/r\}$ gives
    \begin{equation}
        \begin{aligned}
            K_{22}(k,\ell) &= \frac{-1}{(2\pi\I)^2} \oint \limits_{\Gamma_{\sqrt{q}}} dw \!\!\!\oint\limits_{\Gamma_{1/r,1/\sqrt{q},1/s,1/t}}\!\!\!\!\!\! dz \frac{w^{\ell-1}}{z^k} \frac{H(z)S(z)T(z)}{H(w)S(w)T(w)}\frac{(zw-1)}{(z-w)(1-rz)(w-r)}+E(k,\ell)\\
            &+\frac{-1}{(2\pi\I)^2} \oint \limits_{\Gamma_{s}} dw \!\!\!\oint\limits_{\Gamma_{1/r,1/\sqrt{q},1/t}}\!\!\!\!\!\! dz \cdots
            +\frac{-1}{(2\pi\I)^2} \oint \limits_{\Gamma_{t}} dw \!\!\!\oint\limits_{\Gamma_{1/r,1/\sqrt{q},1/s}}\!\!\!\!\!\! dz \cdots
            +\frac{-1}{(2\pi\I)^2} \oint \limits_{\Gamma_{r}} dw \!\!\!\oint\limits_{\Gamma_{1/\sqrt{q},1/s,1/t}}\!\!\!\!\!\! dz \cdots
            ,\\
        \end{aligned}
    \end{equation}
    where $\cdots$ is a shorthand for the integrand $\mathsmaller{\frac{w^{\ell-1}}{z^k} \frac{H(z)S(z)T(z)}{H(w)S(w)T(w)}\frac{(zw-1)}{(z-w)(1-rz)(w-r)}}$.
    We compute the contributions from all pairs of simple poles. The pair $(z,w) = (1/\sqrt{q},\sqrt{q})$ is denoted as $B.$ 
    \begin{equation}
        \begin{aligned}
            &\frac{-1}{(2\pi\I)^2} \oint \limits_{\Gamma_{\sqrt{q},s,r}}\!\!\! dw \!\oint\limits_{\Gamma_{1/t}} dz \cdots + \frac{-1}{(2\pi\I)^2} \oint \limits_{\Gamma_{t}} dw \!\!\!\!\!\oint\limits_{\Gamma_{1/r,1/\sqrt{q},1/s}}\!\!\!\!\! dz \cdots = (1-st)\frac{(1-t^2)}{(t-s)(t-r)}\left(\ketbra{f^t}{d_2} - \ketbra{d_2}{f^t}\right),\\
            &\frac{-1}{(2\pi\I)^2} \oint \limits_{\Gamma_{\sqrt{q}}}\! dw \!\!\oint\limits_{\Gamma_{1/s}} dz \cdots + \frac{-1}{(2\pi\I)^2}\oint \limits_{\Gamma_{s}}\! dw \!\!\oint\limits_{\Gamma_{1/\sqrt{q}}} dz \cdots = (1-st)\frac{(1-s^2)}{(t-s)(s-r)}\left(\ketbra{f^s}{R_t} - \ketbra{R_t}{f^s} \right),\\
            &\frac{-1}{(2\pi\I)^2} \oint \limits_{\Gamma_{r}}\! dw \!\!\oint\limits_{\Gamma_{1/s}} dz \cdots + \frac{-1}{(2\pi\I)^2}\oint \limits_{\Gamma_{s}}\! dw \!\!\oint\limits_{\Gamma_{1/r}} dz \cdots = (1-st)\frac{(1-tr)(1-s^2)}{(t-s)(s-r)(t-r)}\left(\ketbra{f^s}{f^r} - \ketbra{f^r}{f^s}\right),\\
            &\frac{-1}{(2\pi\I)^2} \oint \limits_{\Gamma_{\sqrt{q}}}\! dw \!\!\oint\limits_{\Gamma_{1/r}} dz \cdots + \frac{-1}{(2\pi\I)^2}\oint \limits_{\Gamma_{r}}\! dw \!\!\oint\limits_{\Gamma_{1/\sqrt{q}}} dz \cdots = \frac{(1-sr)(1-tr)}{(s-r)(t-r)}\left(\ketbra{f^r}{P} - \ketbra{P}{f^r}\right).
        \end{aligned}
    \end{equation}
\end{proof}

\subsection{Reformulation of the Fredholm Pfaffian}
We define a few more notations.
Let
\begin{equation}\label{constant,a,b}
    \begin{aligned}
            a = \frac{(1-tr)}{(t-s)}, &\quad b = \frac{(1-t^2)}{(t-s)(t-r)},\\
            X_1 = \begin{pmatrix}
                d_2\\
                g_1
            \end{pmatrix}, \quad Y_1 = \begin{pmatrix}
                af^t & bf^t
            \end{pmatrix},&\quad 
            X_2 = \begin{pmatrix}
                -bf^t\\
                af^t
            \end{pmatrix}, \quad Y_2 = \begin{pmatrix}
                -g_1 & d_2
            \end{pmatrix}.
    \end{aligned}
\end{equation}
Let
\begin{equation}
    \overline{G} = J^{-1}\overline{K}(k,\ell)= \!\begin{pmatrix}
        -\overline{K}_{21}(k,\ell) & -\overline{K}_{22}(k,\ell)\\
        \overline{K}_{11}(k,\ell) & \overline{K}_{12}(k,\ell)
    \end{pmatrix},\quad
    \overline{K}(k,\ell) = \!\begin{pmatrix}
        \overline{K}_{11}(k,\ell)  & \overline{K}_{12}(k,\ell)\\
        \overline{K}_{21}(k,\ell) & \overline{K}_{22}(k,\ell)
    \end{pmatrix}.
\end{equation}
Then
\begin{equation}
\begin{aligned}
    J^{-1}\overline{K} &= \begin{pmatrix}
        -\overline{K}_{21} & -\overline{K}_{22}\\
        \overline{K}_{11} & \overline{K}_{12}
    \end{pmatrix} + (1-st) \begin{pmatrix}
        d_2\\
        g_1
    \end{pmatrix}\begin{pmatrix}
        af^t & bf^t
    \end{pmatrix} + (1-st) \begin{pmatrix}
        -bf^t\\
        af^t
    \end{pmatrix}\begin{pmatrix}
        -g_1 & d_2
    \end{pmatrix}\\
    &= \overline{G} + (1-st) \ketbra{X_1}{Y_1} + (1-st)\ketbra{X_2}{Y_2}.
\end{aligned}
\end{equation}
Recall the definition of $d_2$ and $g_1$ in \eqref{defOfd_2,g_1}.
Given the shift argument, our goal becomes finding the $t\rightarrow 1/s$ limit of 
$\frac{\mathrm{Pf}(J- K)_{\ell^2\{d+1,\dots\}}}{(1-ts)}.$ 
We reformulate the Fredholm Pfaffian so that we can explicitly find where the explosion occurs. We omit the subscript $\ell^2(\{d+1,\dots\})$ in the following formula.
\begin{equation}\label{keyReformulation}
    \begin{aligned}
        \mathrm{Pf}(J-K)^2 &= \det(\Id - J^{-1}K)\\
        &= \det\left(\Id - \overline{G} -(1-st)\ketbra{X_1}{Y_1} - (1-st)\ketbra{X_2}{Y_2}\right)\\
        &= \det(\Id - \overline{G})\det\left(\Id - (1-st)(\Id - \overline{G})^{-1}\ketbra{X_1}{Y_1} - (1-st)(\Id - \overline{G})^{-1}\ketbra{X_2}{Y_2}\right)\\
        &= \det(\Id - \overline{G})\det\left(\Id - (1-st)\begin{pmatrix}
           (\Id -\overline{G})^{-1}\ket{X_1} & (\Id -\overline{G})^{-1}\ket{X_2}
        \end{pmatrix}\begin{pmatrix}
            Y_1\\
            Y_2
        \end{pmatrix}\right)\\
        &= \det(\Id - \overline{G})\det\left(\Id - (1-st)\begin{pmatrix}
            Y_1\\
            Y_2
        \end{pmatrix}\begin{pmatrix}
           (\Id -\overline{G})^{-1}\ket{X_1} & (\Id -\overline{G})^{-1}\ket{X_2}
        \end{pmatrix}\right)\\
        &= \det(\Id - \overline{G})\det\left(\Id - (1-st)\begin{pmatrix}
            \bra{Y_1}(\Id -\overline{G})^{-1}\ket{X_1} & \bra{Y_1}(\Id -\overline{G})^{-1}\ket{X_2}  \\
            \bra{Y_2}(\Id -\overline{G})^{-1}\ket{X_1} & \bra{Y_2}(\Id -\overline{G})^{-1}\ket{X_2} 
        \end{pmatrix}\right)\\
        &= \det(\Id - \overline{G})(1- (1-st)\bra{Y_1}(\Id - \overline{G})^{-1}\ket{X_1})^2.
    \end{aligned}
\end{equation}
The fifth equality in $\eqref{keyReformulation}$ comes from the equality $\det(\Id - AB) = \det(\Id - BA).$ Since $\overline{K}$ is anti-symmetric, by a similar argument as in \cite[Proposition B.1]{Betea_2020}, we see that
\begin{equation}\label{antiSymmetryRelation}
\begin{aligned}
    &(\Id - \overline{G})_{12}^{-1}(k,\ell) = -(\Id - \overline{G})_{12}^{-1}(\ell,k), \quad (\Id - \overline{G})_{21}^{-1}(k,\ell) = -(\Id - \overline{G})_{21}^{-1}(\ell,k),\\
    &(\Id - \overline{G})_{11}^{-1}(k,\ell) = (\Id - \overline{G})_{22}^{-1}(\ell,k).
\end{aligned}
\end{equation}
From this, we get that
\begin{equation}
\begin{aligned}
    \bra{f}(\Id - \overline{G})_{12}^{-1}\ket{g} = -\bra{g}(\Id - \overline{G})_{12}^{-1}\ket{f}, &\quad \bra{f}(\Id - \overline{G})_{21}^{-1}\ket{g} = -\bra{g}(\Id - \overline{G})_{21}^{-1}\ket{f},\\
    \bra{f}(\Id - \overline{G})_{12}^{-1}\ket{f} = \bra{f}(\Id - \overline{G})_{21}^{-1}\ket{f} = 0,&\quad 
    \bra{f}(\Id - \overline{G})_{11}^{-1}\ket{g} =\bra{g}(\Id - \overline{G})_{22}^{-1}\ket{f}. 
\end{aligned}   
\end{equation}
Hence, we have
\begin{equation}
    \begin{aligned}
        \bra{Y_1}(\Id - \overline{G})^{-1}\ket{X_1} = \bra{Y_2}(\Id - \overline{G})^{-1}\ket{X_2}, \quad \bra{Y_1}(\Id - \overline{G})^{-1}\ket{X_2} = \bra{Y_2}(\Id - \overline{G})^{-1}\ket{X_1} = 0,
    \end{aligned}
\end{equation}
which justifies the last equality in \eqref{keyReformulation}.
Taking the square root of \eqref{keyReformulation} and using the identity $\mathrm{Pf}(J-K)^2 = \det{(\Id - J^{-1}K)},$ we get the formula:
\begin{equation}\label{KeyFormula}
    \begin{aligned}
        \frac{1}{(1-st)}\mathrm{Pf}(J-K) &= \mathrm{Pf}(J-\overline{K})\left( \frac{1}{1-st} - \bra{Y_1}(\Id - \overline{G})^{-1}\ket{X_1}
        \right)\\
        &= \mathrm{Pf}(J-\overline{K})\left( \frac{1}{1-st} -\braket{Y_1}{X_1} - \brabarket{Y_1}{\overline{G}}{X_1} -\bra{Y_1}\overline{G}^{^2}(\Id - \overline{G})^{-1}\ket{X_1}
        \right)\\
        &= \mathrm{Pf}(J-\overline{K})\left(\frac{1}{1-st} - \braket{Y_1}{X_1} - \brabarket{Y_1}{\overline{G}}{X_1}\right) - \mathrm{Pf}(J-\overline{K})\\
        &+ \mathrm{Pf}\left(J - \overline{K} - \ketbra{\begin{array}{c}
            g_1\\
            -d_2 
        \end{array}}
        {(Y_1\overline{G}^2)_1 \quad (Y_1\overline{G}^2)_2} - \ketbra{\begin{array}{c}
            (Y_1\overline{G}^2)_1\\
            (Y_1\overline{G}^2)_2 
        \end{array}}{-g_1 \quad d_2}\right).
    \end{aligned}
\end{equation}
The last equality in $\eqref{KeyFormula}$ is justified by the following identity, which is a detailed version of \cite[Lemma 3.18]{Betea_2020}.

\begin{lem}\label{lem:keychange}
    Let $K$ be an anti-symmetric kernel and $J(k,\ell) = \delta_{k,\ell}$\scalebox{0.7}{${\begin{pmatrix}
        0 & 1 \\ -1 & 0
    \end{pmatrix}}$}.
    Let $a,b,c,d$ be functions such that the scalar products in the following formulas are well-defined. Then
    \begin{equation}\label{keyIden}
        \begin{aligned}
            \mathrm{Pf}(J - K)&\braket{c \quad d}{\left(\Id - J^{-1}K\right)^{-1}\begin{pmatrix}
                a\\b
            \end{pmatrix}}\\
            &=
            \mathrm{Pf}(J-K) - \mathrm{Pf}\left(J - K - \ket{\begin{array}{c}
                 b \\
                 -a
            \end{array}}\bra{c \quad d} - \ket{\begin{array}{c}
                 c \\
                 d
            \end{array}}\bra{-b \quad a}\right).
        \end{aligned}
    \end{equation}
\end{lem}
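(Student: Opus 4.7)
The plan is to square both sides of \eqref{keyIden} and invoke the identity $\mathrm{Pf}(M)^2 = \det(M)$ for an anti-symmetric kernel $M$. Writing $K' := K + \ketbra{u_1}{v_1} + \ketbra{u_2}{v_2}$ with $\ket{u_1} = (b,-a)^T$, $\bra{v_1} = (c,d)$, $\ket{u_2} = (c,d)^T$, $\bra{v_2} = (-b,a)$, a direct check of the four rank-one $2\times 2$-block outer products shows that the perturbation is itself anti-symmetric as a kernel (the cross terms pair off under simultaneous swap of $(k,\ell)$ and matrix transposition), so $J - K'$ is anti-symmetric and its squared Pfaffian equals $\det(\Id - J^{-1}K')$. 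The goal thus reduces to showing $\det(\Id - J^{-1}K') = (1-\alpha)^2 \det(\Id - J^{-1}K)$, where $\alpha$ is the scalar product appearing on the left-hand side of \eqref{keyIden}.

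First, I would use $J^{-1} = \bigl(\begin{smallmatrix}0&-1\\1&0\end{smallmatrix}\bigr)$ to compute $J^{-1}\ket{u_1} = \ket{X_1}$ with $X_1 = (a,b)^T$ and $J^{-1}\ket{u_2} = \ket{X_2}$ with $X_2 = (-d,c)^T$, and set $\bra{Y_i} := \bra{v_i}$. Applying the Weinstein--Aronszajn identity to this rank-two perturbation and writing $R := (\Id - J^{-1}K)^{-1}$ gives
\begin{equation*}
\det(\Id - J^{-1}K') = \det(\Id - J^{-1}K)\,\det\!\begin{pmatrix} 1 - \brabarket{Y_1}{R}{X_1} & -\brabarket{Y_1}{R}{X_2} \\ -\brabarket{Y_2}{R}{X_1} & 1 - \brabarket{Y_2}{R}{X_2}\end{pmatrix}.
\end{equation*}
Next, using that $K$ is anti-symmetric, $R$ inherits the block-symmetry relations verified for $\overline G$ in \eqref{antiSymmetryRelation}: $R_{11}(k,\ell) = R_{22}(\ell,k)$ together with the anti-symmetry of $R_{12}$ and $R_{21}$. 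A short pairing argument in the spirit of the one immediately below \eqref{antiSymmetryRelation} then yields $\brabarket{Y_1}{R}{X_1} = \brabarket{Y_2}{R}{X_2} =: \alpha$, with $\alpha$ precisely the scalar product $\braket{c\ d}{R\begin{pmatrix}a\\b\end{pmatrix}}$ appearing in \eqref{keyIden}, while $\brabarket{Y_1}{R}{X_2} = \brabarket{Y_2}{R}{X_1} = 0$. The $2\times 2$ determinant collapses to $(1-\alpha)^2$, as required.

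Finally, taking the square root produces $\mathrm{Pf}(J - K') = \pm (1-\alpha)\mathrm{Pf}(J - K)$, and the main remaining obstacle is pinning down the sign. I would resolve this by a continuity/analyticity argument: both sides are polynomials in $(a,b,c,d)$, and at $a=b=c=d=0$ both reduce to $\mathrm{Pf}(J-K)$, so the $+$ branch must hold throughout (equivalently, the polynomial $\mathrm{Pf}(J-K') - (1-\alpha)\mathrm{Pf}(J-K)$ is one of the two factors of the identically-zero polynomial $\mathrm{Pf}(J-K')^2 - (1-\alpha)^2\mathrm{Pf}(J-K)^2$, and it is the one that vanishes at the origin). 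Rearranging $\mathrm{Pf}(J-K') = (1-\alpha)\mathrm{Pf}(J-K)$ then produces \eqref{keyIden}. The pairing cancellations in the middle step are the only mildly delicate piece; everything else is a routine application of the rank-perturbation machinery already deployed in \eqref{keyReformulation}.
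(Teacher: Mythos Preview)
Your proposal is correct and follows essentially the same route as the paper: both square the identity, apply the rank-two Weinstein--Aronszajn/$\det(\Id-AB)=\det(\Id-BA)$ trick, and collapse the $2\times2$ determinant to $(1-\alpha)^2$ via the block-symmetry relations \eqref{antiSymmetryRelation}. You are in fact slightly more careful than the paper, which takes the square root without comment, whereas you supply the continuity argument at $a=b=c=d=0$ to fix the sign.
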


\begin{proof} We simply compute:
    \begin{equation}
        \begin{aligned}
            &\mathrm{Pf}(J - K)\braket{c \quad d}{\left(\Id - J^{-1}K\right)^{-1}\begin{pmatrix}
                a \\
                b 
            \end{pmatrix}}\\
            &= \mathrm{Pf}(J - K) -\mathrm{Pf}(J - K)\left(1-\braket{c \quad d}{\left(\Id - J^{-1}K\right)^{-1}\begin{pmatrix}
                a \\
                b 
            \end{pmatrix}}\right)\\
            &=\mathrm{Pf}(J-K)\\
            &- \sqrt{\det(\Id - J^{-1}K)}\sqrt{\det\left( \Id - \begin{pmatrix}
                \braket{c \quad d}{\left(\Id - J^{-1}K\right)^{-1}\begin{pmatrix} a \\ b \end{pmatrix}} & \braket{c \quad d}{\left(\Id - J^{-1}K\right)^{-1}\begin{pmatrix}
                -d\\
                c
            \end{pmatrix}}\\
            \braket{-b \quad a}{\left(\Id - J^{-1}K\right)^{-1}\begin{pmatrix}
                a \\
                b 
            \end{pmatrix}} & \braket{-b \quad a}{\left(\Id - J^{-1}K\right)^{-1}\begin{pmatrix}
                -d\\
                c
            \end{pmatrix}}
            \end{pmatrix} \right)}\\
            &=\mathrm{Pf}(J-K) - \sqrt{\det\left(\Id - J^{-1}K - \ket{\begin{array}{c}
                a \\
                b
            \end{array}}\bra{c\quad d}  -  \ket{\begin{array}{c}
                -d \\
                c
            \end{array}}\bra{-b\quad a}\right) }\\
            &=\mathrm{Pf}(J-K) - \sqrt{\det\left(\Id - J^{-1}K - J^{-1}\ket{\begin{array}{c}
                b \\
                -a
            \end{array}}\bra{c\quad d}  -  J^{-1}\ket{\begin{array}{c}
                c \\
                d
            \end{array}}\bra{-b\quad a}\right) }\\
            &= \mathrm{Pf}(J-K) -\mathrm{Pf}\left(J - K - \ket{\begin{array}{c}
                 b \\
                 -a
            \end{array}}\bra{c \quad d} - \ket{\begin{array}{c}
                 c \\
                 d
            \end{array}}\bra{-b \quad a}\right).
        \end{aligned}
    \end{equation}
    where the second equality is due to $\eqref{antiSymmetryRelation},$ which holds as long as $\overline{K}$ is anti-symmetric, and the third equality uses $\det(\Id- AB) = \det(\Id- BA)$.
\end{proof}

\begin{remark}
    In our case, the inverse $(\Id - \overline{G})^{-1}$ can be understood as the series expansion when $\sqrt{q},s,t,r$ lie in $(0,\epsilon)$ for some small $\epsilon.$ However, the Fredholm Pfaffian \begin{equation}\label{PfInverse}
        \mathrm{Pf}(J-\overline{K})-\mathrm{Pf}\left(J - \overline{K} - \ketbra{\begin{array}{c}
            g_1\\
            -d_2 
        \end{array}}
        {(Y_1\overline{G}^2)_1 \quad (Y_1\overline{G}^2)_2} - \ketbra{\begin{array}{c}
            (Y_1\overline{G}^2)_1\\
            (Y_1\overline{G}^2)_2 
        \end{array}}{-g_1 \quad d_2}\right)
        \end{equation}
        is well defined for $\sqrt{q},s,t,r$ satisfying $\sqrt{q}\in(0,1)$, $s\in(\sqrt{q},1)$, $r\in(0,s)\cup (s,1/s)$, $t\in (\max(r,1),1/\sqrt{q})$, $t \neq 1/r$ as we will show later. Hence, we see $\eqref{PfInverse}$ as an analytic continuation.
\end{remark}

\begin{remark}
    Initially, we tried the method in \cite[Appendix B.4]{KPZbeyondBrownian} because our kernel has a similar structure to that in \cite[(162)]{KPZbeyondBrownian}. However, we later checked that the equality $\mathrm{Pf}(J- K) = \sqrt{\det{(\Id - K_{12})}}$, which could have significantly simplified our formula, does not hold. We think that in our case, $E$ in $K_{22}$ is very important and cannot be dropped, but it disappears in $\sqrt{\det(\Id - K_{12})}.$ To give an example, when $N = 2$ and $r= 0$,
    \begin{equation}
        \begin{aligned}
            &\Pb(\text{Geom}(st)\leq d)= 1-(st)^{d+1} =\mathrm{Pf}(J-K)_{\ell^2(\{d+2,\dots\})}\\
            &= \mathrm{Pf}(J - \overline{K})_{\ell^2(\{d+2,\dots\})}\left( 1 - (1-st)\braket{Y_1}{X_1} -  (1-st)\brabarket{Y_1}{\overline{G}}{X_1}\right).\\
        \end{aligned}
    \end{equation}
    We do not expand to the second order term $Y_1\overline{G}^2$ because $\overline{G} = \begin{pmatrix}
        0 & E\\
        0 & 0
    \end{pmatrix}$ and $\overline{G}^2=0$. We checked that 
    \begin{equation}
    \begin{aligned}
        &\mathrm{Pf}(J - \overline{K})_{\ell^2(\{d+2,\dots\})} = 1,\quad -(1-st)\brabarket{Y_1}{\overline{G}}{X_1} = (ts)^{d+2},\\
        &1-(1-st)\braket{Y_1}{X_1} = \det(\Id - K_{12}) = 1-(1+st)(st)^{d+1},\\
        &\mathrm{Pf}(J-K) = 1+ (st)^{d+2}-(1+st)(st)^{d+1}  = 1-(st)^{d+1}.
    \end{aligned}
    \end{equation}
    Hence, we see that $\sqrt{\det(\Id - K_{12})} \neq \mathrm{Pf}(J-K).$
\end{remark}

\begin{remark}\label{modifed}
    We have a slightly modified version of \ref{keyIden}. For any constant $\beta$, we have
    \begin{equation}
        \begin{aligned}
            \mathrm{Pf}(J - K)&\beta \braket{c \quad  d}{\left(\Id - J^{-1}K\right)^{-1}\begin{pmatrix}
                a\\b
            \end{pmatrix}}\\
            &=
            \mathrm{Pf}(J-K) - \mathrm{Pf}\left(J - K - \beta\ket{\begin{array}{c}
                 b \\
                 -a
            \end{array}}\bra{c \quad d} - \beta\ket{\begin{array}{c}
                 c \\
                 d
            \end{array}}\bra{-b \quad a}\right).
        \end{aligned}
    \end{equation}
\end{remark}

\subsection{Difference between two-parameter and product stationary models}\label{difference}
The main challenge of this problem is to group all poles at $t,1/t$ in a way such that multiplying by $(1-st)^{-1}$ produces a finite limit while still maintaining control over the remaining kernel. We follow a similar strategy to decompose the Fredholm Pfaffian as in \cite[Section 3.2.3]{Betea_2020}. However, our finite-time distribution formula is considerably more complex than the one in \cite[Theorem 2.4]{Betea_2020}, as it involves many additional $t$-dependent terms.

In the product (one-parameter) stationary case, some 
$t$-dependent terms were left in the kernel $\overline{K}$ without issue, (for example, in \cite[(3.13)]{Betea_2020}, $\overline{K}_{12}^{\text{exp}}$ has a pole at $w=-\beta$ and similarly in our case \eqref{oneParamKernel}, $\overline{K}_{12}^{\text{geo}}$ has a pole at $w=s.$) This is because $\overline{K}_{11}^{\text{exp}}$ contains only poles at $z=1/2,w=-1/2$ (similarly $\overline{K}_{11}^{\text{geo}}$ only contains poles at $z=1/\sqrt{q},w=\sqrt{q}$) which is a dominating term that guarantees the convergence of Fredholm Pfaffian. 
In contrast, in our two-parameter case, regardless of the poles at $t,1/t$, the kernel entry $\overline{K}_{11}$ still contains poles at $s,1/s$. This means that no $t-$dependent terms can be left in $\overline{K}$ if we want the Pfaffian to converge under the limit. Consequently, all $t-$dependent terms must be extracted into $X_i,Y_i$. Since $X_i,Y_i$ are more complicated, they introduce many more terms that diverge as $t\rightarrow 1/s$.

While explaining the difficulties, we give a preview of what will be proved in the next section.
To distinguish notation, we use $\mathcal{Y}_1$ and $\mathcal{X}_1$ to denote functions used in the product stationary exponential LPP, as given in \cite[(3.18), (3.19)]{Betea_2020}. In the product stationary setting, it suffices to expand up to the first term, that is $\lim_{t \rightarrow  1/s} (1 - st)^{-1} - \braket {\mathcal{Y}_1}{\mathcal{X}_1}$ exists, because $\braket{\mathcal{Y}_1}{\mathcal{X}_1}$ contains the only explosive term $\mathsmaller{\braket{f_{-}^{\alpha}}{f_{+}^{\beta}}}$ as in \cite[Lemma 3.9]{Betea_2020}. Similar things happen in the product stationary geometric case as in \eqref{eq:S_fact} and Theorem $\ref{explosion-one}$. In the two-parameter case, however, this expression diverges, and we need to expand further to obtain the limit: $\lim_{t\rightarrow 1/s} (1-st)^{-1} - \braket{Y_1}{X_1} - \brabarket{Y_1}{\overline{G}}{X_1}.$ This is essentially because there are two types of terms that cause explosions: $\braket{f^t}{f^s}$ and $\brabarket{f^t}{E}{f^s}$. We notice that $\braket{X_1}{Y_1}$ includes terms like $\braket{f^t}{f^s}$ and $\brabarket{Y_1}{\overline{G}}{X_1}$ contains terms like $\brabarket{f^t}{E}{f^s}$. As a result, after extracting terms that explode, $\widehat{\mathcal{A}}_{d},\widehat{\mathcal{B}}_{d},\widehat{\mathcal{C}}_{d},\widehat{\mathcal{D}}_{d},\widehat{\mathcal{E}}_{d}$ appear in the limit of $\brabarket{Y_1}{\overline{G}}{X_1}$ and $\widehat{\nu}_{d},\widehat{\mu}_{d}$ appear in the limit of $\braket{X_1}{Y_1}$.
The term $\bra{Y_1}\overline{G}^2$ becomes $\widehat{V}_1,\widehat{V}_2$ in the limit. All these functions are provided in section \ref{FiniteTimeFormula}. Since the kernel itself involves many terms, when we square $\overline{G}$ and then multiply by $Y_1$, the complexity increases even further. We need to be careful about which terms survive in the limit and at the same time keep track of their upper bounds, which are essential ingredients for the convergence of the Fredholm Pfaffian.

\section{Analytic continuation}
We started with the kernel $K$ in $\eqref{Kernel}$ that is defined for $t\in (0,1)$. We want to find another formula that is equal to $\eqref{KeyFormula}$ on the already defined domain and is also defined for $t \in (1/s-\epsilon,1/s+\epsilon)$ for some small $\epsilon >0.$
We make it clear that the function is analytic for $t \in (x,y),$ if for any $0 <\epsilon \ll 1$, the function is analytic in $ t\in [x+\epsilon, y-\epsilon].$
Since we will heavily rely on Hadamard's inequality to prove the convergence of Fredholm Pfaffian, we write it as an auxiliary lemma here.
\begin{lem}\label{Hadamard}
    Let $\mathsf{K}(k,\ell)$ be a $2\times 2$ matrix valued anti-symmetric kernel. If there exist constants $C>0$ and $\alpha>\beta \geq 0$ such that 
    \begin{equation}
        \begin{aligned}
            |\mathsf{K}_{11}(k,\ell)| \leq Ce^{-\alpha k - \alpha \ell}, \quad |\mathsf{K}_{12}(k,\ell)| \leq Ce^{-\alpha k + \beta \ell}, \quad |\mathsf{K}_{22}(k,\ell)| \leq Ce^{\beta k + \beta \ell},
        \end{aligned}
    \end{equation}
    then for all $n \in \Z_{\geq 1},$ $$\big|\mathrm{Pf}(\mathsf{K}(x_i,x_j))_{i,j = 1}^n\big| \leq (2n)^{n/2}C^n \prod_{i = 1}^n e^{-(\alpha-\beta)x_i}.$$
\end{lem}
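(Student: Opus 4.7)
Let $A := (\mathsf{K}(x_i,x_j))_{i,j=1}^n$ denote the $2n \times 2n$ block matrix built from the $2\times 2$ blocks $\mathsf{K}(x_i,x_j)$; by the anti-symmetry assumption it is skew-symmetric, so the identity $\mathrm{Pf}(A)^2 = \det(A)$ reduces the problem to bounding $|\det(A)|$. The plan is to conjugate $A$ by a diagonal matrix chosen so as to cancel the worst exponential growth in every row and column, making all entries of the rescaled matrix uniformly bounded by $C$; then a single application of Hadamard's inequality yields the stated estimate.

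Concretely, I would introduce the $2n \times 2n$ diagonal matrix $D$ with $D_{(i,1),(i,1)} = e^{\alpha x_i}$ and $D_{(i,2),(i,2)} = e^{-\beta x_i}$. Since $D$ is symmetric, $DAD$ is again skew-symmetric and $\det(DAD) = \det(D)^2 \det(A) = \det(A) \prod_{i=1}^n e^{2(\alpha-\beta)x_i}$. Its entries are
\begin{align*}
(DAD)_{(i,1),(j,1)} &= e^{\alpha(x_i+x_j)} \mathsf{K}_{11}(x_i,x_j), &
(DAD)_{(i,1),(j,2)} &= e^{\alpha x_i - \beta x_j} \mathsf{K}_{12}(x_i,x_j),\\
(DAD)_{(i,2),(j,1)} &= e^{-\beta x_i + \alpha x_j} \mathsf{K}_{21}(x_i,x_j), &
(DAD)_{(i,2),(j,2)} &= e^{-\beta(x_i+x_j)} \mathsf{K}_{22}(x_i,x_j).
\end{align*}
Using the three hypothesized bounds, together with the relation $\mathsf{K}_{21}(x_i,x_j) = -\mathsf{K}_{12}(x_j,x_i)$ (which gives $|\mathsf{K}_{21}(x_i,x_j)| \leq C e^{\beta x_i - \alpha x_j}$), each entry of $DAD$ has absolute value at most $C$.

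Finally, I would apply the classical Hadamard inequality to the $2n \times 2n$ matrix $DAD$: each row has $2n$ entries of modulus at most $C$, hence Euclidean norm at most $\sqrt{2n}\,C$, and so $|\det(DAD)| \leq (\sqrt{2n}\,C)^{2n} = (2n)^n C^{2n}$. Dividing by $\det(D)^2$ and taking square roots yields
$$|\mathrm{Pf}(A)| = \sqrt{|\det(A)|} \leq (2n)^{n/2} C^n \prod_{i=1}^n e^{-(\alpha-\beta)x_i},$$
which is exactly the claim. There is no genuine obstacle in this argument; the only point to spot is the correct choice of diagonal conjugation that precisely cancels the asymmetric exponential factors $e^{-\alpha x}$ and $e^{\beta x}$ in the row and column estimates.
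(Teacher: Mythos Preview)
Your argument is correct and complete. The paper does not actually prove this lemma: it is stated as an auxiliary result without proof, so there is no proof in the paper to compare against. Your diagonal-conjugation-plus-Hadamard argument is the standard one used in the Fredholm Pfaffian literature (see, e.g., Lemma~2.5 of Baik--Barraquand--Corwin--Suidan, \emph{Ann.\ Probab.}\ 2018), and it recovers the stated bound exactly.
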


\subsection{Analyticity of the kernel and the Fredholm Pfaffian}
We will prove the point-wise limit of each entry of the kernel $\overline{K}$. See \eqref{Ahat} and \eqref{Khat} for the definition of $\widehat{K}$.
\begin{lem}\label{lem:limitQ}
    For any fixed $\sqrt{q}\in (0,1),$ $s\in (\sqrt{q},1)$ and $r\in (0,s)\cup (s,1/s)$, the kernel $\overline{K}$ is analytic for $t\in (\max(r,1),1/\sqrt{q})$ and has a limit $\widehat{K} = \lim_{t\rightarrow 1/s} \overline{K}.$
\end{lem}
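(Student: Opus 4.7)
The strategy is to exploit the decompositions of $\overline{K}_{11}, \overline{K}_{12}, \overline{K}_{22}$ obtained in Lemmas \ref{lem:rewrite11}, \ref{lem:rewrite12}, \ref{lem:rewrite22}. Each entry is the sum of a ``core'' double contour integral $A_{ij}$ plus a finite number of rank-one correction terms, with every correction carrying an explicit prefactor $(1-st)$ multiplied by a rational function in $t$. The plan is to verify that (i) each $A_{ij}(k,\ell)$ is analytic in $t$ on $(\max(r,1),1/\sqrt{q})$ and extends continuously to $t=1/s$, and (ii) each rank-one correction is analytic in $t$ in the same range and moreover has vanishing limit as $t\to 1/s$.

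For part (i), note that the integrand of $A_{ij}$ depends on $t$ only through the ratio $T(z)/T(w) = (1-t/z)(1-tw)/((1-tz)(1-t/w))$. Its $t$-dependent singularities therefore lie on the loci $z=1/t$, $w=1/t$. In the prescribed range, $1/t\in(\sqrt{q},\min(1,1/r))$, so choosing $\Gamma_{1/\sqrt{q}}$ as a small circle around $1/\sqrt{q}$ of radius strictly greater than $1$ (hence excluding $1/t$ since $1/t<1$) and $\Gamma_{\sqrt{q}}$ as a small circle around $\sqrt{q}$ of radius strictly less than $1$ (hence also excluding $1/t$ since $1/t>\sqrt{q}$), neither contour encloses the variable singularity at $1/t$. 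The integrand is then rational and jointly analytic in $(z,w,t)$ on a neighborhood of the contours times the interval, so holomorphy under the integral sign gives analyticity of $A_{ij}(k,\ell)$ in $t$, and the limit at $t=1/s$ is obtained by substituting $t=1/s$ inside the integrand (this defines $\widehat{A}_{ij}$). The same argument applies to the auxiliary functions $P,Q,B,G_t,R_t,f^t$ appearing in the rank-one corrections.

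For part (ii), the scalar coefficients of the rank-one corrections are products of the form $(1-st)(1-t\alpha)(1-s\beta)/((t-\gamma)(t-\delta))$ with $\alpha,\beta,\gamma,\delta\in\{r,s,1\}$. Under the assumptions $s\in(\sqrt{q},1)$, $r\in(0,s)\cup(s,1/s)$, and $t\in(\max(r,1),1/\sqrt{q})$ with $t\neq 1/r$, each denominator factor is bounded away from $0$, so every coefficient is analytic in $t$. At $t=1/s$ the factor $(1-st)$ vanishes, while the remaining rational factor and the bras/kets $f^s,f^r,G_t,R_t$ are bounded by part (i); hence every rank-one correction converges to $0$. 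Collecting the surviving pieces gives
\begin{equation*}
\widehat{K}_{11}=\widehat{A}_{11},\qquad \widehat{K}_{12}=\widehat{A}_{12},\qquad \widehat{K}_{22}=\widehat{A}_{22}+E,
\end{equation*}
where $E(k,\ell)$ is $t$-independent and carries over unchanged.

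The main obstacle is not analyticity itself, which is essentially a consequence of holomorphy under the integral sign, but the bookkeeping required to simultaneously preserve two properties of the contours: they must avoid every $t$-dependent singularity (including $z,w=1/t$, which approaches $s$ as $t\to 1/s$), and they must be chosen with radii respectively strictly greater than and strictly less than $1$ so that the kernel entries inherit exponential decay in $k,\ell$ usable in Hadamard's inequality (Lemma \ref{Hadamard}) when establishing Fredholm Pfaffian convergence later. The inequalities $\sqrt{q}<s<1<1/s<1/\sqrt{q}$ and the exclusion $t\neq 1/r$ guarantee that a uniform choice of contours works throughout a neighborhood of $t=1/s$, and the decay estimates obtained will feed directly into the subsequent analysis of $\bra{Y_1}(\Id-\overline{G})^{-1}\ket{X_1}$ in \eqref{KeyFormula}.
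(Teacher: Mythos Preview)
Your approach is essentially the same as the paper's: use the decompositions of Lemmas \ref{lem:rewrite11}--\ref{lem:rewrite22}, shrink the $z$- and $w$-contours close to $1/\sqrt{q}$ and $\sqrt{q}$ so that the $t$-dependent poles of the integrand stay outside, and observe that the rank-one corrections outside $A_{ij}$ vanish due to their explicit $(1-st)$ prefactor while the rank-one pieces \emph{inside} $A_{12},A_{22}$ (namely $\ketbra{Q}{f^r}$ and $\ketbra{f^r}{P}-\ketbra{P}{f^r}$) survive with coefficient $\frac{(1-sr)(1-tr)}{(s-r)(t-r)}\to 1$.

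One small slip: the $t$-dependent pole of $T(z)/T(w)$ in the $w$-variable is at $w=t$ (from $1-t/w=0$), not $w=1/t$ as you wrote. Your exclusion argument ``$1/t>\sqrt{q}$'' does not by itself keep $1/t$ outside a small circle around $\sqrt{q}$ (indeed $1/t\to\sqrt{q}$ as $t\to 1/\sqrt{q}$); fortunately the actual pole $w=t>1$ is trivially excluded by any contour of radius less than $1-\sqrt{q}$, so the conclusion stands. The condition $t\neq 1/r$ you impose is also unnecessary here, since $1/r$ appears only in numerators of the coefficients of $\overline{K}$; it becomes relevant only in the later lemmas treating $Y_1\overline{G}^2$.
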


\begin{proof}
We discuss the limit of each entry separately.
$A_{11}$ is analytic because we can take the contour for $z$ to be as close to $1/\sqrt{q}$ and the contour for $w$ to be as close to $\sqrt{q}$ as desired. Since $\lim_{t\rightarrow 1/s}\frac{S(z)T(z)}{S(w)T(w)} = 1$, we get $\lim_{t\rightarrow 1/s}{A}_{11} = \widehat{A}_{11}.$ The other two rank-one terms have limit zero due to the factor $(1-st)$. Hence, $\lim_{t\rightarrow 1/s} \overline{K}_{11} = \widehat{K}_{11}.$

The analyticity of $A_{12}$ and $Q$ follows from the same reason. The factor $\frac{(1-sr)(1-tr)}{(s-r)(t-r)} \rightarrow 1$. The remaining three rank-one terms have limit zero due to the factor $(1-st)$. Hence, $\lim_{t\rightarrow 1/s} \overline{K}_{12} = \widehat{K}_{12}.$
For $A_{22}$, the analyticity of $P$ and $B$ follow by choosing the contours close enough to the poles. The term $E$ remains unchanged under the limit and the other four rank-one terms vanish. Hence, $\lim_{t\rightarrow 1/s} \overline{K}_{22} = \widehat{K}_{22}.$
\end{proof}

\begin{prop}\label{Fredconv}
    For any fixed $\sqrt{q}\in (0,1),$ $s\in (\sqrt{q},1)$, $r\in (0,s)\cup(s,1/s)$, $\mathrm{Pf}(J - \overline{K})$ is analytic for $t \in (\max(r,1),1/\sqrt{q})$, and there is a well-defined limit
    \begin{equation}
        \lim_{t\rightarrow 1/s} \mathrm{Pf}(J - \overline{K})_{\ell^2(\{d+1,d+2,\dots\})} = \mathrm{Pf}(J - \widehat{K})_{\ell^2(\{d+1,d+2,\dots\})}.
    \end{equation}
\end{prop}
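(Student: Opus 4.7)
The plan is to prove both analyticity and convergence through a single dominated-convergence argument applied to the Fredholm Pfaffian series expansion, with Hadamard's inequality (Lemma \ref{Hadamard}) providing the dominant and Lemma \ref{lem:limitQ} providing pointwise convergence of each entry.

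First, I would fix a small $\epsilon > 0$ and restrict $t$ to a compact subinterval $[\max(r,1)+\epsilon, 1/\sqrt{q}-\epsilon]$. On such an interval, all contours defining $A_{11}, A_{12}, A_{22}, P, Q, B$ can be chosen at a fixed positive distance from their poles (for instance $|w| = \sqrt{q}+\epsilon'$ and $|z| = 1/\sqrt{q}-\epsilon'$ for small $\epsilon'$), so each integrand is uniformly bounded in $t$. Bounding the contour integrals by sup-times-length then yields exponential estimates of Hadamard type,
\begin{equation*}
|\overline{K}_{11}(k,\ell)| \leq C e^{-\alpha(k+\ell)}, \qquad |\overline{K}_{12}(k,\ell)| \leq C e^{-\alpha k + \beta \ell}, \qquad |\overline{K}_{22}(k,\ell)| \leq C e^{\beta(k+\ell)},
\end{equation*}
where $\alpha = -\log(\sqrt{q}+\epsilon')$ and $\beta = \log(\sqrt{q}+\epsilon')$ come from the $z$- and $w$-contour radii (so $\alpha > \beta$). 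The rank-one correction terms from Lemmas \ref{lem:rewrite11}--\ref{lem:rewrite22} are easily absorbed: the coefficient $(1-st)$ stays bounded, and each of the auxiliary functions $f^s, f^t, g_1, d_2, G_t, R_t$, etc., admits the same type of exponential bound (the worst growth, from $f^t(k) = t^{k+1}/H(t)$, is no faster than $(1/\sqrt{q}-\epsilon)^k$, hence still compatible with $\beta$). Finally, $E(k,\ell) = -\sgn(k-\ell) r^{|k-\ell|-1}$ is bounded by $\max(1, r^{k+\ell-1})$, contributing $\beta \geq \log r$ when $r > 1$.

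With these uniform bounds, Hadamard's inequality gives $\bigl|\mathrm{Pf}(\overline{K}(x_i,x_j))_{i,j=1}^n\bigr| \leq (2n)^{n/2} C^n \prod_i e^{-(\alpha-\beta)x_i}$, so the Fredholm Pfaffian series converges absolutely and uniformly in $t$ on the compact subinterval, with dominant $\sum_{n\geq 1} \frac{(2n)^{n/2} C^n}{n!} \bigl(\sum_{k\geq d+1} e^{-(\alpha-\beta)k}\bigr)^n$. Each Pfaffian term is a polynomial in the kernel entries, and each entry is analytic in $t$ (since the contours deform analytically and no pole is crossed in the stated range), so the whole series defines an analytic function on $(\max(r,1),1/\sqrt{q})$. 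Pointwise convergence of each entry via Lemma \ref{lem:limitQ}, combined with the uniform dominant, then lets us exchange the limit $t \to 1/s$ with the infinite sum, yielding $\mathrm{Pf}(J - \overline{K}) \to \mathrm{Pf}(J - \widehat{K})$.

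The main obstacle I anticipate is ensuring $\alpha > \beta$ uniformly in $t$ near $1/s$, which requires carefully balancing the growth of the $t$-dependent rank-one terms (notably $f^t(k)$) against the decay produced by pushing $|z|$ close to $1/\sqrt{q}$. The freedom to keep $|z|$ strictly less than $1/\sqrt{q}$ is exactly what the hypothesis $t < 1/\sqrt{q}$ provides; similarly, the lower bound $t > \max(r,1)$ is what keeps the $w$-contour at radius $\sqrt{q}+\epsilon'$ separated from the poles at $r$, $1$, and $t^{-1}$. Once these contour-admissibility checks are verified, both analyticity on $(\max(r,1),1/\sqrt{q})$ and the limit at $t = 1/s$ follow from the same Hadamard-dominated series estimate.
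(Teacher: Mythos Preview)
Your overall strategy --- uniform Hadamard-type bounds on $\overline{K}$, then dominated convergence combined with the pointwise limits of Lemma~\ref{lem:limitQ} --- is exactly the paper's, and it succeeds. However, there is a concrete confusion about which functions actually sit inside $\overline{K}$, and it makes your stated exponents wrong.

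By construction (Lemmas~\ref{lem:rewrite11}--\ref{lem:rewrite22}), the $f^t$-terms are precisely what has been \emph{removed} from $K$ to form $\overline{K}$; they live in the rank-two piece $(1-st)\bigl(\ketbra{X_1}{Y_1}+\ketbra{X_2}{Y_2}\bigr)$, not in $\overline{K}$. Likewise $g_1,d_2$ are not in $\overline{K}$. The rank-one corrections that \emph{do} remain in $\overline{K}$ involve $f^s$, $f^r$, $G_t$, $R_t$, $P$, $Q$; the last four are contour integrals with poles only at $\sqrt{q}$ or $1/\sqrt{q}$, so they decay like $(\sqrt{q}+\epsilon')^k$ uniformly in $t$. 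It is $f^s(k)\sim s^{k}$ and $f^r(k)\sim r^{k}$ (together with $E(k,\ell)=-\sgn(k-\ell)r^{|k-\ell|-1}$) that set the true rates. Hence your exponents $\alpha=-\log(\sqrt{q}+\epsilon')$, $\beta=\log(\sqrt{q}+\epsilon')$ are too optimistic: the $f^s$-pieces in $\overline{K}_{11}$ force $\alpha$ down to $-\log s$, while $f^r$ and $E$ in $\overline{K}_{22}$ force $\beta$ up to $\max(0,\log r)$. These are exactly the bounds \eqref{11UpperBound}--\eqref{22UpperBound} in the paper. The Hadamard gap $\alpha-\beta$ is then $-\log s>0$ when $r\le 1$ and $-\log(rs)>0$ when $r\in(1,1/s)$, both guaranteed by the standing hypotheses $s<1$ and $r<1/s$. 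Your anticipated ``main obstacle'' --- balancing $f^t$ against the $1/\sqrt{q}$-contour --- therefore never arises; the genuine balance is between $s$ and $r$, and it is settled by the parameter constraints, not by any delicate contour choice. Once you correct the bounds to these, the rest of your argument (uniform absolute convergence, analyticity termwise, dominated convergence for the limit $t\to 1/s$) goes through verbatim.
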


\begin{proof}
Fix $0<\epsilon \ll 1$ such that   $1/s < 1/\sqrt{q}- \epsilon/2$ and $\sqrt{q} + \epsilon/2 < s.$ We take the contour for $z$ to be $|z-1/\sqrt{q}| = \epsilon/2$ and the contours for $w$ to be $|w-\sqrt{q}| = \epsilon/2$. Then we know that there exists a constant $C$ independent of $k,\ell$ such that the following bounds hold uniformly in $t\in [\max(r,1)+\epsilon,1/\sqrt{q}-\epsilon]$.
For $11$-entry, we have the upper bound \begin{equation}\label{11UpperBound}
    |\overline{K}_{11}(k,\ell)| \leq Ce^{(k+\ell)\log(s)},
\end{equation}
since
    \begin{equation} \label{UpperBoundonQ}
        \begin{aligned}
            &|A_{11}(k,\ell)| \leq Ce^{-k\log(1/\sqrt{q} - \epsilon/2) + \ell\log(\sqrt{q}+\epsilon/2)},\\
            &\big|\mathsmaller{(1-st)\frac{(1-tr)}{(t-s)}}\ketbra{f^s}{G_t}\big| \leq Ce^{k\log(s) -\ell \log(1/\sqrt{q} - \epsilon/2)}, \\
            &\big|\mathsmaller{(1-st)\frac{(1-tr)}{(t-s)}}\ketbra{G_t}{f^s}\big| \leq Ce^{ -k \log(1/\sqrt{q} - \epsilon/2) + \ell\log(s)}.\\
        \end{aligned}
    \end{equation}
    For $12$-entry, we have the upper bound
    \begin{equation}\label{12UpperBound}
        |\overline{K}_{12}(k,\ell)| \leq Ce^{k\log(s) + \ell\log(\max(r,s))},
    \end{equation}
    since
    \begin{equation}
        \begin{aligned}
            &|A_{12}(k,\ell)| \leq Ce^{-k\log(1/\sqrt{q}- \epsilon/2) + \ell\log(\sqrt{q}+\epsilon/2)} + Ce^{ - k \log(1/\sqrt{q} - \epsilon/2) + \ell\log(r)},\\
            &\big|(1-st)\left(\mathsmaller{\frac{(1-tr)(1-sr)}{(t-r)(t-s)}}\ketbra{f^s}{f^r} + \mathsmaller{\frac{(1-sr)}{(t-s)}}\ketbra{f^s}{R_t}\right)\big| \leq Ce^{k\log(s) + \ell\log(\max(r, \sqrt{q}+\epsilon/2))},\\
            &\big|(1-st)\mathsmaller{\frac{(1-s^2)}{(t-s)(s-r)}}\ketbra{G_t}{f^s}\big| \leq Ce^{-k\log(1/\sqrt{q} - \epsilon/2) + \ell\log(s)}.
        \end{aligned}
    \end{equation}
    For $22$-entry, we have the upper bound
    \begin{equation}\label{22UpperBound}
        |\overline{K}_{22}(k,\ell)| \leq \begin{cases}
            C &\text{ if } r\in (0,s)\cup(s,1],\\
            Ce^{(k+\ell)\log(r)}  &\text{ if } r\in (1,1/s),
        \end{cases}
    \end{equation}
    since
    \begin{equation}
        \begin{aligned}
            &|E(k,\ell)| \leq \mathsmaller{\begin{cases}
                C &\text{if } r \in (0,s)\cup(s,1], \\
                Ce^{(k+\ell) \log(r)} &\text{if } r \in (1,1/s),
            \end{cases}}\\
            &|A_{22}(k,\ell)| \leq Ce^{-k\log(1/\sqrt{q}- \epsilon/2) + \ell\log(\sqrt{q}+\epsilon/2)} + Ce^{k\log(r) + \ell\log(\sqrt{q}+\epsilon/2)}+Ce^{k\log(\sqrt{q}+\epsilon/2) + \ell\log(r)},\\
            &\big|(1-st)\mathsmaller{\frac{(1-s^2)}{(t-s)(t-r)}}(\ketbra{f^s}{R_t} - \ketbra{R_t}{f^s})\big| \leq Ce^{k\log(s) + \ell\log(\sqrt{q}+\epsilon/2)} + Ce^{k\log(\sqrt{q}+\epsilon/2) + \ell\log(s)},\\
            &\big|(1-st)\mathsmaller{\frac{(1-tr)(1-s^2)}{(t-s)(s-r)(t-r)}}(\ketbra{f^s}{f^r} - \ketbra{f^r}{f^s})\big| \leq Ce^{k\log(s) + \ell\log(r)} + Ce^{k\log(r) + \ell\log(s)}.
        \end{aligned}
    \end{equation}
    Then Hadamard's bound, Lemma \ref{Hadamard}, gives
    \begin{equation}
        \begin{aligned}
            |\mathrm{Pf} ({K(k_i,k_j)})_{i,j=1}^n| \leq \begin{cases}
                (2n)^{n/2}C^n\prod_{i=1}^n e^{\log(s)k_i} &\text{if } r\in (0,s)\cup(s,1],\\
                (2n)^{n/2}C^n\prod_{i=1}^n e^{(\log(s) + \log(r))k_i} &\text{ if }r\in (1,1/s).
            \end{cases}
        \end{aligned}
    \end{equation}
    Then we know that
    \begin{equation}
        \begin{aligned}
            \mathrm{Pf}(J-\overline{K})_{\ell^2(\{d+1,d+2,\dots\})} &= 1+ \sum_{n=1}^{\infty} \frac{(-1)^n}{n!} \sum_{k_1 = d+1}^{\infty} \cdots \sum_{k_n = d+1}^{\infty} \mathrm{Pf} ({\overline{K}(k_i,k_j)})_{i,j=1}^n\\
        \end{aligned}
    \end{equation}
    is absolutely convergent and its limit is obtained by Lemma $\ref{lem:limitQ}$ and dominated convergence theorem.
\end{proof}

\subsection{Analyticity of the term $(1-st)^{-1} - \braket{Y_1}{X_1} - \brabarket{Y_1}{\overline{G}}{X_1}$}
We further expand two terms to see where the explosion occurs. Let $\mathcal{K}_{22} = \overline{K}_{22}(k,\ell) - E(k,\ell).$ Recall the definition of $a,b$ in $\eqref{constant,a,b}$.
\begin{equation}\label{firstTerm}
\begin{aligned}
    \brabarket{Y_1}{\overline{G}}{X_1} = &-a\brabarket{f^t}{\overline{K}_{21}}{d_2} + b\brabarket{f^t}{\overline{K}_{11}}{d_2} - a\brabarket{f^t}{\mathcal{K}_{22}}{g_1} + b\brabarket{f^t}{\overline{K}_{12}}{g_1}\\
    &-a\brabarket{f^t}{E}{G_s} + a(1-sr)\brabarket{f^t}{E}{f^s}.
\end{aligned}
\end{equation}
\begin{equation}\label{secondTerm}
\begin{aligned}
    \braket{Y_1}{X_1} &= a\braket{f^t}{d_2} + b\braket{f^t}{g_1}\\
    &= a\frac{(1-s^2)}{(s-r)}\braket{f^t}{f^s} - b(1-sr)\braket{f^t}{f^s} + b\braket{f^t}{G_s} - a\frac{(1-sr)}{(s-r)}\braket{f^t}{f^r} - a\braket{f^t}{R_s}.
\end{aligned}
\end{equation}
Combining \eqref{firstTerm} and \eqref{secondTerm}, we see that
 \begin{equation}
    \begin{aligned}
        &(1-st)^{-1} - \braket{Y_1}{X_1} - \brabarket{Y_1}{\overline{G}}{X_1}\\
        &= (1-st)^{-1} + \left(b(1-sr) - a\frac{(1-s^2)}{(s-r)} \right)\braket{f^t}{f^s} - a(1-sr)\brabarket{f^t}{E}{f^s}\\
        &+a\brabarket{f^t}{\overline{K}_{21}}{d_2} - b\brabarket{f^t}{\overline{K}_{11}}{d_2} + a\brabarket{f^t}{\mathcal{K}_{22}}{g_1} - b\brabarket{f^t}{\overline{K}_{12}}{g_1}\\
        &+a\brabarket{f^t}{E}{G_s} - b\braket{f^t}{G_s} + a\frac{(1-sr)}{(s-r)}\braket{f^t}{f^r} + a\braket{f^t}{R_s}.
    \end{aligned}
\end{equation}

\begin{lem}\label{lem:analytic}
    For any fixed $\sqrt{q}\in (0,1)$, $s\in (\sqrt{q},1),$ $r\in (0,s)\cup (s,1/s)$, the following term has its $t\rightarrow 1/s$ limit
    \begin{equation}
    \begin{aligned}
         &\lim_{t\rightarrow 1/s} (1-st)^{-1} + \left(b(1-sr) - a\frac{(1-s^2)}{(s-r)} \right)\braket{f^t}{f^s} - a(1-sr)\brabarket{f^t}{E}{f^s}\\
         &= (d+2) - (N-2)\frac{\sqrt{q}(1/s+s-2\sqrt{q})}{(1-\sqrt{q}/s)(1-\sqrt{q}s)}-\frac{(1-r^2)s}{(s-r)(1-sr)}.
    \end{aligned}
    \end{equation}
\end{lem}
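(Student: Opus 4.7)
The plan is to compute the three summands in closed form, exploit an algebraic identity to eliminate the $E$-bilinear, and then extract the finite limit from a first-order Taylor expansion around $t=1/s$.

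The starting point is to evaluate the two scalar products explicitly. Using $f^x(k)=x^{k+1}/H(x)$, a geometric sum gives $\braket{f^t}{f^s} = (ts)^{d+2}/[H(t)H(s)(1-ts)]$. For $\brabarket{f^t}{E}{f^s}$, I would split the double sum into $k>\ell$ and $k<\ell$, insert $E(k,\ell) = -\sgn(k-\ell)r^{|k-\ell|-1}$, and evaluate the two resulting double geometric series. The key identity $s(1-tr)-t(1-sr)=s-t$ collapses the answer to
\[
\brabarket{f^t}{E}{f^s} = \frac{(s-t)\braket{f^t}{f^s}}{(1-tr)(1-sr)}.
\]
Substituting $a=(1-tr)/(t-s)$ then yields the pleasant simplification $-a(1-sr)\brabarket{f^t}{E}{f^s}=\braket{f^t}{f^s}$, so the whole expression reduces to $(1-st)^{-1}+(c(t)+1)\braket{f^t}{f^s}$, where $c(t):=b(1-sr)-a(1-s^2)/(s-r)$.

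Next I would verify that the pole at $t=1/s$ is removable. Direct substitution gives $b(1/s)(1-sr)=-1$ and $a(1/s)(1-s^2)/(s-r)=1$, hence $c(1/s)+1=-1$. Combined with the symmetry $H(1/z)=1/H(z)$ (so $H(1/s)H(s)=1$), the term $(c(t)+1)\braket{f^t}{f^s}$ contributes $-(1-st)^{-1}$ to leading order and cancels the pole. To extract the constant term, set $u=1-st$ (so $t-1/s=-u/s$) and Taylor expand: $\braket{f^t}{f^s} = 1/u + [\alpha - (d+2)] + O(u)$ with $\alpha:=(\log H)'(1/s)/s$ arising from $1/[H(t)H(s)] = 1 + \alpha u + O(u^2)$, and $c(t)+1 = -1 - c'(1/s)\,u/s + O(u^2)$. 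Collecting constants, the limit equals $(d+2) - \alpha - c'(1/s)/s$.

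Finally I would compute the two constants explicitly. Logarithmic differentiation of $H(t)=[(1-\sqrt q/t)/(1-\sqrt qt)]^{N-2}$ gives $(\log H)'(1/s) = (N-2)\sqrt q\,[s^2/(1-\sqrt qs) + s/(s-\sqrt q)]$, and combining fractions then dividing by $s$ yields $\alpha = (N-2)\sqrt q(1/s+s-2\sqrt q)/[(1-\sqrt q/s)(1-\sqrt qs)]$, which is the $N$-dependent term on the right-hand side. Likewise, differentiating $a(t)=(1-tr)/(t-s)$ and $b(t)=(1-t^2)/[(t-s)(t-r)]$ at $t=1/s$ and simplifying over the common denominator $(s-r)(1-sr)$ gives $c'(1/s) = s^2(1-r^2)/[(s-r)(1-sr)]$, so that $c'(1/s)/s = (1-r^2)s/[(s-r)(1-sr)]$ is the remaining term, matching the claim. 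The main obstacle I anticipate is the first step: spotting and correctly applying the identity $s(1-tr)-t(1-sr)=s-t$ so that $\brabarket{f^t}{E}{f^s}$ becomes a scalar multiple of $\braket{f^t}{f^s}$; once this collapse is in hand, the remainder is elementary power-series bookkeeping.
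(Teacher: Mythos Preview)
Your proposal is correct and follows essentially the same route as the paper. The paper computes $-a(1-sr)\brabarket{f^t}{E}{f^s}=\frac{(ts)^{d+2}}{(1-st)H(t)H(s)}$ (which is precisely your observation that this equals $\braket{f^t}{f^s}$), writes the remaining coefficient in closed form, and then applies L'H\^opital's rule at $t=1/s$; your Taylor expansion in $u=1-st$ is the same computation in different dress, and your explicit packaging of the identity $s(1-tr)-t(1-sr)=s-t$ is a nice way to state the collapse that the paper leaves implicit.
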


\begin{proof}
    For $t\in (0,\min(1/s,1/r))$, $t\neq s,r$, we have that
    \begin{equation}\label{ft,E,fs}
        \begin{aligned}
            &-a(1-sr)\brabarket{f^t}{E}{f^s} = \frac{1}{(1-st)}\frac{t^{d+2}s^{d+2}}{H(t)H(s)},\\
        \end{aligned}
    \end{equation}
    \begin{equation}
        \begin{aligned}\label{ft,fs}
            &\left(b(1-sr)-a\frac{(1-s^2)}{(s-r)}\right)\braket{f^t}{f^s} = \frac{1}{(1-st)}\left(\frac{(1-t^2)(1-sr)}{(t-s)(t-r)} - \frac{(1-tr)(1-s^2)}{(t-s)(s-r)}\right)\frac{t^{d+2}s^{d+2}}{H(t)H(s)}.
        \end{aligned}
    \end{equation}
    Notice that ${\frac{(1-t^2)(1-sr)}{(t-s)(t-r)}} \rightarrow -1$ and ${-\frac{(1-tr)(1-s^2)}{(t-s)(s-r)}}\rightarrow -1$ as $t\rightarrow 1/s.$ Then taking the $t\rightarrow 1/s$ limit via L'Hôpital's rule gives the 
    \begin{equation}
        \begin{aligned}
            &\lim_{t\rightarrow 1/s}\frac{1}{(1-st)}\left(1+\left(1+ \frac{(1-t^2)(1-sr)}{(t-s)(t-r)} - \frac{(1-tr)(1-s^2)}{(t-s)(s-r)} \right)\frac{(ts)^{d+2}}{H(t)H(s)}\right)\\
            &=\frac{1}{s}\frac{d}{dt}\left(\frac{(ts)^{d+2}}{H(t)H(s)}\right)\bigg|_{t = 1/s} + \frac{1}{(-s)}\frac{d}{dt}\left(\frac{(1-t^2)(1-sr)}{(t-s)(t-r)} - \frac{(1-tr)(1-s^2)}{(t-s)(s-r)}\right)\bigg|_{t=1/s}\\
            &=(d+2) - (N-2)\frac{\sqrt{q}(1/s+s-2\sqrt{q})}{(1-\sqrt{q}/s)(1-\sqrt{q}s)} + \frac{(s-r)s}{(1-s^2)(1-sr)} + \frac{(sr-1)s}{(1-s^2)(s-r)}\\
            &= (d+2) - (N-2)\frac{\sqrt{q}(1/s+s-2\sqrt{q})}{(1-\sqrt{q}/s)(1-\sqrt{q}s)} - \frac{(1-r^2)s}{(s-r)(1-sr)}.
        \end{aligned}
    \end{equation}
\end{proof}

\begin{lem}\label{lem:analytic1}
    For any fixed $\sqrt{q}\in (0,1),$ $s\in (\sqrt{q},1),$ $r\in (0,s)\cup (s,1/s)$, the following terms are analytic for $t \in (\max(r,1),1/\sqrt{q}),$ $t\neq 1/r$ with their $t\rightarrow 1/s$ limits being
    \begin{equation}
        \begin{aligned}
            & \lim_{t\rightarrow 1/s} -b\braket{f^t}{G_s}=\widehat{\mu}_d, \quad   \lim_{t\rightarrow 1/s} a\frac{(1-sr)}{(s-r)}\braket{f^t}{f^r} + a\braket{f^t}{R_s} = \widehat{\nu}_d,\\
            &\lim_{t\rightarrow 1/s} a\brabarket{f^t}{\overline{K}_{21}}{d_2} = \widehat{\mathcal{A}}_d, \quad \lim_{t\rightarrow 1/s} a\brabarket{f^t}{\mathcal{K}_{22}}{g_1} = \widehat{\mathcal{B}}_d, \quad \lim_{t\rightarrow 1/s} a\brabarket{f^t}{E}{G_s} = \widehat{\mathcal{C}}_d,\\
            &\lim_{t\rightarrow 1/s} -b\brabarket{f^t}{\overline{K}_{11}}{d_2} = \widehat{\mathcal{D}}_d, \quad \lim_{t\rightarrow 1/s} -b\brabarket{f^t}{\overline{K}_{12}}{g_1} = \widehat{\mathcal{E}}_d.
        \end{aligned}
    \end{equation}
\end{lem}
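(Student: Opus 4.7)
The plan is to handle each of the seven terms separately, in each case proving both analyticity of the term as a function of $t$ on the domain $D := (\max(r,1), 1/\sqrt{q})\setminus\{1/r\}$ and computing its explicit $t\to 1/s$ limit. The prefactors $a=(1-tr)/(t-s)$ and $b=(1-t^2)/((t-s)(t-r))$ are rational in $t$ and analytic on $D$ because $s<1\leq\max(r,1)$ excludes the pole $t=s$, while $t=1/r$ is excluded by hypothesis and we have $r\neq s$, $rs<1$. Their limits at $t=1/s$ are finite, namely $a\to s(s-r)/(1-s^2)$ and $b\to -1/(1-rs)$.

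For the two single scalar products $-b\braket{f^t}{G_s}$ and $a\frac{(1-sr)}{(s-r)}\braket{f^t}{f^r}+a\braket{f^t}{R_s}$, none of $G_s$, $R_s$, $f^r$ depends on $t$. Since $f^t(k)=t^{k+1}/H(t)$ is analytic in $t$ on $D$ and $G_s, R_s, f^r$ decay exponentially in $k$ at rates $s$, $\sqrt{q}$, $r$ respectively (as can be read off from their contour integral representations in $\eqref{defOfG&R}$), the scalar products converge absolutely and uniformly on compact subsets of $D$. Analyticity on $D$ and the identifications of $\widehat{\mu}_d$ and $\widehat{\nu}_d$ then follow by substituting $t=1/s$ inside the absolutely convergent series.

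For the five double scalar products, the plan is to exploit the decompositions of $\overline{K}_{ij}$ from Lemmas \ref{lem:rewrite11}--\ref{lem:rewrite22}. Each rank-one piece inside $\overline{K}_{ij}$ carries a prefactor $(1-st)$, which is finite for $t\in D$ and vanishes as $t\to 1/s$; after pairing with $a$ or $b$ these contributions remain bounded and the surviving $A_{ij}$ pieces are given by double contour integrals whose contours can be taken uniformly bounded away from $\{1/\sqrt{q},\sqrt{q}\}$ as $t$ ranges over a compact neighborhood of $1/s$. Similarly, $d_2, g_1, G_s$ admit contour integral representations independent of $t$, so the inner sum in $\ell$ can be reexpressed as a contour integral in a $w$-variable and the outer sum in $k$ as a contour integral in $z$. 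The $t\to 1/s$ limit is then obtained termwise by substitution, producing $\widehat{\mathcal{A}}_d,\widehat{\mathcal{B}}_d,\widehat{\mathcal{C}}_d,\widehat{\mathcal{D}}_d,\widehat{\mathcal{E}}_d$.

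The main technical obstacle is justifying the passage of the limit inside the double sum uniformly near $t=1/s$. At the boundary point $st=1$, so the naive pairing of $|f^t(k)|\lesssim t^{k+1}$ with the rank-one bound $Cs^{k+\ell}$ on $\overline{K}_{11}$ from $\eqref{11UpperBound}$ yields $(st)^k=1$, which is not summable in $k$. The resolution is that the rank-one terms are multiplied by $(1-st)$ and therefore remain uniformly bounded in $t$, while the non-rank-one piece $A_{11}$ in fact enjoys the much stronger geometric decay $(\sqrt{q}+\epsilon/2)^{\ell-1}/(1/\sqrt{q}-\epsilon/2)^{k}$ coming from the contour geometry, so that the full product $f^t(k)\,\overline{K}_{11}(k,\ell)\,d_2(\ell)$ is summable by a geometric majorant independent of $t$ in a neighborhood of $1/s$. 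Analogous estimates, using $\eqref{12UpperBound}$--$\eqref{22UpperBound}$ together with the corresponding exponential decay of $d_2,g_1,G_s$, cover the remaining four double sums. Once uniform absolute convergence is established, dominated convergence permits the termwise $t\to 1/s$ substitution, completing the proof.
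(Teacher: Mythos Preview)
Your proposal has a genuine gap in the treatment of the inner product $\braket{f^t}{f^r}$. You claim that $f^r(k)=r^{k+1}/H(r)$ decays exponentially and that therefore $\braket{f^t}{f^r}$ converges absolutely and uniformly on compact subsets of $D$. But convergence requires $tr<1$, which fails on part of $D$ whenever $r>\sqrt{q}$: if $r\in(1,1/s)$ then $D=(r,1/\sqrt{q})$ and every $t\in D$ has $tr>r^2>1$; if $r\in(\sqrt{q},1)$ then the series still diverges on $(1/r,1/\sqrt{q})\subset D$. For $r>s$ the series diverges even at the target point $t=1/s$, so you cannot ``substitute $t=1/s$ inside the absolutely convergent series'' at all. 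Analyticity on the full domain $D$ is essential, not just near $1/s$, since the overall argument analytically continues identity \eqref{KeyFormula} from $t\in(0,1)$ across to $t=1/s$.

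The paper's fix is to first sum the series in closed form for small $t$, obtaining $\braket{f^t}{f^r}=(tr)^{d+2}/\big((1-tr)H(t)H(r)\big)$, and then observe that the prefactor $a=(1-tr)/(t-s)$ cancels the $(1-tr)^{-1}$ pole, leaving an expression analytic on $D$ that can be evaluated at $t=1/s$. The same issue recurs in the five double scalar products: your resolution addresses only the $(1-st)$-cancellation for rank-one pieces producing $\braket{f^t}{f^s}$, but $A_{12}$ and $A_{22}$ also contain rank-one pieces $\ketbra{Q}{f^r}$ and $\ketbra{f^r}{P}$ carrying the prefactor $\frac{(1-sr)(1-tr)}{(s-r)(t-r)}$ rather than $(1-st)$; when transposed into $\overline{K}_{21}$ or paired against $f^t$ in $\mathcal{K}_{22}$, these again produce divergent $\braket{f^t}{f^r}$'s that must be handled by the closed-form-then-continue device, not by direct summation.
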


\begin{proof} Recall the definition of $a,b$ in \eqref{constant,a,b}.
    The arguments for $\widehat{\mu}_d$ and $\widehat{\nu}_d$ are similar. We give the computation for $\widehat{\nu}_d$. For $t \in (0,1/r),$ we have that 
    \begin{equation}
        \begin{aligned}
            &a\frac{(1-sr)}{(s-r)}\braket{f^t}{f^r} + a\braket{f^t}{R_s}\\
            =& a\frac{(1-sr)}{(s-r)}\frac{(tr)^{d+2}}{(1-tr)H(t)H(s)} - a\oint \limits_{\Gamma_{\sqrt{q}}} \frac{dw}{2\pi\I}\frac{(tw)^{d+2}}{(1-tw)H(t)H(w)}\frac{(1-sw)}{(w-s)(w-r)},
        \end{aligned}
    \end{equation}
    which is analytic for $t \in (\max(r,1),1/\sqrt{q})$, $t \neq 1/r$. Then taking $t\rightarrow 1/s$ limit gives $\widehat{\nu}_d$. To get $\widehat{\mu}_d$, we need a change of variable in the end.

    The arguments for $\widehat{\mathcal{A}}_{d}, \widehat{\mathcal{B}}_{d}, \widehat{\mathcal{D}}_{d}, \widehat{\mathcal{E}}_{d}$ are similar. We give the proof for the most complicated term $\widehat{\mathcal{B}}_{d}$. For $t \in (0,\min(1/r,1/s)),$ $t \neq s,r,$ we have that
    \begin{equation}\label{Bformula}
        \begin{aligned}
            a\brabarket{f^t}{\mathcal{K}_{22}}{g_1} &= a\brabarket{f^t}{A_{22}}{g_1} + a(1-st)\frac{(1-s^2)}{(t-s)(s-r)}\left(\braket{f^t}{f^s}\braket{R_t}{g_1} - \braket{f^t}{R_t}\braket{f^s}{g_1}\right)\\
            &+a(1-st)\frac{(1-tr)(1-s^2)}{(t-s)(s-r)(t-r)}\left(\braket{f^t}{f^s}\braket{f^r}{g_1} - \braket{f^t}{f^r}\braket{f^s}{g_1}\right)\\
            &= a\brabarket{f^t}{B}{g_1} + a\frac{(1-sr)}{(s-r)(t-r)}\left(\frac{(tr)^{d+2}\braket{P}{g_1}}{H(t)H(r)} - (1-tr)\braket{f^t}{P}\braket{f^r}{g_1}\right) +\xi+\tau,
        \end{aligned}
    \end{equation}
    where we used the definition of $A_{22}$ in the second equality and
    \begin{equation}
    \begin{aligned}
        \xi &= a\frac{(1-s^2)}{(t-s)(s-r)}\frac{(ts)^{d+2}}{H(t)H(s)}\braket{R_t}{g_1}
        +a\frac{(1-tr)(1-s^2)}{(t-s)(s-r)(t-r)}\frac{(ts)^{d+2}}{H(t)H(s)}\braket{f^r}{g_1},\\
        \tau &= - a(1-st)\frac{(1-s^2)}{(t-s)(s-r)}\braket{f^t}{R_t}\braket{f^s}{g_1} - a(1-st)\frac{(1-s^2)}{(t-s)(s-r)(t-r)}\frac{(tr)^{d+2}\braket{f^s}{g_1}}{H(t)H(r)}.
    \end{aligned}
    \end{equation}
    The right hand side of $\eqref{Bformula}$ is analytic for $t\in (\max(r,1),1/\sqrt{q})$, $t\neq 1/r,$ and we take the $t\rightarrow 1/s$ limit of each term:
    \begin{equation}\label{Bpart1}
        \begin{aligned}
            &\lim_{t\rightarrow 1/s} \tau = 0, \quad \lim_{t\rightarrow 1/s} \xi = \frac{s}{(1-s^2)}\braket{R_{1/s}}{g_1} + \frac{s(s-r)}{(1-s^2)(1-sr)}\braket{f^r}{g_1},
        \end{aligned}
    \end{equation}
    \begin{equation}\label{Bpart2}
        \begin{aligned}
            &\lim_{t\rightarrow 1/s} a\brabarket{f^t}{B}{g_1} + a\frac{(1-sr)}{(s-r)(t-r)}\left(\frac{(tr)^{d+2}\braket{P}{g_1}}{H(t)H(r)} - (1-tr)\braket{f^t}{P}\braket{f^r}{g_1}\right)\\
            &= \frac{(s-r)}{(1-s^2)}\brabarket{f^{1/s}}{\widehat{B}}{g_1} + \frac{1}{(1-s^2)} \frac{r^{d+2}H(s)}{s^{d+1}H(r)}\braket{\widehat{P}}{g_1} - \frac{(s-r)}{(1-s^2)} \braket{f^{1/s}}{\widehat{P}}\braket{f^r}{g_1}.
        \end{aligned}
    \end{equation}
    To justify the convergence of bracket, we use the term $\brabarket{f^t}{B}{g_1}$ to give an example. Other bracket terms follow a similar argument. Choose $\epsilon \ll 1$ such that $\sqrt{q} + \epsilon/2 < 1/s <1/\sqrt{q} -3\epsilon/2$. For $B$ defined in \eqref{PQB}, we take the contour for $z$ to be $|z-1/\sqrt{q}| = \epsilon/2$ and the contour for $w$ to be $|w=\sqrt{q}| = \epsilon/2.$ Then for $t \leq 1/s+\epsilon,$ there exists a constant $C$ independent of $k,\ell$ such that
    \begin{equation}
        \brabarket{f^t}{B}{g_1} = \sum_{k= d+1}^{\infty}\sum_{\ell = d+1}^{\infty} f^t(k)B(k,\ell)g_1(l) \leq C\sum_{k= d+1}^{\infty}\sum_{\ell = d+1}^{\infty} \left(\frac{(1/s + \epsilon)}{(1/\sqrt{q} - \epsilon/2)}\right)^{k}\left((\sqrt{q}+\epsilon/2)s\right)^{\ell} <\infty.
    \end{equation}
    Then by dominated convergence theorem, we can take the limit inside the summation. It is easy to see that $\lim_{t\rightarrow 1/s}B = \widehat{B}$. Combining $\eqref{Bpart1}$ and $\eqref{Bpart2}$ gives
    $\lim_{t\rightarrow 1/s} a\brabarket{f^t}{\mathcal{K}_{22}}{g_1} = \widehat{\mathcal{B}}_{d}.$ To get $\widehat{\mathcal{A}}_{d}$, we need to use $\lim_{t\rightarrow 1/s}P = \widehat{P}$, $\lim_{t\rightarrow 1/s}Q = \widehat{Q}$ and also the following limit:
    \begin{equation}
        \begin{aligned}
            &a\bra{f^t}{A}_{21}(\ell) = a\oint\limits_{\Gamma_{\sqrt{q}}}\frac{dw}{2\pi\I}\oint \limits_{\Gamma_{1/\sqrt{q}}}\frac{dz}{2\pi\I} \frac{t^2(tw)^d}{H(t)(1-tw)z^{\ell}}\frac{H(z)S(z)T(z)}{H(w)S(w)T(w)}\frac{(zw-1)(z-r)}{(z^2-1)(w-r)(z-w)}\\
            &- a\frac{(1-sr)}{(s-r)(t-r)}\frac{(tr)^{d+2}}{H(t)H(r)}Q(\ell) \rightarrow \frac{(s-r)}{(1-s^2)}\frac{H(s)}{s^{d+1}}\widehat{\mathsf{J}}(\ell) - \frac{1}{(1-s^2)}\frac{r^{d+2}H(s)}{s^{d+1}H(r)}\widehat{Q}(\ell).
        \end{aligned}
    \end{equation}

    Lastly, for $t \in (0,1/r)$, $t\neq s$, direct computation gives
    \begin{equation}
        \begin{aligned}
            a\brabarket{f^t}{E}{G_s} &= \frac{t^{d+2}}{(t-s)H(t)}\oint \limits_{\Gamma_{1/\sqrt{q}}} \frac{dz}{2\pi\I} \frac{H(z)}{z^{d+2}}\frac{(z-s)(1-tz)}{(1-sz)(z^2-1)(z-t)}\\
            &\rightarrow \frac{1}{(1-s^2)}\frac{H(s)}{s^{d+1}}\oint \limits_{\Gamma_{1/\sqrt{q}}}\frac{dz}{2\pi\I}\frac{H(z)}{z^{d+2}}\frac{(z-s)^2}{(1-sz)^2(z^2-1)} = \widehat{\mathcal{C}}_{d}.
        \end{aligned}
    \end{equation}
\end{proof}

\subsection{Analyticity of the difference of Pfaffians}
We first get the expansion of all terms in $Y_1\overline{G}^2.$ See $\eqref{cdot}$ for the definition of $\cdot$ operator. Recall definitions of $a,b$ in \eqref{constant,a,b}.
\begin{equation}
    \begin{aligned}\label{V_1V_2prelimit}
       \begin{pmatrix}
       (Y_1\overline{G}^2)_1 &(Y_1\overline{G}^2)_2\end{pmatrix}= \begin{pmatrix}
            a\bra{f^t}\overline{K}_{21}\cdot\overline{K}_{21} - a\bra{f^t}\overline{K}_{22}\cdot\overline{K}_{11} - b\bra{f^t}\overline{K}_{11}\cdot\overline{K}_{21} + b\bra{f^t}\overline{K}_{12}\cdot\overline{K}_{11}\\
            a\bra{f^t}\overline{K}_{21}\cdot\overline{K}_{22} - a\bra{f^t}\overline{K}_{22}\cdot\overline{K}_{12} - b\bra{f^t}\overline{K}_{11}\cdot\overline{K}_{22} + b\bra{f^t}\overline{K}_{12}\cdot\overline{K}_{12}
        \end{pmatrix}^{T}.
    \end{aligned}
\end{equation}

\begin{lem}\label{V_1,limit}
    For any fixed $r,s,\sqrt{q}$ that that satisfies $\sqrt{q}\in(0,1), s\in (\sqrt{q},1), r \in (0,s)\cup (s,1/s)$, $(Y_1\overline{G}^2)_1$ is analytic for $t\in (\max(1,r),1/\sqrt{q}),$ $t\neq 1/r$. Its $t\rightarrow 1/s$ limit is
    $$\lim_{t\rightarrow 1/s} (Y_1\overline{G}^2)_1 = \widehat{V}_1.$$
\end{lem}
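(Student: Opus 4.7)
The strategy is to reduce the analyticity and convergence of $(Y_1\overline{G}^2)_1$ to that of each constituent piece by fully expanding the decompositions of the kernel entries. Substituting the expressions for $\overline{K}_{11}, \overline{K}_{12}, \overline{K}_{21}, \overline{K}_{22}$ from Lemmas \ref{lem:rewrite11}, \ref{lem:rewrite12}, and \ref{lem:rewrite22} into the four products $\bra{f^t}\overline{K}_{21}\cdot\overline{K}_{21}$, $\bra{f^t}\overline{K}_{22}\cdot\overline{K}_{11}$, $\bra{f^t}\overline{K}_{11}\cdot\overline{K}_{21}$, and $\bra{f^t}\overline{K}_{12}\cdot\overline{K}_{11}$ expresses $(Y_1\overline{G}^2)_1$ as a finite sum of terms in which the intermediate kernel is either one of the double-contour pieces $A_{ij}$, $B$, the explicit term $E$, or a rank-one piece of the form $(1-st)\cdot\ketbra{u}{v}$ with $u,v\in\{f^t,f^s,f^r,G_t,R_t,P,Q\}$.

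For each resulting term, I would establish analyticity on $(\max(r,1),1/\sqrt{q})\setminus\{1/r\}$ as follows. First, rank-one pieces of $\overline{K}_{11},\overline{K}_{12},\overline{K}_{22}$ that contain $f^s$ always carry an explicit $(1-st)$ prefactor, so when paired with $\bra{f^t}$ the singular bracket $\braket{f^t}{f^s}=(ts)^{d+2}/((1-ts)H(t)H(s))$ is multiplied by $(1-st)$ and becomes analytic at $t=1/s$. The double-integral parts $A_{ij}$ and $B$ admit contour representations that remain valid on the entire domain once contours are chosen near $1/\sqrt{q}$ and $\sqrt{q}$, so their pairings with $\bra{f^t}$ and the adjacent kernel produce convergent sums bounded by a geometric factor $(t(\sqrt{q}+\varepsilon/2))^k$, which is $<1$ uniformly on compact subsets of the domain. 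The remaining closed-form piece $E(k,\ell)=-\sgn(k-\ell)r^{|k-\ell|-1}$ needs care: when $tr>1$, the sum $\bra{f^t}E(\ell)$ diverges termwise, but one replaces it by the contour-integral representation used implicitly in Lemma \ref{lem:rewrite22} (the residue computation for the $E$ term), which provides the required analytic continuation past the line $tr=1$, with the excluded point $t=1/r$ being the only singularity. The prefactors $a=(1-tr)/(t-s)$ and $b=(1-t^2)/((t-s)(t-r))$ are analytic on the domain since $s<1\leq\max(r,1)$ keeps us away from $t=s$, and $t\neq r$ is built into $\max(r,1)$; the pole of $b$ at $t=r$ is excluded, and $t=1/r$ is excluded by hypothesis.

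Having established analyticity, I would take the pointwise $t\to 1/s$ limit term by term. For the double-integral pieces and bracket pairings one applies Lemma \ref{lem:limitQ} together with uniform bounds analogous to \eqref{11UpperBound}, \eqref{12UpperBound}, \eqref{22UpperBound} to justify passing the limit through the (finite) intermediate sums via dominated convergence, with $\sqrt{q}\cdot s<1$ and the decay of the contour-integral kernels providing summable majorants. For rank-one pieces carrying $(1-st)$, the product $(1-st)\braket{f^t}{f^s}\to (s/s)^{d+2}/H(1/s)H(s)=s^{-(d+2)}H(s)^{-1}\cdot s^{d+2}H(s)^{-1}$ is explicit; for products involving $\braket{f^t}{f^r}$, $\braket{f^t}{G_t}$, $\braket{f^t}{R_t}$, $\braket{f^t}{P}$, $\braket{f^t}{Q}$, the limits were essentially identified in the proof of Lemma \ref{lem:analytic1}. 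Reorganizing these limits according to the four products in \eqref{V_1V_2prelimit} yields exactly the expression $\widehat{V}_1$ from Appendix \ref{appendixC}.

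The main obstacle is not conceptual but combinatorial: the expansion of $(Y_1\overline{G}^2)_1$ generates a large number of terms (each $\overline{K}_{ij}$ contains up to five pieces, giving on the order of tens of summands per product), and the work lies in verifying systematically, term by term, that (a) each piece is individually analytic at $t=1/s$ thanks to the $(1-st)$-factors built into the rank-one corrections, (b) the contour-integral representations of $\bra{f^t}A_{ij}\cdot A_{kl}$ and $\bra{f^t}E\cdot\overline{K}_{ij}$ survive the analytic continuation across $ts=1$ and $tr=1$, and (c) the limit matches the prescribed form of $\widehat{V}_1$ after the many cancellations between the four signed contributions prefactored by $a$ and $b$.
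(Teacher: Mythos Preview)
Your approach is essentially the same as the paper's: expand each of the four products in \eqref{V_1V_2prelimit} using the rank-one plus double-contour decompositions of $\overline{K}_{ij}$, verify term by term that the $(1-st)$ prefactors neutralize the singular brackets involving $f^t$ and $f^s$, and then pass to the limit using the bounds and pointwise limits already established for the building blocks. The paper carries out exactly this calculation, writing out the full expansion for the most involved product $a\bra{f^t}\overline{K}_{22}\cdot\overline{K}_{11}$ and listing for the other three which pieces survive.

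One small correction: the analytic continuation of $\bra{f^t}E$ past $tr=1$ is not obtained in the paper via a contour-integral representation but simply by summing the geometric series for $tr<1$ to the closed form in \eqref{ftE} and using that formula as the continuation. More importantly, make sure you also track the second source of $(1-st)^{-1}$ singularity, namely $\brabarket{f^t}{E}{f^s}$ (see \eqref{ft,E,fs}); your discussion singles out $\braket{f^t}{f^s}$ but the $E$-mediated bracket is equally dangerous and is what makes the term $(1-st)\frac{(1-sr)}{(t-s)}\brabarket{f^t}{E}{f^s}G_t$ survive in the limit as the $\frac{s}{(1-s^2)}G_{1/s}$ contribution to $\widehat{V}_1$.
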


\begin{proof}
All four terms in $\eqref{V_1V_2prelimit}$ follow a similar proof. We provide detailed computations for the most complicated term $a\bra{f^t}\overline{K}_{22}\cdot\overline{K}_{11}$. For the other three terms, we explain which terms, after expansion, contribute to the limit and which terms vanish as $t\rightarrow 1/s$.
For $t \in (0,\min(1/s,1/r)),$ $t\neq r,s,$ we have
\begin{equation}\label{expand22,11}
\begin{aligned}
&\bra{f^t}\overline{K}_{22}\cdot\overline{K}_{11} \\
&= \bra{f^t}A_{22}\cdot A_{11} + \bra{f^t}E\cdot A_{11} + (1-st)\mathsmaller{\frac{(1-s^2)}{(t-s)(s-r)}}\left(\braket{f^t}{f^s}\bra{R_t}A_{11} - \braket{f^t}{R_t} \bra{f^s}A_{11}\right)\\
&+(1-st)\mathsmaller{\frac{(1-tr)(1-s^2)}{(t-s)(s-r)(t-r)}}\left(\braket{f^t}{f^s}\bra{f^r}A_{11} - \braket{f^t}{f^r}\bra{f^s}A_{11}\right)\\
&+ (1-st)\mathsmaller{\frac{(1-sr)}{(t-s)}}\left(\brabarket{f^t}{E}{f^s}G_t - \brabarket{f^t}{E}{G_t}f^s\right)\\
&+(1-st)^2\mathsmaller{\frac{(1-sr)}{(t-s)}}\mathsmaller{\frac{(1-s^2)}{(t-s)(s-r)}}\left(\braket{f^t}{f^s}\braket{R_t}{f^s}G_t - \braket{f^t}{R_t}\braket{f^s}{f^s}G_t\right)\\
&+(1-st)^2\mathsmaller{\frac{(1-sr)}{(t-s)}}\mathsmaller{\frac{(1-s^2)}{(t-s)(s-r)}}\left(-\braket{f^t}{f^s}\braket{R_t}{G_t}f^s +\braket{f^t}{R_t}\braket{f^s}{G_t}f^s \right)\\
&+(1-st)^2\mathsmaller{\frac{(1-tr)(1-s^2)}{(t-s)(s-r)(t-r)}}\mathsmaller{\frac{(1-sr)}{(t-s)}}\left( \braket{f^t}{f^s}\braket{f^r}{f^s}G_t - \braket{f^t}{f^r}\braket{f^s}{f^s}G_t \right)\\
&+(1-st)^2\mathsmaller{\frac{(1-tr)(1-s^2)}{(t-s)(s-r)(t-r)}}\mathsmaller{\frac{(1-sr)}{(t-s)}}\left( -\braket{f^t}{f^s}\braket{f^r}{G_t}f^s + \braket{f^t}{f^r} \braket{f^s}{G_t}f^s\right),
\end{aligned}
\end{equation}
where $\braket{f^t}{f^s}$ is an abbreviation for $\frac{(ts)^{d+2}}{(1-ts)H(t)H(s)}$ and $\braket{f^t}{f^r}$ is an abbreviation for $\frac{(tr)^{d+2}}{(1-tr)H(t)H(r)}$. After canceling the factor $(1-st)^{-1}$, the above formula is analytic for $t\in (\max(1,r),1/\sqrt{q}),$ $t\neq 1/r$. If a term does not contain a factor of $(1-st)$, or if its $(1-st)$ factor is canceled by terms such as $\braket{f^t}{f^s}$ or $\brabarket{f^t}{E}{f^s}$, then this term survives in the limit. Hence, only the following terms remain in the limit. Let $*$ denote the usual multiplication.
\begin{equation}\label{limit22,11}
\begin{aligned}
    &\lim_{t\rightarrow 1/s}(-a)*\eqref{expand22,11}\\ &= \lim_{t\rightarrow 1/s} -a\bra{f^t}A_{22}\cdot A_{11} - a\bra{f^t}E\cdot A_{11} - a{\frac{(1-s^2)}{(t-s)(s-r)}}\frac{(ts)^{d+2}}{H(t)H(s)}\bra{R_t}A_{11}\\
    &-a{\frac{(1-tr)(1-s^2)}{(t-s)(s-r)(t-r)}}\frac{(ts)^{d+2}}{H(t)H(s)}\bra{f^r}A_{11} -a(1-st)\frac{(1-sr)}{(t-s)}\brabarket{f^t}{E}{f^s}G_t\\
    &= -\frac{(s-r)}{(1-s^2)}\bra{f^{1/s}}\widehat{B}\cdot \widehat{A}_{11} -\frac{1}{(1-s^2)}\frac{r^{d+2}H(s)}{s^{d+1}H(r)}\bra{\widehat{P}}\widehat{A}_{11} + \frac{(s-r)}{(1-s^2)}\braket{f^{1/s}}{\widehat{P}}\bra{f^r}\widehat{A}_{11}\\
    &+\frac{(s-r)H(s)}{(1-s^2)(1-sr)}\bra{\left(\frac{r^{k-d-1}}{s^{d+1}} + \frac{(1-s^2)}{(s-r)s^{k+1}}\right)}\widehat{A}_{11}-\frac{s}{(1-s^2)}\bra{R_{1/s}}\widehat{A}_{11}\\&-\frac{s(s-r)}{(1-sr)(1-s^2)}\bra{f^r}\widehat{A}_{11} + \frac{s}{(1-s^2)}G_{1/s},
\end{aligned}
\end{equation}
where
\begin{equation}\label{ftE}
    \begin{aligned}
        \bra{f^t}E(k) = \frac{t^{d+2}r^{k-d-1} - t^{k+1}}{(r-t)H(t)} - \frac{t^{k+2}}{(1-tr)H(t)}\rightarrow -\frac{H(s)}{(1-sr)}\left(\frac{r^{k-d-1}}{s^{d+1}} + \frac{(1-s^2)}{(s-r)s^{k+1}}\right),\\
    \end{aligned}
\end{equation}
and $\brabarket{f^t}{E}{f^s}$ is computed in $\eqref{ft,E,fs}$. The limits of the bracket terms follow from their upper bounds and the application of the dominated convergence theorem.
All terms in $\eqref{limit22,11}$ can be found as partial components of $\widehat{V}_{1}$.

For $a\bra{f^t}\overline{K}_{21}\cdot\overline{K}_{21},$ only the following terms contribute to the limit.
\begin{equation}\label{limit21,21}
    \begin{aligned}
        \lim_{t\rightarrow 1/s} a\bra{f^t}\overline{K}_{21}\cdot\overline{K}_{21} &= \lim_{t\rightarrow 1/s} a\bra{f^t}A_{21}\cdot A_{21} + a(1-st)\frac{(1-s^2)}{(t-s)(s-r)}\braket{f^t}{f^s}\bra{G_t}A_{21}\\
        &=\frac{(s-r)}{(1-s^2)}\frac{H(s)}{s^{d+1}}\bra{\widehat{\mathsf{J}}}\widehat{A}_{21} -\frac{1}{(1-s^2)}\frac{r^{d+2}H(s)}{s^{d+1}H(r)}\bra{\widehat{Q}}\widehat{A}_{21} + \frac{s}{(1-s^2)}\bra{G_{1/s}}\widehat{A}_{21}.
    \end{aligned}
\end{equation}
For $-b\bra{f^t}\overline{K}_{11}\cdot \overline{K}_{21},$ only the following terms contribute to the limit:
\begin{equation}\label{limit11,21}
    \begin{aligned}
        \lim_{t\rightarrow 1/s} -b\bra{f^t}\overline{K}_{11}\cdot \overline{K}_{21} &=\lim_{t\rightarrow 1/s} -b\bra{f^t}A_{11}\cdot A_{21} -b (1-st)\frac{(1-sr)}{(t-s)}\braket{f^t}{f^s}\bra{G_t}A_{21} \\
        &=\frac{1}{(1-sr)}\bra{f^{1/s}}\widehat{A}_{11}\cdot \widehat{A}_{21} + \frac{s}{(1-s^2)}\bra{G_{1/s}}\widehat{A}_{21}.
    \end{aligned}
\end{equation}
For $b\bra{f^t}\overline{K}_{12}\cdot \overline{K}_{11}$, only the following terms contribute to the limit:
\begin{equation}\label{limit12,11}
    \begin{aligned}
        &\lim_{t\rightarrow 1/s}b\bra{f^t}\overline{K}_{12}\cdot \overline{K}_{11}\\
        &= \lim_{t\rightarrow 1/s} b\bra{f^t}A_{12}\cdot A_{11} + b(1-st)\frac{(1-sr)}{(t-s)}\braket{f^t}{f^s}\bra{R_t}A_{11} + b(1-st)\frac{(1-tr)(1-sr)}{(t-r)(t-s)}\braket{f^t}{f^s}\bra{f^r}A_{11}\\
        &= -\frac{1}{(1-sr)}\bra{f^{1/s}}\widehat{A}_{12}\cdot\widehat{A}_{11} -\frac{s}{(1-s^2)}\bra{R_{1/s}}\widehat{A}_{11} - \frac{s(s-r)}{(1-sr)(1-s^2)}\bra{f^r}\widehat{A}_{11}.
    \end{aligned}
\end{equation}
Combining $\eqref{limit22,11}, \eqref{limit21,21}, \eqref{limit11,21},\eqref{limit12,11}$ gives the desired limit $\widehat{V}_1.$
\end{proof}

\begin{remark}
    In the following lemmas,
we say that some upper bound holds uniformly for $t\in (\max(1,r),1/\sqrt{q}),$ $t\neq 1/r$ if for any $0<\epsilon \ll 1,$ we can find a constant $C$ that depends on $\epsilon$ but independent of $\ell$ such that the upper bound holds for

\begin{equation}\notag
\begin{cases}
    t\in [1+\epsilon, 1/r - \epsilon] \cup [1/r+\epsilon, 1/\sqrt{q}-\epsilon] &\text{if } r\in (\sqrt{q},1],\\
    t\in [1+\epsilon, 1/\sqrt{q}-\epsilon] &\text{if } r\in (0,\sqrt{q}),\\
    t\in [r+\epsilon, 1/\sqrt{q}-\epsilon] &\text{if }r\in (1,1/s).
\end{cases}
\end{equation}
\end{remark}

\begin{lem}\label{V_1UpperBoundLemma}
    For any fixed $r,s,\sqrt{q}$ that satisfies $\sqrt{q}\in(0,1), s\in (\sqrt{q},1), r \in (0,s)\cup (s,1/s)$, there exists a constant $C$ independent of $\ell$ such that the following bound holds uniformly for $t \in (\max(r,1), 1/\sqrt{q}),$ $t\neq 1/r,$
    \begin{equation}\label{V_1UpperBound}
        \begin{aligned}
            |(Y_1\overline{G}^2)_1(\ell)| \leq Ce^{\ell\log(s)}.
        \end{aligned}
    \end{equation}
\end{lem}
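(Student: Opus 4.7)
The plan is to insert the decompositions of $\overline{K}_{11},\overline{K}_{12},\overline{K}_{22}$ from Lemmas \ref{lem:rewrite11}, \ref{lem:rewrite12}, \ref{lem:rewrite22} into the expression \eqref{V_1V_2prelimit} for $(Y_1\overline{G}^2)_1$ and bound each resulting piece uniformly in $t$ on the prescribed interval. The expansion will be organized along the lines already carried out in the proof of Lemma \ref{V_1,limit}, but now estimating absolute values rather than taking limits. For the contributions involving only $A_{ij}$ and $E$, I would use the exponential kernel bounds \eqref{11UpperBound}, \eqref{12UpperBound}, \eqref{22UpperBound} from the proof of Proposition \ref{Fredconv}, together with the elementary estimate $|f^t(k)|\leq C\,t^{k+1}/|H(t)|$ valid uniformly on compact subintervals of $(\max(1,r),1/\sqrt{q})\setminus\{1/r\}$. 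Summing over the intermediate index produces a convergent geometric series since $1<t<1/\sqrt{q}$ ensures $t\cdot(\sqrt{q}+\epsilon/2)<1$, and the surviving $\ell$-dependence comes from the right-end argument, giving a bound of the form $Ce^{\ell\log(s)}$ (or better) coming from the rightmost factor.

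The delicate part is the terms in which the rank-one correction in $\overline{K}$ introduces an explosive factor $(1-st)^{-1}$ upon composition with another copy of $\overline{K}$. The strategy is exactly the pairing strategy used in Lemma \ref{lem:analytic} and in \eqref{limit22,11}--\eqref{limit12,11}: each such factor is matched with a compensating quantity, the main candidates being $\braket{f^t}{f^s}$ and $\brabarket{f^t}{E}{f^s}$, both of which carry an explicit $(1-st)^{-1}$ that cancels the $(1-st)$ prefactor. After the cancellations, each paired group is a continuous function of $t$ on the closed interval (including at $t=1/s$), hence uniformly bounded, and the remaining bracket factors such as $\braket{R_t}{A_{11}}$, $\braket{G_t}{A_{21}}$, $\bra{f^r}A_{11}$ are bounded by Hadamard-type sums using \eqref{11UpperBound}--\eqref{22UpperBound} together with contour choices $|z-1/\sqrt{q}|=\epsilon/2$, $|w-\sqrt{q}|=\epsilon/2$. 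This pairing must be done for each of the four terms in \eqref{V_1V_2prelimit}; the bookkeeping is parallel to the expansion written out in \eqref{expand22,11}.

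The main obstacle is the first cross-term $a\bra{f^t}\overline{K}_{22}\cdot \overline{K}_{11}$, which carries the most singular structure: two layers of rank-one corrections create products of two $(1-st)^{-1}$ factors, so one must produce either $\braket{f^t}{f^s}^2$-type pairings or a composition $\brabarket{f^t}{E}{f^s}\cdot \braket{f^t}{f^s}$ to absorb both singularities. Once these double pairings are identified, the resulting expressions are $O(1)$ uniformly in $t$, and the final $\ell$-dependence is inherited from whichever of $f^s(\ell)$, $R_s(\ell)$, $G_s(\ell)$, or the relevant column of $A_{11}$ or $A_{21}$ appears on the right; each of these is bounded by $Ce^{\ell\log(s)}$ since $s<1$ and the contour for the $z$-integral can be pushed arbitrarily close to $1/\sqrt{q}$ while leaving a pole at $1/s$ with residue decaying like $s^{\ell}$. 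Putting these estimates together yields the claimed uniform bound \eqref{V_1UpperBound}.
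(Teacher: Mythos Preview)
Your overall strategy is correct and aligned with the paper's, but you are over-engineering the singularity analysis. The ``main obstacle'' you describe does not arise: there are never two independent $(1-st)^{-1}$ factors in any term of the expansion of $a\bra{f^t}\overline{K}_{22}\cdot\overline{K}_{11}$. Each rank-one correction in $\overline{K}_{ij}$ already carries its own $(1-st)$ prefactor, and the only way a $(1-st)^{-1}$ can enter is through the $k$-sum $\braket{f^t}{f^s}$ or $\brabarket{f^t}{E}{f^s}$. The intermediate $m$-sums (pairing $R_t$, $G_t$, $f^s$, $f^r$ against one another) are all regular at $t=1/s$, so no second singular factor appears. Hence every term in \eqref{expand22,11} is either $O(1)$ or $O(1-st)$ uniformly on the compact $t$-interval, and no ``double pairing'' is needed.

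The paper's proof is accordingly much shorter. It simply points to the expansion \eqref{expand22,11}, whose analyticity in $t$ on the full interval (including $t=1/s$) was already established in Lemma~\ref{V_1,limit}, and then observes that in every term the $\ell$-dependence sits in the rightmost factor, which is one of $A_{11}(\cdot,\ell)$, $A_{21}(\cdot,\ell)$, $G_t(\ell)$, or $f^s(\ell)$. The first three are bounded by $Ce^{-\ell\log(1/\sqrt{q}-\epsilon/2)}$ via the contour choice $|z-1/\sqrt{q}|=\epsilon/2$, and $f^s(\ell)$ by $Ce^{\ell\log s}$; the dominant rate is $e^{\ell\log s}$. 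The $t$-uniformity of the constant then follows from continuity on the compact subintervals described in the remark preceding the lemma, with no explicit cancellation bookkeeping required.
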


\begin{proof}
    Fix $0<\epsilon \ll 1$ such that   $1/s < 1/\sqrt{q}- \epsilon/2$ and $\sqrt{q} + \epsilon/2 < s.$ We take the contour for $z$ to be $|z-1/\sqrt{q}| = \epsilon/2$ and the contours for $w$ to be $|w-\sqrt{q}| = \epsilon/2$ whenever the pole is present. There are four terms and we discuss one term carefully and the other three terms follow from a similar argument. 
    
    For the first term $\bra{f^t}\overline{K}_{22}\cdot\overline{K}_{11}$, based on $\eqref{expand22,11},$ we see that all terms that involve $A_{11}$ multiplied on the right (e.g. terms like $\bra{f^t}{A_{22}\cdot A_{11}}$) have an upper bound $Ce^{\ell\log(\sqrt{q}+ \epsilon/2)}$ for some $C$ independent of $\ell$ because the asymptotics of the second variable (i.e., $\ell$) of $A_{11}(k,\ell)$ comes from the pole at $w=\sqrt{q}$. All terms that involve $G_t$ multiplied on the right (e.g. terms like $\brabarket{f^t}{E}{f^s}G_t$) have an upper bound $Ce^{-\ell\log(1/\sqrt{q} - \epsilon/2)}$ because $G_t$ has a pole at $z = 1/\sqrt{q}.$ All other terms that involve $f^s$ multiplied on the right(e.g. $\brabarket{f^t}{E}{G_t}f^s$) have an upper bound $Ce^{\ell \log(s)}$.
    
    For the second term $\bra{f^t}\overline{K}_{21}\overline{K}_{21}$, the leading asymptotics is determined by the behavior of the second variable of $\overline{K}_{21}(k,\ell)$, which based on $\eqref{overlineK_12},$ is given by $Ce^{\ell\log(s)}.$ 
    We use the same argument for the third term $\bra{f^t}\overline{K}_{11}\cdot \overline{K}_{21}$ and get the same upper bound $Ce^{\ell\log(s)}.$
    Lastly, for the fourth term, $\bra{f^t}\overline{K}_{12}\cdot \overline{K}_{11},$ its leading asymptotics is determined by the second variable of $\overline{K}_{11}$, which is $Ce^{\ell\log(\sqrt{q}+\epsilon/2)}$ because of the pole at $w= \sqrt{q}.$

    Combining four upper bounds together, we get $\eqref{V_1UpperBound}$.
\end{proof}

\begin{lem}\label{V_2,limit}
    For any fixed $r,s,\sqrt{q}$ satisfying $\sqrt{q}\in(0,1), s\in (\sqrt{q},1), r \in (0,s)\cup (s,1/s)$, $(Y_1\overline{G}^2)_2$ is analytic for $t\in (\max(1,r),1/\sqrt{q}),$ $t\neq 1/r$. Its $t\rightarrow 1/s$ limit is
    $$\lim_{t\rightarrow 1/s} (Y_1\overline{G}^2)_2 = \widehat{V}_2.$$
\end{lem}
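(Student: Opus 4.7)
The plan is to mirror the proof of Lemma \ref{V_1,limit}, now applied to the four-term expansion of $(Y_1\overline{G}^2)_2$ recorded in \eqref{V_1V_2prelimit}, namely
\[
a\bra{f^t}\overline{K}_{21}\cdot\overline{K}_{22} \;-\; a\bra{f^t}\overline{K}_{22}\cdot\overline{K}_{12} \;-\; b\bra{f^t}\overline{K}_{11}\cdot\overline{K}_{22} \;+\; b\bra{f^t}\overline{K}_{12}\cdot\overline{K}_{12}.
\]
For each of the four products I would substitute the decompositions of $\overline{K}_{11}, \overline{K}_{12}, \overline{K}_{22}$ from Lemmas \ref{lem:rewrite11}, \ref{lem:rewrite12}, \ref{lem:rewrite22}, and expand the resulting rank-one contributions in the same systematic way as in \eqref{expand22,11}.

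Next, I would classify every resulting summand by the rule used in Lemma \ref{V_1,limit}: a term survives in the $t\to 1/s$ limit exactly when it either carries no $(1-st)$ factor, or when that factor is canceled by a pole coming from $\braket{f^t}{f^s}$ or $\brabarket{f^t}{E}{f^s}$; all other $(1-st)$-weighted contributions vanish. For each surviving term I would then apply the already-established pointwise limits $A_{ij}\to\widehat{A}_{ij}$ from Lemma \ref{lem:limitQ}, $P\to\widehat{P}$ and $Q\to\widehat{Q}$ from Lemma \ref{lem:analytic1}, together with the explicit formulas \eqref{ft,fs} and \eqref{ftE} and the auxiliary $t\to 1/s$ limits used in Lemmas \ref{lem:analytic} and \ref{lem:analytic1}. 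Justifying the interchange of limit and summation inside each bracket proceeds exactly as there: fix $0<\epsilon\ll 1$ with the $z$-contour on $|z-1/\sqrt{q}|=\epsilon/2$ and the $w$-contour on $|w-\sqrt{q}|=\epsilon/2$; the uniform geometric bounds \eqref{11UpperBound}--\eqref{22UpperBound} then supply a $t$-uniform dominating series, and the dominated convergence theorem applies.

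Analyticity on $t\in(\max(1,r),1/\sqrt{q})$, $t\neq 1/r$, is immediate from the decomposition. The only potential singularities of the rank-one pieces of $\overline{K}$ come from the prefactors $(1-st)^{-1}$, $(t-s)^{-1}$, $(t-r)^{-1}$; the first is removed by the explicit $(1-st)$ factor in each rank-one summand, while the latter two are avoided by the stated domain and the exclusion $t\neq 1/r$. The contour integrals defining $A_{ij}$, $B$, $P$, $Q$ remain holomorphic once the contours are fixed away from their poles, and the possible explosion encoded in $\braket{f^t}{f^s}$ as $t\to 1/s$ is canceled by the $(1-st)$ prefactors precisely as in the $V_1$ analysis.

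The main obstacle will be combinatorial bookkeeping rather than analysis. Both $\overline{K}_{22}$ and $\overline{K}_{12}$ carry the auxiliary piece $E$ from Lemma \ref{lem:rewrite22}, so the products $\bra{f^t}\overline{K}_{22}\cdot\overline{K}_{12}$ and $\bra{f^t}\overline{K}_{11}\cdot\overline{K}_{22}$ generate many cross-terms involving $\brabarket{f^t}{E}{\cdot}$ composed with rank-one factors, and the outer product $\bra{f^t}\overline{K}_{12}\cdot\overline{K}_{12}$ introduces further explosive pairings such as $\braket{f^s}{f^r}$ paired against $\braket{f^t}{f^s}$. Tracking which of these combine into the components of $\widehat{V}_2$ and which cancel requires the same patient case-by-case expansion as in \eqref{expand22,11}--\eqref{limit12,11}, with the roles of $\overline{K}_{11},\overline{K}_{22}$ and of $\overline{K}_{12},\overline{K}_{21}$ interchanged. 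No new analytical idea is needed, but assembling the surviving terms into the explicit expression for $\widehat{V}_2$ is the most delicate part.
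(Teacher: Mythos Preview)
Your proposal is correct and follows essentially the same approach as the paper: expand the four products using the decompositions of $\overline{K}_{ij}$, identify which terms survive via the $(1-st)$-cancellation rule, and pass to the limit by dominated convergence exactly as in Lemma \ref{V_1,limit}. One small slip: only $\overline{K}_{22}$ carries the piece $E$ (via Lemma \ref{lem:rewrite22}), not $\overline{K}_{12}$, so the $E$-related cross-terms arise only from the two products involving $\overline{K}_{22}$ on the right.
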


\begin{proof}
    The proof idea is similar to that of the limit of $(Y_1\overline{G}^2)_1$. For $t \in (0,\min(1/r,1/s)),$ we can compute $\braket{f^t}{f^s} = \frac{(ts)^{d+2}}{(1-st)H(t)H(s)}$ and $\braket{f^t}{f^r} = \frac{(tr)^{d+2}}{(1-tr)H(t)H(r)}.$ Given that we have replaced all brakets by their values, the new formula for $(Y_1\overline{G}^2)_2$ is analytic for $t\in (\max(1,r), 1/\sqrt{q}),$ $t\neq 1/r$ and we can take the $t\rightarrow 1/s$ limit. In the following computations, we only present terms that contribute to the limit.
    \begin{equation}\label{V_2,21,22}
    \begin{aligned}
        &\lim_{t\rightarrow 1/s}a\bra{f^t}\overline{K}_{21}\cdot \overline{K}_{22}\\
        &= \lim_{t\rightarrow 1/s} a\bra{f^t}A_{21}\cdot \left(A_{22}+E\right) + a(1-st)\frac{(1-s^2)}{(t-s)(s-r)}\braket{f^t}{f^s}\bra{G_t}\left(A_{22}+ E \right)\\
        &=\frac{(s-r)}{(1-s^2)}\frac{H(s)}{s^{d+1}}\bra{\widehat{\mathsf{J}}}\left(\widehat{A}_{22} + E\right)-\frac{1}{(1-s^2)}\frac{r^{d+2}H(s)}{s^{d+1}H(r)}\bra{\widehat{Q}}\left(\widehat{A}_{22} + E\right)+\frac{s}{(1-s^2)}\bra{G_{1/s}}\left(\widehat{A}_{22} + E\right).
    \end{aligned}
    \end{equation}
    \begin{equation}\label{V_2,22,12}
        \begin{aligned}
            &\lim_{t\rightarrow 1/s} -a\bra{f^t}\overline{K}_{22}\cdot \overline{K}_{12}\\
            &= \lim_{t\rightarrow 1/s} -a\bra{f^t}\left(A_{22}+E\right)\cdot A_{12} - a\mathsmaller{\frac{(1-st)(1-tr)(1-sr)}{(t-r)(t-s)}}\brabarket{f^t}{E}{f^s}f^r \\
            &- a\mathsmaller{\frac{(1-st)(1-sr)}{(t-s)}}\brabarket{f^t}{E}{f^s}R_t- a\mathsmaller{\frac{(1-st)(1-s^2)}{(t-s)(s-r)}}\braket{f^t}{f^s}\bra{R_t}A_{12}
            -a\mathsmaller{\frac{(1-st)(1-tr)(1-s^2)}{(t-s)(s-r)(t-r)}}\braket{f^t}{f^s}\bra{f^r}A_{12}\\
            &=-\frac{1}{(1-s^2)}\frac{r^{d+2}H(s)}{s^{d+1}H(r)}\bra{\widehat{P}}\widehat{A}_{12} + \frac{(s-r)}{(1-s^2)}\braket{f^{1/s}}{\widehat{P}}\bra{f^r}\widehat{A}_{12} - \frac{(s-r)}{(1-s^2)}\bra{f^{1/s}}\widehat{B}\cdot \widehat{A}_{12}\\
            &+\frac{(s-r)H(s)}{(1-s^2)(1-sr)}\bra{\left(\frac{r^{k-d-1}}{s^{d+1}} + \frac{(1-s^2)}{(s-r)s^{k+1}}\right)}\widehat{A}_{12} +\frac{s(s-r)}{(1-s^2)(1-sr)}f^{r} +\frac{s}{(1-s^2)}R_{1/s}\\
            &- \frac{s}{(1-s^2)}\bra{R_{1/s}}\widehat{A}_{12} -\frac{s(s-r)}{(1-s^2)(1-sr)}\bra{f^r}\widehat{A}_{12},
        \end{aligned}
    \end{equation}
    where $\bra{f^t}E$ can be found in \eqref{ftE} and $\brabarket{f^t}{E}{f^s}$ can be found in \eqref{ft,E,fs}.
    \begin{equation}\label{V_2,11,22}
        \begin{aligned}
            &\lim_{t\rightarrow 1/s}-b\bra{f^t}\overline{K}_{11}\cdot \overline{K}_{22}
            = \lim_{t\rightarrow 1/s} -b\bra{f^t}A_{11}\cdot \left(A_{22}+E\right) -b(1-st)\frac{(1-sr)}{(t-s)}\braket{f^t}{f^s}\bra{G_t}\left(A_{22}+E\right)\\
            &=\frac{1}{(1-sr)}\bra{f^{1/s}}\widehat{A}_{11}\cdot \left(\widehat{A}_{22}+E\right) + \frac{s}{(1-s^2)}\bra{G_{1/s}}\left(\widehat{A}_{22} + E\right).\\
        \end{aligned}
    \end{equation}
    \begin{equation}\label{V_2,12,12}
        \begin{aligned}
            &\lim_{t\rightarrow 1/s}b\bra{f^t}\overline{K}_{12}\cdot \overline{K}_{12}\\
            &= \lim_{t\rightarrow 1/s} b\bra{f^t}A_{12}\cdot A_{12} + b\mathsmaller{\frac{(1-st)(1-tr)(1-sr)}{(t-r)(t-s)}}\braket{f^t}{f^s}\bra{f^r}A_{12} + b\mathsmaller{\frac{(1-st)(1-sr)}{(t-s)}}\braket{f^t}{f^s}\bra{R_t}A_{12}\\
            &= -\frac{1}{(1-sr)}\bra{f^{1/s}}\widehat{A}_{12}\cdot \widehat{A}_{12} - \frac{s(s-r)}{(1-sr)(1-s^2)}\bra{f^r}\widehat{A}_{12} -\frac{s}{(1-s^2)}\bra{R_{1/s}}\widehat{A}_{12}.
        \end{aligned}
    \end{equation}
    Combining $\eqref{V_2,21,22}, \eqref{V_2,22,12},\eqref{V_2,11,22}, \eqref{V_2,12,12},$ we get $\widehat{V}_2.$
\end{proof}

\begin{lem}\label{V_2UpperBound}
    For any fixed $r,s,q$ that satisfies $q\in(0,1), s\in (\sqrt{q},1), r \in (0,s)\cup (s,1/s)$, there exists a constant $C$ independent of $\ell$ such that the following upper bound holds uniformly for $t \in (\max(r,1), 1/\sqrt{q}),$ $t\neq 1/r,$
    \begin{equation}
        \bigg|(Y_1\overline{G}^2)_2(\ell)\bigg|\leq \begin{cases}
            C &\text{ if } r\in (0,s)\cup(s,1],\\
            Ce^{\ell\log(r)} &\text{ if } r\in (1,1/s).
        \end{cases} 
    \end{equation}
\end{lem}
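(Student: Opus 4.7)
The plan is to repeat the bookkeeping of the proof of Lemma \ref{V_1UpperBoundLemma} applied to the second row of the matrix expansion \eqref{V_1V_2prelimit},
\begin{equation*}
(Y_1\overline{G}^2)_2 \;=\; a\bra{f^t}\overline{K}_{21}\cdot\overline{K}_{22} - a\bra{f^t}\overline{K}_{22}\cdot\overline{K}_{12} - b\bra{f^t}\overline{K}_{11}\cdot\overline{K}_{22} + b\bra{f^t}\overline{K}_{12}\cdot\overline{K}_{12},
\end{equation*}
rather than the first. We fix $0<\epsilon\ll 1$ with $\sqrt q+\epsilon/2<s$ and $1/s<1/\sqrt q-\epsilon/2$, take the master $z$-contour at distance $\epsilon/2$ from $1/\sqrt q$ and the master $w$-contour at distance $\epsilon/2$ from $\sqrt q$, and enclose the simple poles at $r,s,t,1/r,1/s,1/t$ coming from the decompositions in Lemmas \ref{lem:rewrite11}--\ref{lem:rewrite22} by small additional contours. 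With $t$ in the stated range $(\max(r,1),1/\sqrt q)\setminus\{1/r\}$, this yields the uniform envelopes \eqref{11UpperBound}--\eqref{22UpperBound} on $\overline{K}_{ij}$, together with analogous exponential envelopes on the auxiliary objects $f^s,f^r,f^t,P,Q,G_t,R_t,g_1,d_2,E$ that appear after expansion.

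The key observation, identical to that in Lemma \ref{V_1UpperBoundLemma}, is that the $\ell$-dependence of every summand is dictated by the \emph{rightmost} factor in the chain. For the two terms ending in $\overline{K}_{22}$, namely $a\bra{f^t}\overline{K}_{21}\cdot\overline{K}_{22}$ and $-b\bra{f^t}\overline{K}_{11}\cdot\overline{K}_{22}$, the second-variable bound \eqref{22UpperBound} applies and already produces the two-case envelope in the statement. For the two terms ending in $\overline{K}_{12}$, namely $-a\bra{f^t}\overline{K}_{22}\cdot\overline{K}_{12}$ and $b\bra{f^t}\overline{K}_{12}\cdot\overline{K}_{12}$, the second-variable bound \eqref{12UpperBound} reads $e^{\ell\log(\max(r,s))}$; since $s<1$, this is $\leq C$ whenever $r\in(0,s)\cup(s,1]$, and equals $Ce^{\ell\log r}$ when $r\in(1,1/s)$, again matching the statement. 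Summing the four contributions and absorbing the finite numerical prefactors and cross-terms into a single constant $C$ will give the claim.

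The main obstacle will be verifying that the intermediate sums over the outer index $k$ (paired with $f^t$) and the middle index $m$ remain bounded uniformly in $\ell$ and in $t$; a crude estimate $|f^t(k)|\leq Ct^{k+1}$ against the pointwise envelope on $\overline{K}_{ij}(k,m)$ is insufficient because $tr$ and $ts$ may exceed $1$ in the permitted $t$-range. Following the strategy of Lemma \ref{V_1UpperBoundLemma}, we plan to absorb $f^t$ into the $z$-integrand using the geometric identity
\begin{equation*}
\sum_{k=d+1}^{\infty}\frac{f^t(k)}{z^k}\;=\;\frac{t^{d+2}}{H(t)\,z^d\,(z-t)},
\end{equation*}
valid on the $z$-contour because $t<1/\sqrt q-\epsilon/2$ ensures $|t/z|<1$; this turns each $\bra{f^t}\overline{K}_{ij}$ into a contour integral whose only new pole at $z=t$ sits outside the $z$-contour, and produces a function of the middle index $m$ whose envelope matches the first-variable envelope of $\overline{K}_{ij}$. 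A second application of the same device to $m$, exploiting $sr<1$ and $s<1$, will collapse the two-kernel chain to a single contour integral whose $\ell$-behaviour is exactly that of the rightmost kernel. The residual $(1-st)^{-1}$-singularities near $t=1/s$ coming from $\braket{f^t}{f^s}$ cancel against the $(1-st)$-prefactors in the rank-one pieces of $\overline{K}_{22}$ and $\overline{K}_{12}$, so $C$ can be chosen uniformly in $t$ over the full range, including neighbourhoods of $1/s$. Combining with the envelope analysis of the preceding paragraph gives the claim.
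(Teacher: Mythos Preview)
Your proposal is correct and follows essentially the same approach as the paper's proof. Both arguments recognize that the $\ell$-dependence of $(Y_1\overline{G}^2)_2$ is governed by the second variable of the rightmost kernel ($\overline{K}_{22}$ or $\overline{K}_{12}$), and then invoke the envelopes \eqref{12UpperBound} and \eqref{22UpperBound} to read off the two-case bound. The paper's version is more telegraphic: it simply refers back to the explicit term-by-term expansion already carried out in the proof of Lemma~\ref{V_2,limit} (equations \eqref{V_2,21,22}--\eqref{V_2,12,12}), lists for each of the four products which rank-one tails ($f^r$, $f^s$, $\bra{\cdot}E$, etc.) actually dominate the $\ell$-asymptotics, and declares the bound. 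Your writeup, by contrast, describes the absorption-into-contour mechanism and the $(1-st)$-cancellation more abstractly; this is the same content, but packaged without reference to the preceding lemma's expansion.
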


\begin{proof}
    The leading asymptotics of $(Y_1\overline{G}^2)_2$ is determined by the behavior of the second variable of $\overline{K}_{22}$ and $\overline{K}_{12}.$ Since $s>\sqrt{q}$ and $r\in (0,s)\cup (s,1/s),$ the leading terms are those that contains $s^{\ell}$ or $r^{\ell}.$
    \begin{itemize}
        \item The leading asymptotic for $a\bra{f^t}\overline{K}_{21}\cdot \overline{K}_{22}$  comes from $\bra{f^s}E$, $\brabarket{f^t}{\overline{K}_{21}}{f^s}{f^r}$, $\brabarket{f^t}{\overline{K}_{21}}{P}f^r,$ $\brabarket{f^t}{\overline{K}_{21}}{f^r}f^s,$ and $\brabarket{f^t}{\overline{K}_{21}}{R_t}f^s$
        \item The leading asymptotic for $-a\bra{f^t}\overline{K}_{22}\cdot \overline{K}_{12}$ comes from $\brabarket{f^t}{\overline{K}_{22}}{f^s}{f^r}$, $\brabarket{f^t}{\overline{K}_{22}}{Q}f^r$, and $\brabarket{f^t}{\overline{K}_{22}}{G_t}f^s$.
        \item The leading asymptotic for $-b\bra{f^t}\overline{K}_{11}\cdot \overline{K}_{22}$ comes from $\bra{f^t} \overline{K}_{11}\cdot E$,  $\brabarket{f^t}{\overline{K}_{11}}{f^s}f^r$, $\brabarket{f^t}{\overline{K}_{11}}{f^r}f^s$, $\brabarket{f^t}{\overline{K}_{11}}{P}f^r,$ and $\brabarket{f^t}{\overline{K}_{11}}{R_t}f^s.$
        \item The leading asymptotic for $b\bra{f^t}\overline{K}_{12}\cdot\overline{K}_{12}$ comes from $\brabarket{f^t}{\overline{K}_{12}}{Q}f^r,$ $\brabarket{f^t}{\overline{K}_{12}}{f^s}f^r$, and $\brabarket{f^t}{\overline{K}_{12}}{G_t}f^s$.
    \end{itemize}
    Considering the upper bound of each leading term yields the desired upper bound.
\end{proof}

\begin{lem}\label{d_2,g_1UpperBound}
    For any fixed $r,s,\sqrt{q}$ that satisfies $\sqrt{q}\in(0,1), s\in (\sqrt{q},1), r \in (0,s)\cup (s,1/s)$, there exists a constant $C$ independent of $\ell$ such that the following bound holds uniformly for $t \in (\max(r,1), 1/\sqrt{q}),$
    \begin{equation}
        \begin{aligned}
            |g_1(\ell)| \leq Ce^{\ell\log(s)}, \quad |d_2(\ell)| \leq Ce^{\ell\log(\max(s,r))}.
        \end{aligned}
    \end{equation}
\end{lem}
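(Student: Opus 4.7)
The plan is to read off contour integral representations of $g_1(\ell)$ and $d_2(\ell)$ from \eqref{defOfd_2,g_1} and then isolate the leading exponential behavior in $\ell$ by residue extraction. By the residue computations carried out in the proofs of Lemmas \ref{lem:rewrite11} and \ref{lem:rewrite12}, both $g_1$ and $d_2$ arise after extracting the residue at $z=1/t$ from the double contour integrals for $K_{11}$ and $K_{12}$, and each admits a single-variable contour integral representation whose integrand carries a factor $w^{\ell+1}/H(w)$. Crucially, the $t$-dependent prefactors from the $z=1/t$ residue are absorbed into the coefficient $(1-st)(1-tr)/(t-s)$ and the factor $f^t(k)$ of the rank-one piece, so the remaining $w$-integrands defining $g_1(\ell)$ and $d_2(\ell)$ are $t$-independent; uniformity in $t$ is therefore automatic.

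For $g_1$ the contour encloses the pole of order $N-2$ at $w=\sqrt{q}$ (coming from $H(w)^{-1}$) and a simple pole at $w=s$. I would split the integral as the residue at $w=s$, which contributes an explicit constant times $s^{\ell+1}/H(s)$ and hence is of exact order $s^\ell$, plus a remainder contour integral that can be deformed to a small circle $|w|=\sqrt{q}+\epsilon$ with $\sqrt{q}+\epsilon<s$. The maximum-modulus estimate bounds the remainder by $C(\sqrt{q}+\epsilon)^{\ell+1}$ for a constant $C$ depending on $\epsilon$ but not on $\ell$, and this is dominated by $Cs^\ell$. The two contributions together give $|g_1(\ell)|\leq Ce^{\ell\log s}$.

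For $d_2$, the contour additionally encloses the simple pole at $w=r$. Extracting residues at both $w=s$ and $w=r$ produces explicit terms of orders $s^\ell$ and $r^\ell$, respectively, each bounded by $\max(s,r)^\ell$, while the residual contour around $\sqrt{q}$ again contributes only $O((\sqrt{q}+\epsilon)^\ell)$, subdominant since $\sqrt{q}+\epsilon<\max(s,r)$. Combining these gives $|d_2(\ell)|\leq Ce^{\ell\log\max(s,r)}$ as claimed.

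I expect no significant obstacle beyond bookkeeping. The only care required is the joint choice of $\epsilon$: it must be small enough so that the residual contour $|w|=\sqrt{q}+\epsilon$ encloses $w=\sqrt{q}$ but lies strictly inside $|w|=\min(s,r)$ and is separated from $|w|=1,\,1/s,\,1/r$. Given the hypotheses $\sqrt{q}\in(0,1)$, $s\in(\sqrt{q},1)$, and $r\in(0,s)\cup(s,1/s)$, the relevant poles $\{\sqrt{q},s,r\}$ are cleanly separated from the remaining singularities, so such an $\epsilon$ always exists, and all prefactor dependences on $\epsilon$ and on the fixed parameters $r,s,\sqrt{q},N$ are absorbed into the constant $C$.
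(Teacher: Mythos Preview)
Your approach matches the paper's, which simply reads off the decomposition in \eqref{defOfd_2,g_1}, namely $g_1=G_s-(1-sr)f^s$ and $d_2=\tfrac{1-s^2}{s-r}f^s-\tfrac{1-sr}{s-r}f^r-R_s$, and identifies the dominant exponential term; both functions are $t$-independent, so uniformity is automatic. One small technicality: your residual contour $|w|=\sqrt{q}+\epsilon$ centered at the origin fails when $r<\sqrt{q}$ (allowed by $r\in(0,s)$), since it would re-enclose the already-extracted pole at $w=r$; replace it by a small circle $|w-\sqrt{q}|=\epsilon$ centered at $\sqrt{q}$ and the argument goes through.
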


\begin{proof}
    Since $g_1$ and $d_2$ have no $t$ involved, they remain unchanged under the limit. The dominating term in $g_1$ is $-(1-sr)f^s$, which give an upper bound $e^{\ell\log(s)}$. The dominating term for $d_2$ is either $-\frac{(1-sr)}{(s-r)}f^r$ or $\frac{(1-s^2)}{(s-r)}f^s$ depending on the value of $s,r$, which gives an upper bound $e^{\ell\log(\max(s,r))}.$
\end{proof}

\begin{lem}\label{lem:analytic2}
     For any fixed $r,s,\sqrt{q}$ satisfying $\sqrt{q}\in(0,1), s\in (\sqrt{q},1), r \in (0,s)$, the following term is analytic for $t\in (\max(1,r),1/\sqrt{q}),$ $t\neq 1/r.$ Its $t\rightarrow 1/s$ limit is
     \begin{equation}
     \begin{aligned}
         &\lim_{t\rightarrow 1/s} \mathrm{Pf}\left(J - \overline{K} - \ketbra{\begin{array}{c}
            g_1\\
            -d_2 
        \end{array}}
        {(Y_1\overline{G}^2)_1 \quad (Y_1\overline{G}^2)_2} - \ketbra{\begin{array}{c}
            (Y_1\overline{G}^2)_1\\
            (Y_1\overline{G}^2)_2 
        \end{array}}{-g_1 \quad d_2}\right)_{\ell^{2}(\{d+1,d+2,\dots\})}\\
        &=\mathrm{Pf}\left(J - \widehat{K} - \ketbra{\begin{array}{c}
            g_1\\
            -d_2 
        \end{array}}
        {\widehat{V}_1 \quad \widehat{V}_2} - \ketbra{\begin{array}{c}
            \widehat{V}_1\\
            \widehat{V}_2 
        \end{array}}{-g_1 \quad d_2}\right)_{\ell^{2}(\{d+1,d+2,\dots\})}.
    \end{aligned}
     \end{equation}
\end{lem}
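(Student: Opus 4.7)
The plan is to expand both Fredholm Pfaffians as absolutely convergent series and pass the $t\to 1/s$ limit inside term-by-term via dominated convergence, following the same template as Proposition \ref{Fredconv}. Write the effective kernel on the left-hand side as
$$ M^{t}(k,\ell) := \overline{K}(k,\ell) + \ketbra{\begin{array}{c} g_1 \\ -d_2 \end{array}}{(Y_1\overline{G}^2)_1 \quad (Y_1\overline{G}^2)_2}(k,\ell) + \ketbra{\begin{array}{c} (Y_1\overline{G}^2)_1 \\ (Y_1\overline{G}^2)_2 \end{array}}{-g_1 \quad d_2}(k,\ell), $$
so that the left-hand side is $\mathrm{Pf}(J-M^{t})$, and let $\widehat{M}$ denote the limiting object obtained by replacing $\overline{K}$ with $\widehat{K}$ and $(Y_1\overline{G}^2)_i$ with $\widehat{V}_i$. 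Combining Lemmas \ref{lem:limitQ}, \ref{V_1,limit}, \ref{V_2,limit} and using that $g_1,d_2$ are independent of $t$, each entry $M^{t}_{ij}(k,\ell)$ is analytic in $t\in(1,1/\sqrt{q})\setminus\{1/r\}$ (we use $r\in(0,s)\subset(0,1)$) and converges pointwise to $\widehat{M}_{ij}(k,\ell)$ as $t\to 1/s$.

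For the dominated convergence step I need a uniform, summable envelope on $|\mathrm{Pf}(M^{t}(k_i,k_j))_{i,j=1}^n|$. Under $r\in(0,s)$ the bounds \eqref{11UpperBound}, \eqref{12UpperBound}, \eqref{22UpperBound} simplify to $|\overline{K}_{11}|,|\overline{K}_{12}|\leq Ce^{(k+\ell)\log s}$ and $|\overline{K}_{22}|\leq C$, while Lemmas \ref{V_1UpperBoundLemma}, \ref{V_2UpperBound}, \ref{d_2,g_1UpperBound} yield $|g_1(\ell)|,|(Y_1\overline{G}^2)_1(\ell)|\leq Ce^{\ell\log s}$ and $|d_2(\ell)|,|(Y_1\overline{G}^2)_2(\ell)|\leq C$, all uniformly in $t$ on a neighborhood of $1/s$ inside $(1,1/\sqrt{q})\setminus\{1/r\}$. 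Multiplying these entrywise shows that each of the four blocks of the rank-two correction respects the same Hadamard envelope as the corresponding block of $\overline{K}$, producing
$$ |M^{t}_{11}(k,\ell)|\leq Ce^{(k+\ell)\log s}, \quad |M^{t}_{12}(k,\ell)|\leq Ce^{k\log s}, \quad |M^{t}_{22}(k,\ell)|\leq C. $$
Lemma \ref{Hadamard} with $\alpha=-\log s>0$ and $\beta=0$ then gives $|\mathrm{Pf}(M^{t}(k_i,k_j))_{i,j=1}^n|\leq (2n)^{n/2}C^n\prod_{i=1}^n e^{k_i\log s}$, which is summable over $k_1,\dots,k_n\geq d+1$ and over $n\geq 1$, uniformly in $t$.

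With pointwise convergence in hand and a summable uniform envelope, dominated convergence justifies interchanging the $t\to 1/s$ limit with the Fredholm Pfaffian series expansion, yielding the desired identity. The main technical obstacle is the second step above: the rank-two correction affects all four matrix entries and one must verify that in each entry the product of the relevant factors does not degrade the Hadamard envelope inherited from $\overline{K}$. This is a bookkeeping exercise of pairing each factor of $g_1$ or $(Y_1\overline{G}^2)_1$ (which decay like $e^{\cdot\log s}$) with the appropriate block, so that the $(1,1)$ block still decays in both variables and the $(2,2)$ block stays $O(1)$; the requisite estimates are supplied by the preceding lemmas, after which the argument follows the same pattern as Proposition \ref{Fredconv}.
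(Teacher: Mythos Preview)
Your proposal is correct and follows essentially the same approach as the paper: you combine the pointwise limits from Lemmas \ref{lem:limitQ}, \ref{V_1,limit}, \ref{V_2,limit} with the uniform entrywise bounds from \eqref{11UpperBound}--\eqref{22UpperBound} and Lemmas \ref{V_1UpperBoundLemma}, \ref{V_2UpperBound}, \ref{d_2,g_1UpperBound}, then apply Hadamard's inequality (Lemma \ref{Hadamard}) and dominated convergence exactly as in Proposition \ref{Fredconv}. The paper's proof is the same bookkeeping exercise you describe, written out by listing the $2\times 2$ blocks of the two rank-one kernels $N_1, N_2$ and checking they satisfy the same envelope as $\overline{K}$; it also carries along the case $r\in(s,1/s)$ (with $\beta=\log r$ replacing $\beta=0$), but for the stated range $r\in(0,s)$ your bounds and argument match.
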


\begin{proof}
    We have three kernels to discuss. For $\overline{K},$ the upper bound for each entry has been analyzed in $\eqref{11UpperBound},\eqref{12UpperBound},\eqref{22UpperBound}.$ 
    We set
    \begin{equation}
        N_1 = \begin{pmatrix}
                \ketbra{g_1}{(Y_1\overline{G}^2)_1} &\ketbra{g_1}{(Y_1\overline{G}^2)_2}\\
                \ketbra{-d_2}{(Y_1\overline{G}^2)_1} & \ketbra{-d_2}{(Y_1\overline{G}^2)_2 }
            \end{pmatrix}, \quad N_2 = \begin{pmatrix}
                \ketbra{(Y_1\overline{G}^2)_1}{-g_1} &\ketbra{(Y_1\overline{G}^2)_1}{d_2}\\
                \ketbra{(Y_1\overline{G}^2)_2}{-g_1} & \ketbra{(Y_1\overline{G}^2)_2 }{d_2}
            \end{pmatrix}.
    \end{equation}
    By using Lemmas $\ref{V_1UpperBoundLemma}$, $\ref{V_2UpperBound}$, $\ref{d_2,g_1UpperBound}$, we get that
    \begin{equation}
        \begin{aligned}
            N_1,N_2\leq 
            \begin{cases}
                \begin{pmatrix}
                    Ce^{(k+\ell) \log(s)} & 
                    Ce^{k \log(s)}\\
                    Ce^{\ell \log(s)} & C
                \end{pmatrix} &\text{ if }r\in(0,s)\cup (s,1),\\
                \begin{pmatrix}
                e^{(k+\ell) \log(s)} & e^{k\log(s) + \ell\log(r)}\\
                e^{k\log(r) + \ell\log(s)} & e^{(k+\ell)\log(r) }
            \end{pmatrix} &\text{ if } r \in (1,1/s).
            \end{cases}
        \end{aligned}
    \end{equation}
    By Lemmas \ref{lem:limitQ}, \ref{V_1,limit} and \ref{V_2,limit}, we get the entry-wise limits of three kernels.
    By Hadamard's bound, Lemma \ref{Hadamard}, and dominated convergence theorem, we get the convergence of Fredholm Pfaffian.
\end{proof}

In conclusion, we have found an analytic continuation of $\eqref{keyReformulation}$ and computed its $t\rightarrow 1/s$ limit.

\subsection{Analytic continuation of the LPP model}
In this section, we show that we can analytically extend the diagonal LPP probability distribution from $t \in (0,1)$ to $t \in (0,1/\sqrt{q})$. Recall the definition of $\widetilde{G}_{N,N}$ in \eqref{Gtilde}.
\begin{prop}\label{ModelAnalytic}
    For any fixed $r,s,q$ that satisfies $q \in (0,1),$ $s\in(\sqrt{q},1),$ $r\in (0,1/s),$ the map $t \rightarrow \Pb(\widetilde{G}_{N,N} \leq d)$ is real analytic for $t \in (1,1/\sqrt{q}).$
\end{prop}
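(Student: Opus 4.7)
The plan is to combine the shift identity \eqref{eq:shift} with the decomposition \eqref{KeyFormula} of the Fredholm Pfaffian, and then close the remaining gaps in the target interval $(1,1/\sqrt{q})$ via a direct probabilistic reinterpretation of $\widetilde{G}_{N,N}$.

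Starting from \eqref{eq:shift}, I would first rewrite
\begin{equation*}
(1-rt)\,\Pb(\widetilde{G}_{N,N}\leq d) = \sum_{i=0}^{2} c_i(t)\,\frac{\mathrm{Pf}(J-K)_{\ell^2(\{d-i+1,\ldots\})}}{1-st},
\end{equation*}
with $c_0=1$, $c_1=-(r+s)t$, and $c_2=rst^2$, bundling the singular factor $(1-st)^{-1}$ with the Pfaffians. By the key reformulation \eqref{KeyFormula}, each quotient $(1-st)^{-1}\mathrm{Pf}(J-K)$ splits into three pieces: $\mathrm{Pf}(J-\overline{K})$ multiplied by the bracket factor $(1-st)^{-1}-\braket{Y_1}{X_1}-\brabarket{Y_1}{\overline{G}}{X_1}$, the subtracted $\mathrm{Pf}(J-\overline{K})$, and the modified Pfaffian built from $\overline{K}$ and the rank-one corrections coming from $Y_1\overline{G}^{2}$. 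Proposition \ref{Fredconv} together with Lemmas \ref{lem:analytic}, \ref{lem:analytic1}, and \ref{lem:analytic2} show that each of these three pieces is real analytic in $t$ on $(\max(r,1),1/\sqrt{q})\setminus\{1/r\}$. Dividing by the nonzero factor $(1-rt)$ then yields real analyticity of $\Pb(\widetilde{G}_{N,N}\leq d)$ on the same punctured set.

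Relative to the target interval $(1,1/\sqrt{q})$, two gaps remain: the apparent singularity at $t=1/r$ when $r\in(\sqrt{q},1)$, and the sub-interval $(1,r)$ when $r\in(1,1/s)$. Both I would handle by a direct probabilistic argument. Since every up-right path from $(1,1)$ in the octant $\mathcal{D}$ is forced to begin $(1,1)\to(2,1)\to\cdots$ (the alternative step $(1,1)\to(1,2)$ leaves $\mathcal{D}$), the weights $\omega_{1,1}$ and $\omega_{2,1}$ contribute a common additive constant to $L_{N,N}$ and cancel in $\widetilde{G}_{N,N}=L_{N,N}-\omega_{1,1}-\omega_{2,1}$. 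Hence $\widetilde{G}_{N,N}$ is a measurable function of $(\omega_{i,j})_{(i,j)\neq(1,1),(2,1)}$, and among these the only $t$-dependent weights are $\omega_{i,1}\sim\mathrm{Geo}(t\sqrt{q})$ for $3\leq i\leq N$, which are bona fide geometric distributions for every $t\in(0,1/\sqrt{q})$. Since the event $\{\widetilde{G}_{N,N}\leq d\}$ forces each relevant weight to take values in the finite set $\{0,\ldots,d\}$, the probability $\Pb(\widetilde{G}_{N,N}\leq d)$ is a finite polynomial in $t$ on $(0,1/\sqrt{q})$, and in particular real analytic there. By the identity theorem this polynomial coincides with the Pfaffian-based analytic continuation on their common domain, closing both remaining gaps and yielding real analyticity on the full interval $(1,1/\sqrt{q})$.

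The substantive analytic work is already packaged into Proposition \ref{Fredconv} and the three preceding lemmas; the main obstacle in the present proof is purely organizational, namely recognizing that the apparent poles at $t=1/r$ and the restriction to $t>\max(r,1)$ are artifacts of the algebraic shift derivation that disappear once $\widetilde{G}_{N,N}$ is viewed as a random variable depending only on the inner weights of the octant.
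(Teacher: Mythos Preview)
Your proof is correct, but the first half is superfluous. The direct probabilistic argument you give in the second paragraph---that $\widetilde{G}_{N,N}$ depends only on the weights $(\omega_{i,j})_{(i,j)\neq(1,1),(2,1)}$, that among these only $\omega_{i,1}\sim\mathrm{Geo}(t\sqrt{q})$ for $3\leq i\leq N$ involve $t$, and that the event $\{\widetilde{G}_{N,N}\leq d\}$ forces each such weight into $\{0,\ldots,d\}$, so the probability is a polynomial in $t$ on $(0,1/\sqrt{q})$---is exactly the paper's proof, and it already establishes the proposition on the full interval $(1,1/\sqrt{q})$ without any recourse to the Pfaffian machinery.

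The first half of your argument, invoking \eqref{KeyFormula}, Proposition~\ref{Fredconv}, and Lemmas~\ref{lem:analytic}, \ref{lem:analytic1}, \ref{lem:analytic2}, is logically sound but yields only the weaker conclusion of analyticity on $(\max(r,1),1/\sqrt{q})\setminus\{1/r\}$, which you then immediately supersede with the probabilistic step. In the paper the purpose of Proposition~\ref{ModelAnalytic} is precisely to provide an \emph{independent}, Pfaffian-free verification that the left-hand side of \eqref{eq:shift} is analytic, so that the Pfaffian-based analytic continuation of the right-hand side can be identified with it in the proof of Theorem~\ref{FiniteTimeFormulaTheorem}(1). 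Pulling those Pfaffian lemmas into the proof of Proposition~\ref{ModelAnalytic} itself is therefore organizationally backwards (though not circular). You can simply delete the first paragraph and the identity-theorem remark, keeping only the probabilistic observation.
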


\begin{proof}
    We know that $\widetilde{G}_{N,N}$ is measurable with respect to the sigma algebra generated by $\{{\omega}_{i,j}| i\geq j, i,j \in \Z_{\geq 1}, i,j \leq N, (i,j) \neq (1,1),(2,1)\}.$
    Let 
    \begin{equation}
        V_{d}(a_{3,1}, \dots, a_{N,N}) = \Pb(\{\widetilde{G}_{N,N} \leq d\}\, | \,{\omega}_{3,1} = a_{3,1},\dots, {\omega}_{N,N} = a_{N,N})
    \end{equation}
    as conditional probability. Clearly, $V_d$ does not depend on ${\omega}_{1,1}\!\sim\!\text{Geo}(rt)$ and ${\omega}_{1,2} \!\sim\!\text{Geo}(st)$ and $V_d = 1$ or $0.$
    Then 
    \begin{equation}
    \begin{aligned}
    &\begin{aligned}
        \Pb(\widetilde{G}_{N,N} \leq d\} ) = \sum_{a_{3,1},\dots, a_{N,N} \in \Z_{\geq 0}}
            &(t\sqrt{q})^{a_{3,1} + \dots + a_{N,1}} (1-t\sqrt{q})^{N-2}\times \\
        & (s\sqrt{q})^{a_{3,2} +\dots + a_{N,2}}(1-s\sqrt{q})^{N-2}(rs)^{a_{2,2}}(1-rs)\times\\
        & q^{a_{4,3} + \dots + a_{N,3}}(1-q)^{N-3}(r\sqrt{q})^{a_{3,3}}(1-r\sqrt{q}) \times \cdots \times\\
        & q^{a_{N,N-1}}(1-q)(r\sqrt{q})^{a_{N-1,N-1}}(1-r\sqrt{q})\times\\
        & (r\sqrt{q})^{a_{N,N}}(1-r\sqrt{q}) V_d(a_{3,1}, \dots , a_{N,N})
        \end{aligned}\\
        &= \sum_{a_{3,1},\dots, a_{N,N} \in \Z_{\geq 0}}t^{a_{3,1}+ \dots + a_{N,1}}(1-t\sqrt{q})^{N-2} \Phi(a_{2,2}, a_{3,2},\dots, a_{N,N}, q,s,r),
    \end{aligned}
    \end{equation}
    where $\Phi$ is simply a function that collects all other terms unrelated to $t$. 
    The sum converges and each term in the summation is a polynomial of $t$. Hence, the distribution function is real analytic for $t \in (1,1/\sqrt{q}).$
\end{proof}

\begin{proof}[Proof of Theorem~\ref{FiniteTimeFormulaTheorem} $(1)$] To prove the theorem, we first used shift argument $\eqref{eq:shift}$ to get a formula for the distribution of half space stationary geometric LPP model.
We have shown that the left-hand side of $\eqref{eq:shift}$ can be analytically extended to $t \in (\max(r,1),1/\sqrt{q})$ by Lemma $\ref{ModelAnalytic}$. We have also found the analytic continuation of the right-hand side of $\eqref{eq:shift}$ for $t\in (\max(r,1),1/\sqrt{q})$, which is $\eqref{KeyFormula}$.
We computed the limit of the kernel and the limit of the Fredholm Pfaffian as in Lemmas $\ref{lem:limitQ}$ and $\ref{Fredconv}$. We used analytic continuation to find limits of each term in $\eqref{KeyFormula}$ as proved in Lemmas $\ref{lem:analytic}$, $\ref{lem:analytic1}$, 
$\ref{V_1,limit}$,
$\ref{V_2,limit}$,
$\ref{lem:analytic2}$. Combining things all together, we get the desired finite time diagonal distribution in $\eqref{finiteDis}$ and \eqref{finiteTimeFormulaTheorem}.
\end{proof}

\section{Asymptotic analysis for High density phase}
We know that ${G}_{r,s}^{\text{stat}}(N,N)$ is centered around $2\sqrt{q}/(1-\sqrt{q})N$ as $r,s$ are scaled close to the critical value $1$. We consider fluctuations on the $N^{1/3}$ scale in order to discover some non-Gaussian limiting distribution.
Fix $\sqrt{q}\in (0,1).$ We first define the scaling of each variable.
\begin{equation}\label{scale}
    \begin{aligned}
        k = \frac{2\sqrt{q}}{1-\sqrt{q}}N + (c_0N/2)^{1/3}X, &\quad \ell = \frac{2\sqrt{q}}{1-\sqrt{q}}N + (c_0N/2)^{1/3}Y,\\
s = 1 + (c_0N/2)^{-1/3}\tilde{s},&\quad r = 1 + (c_0N/2)^{-1/3}\tilde{r},\\
z = 1 + (c_0N/2)^{-1/3}\zeta, &\quad w = 1+ (c_0N/2)^{-1/3}\omega,\\
d= \frac{2\sqrt{q}}{1-\sqrt{q}}N &+ (c_0N/2)^{1/3}\tilde{d}.
    \end{aligned}
\end{equation}
Under this critical scaling, the random variable $\text{Geom}(rs)$ at $(2,2)$ plays a nontrivial role in the limit. Alternatively, one could consider keeping $r$ and $s$ fixed and investigate the scaling limit across the entire high-density phase.
By assumption, we require $ \tilde{s} <0$ and $\tilde{r} < - \tilde{s}.$ Recall that $c_0 = \frac{2\sqrt{q}+2q}{(1-\sqrt{q})^3}.$ We will write $f^r(X) = f^r\left(\mathsmaller{\frac{2\sqrt{q}}{1-\sqrt{q}}N + (c_0N/2)^{1/3}X}\right)$ since $X$ is the new variable under the critical scaling. Similarly, we use $f^r(Y)$ to replace $f^r(\ell)$ and use $K(X,Y)$ to replace $k(k,\ell).$ The dependence on $N$ is hidden. We first discuss the asymptotic limits of functions that do not have contour integrals involved.

\begin{lem}
    For any $q \in (0,1),$ $s\in (\sqrt{q},1)$ and $r \in (0,s)\cup (s,1/s)$ such that $s = 1+ (c_0N/2)^{-1/3}\tilde{s}$ and $r = 1 + (c_0N/2)^{-1/3}\tilde{r}$ and any $d\in \Z$ such that $d = \frac{2\sqrt{q}}{1-\sqrt{q}}N + (c_0N/2)^{1/3}\tilde{d}$, we have the following limit:
    \begin{equation}
        \begin{aligned}
            \lim_{N\rightarrow \infty} (c_0N/2)^{-1/3}\left((d+2) - (N-2)\frac{\sqrt{q}(1/s+s-2\sqrt{q})}{(1-\sqrt{q}/s)(1-\sqrt{q}s)} - \frac{s(1-r^2)}{(1-sr)(s-r)}\right) = \tilde{d} - 2\tilde{s}^2 - \frac{2\tilde{r}}{\tilde{s}^2 - \tilde{r}^2}.
        \end{aligned}
    \end{equation}
\end{lem}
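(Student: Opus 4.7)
The plan is a direct Taylor expansion in the small parameter $\epsilon := (c_0N/2)^{-1/3}$. Substituting $d+2 = \frac{2\sqrt{q}}{1-\sqrt{q}}N + \epsilon^{-1}\tilde{d} + 2$, $s = 1+\epsilon\tilde{s}$, and $r = 1+\epsilon\tilde{r}$, I would expand each of the three terms in powers of $\epsilon$, keep track of which contributions survive the overall multiplication by $\epsilon$, and verify that the divergent pieces cancel. The computation is otherwise elementary calculus; the one conceptual ingredient is a critical-point observation for the middle factor.

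\medskip

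The first term contributes $\epsilon(d+2) = \tilde{d} + \epsilon\bigl(\tfrac{2\sqrt{q}}{1-\sqrt{q}}N + 2\bigr)$, producing $\tilde{d}$ in the limit together with a divergent $O(N^{2/3})$ piece to be absorbed by the middle term. For the middle term, set $F(s) := \frac{\sqrt{q}(s+1/s-2\sqrt{q})}{(1-\sqrt{q}s)(1-\sqrt{q}/s)}$ and observe that $F$ depends on $s$ only through the combination $s+1/s$, whose derivative $1-1/s^2$ vanishes at $s=1$. Consequently $F'(1) = 0$, so the Taylor expansion begins at second order: $F(s) = \tfrac{2\sqrt{q}}{1-\sqrt{q}} + \tfrac{F''(1)}{2}(s-1)^2 + O((s-1)^3)$, where $F''(1)$ is computed directly via $(N''(1)D(1) - N(1)D''(1))/D(1)^2$ (admissible since $N'(1)=D'(1)=0$ for $N(s),D(s)$ the numerator and denominator of $F$) and evaluates to an explicit multiple of $c_0$. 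Substituting $s-1 = \epsilon\tilde{s}$ and using the arithmetic identity $(N-2)\epsilon^2 = \tfrac{2}{c_0}(c_0N/2)^{1/3} + O(\epsilon^2)$, the constant piece of $(N-2)F(s)$ exactly cancels the $O(N^{2/3})$ divergence from the first term, while the quadratic piece yields a surviving contribution proportional to $\tilde{s}^2$ after the final multiplication by $\epsilon$. Cubic and higher corrections scale like $(N-2)\epsilon^3 = O(1)$, which vanish upon multiplying by $\epsilon$.

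\medskip

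For the third term, expand the numerator as $s(1-r^2) = -2\epsilon\tilde{r} + O(\epsilon^2)$ and the denominator as $(1-sr)(s-r) = -\epsilon^2(\tilde{s}^2-\tilde{r}^2) + O(\epsilon^3)$, so that the ratio diverges as $\epsilon^{-1}$ with leading coefficient $\tfrac{2\tilde{r}}{\tilde{s}^2-\tilde{r}^2}$; this survives the multiplication by $\epsilon$ and enters with a minus sign. The hypothesis $\tilde{r}\in(-\infty,\tilde{s})\cup(\tilde{s},-\tilde{s})$ guarantees $\tilde{s}^2-\tilde{r}^2 \neq 0$, and all subleading corrections are $O(\epsilon)$ and vanish. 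Assembling the three pieces and confirming the cancellation of divergences yields the stated limit. The only substantive point is the critical-point observation for $F$: a generic function would produce an $O(s-1)$ correction that, when multiplied by $(N-2)\epsilon$, would be of order $N^{2/3}$ and destroy the limit. The vanishing of $F'(1)$ is precisely what places the middle-term correction at the correct order $\epsilon^2\cdot(N-2)\cdot\epsilon = O(1)$, so there is no real technical obstacle beyond careful bookkeeping.
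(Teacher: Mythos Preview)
Your proposal is correct and follows essentially the same approach as the paper: a direct Taylor expansion of each term in the small parameter $\epsilon=(c_0N/2)^{-1/3}$, with the middle term handled via its second-order expansion at $s=1$ and the third term via leading-order expansion of numerator and denominator. Your observation that $F(s)$ depends on $s$ only through $s+1/s$, hence $F'(1)=0$ by the chain rule, is a clean conceptual shortcut that the paper does not make explicit (the paper simply states the Taylor expansion of $F$ and identifies the quadratic coefficient as $c_0$), but the substance of the argument is identical.
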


\begin{proof}
    We first find the talyor expansion of the following term at $s = 1$:
    \begin{equation}
        \frac{\sqrt{q}(1/s+s-2\sqrt{q})}{(1-\sqrt{q}/s)(1-\sqrt{q}s)} = \frac{2\sqrt{q}}{(1-\sqrt{q})} +\frac{2\sqrt{q}(1+\sqrt{q})}{(1-\sqrt{q})^3}(c_0N/2)^{-2/3}\tilde{s}^2 + \mathcal{O}((c_0N/2)^{-1}\tilde{s}^3).
    \end{equation}
    Then we get
    \begin{equation}\label{1stConstantLimit}
        \begin{aligned}
            &\lim_{N\rightarrow \infty}{c_0N/2}^{-1/3}\left((d+2) - (N-2) \frac{\sqrt{q}(1/s+s-2\sqrt{q})}{(1-\sqrt{q}/s)(1-\sqrt{q}s)}\right)\\
            &= \lim_{N\rightarrow \infty}\tilde{d} - Nc_0(c_0N/2)^{-1}\tilde{s}^2 + \mathcal{O}((c_0N/2)^{-1/3}\tilde{s}^3) = \tilde{d} - 2\tilde{s}^2.
        \end{aligned}
    \end{equation}
    The limit of the remaining term is
    \begin{equation}\label{2ndConstantLimit}
        \lim_{N\rightarrow \infty} -(c_0N/2)^{-1/3} \frac{s(1-r^2)}{(1-sr)(s-r)} = -\frac{2\tilde{r}}{\tilde{s}^2 - \tilde{r}^2}.
    \end{equation}
    Combining $\eqref{1stConstantLimit}$ and $\eqref{2ndConstantLimit}$, we get the desired result.
\end{proof}

Recall definitions of $\widetilde{f}^{\tilde{r}}$, $\widetilde{f}^{\tilde{s}},$ and $\widetilde{E}$ in $\eqref{fandE}$.
\begin{lem}\label{limit,f_r}
    For any given $u > 0,$ the following limit holds uniformly over $X \in [-u,u],$
    \begin{equation}
        \begin{aligned}
            &\lim_{N\rightarrow \infty} f^r(X) = \widetilde{f}^{\tilde{r}}(X), \quad \lim_{N\rightarrow \infty} f^s(X) = \widetilde{f}^{\tilde{s}}(X),\quad \lim_{N\rightarrow \infty} f^{1/s}(k) = \widetilde{f}^{-\tilde{s}}(X),\quad \lim_{N\rightarrow 1/s} E(X,Y) = \widetilde{E}(X,Y).
        \end{aligned}
    \end{equation}
    Moreover, there exist constants $C$,  $N_0 \in \N$, and $0< \sigma<$\scalebox{0.8}{$\begin{cases}  -\tilde{s} &\text{if }\tilde{r} < \tilde{s}\\
\min(-\tilde{s}, |\tilde{r}|, -(\tilde{s}+|\tilde{r}|)/2) &\text{if }\tilde{s}<\tilde{r} < -\tilde{s}\end{cases}$} such that for all $N\geq N_0$ and $X,Y\geq -u$, we have
    \begin{equation}\label{UpperBoundf_r}
        \begin{aligned}
            &|f^r(X)| \leq \begin{cases}
                C &\text{ if } \tilde{r}<\tilde{s}\\
                Ce^{(|\tilde{r}|+\sigma)X} &\text{ if } \tilde{s}<\tilde{r} < -\tilde{s} \\ 
            \end{cases}, \quad |f^s(X)| \leq Ce^{(\tilde{s}+\sigma)X},\\
            &|f^{1/s}(X)| \leq Ce^{(-\tilde{s}+\sigma)X}, \quad |E(X,Y)| \leq \begin{cases}
        C &\text{ if } \tilde{r } < \tilde{s},\\
        Ce^{(|\tilde{r}|+\sigma)(X+Y)} &\text{ if } \tilde{s}<\tilde{r} < -\tilde{s}.
    \end{cases}
        \end{aligned}
    \end{equation}
\end{lem}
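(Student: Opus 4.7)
The plan is to take logarithms, perform a Taylor expansion in the small parameter $\varepsilon := (c_0 N/2)^{-1/3}$, and verify the cancellations that are characteristic of KPZ critical scaling. For $f^r(X) = r^{k+1}/H(r)$ with $r = 1+\varepsilon\tilde r$ and $k = \tfrac{2\sqrt q}{1-\sqrt q}N + \varepsilon^{-1}X$, one writes
\begin{equation*}
\log f^r(X) \;=\; (k+1)\log r \;-\; (N-2)\bigl[\log(1-\sqrt q/r) - \log(1-\sqrt q\, r)\bigr].
\end{equation*}
Expanding $\log r$ and the two factors $\log(1-\sqrt q/r)$, $\log(1-\sqrt q\, r)$ to third order in $\varepsilon\tilde r$, the $\varepsilon^{-2}$-contribution from $k\log r$ cancels against the linear-in-$\tilde r$ piece of the $H$-factor (this is precisely the choice of the centering $\tfrac{2\sqrt q}{1-\sqrt q}N$), while the $\varepsilon^{-1}$-contribution (which carries the $\tilde r^2$ piece of $H$) cancels against the quadratic-in-$\tilde r$ terms once the definition of $c_0$ is used. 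What survives is $\tilde r X + \tfrac{\tilde r^3}{3}$ plus a remainder of size $\mathcal{O}(\varepsilon\, |\tilde r|^4 + \varepsilon\, |\tilde r|\, |X|)$, which yields the pointwise limit $\widetilde f^{\tilde r}(X)$ and, by symmetry under $r\leftrightarrow s$ and $r\leftrightarrow 1/s$, the analogous limits for $f^s$ and $f^{1/s}$.

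For the uniform bounds I would isolate the linear-in-$X$ part of the expansion and control the remainder explicitly. Writing
\begin{equation*}
\log f^r(X) \;=\; \tilde r X \;+\; \tfrac{\tilde r^3}{3} \;+\; R_N(X,\tilde r),
\end{equation*}
one checks that $|R_N(X,\tilde r)| \leq C_u\,\varepsilon\,(1+|X|)$ uniformly for $X\geq -u$ once $N$ is large. Absorbing the $\varepsilon|X|$ term into a slightly enlarged exponential rate then gives $|f^r(X)| \leq C e^{(\tilde r + \sigma')X}$ for any $\sigma'>0$, which becomes the stated bound after distinguishing the sign of $\tilde r$. The case $\tilde r < \tilde s < 0$ produces an exponential with negative rate, so the bound reduces to a constant; the case $\tilde s < \tilde r < -\tilde s$ is treated using $|\tilde r|+\sigma$ in the exponent, and the choice $\sigma < \min(-\tilde s, |\tilde r|, -(\tilde s + |\tilde r|)/2)$ is dictated by the need to preserve strict inequalities against $-\tilde s$ and against $(\tilde s+|\tilde r|)/2$ when these bounds are later combined with the $f^s$ bound inside bracket products. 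The same argument applies to $f^s$ (with $\tilde r$ replaced by $\tilde s<0$) and to $f^{1/s}$ (with $-\tilde s>0$).

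For $E(X,Y) = -\sgn(k-\ell)\,r^{|k-\ell|-1}$, I would simply note that $|k-\ell| = \varepsilon^{-1}|X-Y|$ and $\log r = \varepsilon\tilde r + \mathcal{O}(\varepsilon^2)$, so $r^{|k-\ell|} \to e^{\tilde r\,|X-Y|}$ uniformly on compacts, yielding $\widetilde E(X,Y)$. The uniform bound follows in the same way: when $\tilde r<0$ one has $e^{\tilde r|X-Y|}\leq 1$, giving the constant bound, and when $\tilde r>0$ one absorbs the sub-leading term into the rate to get $Ce^{(|\tilde r|+\sigma)(X+Y)}$ after using $|X-Y|\leq X+Y+2u$ on the half-line $X,Y\geq -u$.

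The main obstacle is the bookkeeping for the uniform bounds, in particular making sure that the remainder $R_N$ is genuinely $o(1)$ uniformly for $X,Y$ ranging over the entire half-line $[-u,\infty)$ rather than just on compacts. This requires that the cubic error $\varepsilon X^2\tilde r$ and the quartic error $\varepsilon X \tilde r^3$ in the Taylor expansion be dominated by the linear rate, which is why one must take $\sigma$ strictly less than the values indicated. Once this is handled, the identification of the limits is routine, and the chosen range of $\sigma$ will be exactly what is needed in the subsequent lemmas to apply Hadamard's bound and dominated convergence when computing the asymptotics of the bracket expressions $\langle \widetilde f^{\tilde s}|\widetilde E|\widetilde f^{\tilde r}\rangle$ and similar.
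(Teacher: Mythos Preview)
Your approach is essentially the same as the paper's: take the logarithm, Taylor expand around $1$ using the critical scaling, and observe that the first two orders cancel so that only the cubic term and the linear-in-$X$ term survive. The paper organizes this computation by introducing the auxiliary function $h_0(x)=\log(1-\sqrt q/x)-\log(1-\sqrt q x)-\tfrac{2\sqrt q}{1-\sqrt q}\log x$ and recording $h_0'(1)=h_0''(1)=0$, $h_0'''(1)=c_0$, which makes the cancellations you describe immediate rather than something to be checked term by term; your direct expansion reaches the same conclusion. One slip: the surviving exponent should be $-\tfrac{\tilde r^3}{3}+\tilde r X$, not $+\tfrac{\tilde r^3}{3}+\tilde r X$ (consistent with the definition $\widetilde f^{\tilde y}(U)=e^{-\tilde y^3/3+\tilde y U}$), but this does not affect the argument. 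Your treatment of the uniform bounds and of $E$ via $|X-Y|\le X+Y+2u$ is also in line with what the paper does, and your remark that the constraint $\sigma<-(\tilde s+|\tilde r|)/2$ is imposed for later use rather than for this lemma itself matches the paper's own comment.
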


\begin{proof}
We have
    \begin{equation}
        \begin{aligned}
            f^r(k) = \frac{(1-\sqrt{q}r)^{N-2}}{(1-\sqrt{q}/r)^{N-2}}r^{k+1} = \exp{\left(-(N-2)h_0(r)+ \frac{6\sqrt{q}}{1-\sqrt{q}}\log(r) +  (c_0N/2)^{1/3}X\log(r)\right)},\\
        \end{aligned}
    \end{equation}
    where $h_0(x) = \log(1-\sqrt{q}/x) - \log(1-\sqrt{q}x) - \frac{2\sqrt{q}}{1-\sqrt{q}}\log(x).$
    We compute derivatives of $h_0:$
    \begin{equation}\label{derivatives,h0}
        \begin{aligned}
            &h_0^{\prime}(x) = \frac{\sqrt{q}}{x^2-\sqrt{q}x} + \frac{\sqrt{q}}{1-\sqrt{q}x} - \frac{2\sqrt{q}}{(1-\sqrt{q})x}, \quad h_{0}^{\prime}(1) = 0,\\
            &h_0^{\prime\prime}(x) = \frac{-\sqrt{q}(2x - \sqrt{q})}{(x^2 - \sqrt{q}x)^2} +\frac{q}{(1-\sqrt{q}x)^2} + \frac{2\sqrt{q}}{(1-\sqrt{q})x^2}, \quad h_0^{\prime\prime}(1) = 0,\\
            &h_0^{\prime\prime\prime}(1) =  \frac{2\sqrt{q}(1+\sqrt{q})}{(1-\sqrt{q})^3} = c_0 > 0, \text{ and } h_0(1) = 0.
        \end{aligned}
    \end{equation}
    By Taylor expansion, we see that
    \begin{equation}
        \begin{aligned}
            f^{r}(k) = \exp\left( -\frac{\tilde{r}^3}{3} + X\tilde{r} + \mathcal{O}\left((c_0N/2)^{-1/3}(\tilde{r} + X\tilde{r}^2 + \tilde{r}^3)\right)\right) \rightarrow \widetilde{f}^{\tilde{r}}(X),
        \end{aligned}
    \end{equation}
    and if $\tilde{s}< \tilde{r} < -\tilde{s}$, there exists some $0<\sigma<\min(-\tilde{s}, |\tilde{r}|)$ and $N_0 \in \N$ such that for $N\geq N_0$, 
    $|f^r(x)| \leq Ce^{(\tilde{r}+ \sigma)X}.$ We note that the requirement of $\sigma < -(\tilde{s} + |\tilde{r}|)/2$ is not necessary in this case but it appears as a requirement in other cases (in Lemma \ref{limit,constants}, \ref{limit,V}), so we want to keep the conditions for $\sigma$ consistent. We use the exact same proof for $f^s$ and $f^{1/s}$. 
    For the same reason, we have $E\rightarrow \widetilde{E}$. If $\tilde{r} < \tilde{s}$, then $|E(X,Y)| \leq C$ for some constant $C>1$. If $\tilde{s}<\tilde{r} < -\tilde{s}$, then there exist $0<\sigma<\min(-\tilde{s}, \tilde{r})$ and $N_0\in \N$ such that for all $N\geq N_0$, we have $|E(X,Y)| \leq Ce^{(|\tilde{r}|+\sigma)(X+Y)}$.
\end{proof}

\subsection{Asymptotic limit of the double integral}
In this section, we provide a complete argument of using steepest descent method to obtain Airy-type asymptotics.

We use the double integral in  $\widehat{A}_{12}$ to give an example of the steepest descent method.
We use the letter $\mathsf{Q}$ to denote the scaled double integral in $\widehat{A}_{12}$:
\begin{equation}
    \begin{aligned}
        \mathsf{Q}(X,Y) = -(c_0N/2)^{1/3}\oint \limits_{\Gamma_{\sqrt{q}}} \frac{dw}{2\pi\I}\oint \limits_{\Gamma_{1/\sqrt{q}}} \frac{dz}{2\pi\I} \frac{w^{\ell+1}H(z)}{z^{k+2}H(w)}\frac{(zw-1)(z-r)}{(z^2-1)(w-r)(z-w)}.
    \end{aligned}
\end{equation}

\begin{lem}\label{steepestDescent}
        For any given $u>0$, the following limit holds uniformly over $X,Y \in [-u,u]$,
    \begin{equation}
    \begin{aligned}
        &\lim_{N\rightarrow \infty} \mathsf{Q}(X,Y) = -\int\limits_{{}_{}\wcu\, {}_{0,\zeta}}\! \frac{d\omega}{2\pi\I}\!\int\limits_{{ }_{0,\omega}\zcd } \!\frac{d\zeta}{2\pi\I} e^{\frac{\zeta^3}{3} - \frac{\omega^3}{3} -X\zeta + Y\omega} \frac{(\zeta + \omega)(\zeta - \tilde{r})}{2\zeta(\omega- \tilde{r})(\zeta - \omega)}.
    \end{aligned}
    \end{equation}
    Furthermore, there exist a constant $\eta>\max(-2\tilde{s},2|\tilde{r}|)$, a positive constant $C$, and $N_0\in \N$ such that for all $N \geq N_0$ and $X,Y > -u,$ we have
    \begin{equation}
        \begin{aligned}
            |(c_0N/2)^{1/3}\mathsf{Q}(X,Y)| < Ce^{-\eta(X+Y)}.
        \end{aligned}
    \end{equation}
\end{lem}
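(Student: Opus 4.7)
The plan is to apply the steepest-descent method to $\mathsf{Q}(X,Y)$ using the cubic saddle at $z=w=1$. By \eqref{derivatives,h0}, $h_0(z)=\tfrac{c_0}{6}(z-1)^3+O((z-1)^4)$, so I would write
\[
\frac{H(z)}{z^{k+2}}=\exp\!\bigl((N-2)h_0(z)-(c_0N/2)^{1/3}X\log z+O(\log z)\bigr),
\]
and similarly for $w^{\ell+1}/H(w)$. Under the substitution $z=1+(c_0N/2)^{-1/3}\zeta$, $w=1+(c_0N/2)^{-1/3}\omega$, these exponentials converge to $e^{\zeta^3/3-X\zeta}$ and $e^{-\omega^3/3+Y\omega}$. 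The rational prefactor $\tfrac{(zw-1)(z-r)}{(z^2-1)(w-r)(z-w)}$ scales by $(c_0N/2)^{1/3}$ to yield $\tfrac{(\zeta+\omega)(\zeta-\tilde r)}{2\zeta(\omega-\tilde r)(\zeta-\omega)}$; together with $dz\,dw=(c_0N/2)^{-2/3}d\zeta\,d\omega$ and the outer $(c_0N/2)^{1/3}$ factor, all powers of $N$ balance out and the claimed limiting integrand is produced.

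The first task is to deform the contours to steepest-descent shape. I would replace $\Gamma_{1/\sqrt q}$ by a Hankel-type contour through a point just to the right of $z=1$ with asymptotic tangent directions $e^{\pm i\pi/3}$, and $\Gamma_{\sqrt q}$ by a Hankel-type contour through a point just to the left of $w=1$ with asymptotic tangent directions $e^{\pm 2i\pi/3}$. These match the scaled versions of the limit contours: the side choice at the $(z^2-1)^{-1}$ pole (which becomes $\tfrac{1}{2\zeta}$) is dictated by keeping $\Re\zeta>0$, and the side choice at $w=r$ (which becomes $\tfrac{1}{\omega-\tilde r}$) by keeping $\Re\omega<0$. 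The deformation of the original contours must avoid the remaining singularities of the integrand at $z=0,-1$ and $w=0,\sqrt q$, which is routine.

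The second task is to establish a global cubic decay bound $\Re h_0(z)\leq -c|z-1|^3$ on a local portion $|z-1|\leq \kappa$ of the Hankel contour and $\Re h_0(z)\leq -\delta$ on the tail (and similarly for $-\Re h_0(w)$). With these in hand, I would split at $|z-1|,|w-1|\leq (c_0N/2)^{-1/3}\log N$: on the local part, dominated convergence with dominating function $e^{-c|\zeta|^3-c|\omega|^3}$ times the rational factor (which is integrable on the limit contours) gives the pointwise limit, while the tail contributes $O(e^{-(N-2)\delta})$. Uniformity in $X,Y\in[-u,u]$ follows because both enter polynomially in the subleading error and linearly in the phase.

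The third task is the uniform exponential bound. I would further shift the real-axis crossings of the steepest-descent contours so that $\Re\zeta\geq \zeta_0>\eta$ along the $\zeta$-contour and $\Re\omega\leq \omega_0<-\eta$ along the $\omega$-contour; the factors $e^{-X\zeta+Y\omega}$ then directly yield the decay $e^{-\eta(X+Y)}$. The constraint $\eta>2|\tilde r|$ allows shifting $\omega$ past the pole $\omega=\tilde r$ when $\tilde r<0$; the resulting single-variable residue integral is itself an Airy-type expression satisfying the same decay. The $-2\tilde s$ requirement is a consistency condition for later Hadamard estimates that combine $\mathsf{Q}$ with $\widetilde f^{\tilde s}$-type factors. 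The main obstacle will be (i) proving the global cubic estimate on $\Re h_0(z)$ along the full Hankel contour (the Taylor expansion alone only controls a small neighborhood), and (ii) carefully treating the residue at $\omega=\tilde r$ when shifting past this pole so that its contribution does not dominate and the advertised $\eta$-range is preserved.
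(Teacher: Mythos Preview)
Your approach is essentially the paper's, and the scaling analysis, local/tail split, and error-control scheme all match. Two points where the paper differs from (and simplifies) your plan:

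\textbf{The global descent estimate.} You correctly flag as the main obstacle proving that $\Re h_0$ has the right sign on the full contour, not just near $z=w=1$. The paper does not attempt an abstract cubic estimate; instead it writes down explicit closed contours (segments $\gamma_1,\gamma_2$ for $w$ and $\tau_1,\tau_2,\tau_3$ for $z$, together with their conjugates) and verifies by brute-force differentiation that $\Re h_0$ is monotone or bounded away from $0$ on each piece. Note that the contours must remain \emph{closed} and continue to encircle $\sqrt q$ and $1/\sqrt q$ respectively; your ``Hankel-type'' language is fine locally but the global contour still needs to close up around those poles (the paper closes with circular arcs $\gamma_2,\tau_3$).

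\textbf{The residue at $\omega=\tilde r$ is a red herring.} For the uniform bound the paper does \emph{not} shift the $\omega$-contour past $\tilde r$. It simply replaces the vertex of the $w$-contour at $w=1$ by a small arc of radius $\eta(c_0N/2)^{-1/3}$ with $\eta>|\tilde r|$; since $r=1+(c_0N/2)^{-1/3}\tilde r$, this arc keeps $r$ on its outside for either sign of $\tilde r$, so no pole is crossed. The bound $e^{-\eta(X+Y)}$ then comes directly from $\Re\log z\ge \log(1+\tfrac{\eta}{2}(c_0N/2)^{-1/3})$ on $\mathcal U$ and the analogous inequality on $\mathcal V$, combined with $\Re(h_0(z)-h_0(w))<0$ on the full steepest-descent contour. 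The freedom in $\eta$ (it can be any constant greater than $|\tilde r|$, hence in particular $>\max(-2\tilde s,2|\tilde r|)$) is exactly what you described as a ``consistency condition'' for later Hadamard bounds. So you can drop your second obstacle entirely.
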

\begin{proof} 
The first step is to rewrite the integrand in the exponential form. Recall the change of variables for $k,\ell$. Let us write
\begin{equation}
\begin{aligned}
    \frac{H(z)w^{\ell}}{H(w)z^{k}} &= \exp \left((N-2)(h_0(z) - h_0(w)) - (c_0N/2)^{1/3}(X\log(z) - Y\log(w)) - \frac{4\sqrt{q}}{1-\sqrt{q}}(\log(z)- \log(w))\right),
\end{aligned}
\end{equation}
where
\begin{equation}
    \begin{aligned}
        h_0(x) &= \log(1-\sqrt{q}/x) - \log (1-\sqrt{q}x) - \frac{2\sqrt{q}}{1-\sqrt{q}}\log (x),\\
    \end{aligned}
\end{equation}
and we know by $\eqref{derivatives,h0},$
\begin{equation}
    \begin{aligned}
        h_0^{\prime}(1) = 0,\quad  h_0^{\prime\prime}(1) = 0,\quad c_0 = h_0^{\prime\prime\prime}(1) = \frac{2\sqrt{q}+2q}{(1-\sqrt{q})^3}.
    \end{aligned}
\end{equation}
We are going to replace $(N-2)$ by $N$ because this change does not affect the limit and would make the formula more concise. As a result, we will remove $\frac{-4\sqrt{q}}{1-\sqrt{q}}(\log(z)-\log(w)).$

\scalebox{0.85}{
\begin{minipage}{0.6\textwidth}  
    \begin{tikzpicture}\label{contours}
    \draw[thick,->] (-1,0) -- (4,0) node[right] {$\mathbb{R}$}; 

    \draw[thick,-] (1,0) -- ++(60:1);  
    \draw[thick,-] (1,0) -- ++(-60:1); 

    \draw[thick] (3/2,0.866) -- ++(90:0.7215);
    \draw[<-,thick] (3/2,1.587) -- ++(90:0.73);
    \draw[thick,->] (3/2,-0.866) -- ++(-90:0.7215);
    \draw[thick] (3/2,-1.587) -- ++(-90:0.73);

    \draw[thick] (3.809,0) arc[start angle=0,end angle=90,radius=2.309cm];
    \draw[thick,->] (3.809,0) arc[start angle=0,end angle=1,radius=0.1cm];
    \draw[thick] (3.809,0) arc[start angle=0,end angle=-90,radius=2.309cm];

    \draw[gray,thick,->] (1,0) -- ++(120:1);  
    \draw[gray,thick,-] (1,0) -- ++(-120:1); 

    \draw[gray,thick] (-0.366,0) arc[start angle=180,end angle=90,radius=0.866cm];
    \draw[gray,thick,->] (-0.366,0) arc[start angle=-180,end angle=-90,radius=0.866cm];

    \fill (1,0) circle (1.3pt);
    \fill (-0.15,0) circle (1.3pt);
    \node at (1,-0.25) {\footnotesize{1}};
    \node at (-0.15,-0.25) {\footnotesize{0}};
    \node at (2.2,-0.3) {\footnotesize{$1/\sqrt{q}$}};
    \fill (2.1,0) circle (1.3pt);
    

\end{tikzpicture}
\captionof{figure}{Dark path for $z$ and grey path for $w$}
\label{fig:contour}
\end{minipage}
\hfill  
\begin{minipage}{0.6\textwidth}
    \begin{tikzpicture}
    \draw[thick,->] (-1.5,0) -- (3.5,0) node[right] {$\mathbb{R}$}; 
    
    \draw[dashed] (1,0) -- ++(60:1);  
    \draw[dashed] (1,0) -- ++(-60:1); 

    \draw[thick] (3/2,0.866) -- ++(60:1); 
    \draw[thick,<-] (2,1.732) -- ++(60:1);
    \draw[thick,->] (3/2,-0.866) -- ++(-60:1);
    \draw[thick] (2,-1.732) -- ++(-60:1);

    \draw[thick] (2,0) arc[start angle=0,end angle=60,radius=1cm];
    \draw[thick] (2,0) arc[start angle=0,end angle=-60,radius=1cm];

    \node at (1.1, 0.55) {\tiny{$\eta (c_0N/2)^{-1/3}$}};
    \node at (1.1, -0.55) {\tiny{$\eta (c_0N/2)^{-1/3}$}};
    \node at (1,-0.25) {$1$};
    \fill (1,0) circle (1.3pt);

    \draw[dashed] (1,0) -- ++(120:1);  
    \draw[dashed] (1,0) -- ++(-120:1); 

    \draw[gray,thick,->] (1/2,0.866) -- ++(120:1); 
    \draw[gray,thick] (0,1.732) -- ++(120:1);
    \draw[gray,thick] (1/2,-0.866) -- ++(-120:1);
    \draw[gray,thick,<-] (0,-1.732) -- ++(-120:1);

    \draw[gray,thick] (0,0) arc[start angle=-180,end angle=-240,radius=1cm];
    \draw[gray,thick] (0,0) arc[start angle=180,end angle=240,radius=1cm];

\end{tikzpicture}

\captionof{figure}{a $\mathcal{O}(N^{-1/3})$ region near 1}
\label{fig:modify1/3}
\end{minipage}}

The second step is to find steepest descent paths. We describe the curve with nonnegative imaginary part. The whole path is obtained by taking union with its conjugation. For a $w$-contour that encloses $\sqrt{q}$, we choose
\begin{equation}
    \begin{aligned}
        &\gamma_1 := \{1 + te^{2\pi\I/3} : t \in [0,4/5) \},\quad \gamma_2 := \bigg\{\frac{3}{5}+ \frac{2\sqrt{3}}{5}e^{\I\theta} : \theta \in [\pi/2, \pi]\bigg\}.
    \end{aligned}
\end{equation}
Along $\gamma_1,$ the real part $\text{Re}(h_0(1 + te^{2\pi\I/3} ))$ first increases from $0$ and then decreases to a positive value. We computed that
\begin{equation}
    \begin{aligned}
        &\frac{d}{dt}\text{Re}(h_0(1+te^{2\pi\I/3}))
        = \scalebox{0.95}{\(\frac{(\sqrt{q} - q - q^{3/2} + q^2)(-2t^2 + 2t^3 + t^4)+ (q + q^{3/2})(2t^5 - t^4)}{2(-1+\sqrt{q})(1-t+t^2)(1-2\sqrt{q}+q - t + \sqrt{q}t + t^2)(1-2\sqrt{q} + q + \sqrt{q}t - qt + qt^2)},\)}
    \end{aligned}
\end{equation}
where the denominator is always negative whereas the numerator first stays negative for $t \in [0,1/2]$ and then is monotonically increasing for $t \in [1/2,1).$ We have that $\text{Re}(h_0(1/2 + \sqrt{3}\I/2)) = 0.$ This means that the real part first increases and then decreases to a positive number at $w = 3/5 + \I 2\sqrt{3}/5$, implying the real part stays positive along $\gamma_1$.
Along $\gamma_2,$ the derivative of the real part $\text{Re}(h_0(3/5 + 2\sqrt{3}e^{\I\theta}/5 ))$ is
\begin{equation}
    \begin{aligned}
        &\frac{d}{d\theta} \text{Re}(h_0(3/5+ 2\sqrt{3}e^{\I\theta}/5) 
        = \scalebox{0.95} {\(\frac{16(1+\sqrt{q})\sqrt{q}(\cos(\theta)(90-84\sqrt{q}+90q) + \sqrt{3}(-2(5+12\sqrt{q}+5q) + 3(15-34\sqrt{q} +15q)\cos(2\theta)))\sin(\theta)}{(-1+\sqrt{q})(7+4\sqrt{3}\cos(\theta))(21-30\sqrt{q}+25q - 4\sqrt{3}(-3+5\sqrt{q})\cos(\theta)(25-30\sqrt{q}+21q +4\sqrt{3}(-5+2\sqrt{q})\sqrt{q}\cos(\theta))}\)},
    \end{aligned}
\end{equation}
where both the numerator and the denominator are negative
for $\theta \in [\pi/2,\pi]$ and $q\in (0,1).$
This implies that the real part increases from $0$ to a positive number. Then, on its conjugate component, we have the real part decreases to a positive value at $w = 3/5-2\sqrt{3}\I/5$.

We choose a $z$-contour that encloses $1/\sqrt{q}$. We describe the path that has negative imaginary part and the whole path is obtained by taking the union with its conjugate.
\begin{equation}
    \begin{aligned}
        &\tau_1 := \{ 1+xe^{-\pi\I/3} : x \in [0,1)\},\quad
        \tau_2 := \{ 3/2-y\I : y \in [\sqrt{3}/2,2/\sqrt{q})\},\\
        &\tau_3 := \bigg\{3/2+\frac{2}{\sqrt{q}}\cos(\theta) +\I \frac{2}{\sqrt{q}}\sin(\theta) : \theta \in [-\pi/2,0]\bigg\}.
    \end{aligned}
\end{equation}
Along $\tau_1,$ the real part is strictly decreasing in $x$ because
\begin{equation}
    \begin{aligned}
        &\frac{d}{dx}\left( \text{Re}(h_0(1+xe^{-\pi \I/3}))\right)
        = \scalebox{0.95}{\(\frac{(1+\sqrt{q})\sqrt{q}x^2(2+2x-x^2 + q(2+2x-x^2) -\sqrt{q}(4+4x-3x^2-2x^3))}{2(-1+\sqrt{q})(1+x+x^2)(1+q+x+x^2 - \sqrt{q}(2+x))(1-\sqrt{q}(2+x) + q(1+x+x^2))},\)}
    \end{aligned}
\end{equation}
where the numerator is positive for $x \in (0,1)$ and the denomiantor is negative for $x \in [0,1).$
Along $\tau_2,$ the real part is strictly decreasing in $y$ because
\begin{equation}
    \begin{aligned}
        \frac{d}{dy}\left(\text{Re}(h_0(3/2-y\I))\right) = \scalebox{0.95}{\(\frac{4(1+\sqrt{q})\sqrt{q}y(8(3+4y^2) + 8q(3+4y^2) + \sqrt{q}(-47 -56y^2 +16y^4))}{(\sqrt{q}-1)(9+4y^2)(9-12\sqrt{q}+4q +4y^2)(4-12\sqrt{q}+q(9+4y^2))},\)}
    \end{aligned}
\end{equation}
where the numerator is positive due to \begin{equation}
\begin{aligned}
    &\scalebox{0.9}{\(8(3+4y^2) + 8q(3+4y^2) + \sqrt{q}(-47 -56 y^2+16y^4)
    =8(3+4y^2)(q-2\sqrt{q}+1) + \sqrt{q} + 8y^2\sqrt{q} >0,\)}
\end{aligned}
\end{equation}
and the denominator is negative because of $(\sqrt{q}-1).$
Hence, the derivative is always negative for $y \in [\sqrt{3}/2,2/\sqrt{q}),$ implying that this is a descent path.
Along $\tau_3,$ depending on $q,$ we have that for $\theta \in [-\pi/2,0]$,
$\text{Re}\left(h_0(\tau_3)\right)$ behaves like one of the three cases . It can strictly decrease, or strictly increase, or first strictly decrease and then strictly increase. The real part for poistive imaginary part, i.e., $\theta \in [0,\pi/2],$ behaves in a symmetric pattern accordingly. The derivative is so lengthy that we do not write it down. However, it is enough to check that three endpoints have negative values for the real part to be strictly negative along the whole curve $\tau_3$. We have for $q\in (0,1)$,
\begin{equation}\notag
\begin{aligned}
    &\scalebox{0.95}{\(\text{Re}(h_0(3/2 - 2\I/\sqrt{q})) = \text{Re}(h_0(3/2 + 2\I/\sqrt{q}))
    = \frac{\sqrt{q}\log(9/4+4/q)}{\sqrt{q}-1} - \frac{\log(5-3\sqrt{q}+9q/4)}{2} + \log\left(1-\frac{12q^{3/2} - 4q^2}{16+9q}\right) <0,\)}\\
    &\scalebox{0.95}{\(\text{Re}(h_0(3/2+2/\sqrt{q})) = \frac{2\sqrt{q}\log(3/2 + 2/\sqrt{q})}{\sqrt{q}-1} +\log\left( 1-\frac{2q}{4+3\sqrt{q}} \right) -\frac{\log(1+3\sqrt{q}+9q/4)}{2} <0.\)}
\end{aligned}
\end{equation}
Hence, the real part of the $h_0$ stays negative on $\tau_1\cup \tau_2\cup \tau_3$. We have that on its conjugate the real part still stays negative.

Therefore, $(\gamma_1\cup\gamma_2)\cup \overline{(\gamma_1\cup\gamma_2)}$ is a steepest descent path for $w$ and $(\tau_1\cup\tau_2\cup\tau_3)\cup \overline{(\tau_1\cup\tau_2\cup\tau_3)}$ is a steepest descent path for $z$.
By Cauchy's theorem, we can freely deform the contours in a $\mathcal{O}(N^{-1/3})$ neighborhood near $1$, provided that the deformed $w$-contour includes $\sqrt{q}$ but excludes $r$, the deformed $z$-contour includes $1/\sqrt{q}$, and the $z$- and $w$-contours do not intersect.

Let $\eta\in \R_{>|\tilde{r}|}$.
We use $\mathcal{U}$ and $\mathcal{V}$ to denote the contours for $z$ and $w$ after modifying a $\eta (c_0N/2)^{-1/3}$ neighborhood near $1$ as shown in the figure $\ref{fig:modify1/3}$. The constant $\eta$ can be any positive constant greater than $|\tilde{r}|$ because modifying the contour in a region of $\mathcal{O}(N^{-1/3})$ does not affects poles at $z =1\sqrt{q}$ and $w = \sqrt{q}$ and $\eta>|\tilde{r}|$ ensures that the pole at $w = r$ is not included. Let $\mathcal{U}_{loc,\delta}$ and $\mathcal{V}_{loc,\delta}$ denote the intersection of the $z-$contour and respectively $w-$contour with a ball of radius $\delta$ centered at $1$. We want to prove that for any small $\delta >0,$ the integral restricted to $\mathcal{U}_{loc,\delta}\cup \mathcal{V}_{loc,\delta}$ has the desired limit and the integral restricted to $(\mathcal{U} \setminus\mathcal{U}_{loc,\delta})\cup (\mathcal{V} \setminus\mathcal{V}_{loc,\delta})$ vanishes as $N \rightarrow \infty.$
Let $\widetilde{\mathcal{U}}_{loc,\delta}$ and $\widetilde{\mathcal{V}}_{loc,\delta}$
denoted the corresponding contours for $\zeta$ and $\omega$ after the change of variable performed on $\mathcal{U}_{loc,\delta}$ and $\mathcal{V}_{loc,\delta}$.

To prove the limit, we make two approximations. Firstly, we approximate $N(h_0(z)-h_0(w))-(c_0N/2)^{1/3}(X\log(z)-Y\log(w))$ by its taylor expansion $\frac{\zeta^3}{3} - \frac{\omega^3}{3} -X\zeta + Y\omega$. Secondly, we approximate $(c_0N/2)^{-1/3}\frac{w(zw-1)(z-r)}{z^2(z^2-1)(w-r)(z-w)}$ by $\frac{(\zeta + \omega)(\zeta - \tilde{r})}{2\zeta(\omega - \tilde{r})(\zeta - \omega)}.$ Let $E_1$ and $E_2$ be errors generated by the first and second approximations respectively. By applying the inequality $|e^x-1|\leq |x|e^{|x|}$ to the first approximation, we have that the total error is bounded by $E_1 + E_2$ where
\begin{equation}
    \begin{aligned}
        E_1 = &\frac{1}{(2\pi)^2} \oint \limits_{\widetilde{\mathcal{U}}_{loc,\delta}} d\zeta \oint\limits_{\widetilde{\mathcal{V}}_{loc,\delta}} d\omega \, |\Delta| e^{|\Delta|}\times e^{\text{Re}(\frac{\zeta^3}{3} - \frac{\omega^3}{3} - X\zeta + Y\omega)}\bigg|\frac{(c_0N/2)^{-1/3}w(zw-1)(z-r)}{z^2(z^2-1)(w-r)(z-w)}\bigg|,\\
        \Delta = &N(h_0(z) - h_0(w)) - (c_0N/2)^{1/3}(X\log(z)- Y\log(w)) - \left(\frac{\zeta^3}{3} - \frac{\omega^3}{3} - X\zeta + Y\omega\right),
    \end{aligned}
\end{equation}
and 
\begin{equation}
    \begin{aligned}
        E_2 = \frac{1}{(2\pi)^2}\oint \limits_{\widetilde{\mathcal{U}}_{loc,\delta}}  d\zeta \oint\limits_{\widetilde{\mathcal{V}}_{loc,\delta}} d\omega \,e^{\text{Re}(\frac{\zeta^3}{3} - \frac{\omega^3}{3} - X\zeta + Y\omega)} &\bigg|\frac{(\zeta - \tilde{r})}{(\omega - \tilde{r})(\zeta - \omega)}\bigg|\bigg|\frac{w(zw-1)}{z^2(z^2-1)}-\frac{(\zeta+\omega)}{2\zeta}\bigg|\\
        = \frac{1}{(2\pi)^2}\!\!\oint \limits_{\widetilde{\mathcal{U}}_{loc,\delta}}\!\!  d\zeta\!\!\! \oint\limits_{\widetilde{\mathcal{V}}_{loc,\delta}}\!\! d\omega \,e^{\text{Re}(\frac{\zeta^3}{3} - \frac{\omega^3}{3} - X\zeta + Y\omega)} &\bigg|\frac{(\zeta - \tilde{r})}{(\omega - \tilde{r})(\zeta - \omega)}\bigg|\\
\bigg(\bigg|\left(\frac{zw-1}{z^2-1} - \frac{\zeta + \omega}{2\zeta}\right)\left(\frac{w}{z^2}\right)\bigg| &+ \bigg| \frac{(\zeta + \omega)}{2\zeta}\bigg(\frac{w}{z^2}-1\bigg)\bigg|\bigg),\\
    \end{aligned}
\end{equation}
where
\begin{equation}
\frac{zw-1}{z^2-1} - \frac{\zeta+\omega}{2\zeta} = (c_0N/2)^{-1/3}\frac{(\omega- \zeta)}{4+2\zeta(c_0N/2)^{-1/3}}, \quad \frac{w}{z^2} -1 = (c_0N/2)^{-1/3}\left(\frac{\omega - 2\zeta - \zeta^2(c_0N/2)^{-1/3}}{(1+\zeta(c_0N/2)^{-1/3})^2}\right).
\end{equation}
Notice that since $|(c_0N/2)^{-1/3}\zeta| = |z-1| \leq \delta$ for $z \in \mathcal{U}_{loc,\delta}$ and respectively $|w-1| \leq \delta$, we know for $E_1$,
\begin{equation}
    \begin{aligned}
        &|\Delta| \leq N \sup_{x\in B(1,\delta)}|h_0^{(4)}(x)|\left(\frac{|z-1|^4}{4!} + \frac{|w-1|^4}{4!}\right) + (c_0N/2)^{1/3}\sup_{y\in B(1,\delta)} |y^{-2}|\left(|X|\frac{|z-1|^2}{2} +|Y|\frac{|w-1|^2}{2}\right)\\ 
        &= \frac{(c_0N/2)^{-1/3}}{12c_0}\sup_{x\in B(1,\delta)}|h_0^{(4)}(x)|(|\zeta|^4 + |\omega|^4) + \frac{(c_0N/2)^{-1/3}}{2}\sup_{y\in B(1,\delta)} |y^{-2}|(|X\zeta^2| + |Y\omega^2|) \\
        &\leq \frac{\delta}{12c_0}\sup_{x\in B(1,\delta)} |h_0^{(4)}(x)|\left( |\zeta|^3 + |\omega|^3\right) + \frac{\delta}{2}\sup_{y\in B(1,\delta)} |y^{-2}| (|X\zeta| + |Y\omega|)\\
        &\leq C(|\zeta|^3+|\omega|^3 + |X\zeta| + |Y\omega|),\\
\end{aligned}
\end{equation}
where $C = \max \big\{\delta\sup_{x\in B(1,\delta)} |h_0^{(4)}(x)|/12c_0,\, \delta\sup_{y\in B(1,\delta)} |y^{-2}|/2\big\}$. 

We also set $C_1 = \max \big\{ \sup_{x\in B(1,\delta)} |h_0^{(4)}(x)|/12c_0,\, \sup_{y\in B(1,\delta)} |y^{-2}|/2 \big\}.$
Therefore, we know that
\begin{equation}
\begin{aligned}
        &|\Delta|e^{|\Delta|} \leq (c_0N/2)^{-1/3} C_1 (|\zeta|^4 + |\omega|^4 + |X\zeta^2| + |Y\omega^2|) e^{C(|\zeta|^3+|\omega|^3 + |X\zeta| + |Y\omega|)}.
    \end{aligned}
\end{equation}
We simply need to choose $\delta$ small enough such that $C < 1/3,$ which ensures that the dominating term in the integrand of $E_1$ is still $e^{\text{Re}(\frac{\zeta^3}{3} - \frac{\omega^3}{3} - X\zeta + Y\omega)}$. We know that $\sup_{x\in B(1,\delta)}|h_0^{(4)}(x)|$ is bounded for $\delta$ small enough because $h_0^{(4)}(1) = \frac{12(\sqrt{q}+q)}{(-1+\sqrt{q})^3}$ and $h_0^{(4)}(z)$ is continuous at $z=1.$
Therefore,
$E_1$ goes to zero by dominated convergence theorem because the leading term $e^{\zeta^3/3 - \omega^3/3}$ has exponential decay and $|\Delta|$ contains a factor of $(c_0N/2)^{-1/3}$ which goes to zero in the limit.
$E_2$ also goes to zero by dominated convergence theorem and the exponential decay due to the leading term $e^{\text{Re}(\zeta^3/3 - \omega^3/3)}$ and the factor $(c_0N/2)^{-1/3}$.
It is obvious that the convergence of $E_1$ and $E_2$ to zero holds uniformly over $X,Y \in [-u,u].$

Finally, we can extend the integration path $\widetilde{\mathcal{U}}_{loc,\delta}$ and $\widetilde{\mathcal{V}}_{loc,\delta}$ to infinity due to the exponential decay of the integrand for $\zeta,\omega$ in direction $\pm \pi/3$ and $\pm 2\pi/3$. By Cauchy's theorem and such exponential decay, we can freely deform the contours. The $\zeta-$contour can pass through any $x\in \R_{>0}$ and the $\omega-$contour can go through any point $y \in \R_{<\min(0,\tilde{r})}$.

Next, we prove that integration over 
$(\mathcal{U} \setminus\mathcal{U}_{loc,\delta})\cup (\mathcal{V} \setminus\mathcal{V}_{loc,\delta})$ vanishes in the limit. Since the contours we chose are steepest descent paths, there exists $\epsilon(\delta) <0$ such that $\text{Re}(h_0(z) - h_0(w)) < \epsilon$ for all $z \in (\mathcal{U} \setminus\mathcal{U}_{loc,\delta})$ and $w \in(\mathcal{V} \setminus\mathcal{V}_{loc,\delta}).$ Since $(\mathcal{U} \setminus\mathcal{U}_{loc,\delta}) \cup(\mathcal{V} \setminus\mathcal{V}_{loc,\delta})$ is a bounded compact set and $X,Y\in[-u,u]$, there exists $M>0$ such that $|X\log(z)|, |Y\log(w)| <M.$ Based on the construction of the contours, there exists a constant $C_0$ such that for $(z,w) \in (\mathcal{U} \setminus\mathcal{U}_{loc,\delta})\cup (\mathcal{V} \setminus\mathcal{V}_{loc,\delta}),$
\begin{equation}
    \begin{aligned}
        \bigg| \frac{w(zw-1)(z-r)}{z^2(z^2-1)(w-r)(z-w)} \bigg| < C_0.
    \end{aligned}
\end{equation}
Hence, we have for some constant $C>0,$
\begin{equation}
    \begin{aligned}
    |(c_0N/2)^{1/3}\mathsf{Q}(X,Y)| \leq C e^{\epsilon N + 2(c_0N)^{1/3}M},
    \end{aligned}
\end{equation}
which goes to zero as $N \rightarrow \infty$ due to the dominating term $e^{\epsilon N}.$

Lastly, we show the upper bound.
When $X,Y$ are in a compact interval $[-u,u],$ we have shown (in the above proof) that there exists $N_0 >0$ such that for all $N \geq N_0$ and for all $X,Y \in [-u,u],$ $|(c_0N/2)^{1/3}\mathsf{Q}(X,Y)|< C$ for some large constant $C$ that depends on $u,N_0$ but independent of $X,Y$. So now we consider $X,Y > u>0.$
Recall that we have modified a $\eta(c_0N/2)^{-1/3}$ neighborhood near $1$. By the construction of the steepest descent path, we have $\text{Re}(h_0(z)- h_0(w))<0$ on $\mathcal{U}\cup \mathcal{V}$. 
Given any fixed $\eta>0$ and any $N\in \N$, we have
\begin{equation}
    \begin{aligned}
        &\text{Re}(\log(z)) \geq \log(1+\eta (c_0N/2)^{-1/3}/2)\text{ for all }z\in \mathcal{U},\\
        &\text{Re}(\log(w)) \leq \log(1-\eta(c_0N/2)^{-1/3} +\eta^2(c_0N/2)^{-2/3})\text{ for all }w\in \mathcal{V}.
    \end{aligned}
\end{equation}
For any $\epsilon<1/2$, there exist constants $c_1,c_2>0$ that depends on $\epsilon$ such that
\begin{equation}
    \begin{aligned}
        &\log(1+y)>c_1y,\quad
        \log(1-y +y^2)< -c_2y \text{  for all } y < \epsilon.
    \end{aligned}
\end{equation}
We fix $\epsilon$ and decide $c_1,c_2>0.$ For some large enough $N_0,$ we can find $\eta>\max(-2\tilde{s},2|\tilde{r}|)$ such that for all $N\geq N_0,$ 
\begin{equation}
\text{Re}(\log(z))-(c_0N/2)^{-1/3}\eta >0, \quad \text{Re}(\log(w)) + (c_0N/2)^{-1/3}\eta <0.
\end{equation} Then for some constant $C_0$, we have
\begin{equation}
\begin{aligned}
    |(c_0N/2)^{1/3}&\mathsf{Q}(k,\ell)|
    \leq  \frac{C_0 e^{-\eta( X+ Y)}}{(2\pi)^2}\\
    &\oint \limits_{\mathcal{U}}\! dz \!\oint \limits_{\mathcal{V}}\! dw \, e^{N\text{Re}(h_0(z)- h_0(w))-(c_0N/2)^{1/3} \left(X(\text{Re}(\log(z))-\eta (c_0N/2)^{-1/3}) - Y(\text{Re}(\log(w))+\eta (c_0N/2)^{-1/3}) \right)} \\
    &\times\bigg|\frac{(c_0N/2)^{1/3}w(zw-1)(z-r)}{z^2(z^2-1)(w-r)(z-w)}\bigg|,\\
\end{aligned}
\end{equation}
where the double integral has a limit due to the exponential decay of the integrand. Hence, we get the desired upper bound $|(c_0N/2)^{1/3}\mathsf{Q}(k,\ell)| \leq Ce^{-\eta(X+Y)}$.
\end{proof}

\subsection{Asymptotic limit of the kernel}
\begin{lem}\label{lem:limit&bound}
    For any given $u>0$, the following limit holds uniformly over $X,Y \in [-u,u]$,
    \begin{equation}
    \begin{aligned}
        &\lim_{N\rightarrow \infty} (c_0N/2)^{2/3}\widehat{K}_{11}(X,Y) = \widetilde{K}_{11}(X,Y), \quad 
        \lim_{N\rightarrow \infty} (c_0N/2)^{1/3}\widehat{K}_{12}(X,Y) = \widetilde{K}_{12}(X,Y),\\
        &\lim_{N\rightarrow \infty}(c_0N/2)^{1/3}\widehat{K}_{21}(X,Y)=\widetilde{K}_{21}(X,Y) ,\quad \lim_{N\rightarrow \infty} \widehat{K}_{22}(X,Y) = \widetilde{K}_{22}(X,Y).
    \end{aligned}
    \end{equation}
    Furthermore, there exist constants $a>\max(-2\tilde{s},2|\tilde{r}|)$, $0<\sigma<$\scalebox{0.8}{$\begin{cases}  -\tilde{s} &\text{if }\tilde{r} < \tilde{s}\\
\min(|\tilde{r}|, -(\tilde{s}+|\tilde{r}|)/2) &\text{if }\tilde{s}<\tilde{r} < -\tilde{s}\end{cases}$}, $C>0$, and $N_0\in \N$ such that for all $N \geq N_0$ and $X,Y > -u,$ we have
    \begin{equation}
        \begin{aligned}
            \widehat{K}^{\text{scaled}} := \begin{pmatrix}
                (c_0N/2)^{2/3}\widehat{K}_{11} & (c_0N/2)^{1/3}\widehat{K}_{12}\\
                (c_0N/2)^{1/3}\widehat{K}_{21} & \widehat{K}_{22}
            \end{pmatrix} \leq \begin{cases}
                \begin{pmatrix}
                    Ce^{-a(X+Y)} & Ce^{-aX}\\
                    Ce^{-aY} & C
                \end{pmatrix} &\text{if } \tilde{r} < \tilde{s},\\
                \begin{pmatrix}
                    Ce^{-a(X+Y)} & Ce^{-aX+(|\tilde{r}|+\sigma)Y}\\
                    Ce^{(|\tilde{r}|+\sigma)X-aY} & Ce^{(|\tilde{r}|+\sigma)(X+Y)} 
                \end{pmatrix}&\text{if }\tilde{s}<\tilde{r}<-\tilde{s}
            \end{cases}
        \end{aligned}
    \end{equation}
\end{lem}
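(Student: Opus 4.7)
The plan is to decompose each entry $\widehat{K}_{ij}$ into its pieces using the $t\to 1/s$ specialization of Lemmas \ref{lem:rewrite11}--\ref{lem:rewrite22}: since the $(1-st)$ factor kills the rank-one corrections there, one is left with $\widehat{K}_{11}=\widehat{A}_{11}$; $\widehat{K}_{12}=\widehat{A}_{12}$, which unpacks to a double contour integral plus $\frac{(1-sr)^2}{(s-r)^2}\ketbra{\widehat{Q}}{f^r}$; and $\widehat{K}_{22}=\widehat{A}_{22}+E$, which unpacks to $\widehat{B}$ plus $E$ plus $\frac{(1-sr)^2}{(s-r)^2}(\ketbra{f^r}{\widehat{P}}-\ketbra{\widehat{P}}{f^r})$. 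Under critical scaling, the prefactor $\frac{(1-sr)^2}{(s-r)^2}$ converges to the finite constant $\frac{(\tilde{s}+\tilde{r})^2}{(\tilde{s}-\tilde{r})^2}$, so all entries reduce to contour integrals and elementary functions already treated by earlier lemmas.

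For the double contour integrals appearing in $\widehat{A}_{11}$, $\widehat{A}_{12}$, and $\widehat{B}$, I will run a word-for-word repetition of the steepest descent analysis carried out in Lemma \ref{steepestDescent}: rewrite each integrand as $\exp\bigl((N-2)(h_0(z)-h_0(w))-(c_0N/2)^{1/3}(X\log z - Y\log w)\bigr)$ times a rational prefactor, deform the $w$- and $z$-contours to the steepest descent paths $\gamma_1\cup\gamma_2$ and $\tau_1\cup\tau_2\cup\tau_3$ modified by an $\eta(c_0N/2)^{-1/3}$ neighborhood of $1$ with $\eta>\max(-2\tilde{s},2|\tilde{r}|)$, change variables $z=1+(c_0N/2)^{-1/3}\zeta$, $w=1+(c_0N/2)^{-1/3}\omega$, and Taylor expand using $h_0(1)=h_0'(1)=h_0''(1)=0$, $h_0'''(1)=c_0$. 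The distinct scaling prefactors $(c_0N/2)^{2/3}$, $(c_0N/2)^{1/3}$, $1$ for the three entries emerge from the differing vanishing order in $(\zeta-\omega)$ of the rational parts (the integrand of $\widehat{A}_{11}$ carries an extra $(zw-1)\sim (c_0N/2)^{-1/3}(\zeta+\omega)$ in the numerator; $\widehat{A}_{12}$ carries one; $\widehat{B}$ carries none). For the single integrals $\widehat{P},\widehat{Q}$ I will apply the analogous one-variable steepest descent through $z=1$ (resp. $w=1$), producing the Airy-type integrals encoded in $\widetilde{P},\widetilde{Q}$ that make up $\widetilde{K}$; the pointwise convergence of $f^r,f^s,f^{1/s},E$ is already granted by Lemma \ref{limit,f_r}.

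The uniform upper bounds come out of the contour modification trick from the second part of Lemma \ref{steepestDescent}: shifting the $z$-contour outward by $\eta(c_0N/2)^{-1/3}$ and the $w$-contour inward by the same amount extracts factors $e^{-\eta X}$ from $z^{-k}$ and $e^{-\eta Y}$ from $w^\ell$, giving the base decay $e^{-a(X+Y)}$ with $a=\eta$ appearing in every entry. For $\tilde{r}<\tilde{s}$ the pole at $w=r$ lies below $1-\eta(c_0N/2)^{-1/3}$ and plays no role, yielding the simpler bound. For $\tilde{s}<\tilde{r}<-\tilde{s}$, however, the pole at $w=r$ is inside the region $|w-1|\le\eta(c_0N/2)^{-1/3}$; when the bound is applied to terms $\bra{\widehat{Q}}f^r,\,\bra{\widehat{P}}f^r,\,E$, the $f^r$ and $E$ estimates of Lemma \ref{limit,f_r} inject the multiplicative factor $e^{(|\tilde{r}|+\sigma)X}$ or $e^{(|\tilde{r}|+\sigma)Y}$ (with $\sigma$ the margin from that lemma), precisely matching the stated mixed exponents in $\widehat{K}_{12}$ and $\widehat{K}_{22}$.

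The main technical obstacle is the bookkeeping in the mixed regime $\tilde{s}<\tilde{r}<-\tilde{s}$: one must verify that every rank-one correction $\ketbra{\widehat{P}}{f^r},\,\ketbra{\widehat{Q}}{f^r}$, each component of $\widehat{A}_{22}+E$, and every one of the many terms hidden inside $\widehat{B}$ contributes at worst the exponential rate claimed, and that the chosen $\eta$ and $\sigma$ are compatible with $\eta>\max(-2\tilde{s},2|\tilde{r}|)$ and $0<\sigma<\min(|\tilde{r}|,-(\tilde{s}+|\tilde{r}|)/2)$ so that the eventual Fredholm Pfaffian bound through Hadamard's inequality (Lemma \ref{Hadamard}) is integrable on $[\tilde{d},\infty)$. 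No new analytic tool beyond the steepest descent of Lemma \ref{steepestDescent} and the estimates of Lemma \ref{limit,f_r} is needed; the entire lemma is a careful term-by-term assembly.
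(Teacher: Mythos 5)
Your proposal follows essentially the same route as the paper: determine the scaling prefactors by counting the $(c_0N/2)^{-1/3}$ factors contributed by the measures $dz\,dw$ and by the small numerator/denominator factors, then invoke the steepest descent argument of Lemma \ref{steepestDescent} for the double integrals (and its one-variable analogue along $\gamma_1\cup\gamma_2$, resp. $\tau_1\cup\tau_2\cup\tau_3$, for $\widehat{P}$ and $\widehat{Q}$), and import the bounds on $f^r$ and $E$ from Lemma \ref{limit,f_r}. One small slip: the coefficient of $\ketbra{\widehat{Q}}{f^r}$ in $\widehat{A}_{12}$ (and of $\ketbra{f^r}{\widehat{P}}-\ketbra{\widehat{P}}{f^r}$ in $\widehat{A}_{22}$) is $1$, not $\tfrac{(1-sr)^2}{(s-r)^2}$, since $\tfrac{(1-sr)(1-tr)}{(s-r)(t-r)}\to 1$ as $t\to 1/s$; this does not affect the argument.
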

\begin{proof}
    We explain the scaling factor for $11$-entry of $\widehat{K}$. For $\widehat{A}_{11}$, after inserting new variables, $dw,dz$ each contributes $(c_0N/2)^{-1/3}$ and $(zw-1)(z-r)(1-wr)$ contributes $(c_0N/2)^{-1}$ whereas $(z^2-1)^{-1}(1-w^2)^{-1}(z-w)^{-1}$ contributes $(c_0N/2)$. Therefore, we need to rescale $\widehat{A}_{11}$ by $(c_0N/2)^{2/3}$ for it to have a limit.  We use the steepest descent method explained in Lemma $\ref{steepestDescent}$ to find limits of the double integral with poles at $z=1/\sqrt{q}$ and $w=\sqrt{q}$. For single integrals $\widehat{P}, \widehat{Q}$, we can still adopt the steepest descent method but only use the steepest descent path $\gamma_1,\gamma_2$ for $\widehat{P}$ and the steepest descent path $\tau_1,\tau_2,\tau_3$ for $\widehat{Q}$. The upper bounds for contour integrals follow from the same argument as in Lemma $\ref{steepestDescent}$ and the upper bounds for $f^r$ and $E$ are obtained in $\eqref{UpperBoundf_r}$.
\end{proof}

\begin{remark}
   For $\widetilde{A}_{11}$, we choose the contour for $\zeta$ to lie to the right of $-\tilde{s}$.
   While this choice is not strictly necessary for $\widetilde{A}_{11}$
  to be well-defined—since the contour can be freely deformed to cross any point $x>0$ without encountering poles—it becomes essential when considering terms such as $\brabarket{\widetilde{f}^{-\tilde{s}}}{\widetilde{A}_{11}}{\widetilde{d}_2}$. In these cases, we must ensure that the integral $\int_{\tilde{d}}^{\infty}{e^{-\tilde{s}X}}{\widetilde{A}_{11}(X,Y)}dX$ is well-defined. This is the reason for our specific contour choice. Similar considerations apply to $\widetilde{A}_{12}$ and $\widetilde{A}_{22}$.
\end{remark}

\subsection{Asymptotic limit of the Fredholm Pfaffian}

\begin{lem}\label{lem:fredholmConv}
    For any $d \in \Z_{\geq 0}$ such that $d = \frac{2\sqrt{q}}{1-\sqrt{q}}N + (c_0N/2)^{1/3}\tilde{d}$, we have
    \begin{equation}
        \lim_{N\rightarrow \infty} \mathrm{Pf}(J - \widehat{K})_{\ell^2(\{d+1,\dots\})} = \mathrm{Pf}(J - \widetilde{K})_{L^2((\tilde{d},\infty))}.
    \end{equation}
\end{lem}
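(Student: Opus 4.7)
The plan is to expand the Fredholm Pfaffian as a Pfaffian series, track the rescaling factors through a counting argument on perfect matchings, and then recognize each term as a Riemann sum converging (by dominated convergence via Hadamard's bound) to the corresponding Fredholm Pfaffian integral.

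I start from the series expansion
\begin{equation*}
\mathrm{Pf}(J-\widehat{K})_{\ell^2(\{d+1,\dots\})} = 1 + \sum_{n=1}^{\infty}\frac{(-1)^n}{n!}\sum_{k_1,\dots,k_n=d+1}^{\infty}\mathrm{Pf}\big(\widehat{K}(k_i,k_j)\big)_{i,j=1}^n,
\end{equation*}
and perform the change of variables $k_i = \frac{2\sqrt{q}}{1-\sqrt{q}}N + (c_0N/2)^{1/3} X_i$, so that the discrete sum $\sum_{k_i=d+1}^\infty$ becomes a Riemann sum with mesh $(c_0N/2)^{-1/3}$ over $\{X_i : X_i > \tilde{d}\}$, contributing a total factor of $(c_0N/2)^{n/3}$. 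Next, I reconcile the scalings in the Pfaffian. The $2n\times 2n$ block matrix $(\widehat{K}(k_i,k_j))_{i,j=1}^n$ has odd rows/columns corresponding to the first component of each block and even rows/columns to the second. In any perfect matching of $\{1,\dots,2n\}$, an odd-odd pair picks $\widehat{K}_{11}$, an odd-even pair picks $\widehat{K}_{12}$, and an even-even pair picks $\widehat{K}_{22}$; if the matching has $a$ odd-odd, $b$ odd-even, and $c$ even-even pairs, the parity constraint forces $2a+b=n=2c+b$. Since $\widehat{K}^{\text{scaled}}$ rescales the three types of entries by $(c_0N/2)^{2/3}$, $(c_0N/2)^{1/3}$, $1$ respectively, each matching contributes a rescaling factor $(c_0N/2)^{(2a+b)/3}=(c_0N/2)^{n/3}$, which is independent of the matching. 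Hence
\begin{equation*}
\mathrm{Pf}\big(\widehat{K}^{\text{scaled}}(k_i,k_j)\big)_{i,j=1}^n = (c_0N/2)^{n/3}\,\mathrm{Pf}\big(\widehat{K}(k_i,k_j)\big)_{i,j=1}^n,
\end{equation*}
and the $(c_0N/2)^{-n/3}$ from the Pfaffian exactly cancels the $(c_0N/2)^{n/3}$ from the Riemann sum.

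With this matching of scalings in hand, the $n$-th term of the series is a Riemann sum over $\{X_i>\tilde{d}\}^n$ of $\mathrm{Pf}(\widehat{K}^{\text{scaled}}(k_i,k_j))_{i,j=1}^n$ with mesh $(c_0N/2)^{-n/3}$. The pointwise convergence
\begin{equation*}
\mathrm{Pf}\big(\widehat{K}^{\text{scaled}}(k_i,k_j)\big)_{i,j=1}^n \longrightarrow \mathrm{Pf}\big(\widetilde{K}(X_i,X_j)\big)_{i,j=1}^n
\end{equation*}
on compact subsets of $\{X_i>\tilde{d}\}^n$ follows from Lemma \ref{lem:limit&bound} together with the continuity of the Pfaffian in matrix entries. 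To exchange limit with sum and integration, I invoke Hadamard's inequality (Lemma \ref{Hadamard}) with the uniform bounds supplied by Lemma \ref{lem:limit&bound}: taking $\alpha = a$ and $\beta\in\{0,\,|\tilde{r}|+\sigma\}$, which satisfy $\alpha>\beta\geq 0$ under the hypotheses on $\tilde{r},\tilde{s}$, yields
\begin{equation*}
\big|\mathrm{Pf}\big(\widehat{K}^{\text{scaled}}(k_i,k_j)\big)_{i,j=1}^n\big| \leq (2n)^{n/2}\,C^n \prod_{i=1}^n e^{-(\alpha-\beta)X_i}
\end{equation*}
uniformly in $N\geq N_0$. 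Summing against $(c_0N/2)^{-n/3}$ gives an $N$-uniform majorant of the form $(2n)^{n/2}C^n$ times a product of Riemann sums converging to $\int_{\tilde{d}}^\infty e^{-(\alpha-\beta)X}dX < \infty$, and the series $\sum_n \frac{1}{n!}(2n)^{n/2}C^n \big(\int_{\tilde{d}}^\infty e^{-(\alpha-\beta)X}dX\big)^n$ converges. Two applications of dominated convergence, one inside each term to pass from the Riemann sum to the Riemann integral, and one on the outer series to pass from finite truncations to the full sum, then give
\begin{equation*}
\lim_{N\to\infty}\mathrm{Pf}(J-\widehat{K})_{\ell^2(\{d+1,\dots\})} = \mathrm{Pf}(J-\widetilde{K})_{L^2((\tilde{d},\infty))}.
\end{equation*}

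The main technical obstacle is verifying that the Hadamard-type majorants hold \emph{uniformly in $N$} for all the rescaled prelimit kernels, not just in the limit. This is precisely the content of the upper bounds in Lemma \ref{lem:limit&bound}, which were engineered with $\alpha$ strictly larger than $\beta$ in both regimes $\tilde{r}<\tilde{s}$ and $\tilde{s}<\tilde{r}<-\tilde{s}$; the strict gap $\alpha-\beta>0$ is what makes the tail of the Riemann sum small uniformly in $N$ and yields the integrable majorant needed to apply dominated convergence at the level of the full Fredholm series.
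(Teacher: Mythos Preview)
Your proof is correct and follows essentially the same approach as the paper: expand the Fredholm Pfaffian series, rescale to the $\widehat{K}^{\text{scaled}}$ kernel, apply Hadamard's bound with the uniform estimates from Lemma~\ref{lem:limit&bound}, and conclude by dominated convergence. The only difference is expository: you spell out the perfect-matching counting argument to justify $\mathrm{Pf}(\widehat{K}^{\text{scaled}})=(c_0N/2)^{n/3}\mathrm{Pf}(\widehat{K})$, whereas the paper writes the scaled block matrix directly and leaves this identity implicit (it is equivalently the conjugation formula $\mathrm{Pf}(DAD^T)=\det(D)\,\mathrm{Pf}(A)$ with $D$ block-diagonal).
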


\begin{proof}
We know that after change of variable
\begin{equation}\label{finiteSum}
\begin{aligned}
    &\mathrm{Pf}(J - \widehat{K})_{\ell^2(\{d+1,\dots\})} = 1 + \sum_{n=1}^{\infty} \frac{(-1)^n}{n!} \sum_{X_1 = \tilde{d}+ (c_0N/2)^{-1/3}}^{\infty}\cdots\sum_{X_n = \tilde{d} + (c_0N/2)^{-1/3}}^{\infty} \mathrm{Pf}[\widehat{K}(X_i,X_j)]_{i,j=1}^{n}.\\
\end{aligned}
\end{equation}
We prove the convergence of each term in the summation. Let $\epsilon = (c_0N/2)^{-1/3}$, $E_{\epsilon} = \{\tilde{d} + i\epsilon \, : \, i\in \Z_{\geq 1} \},$ $E_{\epsilon}^n$ is the n-times product and let $\vec{X} = (X_1,\dots, X_n)$, $\vec{y} = (y_1,\dots,y_n)$. For each $n$, we rewrite the summation as integrals of a function that takes constant value on each cube. By Hadamard's bound Lemma $\ref{Hadamard}$ and Lemma $\ref{lem:limit&bound}$, we can find $a > \max(-2\tilde{s}, 2|\tilde{r}|)$, a constant $0<\sigma<\min(-\tilde{s}, |\tilde{r}|)$, $C>0$, and $N_0\in\N$ such that for all $N\geq N_0$ and $X,Y\geq -u$, we have 
\begin{equation}\label{LimitEachTerm}
    \begin{aligned}
        &\sum_{X_1 = \tilde{d}+ \epsilon}^{\infty}\cdots\sum_{X_n = \tilde{d} + \epsilon}^{\infty} \mathrm{Pf}[\widehat{K}(X_i,X_j)]_{i,j=1}^{n}\\
        &=\int_{\tilde{d}+\epsilon}^{\infty} \cdots \int_{\tilde{d}+\epsilon}^{\infty} \sum_{\vec{y} \in E_{\epsilon}^n}\mathrm{Pf}\left( \begin{pmatrix}
            \epsilon^{-2}\widehat{K}_{11} & \epsilon^{-1}\widehat{K}_{12}\\
            \epsilon^{-1}\widehat{K}_{21} & \widehat{K}_{22}
        \end{pmatrix}(y_i,y_j)\right)_{i,j=1}^n \Id_{[y_1-\epsilon, y_1)\times \dots\times [y_n-\epsilon,y_n)}(\vec{X}) \prod_{i=1}^n dX_i\\
        &\leq \int_{\tilde{d} }^{\infty} \cdots \int_{\tilde{d}}^{\infty} C^{n}(2n)^{n/2}\prod_{i=1}^{n}e^{-(a-|\tilde{r}| - \sigma)X_i} \prod_{i=1}^{n}dX_i < \infty,\\
    \end{aligned}
\end{equation}
which is a dominating function that is also summable over $n$ after multiplied by $\frac{1}{n!}$.
Then we apply the dominated convergence theorem to get
\begin{equation}
    \begin{aligned}
        \lim_{N\rightarrow \infty} \sum_{X_1 = \tilde{d}+ \epsilon}^{\infty}\cdots\sum_{X_n = \tilde{d} + \epsilon}^{\infty} \mathrm{Pf}[\widehat{K}(X_i,X_j)]_{i,j=1}^{n}   = \int_{\tilde{d} }^{\infty} \cdots \int_{\tilde{d}}^{\infty}  \mathrm{Pf}(\widetilde{K}(X_i,X_j))_{i,j = 1}^{n} \prod_{i = 1}^{n} dX_i.
    \end{aligned}
\end{equation}
Summing up over $n$ gives
\begin{equation}
\begin{aligned}
    &\lim_{N\rightarrow \infty} \mathrm{Pf}( J - \widehat{K})_{\ell^2(\{d+1,\dots\})}
    = \mathrm{Pf}(J - \widetilde{K})_{L^2((\tilde{d},\infty))}.
\end{aligned}
\end{equation}
\end{proof}

\subsection{Asymptotic limit of all other functions}
Recall the definitions of the following pre-limit functions (LHS) and limiting functions (RHS) with tilde in section \ref{HighAsymptotic}.

\begin{lem}\label{limit,functions}For any given $u > 0,$ the following limit holds uniformly over $X\in[-u,u],$
    \begin{equation}
        \begin{aligned}
            &\lim_{N\rightarrow \infty} (c_0N/2)^{1/3}{G}_{1/s}(X) = \widetilde{G}_{-\tilde{s}}(X), \quad \lim_{N\rightarrow \infty} (c_0N/2)^{1/3}{G}_{s}(X) = \widetilde{G}_{\tilde{s}}(X),\quad \lim_{N\rightarrow \infty} (c_0N/2)^{1/3}\widehat{Q}(X) = \widetilde{Q}(X),\\
            &\lim_{N\rightarrow \infty} {R}_{1/s}(X) = \widetilde{R}_{-\tilde{s}}(X), \quad \lim_{N\rightarrow \infty} {R}_{s}(X) = \widetilde{R}_{\tilde{s}}(X),\quad \lim_{N\rightarrow \infty} \widehat{P}(X) = \widetilde{P}(X),\quad
            \lim_{N\rightarrow \infty} \widehat{\mathsf{J}}(X) = \widetilde{\mathsf{J}}(X),\\
            &\lim_{N\rightarrow \infty} \widehat{B}(X,Y) = \widetilde{B}(X,Y),\quad
            \lim_{N\rightarrow \infty} (c_0N/2)^{1/3}{g_1}(X) =  \widetilde{g}_1(X), \quad \lim_{N\rightarrow \infty} d_2(X) = \widetilde{d}_2(X).
        \end{aligned}
    \end{equation}
Moreover, there exist constants $a > \max(-2\tilde{s},2|\tilde{r}|)$, $0<\sigma<$\scalebox{0.8}{$\begin{cases}  -\tilde{s} &\text{if }\tilde{r} < \tilde{s}\\
\min(-\tilde{s}, |\tilde{r}|, -(\tilde{s}+|\tilde{r}|)/2) &\text{if }\tilde{s}<\tilde{r} < -\tilde{s}\end{cases}$}, $C>0$, and $N_0 \in \N$ such that for all $N\geq N_0$ and $X,Y\geq -u$,  
    \begin{equation}
        \begin{aligned}
            &|(c_0N/2)^{1/3}{G}_{1/s}(X)| \leq Ce^{-aX}, \quad |(c_0N/2)^{1/3}{G}_{s}(X)| \leq Ce^{-aX},\quad |\widehat{Q}(X)| \leq Ce^{-aX},\\
            &|(c_0N/2)^{1/3}{R}_{1/s}(X)| \leq Ce^{-aX},\quad |(c_0N/2)^{1/3}{R}_{s}(X)| \leq Ce^{-aX},\quad |\widehat{P}(X)| \leq Ce^{-aX}, \quad |\widehat{\mathsf{J}}(X)| \leq Ce^{-aX},\\
            & \quad |\widehat{B}(X,Y)| \leq Ce^{-a(X+Y)}, \quad |g_1(X)| \leq Ce^{(\tilde{s}+ \sigma)X}, \quad |{d}_2(X)| \leq \begin{cases}
            Ce^{(\tilde{s}+\sigma)X}\text{if }\tilde{r} <\tilde{s},\\
            Ce^{(|\tilde{r}|+\sigma)X} \text{if } 
            \tilde{s} <\tilde{r} < - \tilde{s}
            \end{cases}.
        \end{aligned}
    \end{equation}
\end{lem}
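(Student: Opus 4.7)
The plan is to handle the nine functions by the same steepest-descent framework already used in Lemma~\ref{steepestDescent} and Lemma~\ref{lem:limit&bound}, adjusted for the number of contour integrals involved. All of $\widehat{G}_{1/s}, \widehat{G}_s, \widehat{Q}$ are single $z$-integrals whose integrand has a pole at $z=1/\sqrt{q}$, so I would deform their contours to the path $\tau_1\cup\tau_2\cup\tau_3$ constructed in Lemma~\ref{steepestDescent}. Similarly $\widehat{R}_{1/s}, \widehat{R}_s, \widehat{P}, \widehat{\mathsf{J}}$ are single $w$-integrals with pole at $\sqrt{q}$, and I would deform onto $\gamma_1\cup\gamma_2$. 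After the change of variables $z = 1+(c_0N/2)^{-1/3}\zeta$ (resp.\ $w = 1+(c_0N/2)^{-1/3}\omega$), the Jacobian $dz = (c_0N/2)^{-1/3}d\zeta$ accounts for the factor $(c_0N/2)^{1/3}$ multiplying each single integral. On the $\delta$-local piece of the contour near $1$, I Taylor expand using $h_0(1)=h_0'(1)=h_0''(1)=0$ and $h_0'''(1)=c_0$ to produce the cubic $\pm\zeta^3/3$ (or $\mp\omega^3/3$) in the exponent, while approximating the rational prefactor by its leading order in the new variable. The error control is a one-variable version of the $E_1,E_2$ estimate in Lemma~\ref{steepestDescent}: apply $|e^x-1|\le |x|e^{|x|}$ with $|x|=O((c_0N/2)^{-1/3}(|\zeta|^4+X|\zeta|^2))$, dominate by the Gaussian-like tail of $e^{\operatorname{Re}(\zeta^3/3)}$, and invoke dominated convergence. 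The contribution outside the $\delta$-ball is killed by the strict negativity of $\operatorname{Re}(h_0)$ on the descent contours, producing $O(e^{-\epsilon N})$ error.

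For $\widehat{B}$, a double integral in $(z,w)$, I would apply Lemma~\ref{steepestDescent} essentially verbatim: the integrand is actually simpler than that of $\mathsf{Q}$ since the factors $(z-r)(1-wr)/((z^2-1)(1-w^2))$ are replaced by the single factor $1/((1-zr)(w-r))$, so the same steepest descent paths and the same $E_1,E_2$ bookkeeping apply. No extra $(c_0N/2)$ rescaling is required because the two Jacobian factors cancel the order-$2$ zero of $(z-w)$ already present.

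For the upper bounds I would follow the construction of $\mathcal{U},\mathcal{V}$ in Lemma~\ref{steepestDescent}: modify each contour in an $\eta(c_0N/2)^{-1/3}$ neighbourhood of $1$, choosing $\eta$ to exceed $\max(-2\tilde{s}, 2|\tilde{r}|)$ so that $r$ remains outside the $w$-contour and the desired exponential rates $a$ can be taken larger than $\max(-2\tilde{s}, 2|\tilde{r}|)$. On the modified contour one has $\operatorname{Re}(\log z)\ge c_1\eta(c_0N/2)^{-1/3}$ and $\operatorname{Re}(\log w)\le -c_2\eta(c_0N/2)^{-1/3}$; multiplying by $(c_0N/2)^{1/3}X$ (resp.\ $Y$) in the exponent yields the factor $e^{-aX}$ (resp.\ $e^{-aY}$), while the remaining integral over the steepest descent contour is uniformly bounded by a constant. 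Uniformity in $X\in[-u,u]$ follows from the same compactness argument as in Lemma~\ref{steepestDescent}, and the $X>u$ case is handled by the modified-contour argument.

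Finally, $g_1$ and $d_2$ are defined as fixed linear combinations of $f^r, f^s, \widehat{R}_{1/s}, \widehat{G}_{1/s}$ (via \eqref{defOfd_2,g_1}), with no $t$-dependence, so both limits and upper bounds are obtained by adding the bounds already established in Lemma~\ref{limit,f_r} and in the single-integral step above. The dominant contribution to $g_1$ is the $f^s$ term, giving the rate $e^{(\tilde{s}+\sigma)X}$; for $d_2$ the dominant contribution is either $f^s$ or $f^r$ depending on whether $\tilde{r}<\tilde{s}$ or $\tilde{s}<\tilde{r}<-\tilde{s}$. The main obstacle I anticipate is purely bookkeeping: choosing the single constant $\sigma$ (and the single rate $a$) simultaneously compatible with all nine bounds and consistent with the constraints imposed by Lemma~\ref{limit,f_r} and Lemma~\ref{lem:limit&bound}, so that these estimates can be combined freely in the Fredholm Pfaffian convergence step analogous to Lemma~\ref{lem:analytic2}.
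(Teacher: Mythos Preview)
Your proposal is correct and follows essentially the same approach as the paper, which simply says to apply the steepest descent method of Lemma~\ref{steepestDescent} for the contour integrals and to read off the $g_1,d_2$ bounds from Lemma~\ref{limit,f_r}. Two minor slips to fix when you write it out: $\widehat{\mathsf{J}}$ is a \emph{double} integral in $(z,w)$ (not a single $w$-integral), so it needs both descent paths $\tau$ and $\gamma$; and $g_1,d_2$ are linear combinations of $f^s,f^r,G_s,R_s$ (not $\widehat{G}_{1/s},\widehat{R}_{1/s}$), though this does not change your argument for the bounds.
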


\begin{proof}
    We use steepest descent method to find limits of contour integrals and their upper bounds as in Lemma $\ref{steepestDescent}$. Upper bounds for $g_1$ and $d_2$ follow from $\eqref{UpperBoundf_r}.$
\end{proof}

\begin{remark}
For $\widetilde{B}$, we also need to ensure that the $\zeta$ contour lies to the right of $-\tilde{s}$ to guarantee that terms like $\brabarket{\widetilde{f}^{-\tilde{s}}}{\widetilde{B}}{\widetilde{g}_1}$
are well-defined. Similar considerations apply to $ \widetilde{P}, \widetilde{Q}, \widetilde{\mathsf{J}},\widetilde{G}_{-\tilde{s}},\widetilde{G}_{\tilde{s}}.$
\end{remark}

\begin{lem}\label{limit,constants}
For any $d\in\Z_{\geq 0}$ such that $d = \frac{2\sqrt{q}}{1-\sqrt{q}}N + (c_0N/2)^{1/3}\tilde{d}$, we have the following limits:
    \begin{equation}
    \begin{aligned}
        &\lim_{N\rightarrow \infty}(c_0N/2)^{-1/3} {\widehat{\mu}_d} = \widetilde{\mu}_{\tilde{d}}, \quad \lim_{N\rightarrow \infty} (c_0N/2)^{-1/3}{\widehat{\nu}_d} =  \widetilde{\nu}_{\tilde{d}},\quad \lim_{N\rightarrow \infty} (c_0N/2)^{-1/3}\widehat{\mathcal{A}}_{d} = \widetilde{\mathcal{A}}_{d},\\
        & \lim_{N\rightarrow \infty} (c_0N/2)^{-1/3}\widehat{\mathcal{B}}_{d} =  \widetilde{\mathcal{B}}_{\tilde{d}}, \quad \lim_{N\rightarrow \infty}(c_0N/2)^{-1/3} {\widehat{\mathcal{C}}_d} = \widetilde{\mathcal{C}}_{\tilde{d}}, \quad \lim_{N\rightarrow \infty}(c_0N/2)^{-1/3} {\widehat{\mathcal{D}}_d} =  \widetilde{\mathcal{D}}_{\tilde{d}},\\
        &\lim_{N\rightarrow \infty}(c_0N/2)^{-1/3} {\widehat{\mathcal{E}}_d} =  \widetilde{\mathcal{E}}_{\tilde{d}}.
    \end{aligned}
    \end{equation}
\end{lem}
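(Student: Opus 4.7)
The plan is to reduce each of $\widehat{\mu}_d,\widehat{\nu}_d,\widehat{\mathcal{A}}_d,\widehat{\mathcal{B}}_d,\widehat{\mathcal{C}}_d,\widehat{\mathcal{D}}_d,\widehat{\mathcal{E}}_d$ to sums (respectively double sums) of products of the building-block functions whose limits and tail bounds have already been pinned down in Lemmas~\ref{limit,f_r}, \ref{lem:limit&bound} and \ref{limit,functions}, and then apply dominated convergence to a Riemann-sum representation. Concretely, recalling the definitions of these constants from Appendix~\ref{appendixC}, each is of the form $\braket{h_1}{h_2}$ or $\brabarket{h_1}{\mathsf{H}}{h_2}$ for some functions $h_1,h_2\in\{f^{1/s},f^r,g_1,d_2,\widehat{G}_{\tilde{s}},\widehat{G}_{-\tilde{s}},\widehat{P},\widehat{Q},\widehat{R}_{\tilde{s}},\widehat{R}_{-\tilde{s}},\widehat{\mathsf{J}}\}$ and kernels $\mathsf{H}\in\{\widehat{A}_{11},\widehat{A}_{12},\widehat{A}_{21},\widehat{B},E,\ldots\}$.

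First I would perform the change of variable $k=\frac{2\sqrt{q}}{1-\sqrt{q}}N+(c_0N/2)^{1/3}X_k$ (and analogously for $\ell$), so that the sum $\sum_{k\geq d+1}$ becomes a Riemann sum over the lattice $E_\epsilon=\{\tilde{d}+i\epsilon:i\in\Z_{\geq 1}\}$ with mesh $\epsilon=(c_0N/2)^{-1/3}$. The overall scaling factor $(c_0N/2)^{-1/3}$ appearing in front of each constant in the lemma is then precisely the factor needed to convert the lattice sum into a Riemann approximation to $\int_{\tilde{d}}^{\infty}\cdots\,dX$, after balancing the scaling factors that appear on the individual building blocks: for instance, $\widehat{P}$ and $\widehat{Q}$ come with $(c_0N/2)^{1/3}$, $g_1$ with $(c_0N/2)^{1/3}$, $d_2$ with $1$, and $\widehat{B}$ with $1$, and one checks term by term that in every expression the product of the scalings of the three factors in a bracket cancels against the $(c_0N/2)^{-1/3}$ in the statement to leave exactly one power of $\epsilon$ per free summation variable.

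Next I would justify passing to the limit inside the sums. By Lemmas~\ref{limit,f_r}, \ref{lem:limit&bound}, and \ref{limit,functions} each building block converges uniformly on compacts to its tilde-analogue, and each admits an exponential bound of the form $Ce^{-aX}$ (or $Ce^{(\tilde{s}+\sigma)X}$, $Ce^{(|\tilde{r}|+\sigma)X}$, etc.) uniform in $N\geq N_0$ and $X\geq -u$, where the constants $a>\max(-2\tilde{s},2|\tilde{r}|)$ and $\sigma$ satisfy the gap conditions already used in Lemma~\ref{lem:fredholmConv}. Multiplying two such factors and, in the double-sum case, integrating against a kernel $\widehat{B}$ or $\widehat{A}_{ij}$ bounded by $Ce^{-a(X+Y)}$ (resp.\ $Ce^{(|\tilde{r}|+\sigma)(X+Y)-aY}$ etc.), one obtains a summable/integrable majorant on $[\tilde{d},\infty)$ (resp.\ $[\tilde{d},\infty)^2$) by exactly the same $\tilde{r}<-\tilde{s}$ gap estimate that appeared in \eqref{LimitEachTerm}. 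Dominated convergence then converts each Riemann sum to the advertised integral, producing $\widetilde{\mu}_{\tilde d},\widetilde{\nu}_{\tilde d},\widetilde{\mathcal{A}}_{\tilde d}$, etc.

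The routine but attention-demanding step is $\widehat{\mathcal{B}}_d$, since its defining expression in \eqref{Bpart1}--\eqref{Bpart2} is a sum of terms of mixed type: scalar products such as $\braket{R_{1/s}}{g_1}$ and $\braket{f^r}{g_1}$, together with the double bracket $\brabarket{f^{1/s}}{\widehat{B}}{g_1}$ and a rank-one piece $\braket{f^{1/s}}{\widehat{P}}\braket{f^r}{g_1}$. I would handle these in turn: the scalar products by the one-dimensional argument above, and the double bracket by applying dominated convergence on the two-dimensional lattice $E_\epsilon\times E_\epsilon$ with the majorant $Ce^{-aX}\cdot Ce^{-a(X+Y)}\cdot Ce^{(\tilde{s}+\sigma)Y}$, which is summable because $a>-\tilde{s}>\sigma$ forces an exponentially small tail in both variables. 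The other constants $\widehat{\mathcal{A}}_d,\widehat{\mathcal{C}}_d,\widehat{\mathcal{D}}_d,\widehat{\mathcal{E}}_d$ are structurally similar and in fact simpler; the only extra care required is in $\widehat{\mathcal{C}}_d$, where one uses the explicit limit $\bra{f^t}E\to$ the expression in \eqref{ftE} already computed in Lemma~\ref{lem:analytic1}. The main obstacle, if any, is bookkeeping: making sure every scaling exponent, every contour-steepness constraint from Lemma~\ref{limit,functions} (the $\zeta$-contour lying to the right of $-\tilde{s}$), and every exponential-gap condition on $a$ and $\sigma$ lines up across the various sub-terms so that a single dominated-convergence argument applies uniformly.
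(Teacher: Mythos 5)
Your proposal follows essentially the same route as the paper: steepest descent for the contour-integral building blocks, conversion of the lattice sums over $\{d+1,d+2,\dots\}$ into Riemann sums on the mesh $\epsilon=(c_0N/2)^{-1/3}$, and dominated convergence using the uniform exponential majorants from Lemmas~\ref{limit,f_r}, \ref{lem:limit&bound} and \ref{limit,functions} (the paper illustrates this on the term $\brabarket{f^{1/s}}{\widehat{A}_{12}}{g_1}$ in $\widehat{\mathcal{E}}_d$, exactly as you propose for the double brackets). The only minor imprecision is that $\widehat{\mu}_d$, $\widehat{\nu}_d$ and $\widehat{\mathcal{C}}_d$ are prefactors times single contour integrals rather than lattice sums, so for these no Riemann-sum step is needed—only the steepest-descent limit and the elementary limit of the prefactor—but this does not affect the correctness of your argument.
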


\begin{proof}
We use the same steepest descent method as in Lemma $\ref{lem:limit&bound}$ to compute limits. We justify how the summation over $d$ converges to integral over $\tilde{d}$. We pick the most complicated term to give an example. Consider $\mathsmaller{\brabarket{f^{1/s}}{\widehat{A}_{12}}{g_1}}$ in $\widehat{\mathcal{E}}_{d}$. Let $\epsilon = (c_0N/2)^{-1/3}$.
Then by Lemmas $\ref{limit,f_r},\ref{lem:limit&bound},\ref{limit,functions},$ there exist constants $C>0,$ $0<\sigma<$\scalebox{0.8}{$\begin{cases}  -\tilde{s} &\text{if }\tilde{r} < \tilde{s}\\
\min(|\tilde{r}|, -(\tilde{s}+|\tilde{r}|)/2) &\text{if }\tilde{s}<\tilde{r} < -\tilde{s}\end{cases}$} such that
\begin{equation}
\begin{aligned}
    &\sum_{k,\ell = d+1}^{\infty} f^{1/s}(k)\widehat{A}_{12}(k,\ell)g_1(\ell) = \sum_{X,Y = \tilde{d}+\epsilon}^{\infty} f^{1/s}(X)\widehat{A}_{12}(X,Y)g_1(Y)\\
    &= \int_{\tilde{d}+\epsilon}^{\infty}\int_{\tilde{d}+\epsilon}^{\infty} \sum_{(y_1,y_2) \in E_{\epsilon}^2}f^{1/s}(y_1)\left(\epsilon^{-1}\widehat{A}_{12}(y_1,y_2)\right)\epsilon^{-1} g_1(y_2)\Id_{[y_1-\epsilon,y_1)\times[y_2-\epsilon,y_2)}(X,Y) dX dY\\
    &\leq\begin{cases}
        C^3\int_{\tilde{d}+\epsilon}^{\infty}\int_{\tilde{d}+\epsilon}^{\infty} e^{-(a+\tilde{s}-\sigma)X}e^{(\tilde{s}+\sigma)Y} dX dY &\text{if }\tilde{r} < \tilde{s},\\
        C^3\int_{\tilde{d}+\epsilon}^{\infty}\int_{\tilde{d}+\epsilon}^{\infty} e^{-(a+\tilde{s}-\sigma)X}e^{(|\tilde{r}|+\tilde{s}+2\sigma)Y} dX dY  &\text{if } \tilde{s}< \tilde{r} < -\tilde{s}
    \end{cases},
\end{aligned}
\end{equation}
where the upper bounds are dominating functions.
Since we already have the pointwise limit of each function, we get by dominated convergence theorem
\begin{equation}
    \lim_{N\rightarrow \infty} \brabarket{f^{1/s}}{\widehat{A}_{12}}{g_1} = \brabarket{\widetilde{f}^{\tilde{s}}}{\widetilde{A}_{12}}{\widetilde{g}_1}.
\end{equation}
\end{proof}

\begin{lem}\label{limit,V}
    For any given $u > 0,$ the following limit holds uniformly over $Y\in[-u,u],$ 
    \begin{equation}
        \begin{aligned}
            \lim_{N\rightarrow \infty} \widehat{V}_{1}(Y) = \widetilde{V}_{1}(Y), \quad \lim_{N\rightarrow \infty} (c_0N/2)^{-1/3}\widehat{V}_{2}(Y) = \widetilde{V}_{2}(Y).
        \end{aligned}
    \end{equation}
    Moreover, there exist constants $a > -\tilde{s}$, $0<\sigma<$\scalebox{0.8}{$\begin{cases}  -\tilde{s} &\text{if }\tilde{r} < \tilde{s}\\
\min(-\tilde{s}, |\tilde{r}|, -(\tilde{s}+|\tilde{r}|)/2) &\text{if }\tilde{s}<\tilde{r} < -\tilde{s}\end{cases}$}, $C >0$ independent of $Y$, and $N_0\in \N$ such that for all $N \geq N_0$ and $Y \geq -u$, we have
    \begin{equation}
        |\widehat{V}_{1}(Y)| \leq Ce^{-aY}, \quad |\widehat{V}_{2}(Y)| \leq \begin{cases}
            C &\text{ if } \tilde{r} < \tilde{s},\\
            Ce^{(|\tilde{r}|+\sigma)Y} &\text{ if } \tilde{s}<\tilde{r} < -\tilde{s}.
        \end{cases}
    \end{equation}
\end{lem}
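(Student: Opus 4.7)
The plan is to combine the term-by-term decompositions of $\widehat{V}_1$ and $\widehat{V}_2$ that were already laid out in Lemmas \ref{V_1,limit} and \ref{V_2,limit} with the pointwise limits and exponential bounds established for all the building blocks in Lemmas \ref{limit,f_r}, \ref{lem:limit&bound}, \ref{limit,functions}, and \ref{limit,constants}. Each of $\widehat{V}_1,\widehat{V}_2$ is a finite sum of summands of schematic form $c\cdot\phi(Y)$, $c\cdot\braket{a}{\phi(\cdot,Y)}$, or $c\cdot\brabarket{a}{\widehat{K}_{ij}}{\phi(\cdot,Y)}$, where $c$ is a rational function of $s,r$ with a finite limit under the critical scaling, $a$ is one of $f^{1/s},f^r,G_{1/s},R_{1/s},\widehat{P},\widehat{Q},\widehat{\mathsf{J}}$ etc., and $\phi$ is a similar object in the variable $Y$.

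For the pointwise limits I would substitute, term by term, the already-established limits for every factor and read off the total scaling power: each contour-integral factor contributes $(c_0N/2)^{-1/3}$, and the Taylor expansions $s=1+(c_0N/2)^{-1/3}\tilde s$, $r=1+(c_0N/2)^{-1/3}\tilde r$ convert the scalar prefactors into their limits in $\tilde s,\tilde r$. For the bracket terms, I would pass from discrete sums over $\ell^2(\{d+1,\dots\})$ to integrals over $L^2((\tilde d,\infty))$ by the Riemann-sum dominated-convergence argument already used in the proofs of Lemmas \ref{lem:fredholmConv} and \ref{limit,constants}: each factor in the integrand admits an exponential bound of the correct sign from the aforementioned lemmas, so the product is integrable and the limit inside the integral is the desired $\widetilde{V}_1,\widetilde{V}_2$ from Appendix D. A direct term-count confirms the net scaling is $1$ for $\widehat{V}_1$ and $(c_0N/2)^{-1/3}$ for $\widehat{V}_2$, matching the claim.

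For the upper bounds I would isolate, in each summand, the single factor carrying the $Y$-dependence. In $\widehat{V}_1$, every such factor is $\widehat{A}_{11}(\cdot,Y)$, $\widehat{A}_{21}(\cdot,Y)$, $G_{1/s}(Y)$, $R_{1/s}(Y)$, or $\widehat{\mathsf{J}}(Y)$, all of which decay like $e^{-aY}$ for some $a>\max(-2\tilde s, 2|\tilde r|)$ by Lemmas \ref{lem:limit&bound} and \ref{limit,functions}; the inner bracket integrates against an integrable exponential in the interior variable and yields only a multiplicative constant. In $\widehat{V}_2$, the slowest-decaying $Y$-factors are $\widehat{A}_{22}(\cdot,Y)$, $\widehat{A}_{12}(\cdot,Y)$, $E(\cdot,Y)$, $R_{1/s}(Y)$, and $f^r(Y)$. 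In the regime $\tilde r<\tilde s$ each of these is uniformly bounded, giving $|\widehat{V}_2(Y)|\leq C$; in the regime $\tilde s<\tilde r<-\tilde s$ they give at worst $Ce^{(|\tilde r|+\sigma)Y}$, exactly as claimed.

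The main obstacle is bookkeeping rather than any genuinely new analytic input. Because $Y_1\overline{G}^2$ expands into a large number of cross-terms (compare \eqref{V_1V_2prelimit} and \eqref{expand22,11}), I need to verify for every term that (i) the scaling factors combine to the stated global power, (ii) the explicit limit matches the definition of $\widetilde{V}_1,\widetilde{V}_2$ in Appendix D after the contour-integral limit is taken, and (iii) the exponential rates in the interior bracket variable and in $Y$ simultaneously satisfy the conditions of the earlier lemmas. The delicate constraint is the restriction $\sigma<-(\tilde s+|\tilde r|)/2$ in the regime $\tilde s<\tilde r<-\tilde s$: this is precisely what guarantees that after the innermost bracket is summed, products such as $e^{(\tilde s+\sigma)X_1}\cdot e^{(|\tilde r|+\sigma)X_2}$ remain integrable and the dominated-convergence argument closes, so the choice of $\sigma$ must be made once, uniformly for all summands.
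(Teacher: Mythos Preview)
Your proposal is correct and follows essentially the same approach as the paper's proof: term-by-term substitution of the limits and bounds from Lemmas \ref{limit,f_r}, \ref{lem:limit&bound}, \ref{limit,functions}, and \ref{limit,constants}, with the Riemann-sum dominated-convergence argument handling the brackets and the $Y$-dependent factor in each summand dictating the exponential bound. The paper additionally writes out the explicit limit of the special term $\bigl(\tfrac{r^{k-d-1}}{s^{d+1}} + \tfrac{(1-s^2)}{(s-r)s^{k+1}}\bigr)H(s)$ and uses it as a worked example of the scaling-factor count, but this falls squarely within your outlined scheme; note also that in $\widehat{V}_1$ the $Y$-dependence actually comes only from $\widehat{A}_{11}(\cdot,Y)$, $\widehat{A}_{21}(\cdot,Y)$, and $G_{1/s}(Y)$, not $R_{1/s}$ or $\widehat{\mathsf{J}}$, though this slip is harmless since those also obey the same decay.
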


\begin{proof}
    We pick one term in $\widehat{V}_2$ to discuss its scaling factor $(c_0N/2)^{-1/3}$. Consider the term $\mathsmaller{\frac{(s-r)}{(1-s^2)}\braket{f^{1/s}}{\widehat{P}}\bra{f^r}\widehat{A}_{12}(\ell)}$. From Lemma $\ref{limit,f_r}$ and $\ref{limit,functions}$, we know that $f^s$, $f^r,$ $\widehat{P}$ need no scaling. By Lemma 
    $\ref{lem:limit&bound}$, $\widehat{A}_{12}$ need a scaling factor $(c_0N/2)^{1/3}$, which means that $\widehat{A}_{12}$ contributes a factor of $(c_0N/2)^{-1/3}$. We also notice that the inner product $\mathsmaller{\braket{f^{1/s}}{\widehat{P}}}$ generates a factor $(s-w)^{-1}$, which contributes a factor of $(c_0N/2)^{1/3}$ after scaling. Similarly, $\bra{f^r}\widehat{A}_{12}$ contributes a factor of $(c_0N/2)^{1/3}$. Combining things together, we see that $\widehat{V}_{2}$ needs a scaling factor $(c_0N/2)^{-1/3}$.
    
    By Lemma $\ref{limit,f_r}$, $\ref{steepestDescent}$, $\ref{limit,functions}$, we get limits of all functions involved in $\widehat{V}_1$ and $\widehat{V}_2$ except one term which we compute as follows:
    \begin{equation}
        \lim_{N\rightarrow \infty} \left(\frac{r^{k-d-1}}{s^{d+1}} + \frac{(1-s^2)}{(s-r)s^{k+1}}\right)H(s) = e^{\tilde{r}(X-\tilde{d}) +\frac{\tilde{s}^3}{3}- \tilde{s}\tilde{d}} - \frac{2\tilde{s}}{(\tilde{s}-\tilde{r})}e^{\frac{\tilde{s}^3}{3}-\tilde{s}X}. 
    \end{equation}
    Based on the upper bounds found in Lemma $\ref{limit,f_r}$, $\ref{lem:limit&bound}$, $\ref{limit,functions}$, we can use the same argument as in Lemma $\ref{limit,constants}$ to prove the convergence of summation over $\{d+1,\dots\}$ to integration over $(\tilde{d},\infty)$. This explains the convergence of terms like $\lim_{N\rightarrow \infty} \bra{f^{1/s}}\widehat{A}_{11}\cdot \widehat{A}_{21} = \bra{\widetilde{f}^{-\tilde{s}}}\widetilde{A}_{11}\times \widetilde{A}_{21}$.
    
    All terms except $\frac{s}{(1-s^2)}R_{1/s}$ in $\widehat{V}_2$ are dominating terms because the second variable of $\widehat{A}_{12},$ $\widehat{A}_{22},$ and $E$ are of order $r^{\ell}$. If $\tilde{r} < \tilde{s} <0,$ then $r^{\ell} \leq 1$ and there exists a constant $C$ such that $|(c_0N/2)^{-1/3}\widehat{V}_{2}|\leq C$ for all $Y\geq -u.$ However, if $\tilde{s} < \tilde{r} < -\tilde{s}$, then there exist $N_0\in \N$ and $0<\sigma<\min(-\tilde{s},|\tilde{r}|)$ such that $|r^{\ell}|\leq e^{(|\tilde{r}|+ \sigma)Y}$ for $N\geq N_0$ and $Y\geq -u.$ For $\widehat{V}_{1}$, the second variable of $\widehat{A}_{21}$ and $\widehat{A}_{11}$ is of order $\sqrt{q}^{\ell}.$ We also know the upper bound of $|(c_0N/2)^{1/3}G_{1/s}(\ell)|$ from Lemma $\ref{limit,functions}.$ Hence, we get the desired upper bound for $\widehat{V}_{1}$.
\end{proof}

For the following Lemma, recall Remark \ref{modifed} for the reason of placing $(c_0N/2)^{-1/3}$ inside the Pfaffian.
\begin{lem}\label{limit,V_1,V_2}
    For any $\sqrt{q}\in (0,1)$ and any $d\in \Z$ such that $d = \frac{2\sqrt{q}}{1-\sqrt{q}}N + (c_0N/2)^{1/3}\tilde{d},$ we have
    \begin{equation}
        \begin{aligned}
            \lim_{N\rightarrow \infty} \mathrm{Pf}\bigg(J - \widehat{K} - (c_0N/2)^{-1/3}&\ket{\begin{array}{c}
         g_1 \\
         -d_2   \end{array}}\bra{\widehat{V}_1 \quad \widehat{V}_2} - (c_0N/2)^{-1/3}\ket{\begin{array}{c}
         \widehat{V}_1 \\
         \widehat{V}_2
    \end{array}}\bra{-g_1 \quad d_2}\bigg)_{\ell^{2}(\{d+1,\dots\})} \\
    &= \mathrm{Pf}\left(J - \widehat{K} - \ket{\begin{array}{c}
         \widetilde{g}_1 \\
         -\widetilde{d}_2
    \end{array}}\bra{\widetilde{V}_1 \quad \widetilde{V}_2} - \ket{\begin{array}{c}
         \widetilde{V}_1 \\
         \widetilde{V}_2
    \end{array}}\bra{-\widetilde{g}_1 \quad \widetilde{d}_2}\right)_{L^2((\tilde{d}, \infty))}.
         \end{aligned}
    \end{equation}
\end{lem}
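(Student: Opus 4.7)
The plan is to follow the exact scheme of Lemma \ref{lem:fredholmConv}. Let $\widehat{K}^{\text{pert}}$ denote the antisymmetric $2\times 2$ matrix kernel appearing on the left-hand side. I expand $\mathrm{Pf}(J-\widehat{K}^{\text{pert}})$ as a Fredholm series, convert each $n$-fold sum over $\{d+1,d+2,\dots\}$ into an integral over $(\tilde{d}+\epsilon,\infty)^{n}$ with $\epsilon=(c_0N/2)^{-1/3}$ via characteristic functions of cubes of side $\epsilon$ (as in \eqref{finiteSum}--\eqref{LimitEachTerm}), and then pass to the limit term-by-term using dominated convergence.

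For the pointwise step, I will verify that the rescaled matrix
\[
\widehat{K}^{\text{pert,scaled}}=\begin{pmatrix}(c_0N/2)^{2/3}\widehat{K}^{\text{pert}}_{11}&(c_0N/2)^{1/3}\widehat{K}^{\text{pert}}_{12}\\(c_0N/2)^{1/3}\widehat{K}^{\text{pert}}_{21}&\widehat{K}^{\text{pert}}_{22}\end{pmatrix}
\]
converges entrywise to the rescaled version of $J-\widetilde{K}$ plus the two rank-one perturbations built from $\widetilde{g}_1,\widetilde{d}_2,\widetilde{V}_1,\widetilde{V}_2$. The essential arithmetic is that the prefactor $(c_0N/2)^{-1/3}$ of the rank-one perturbations, once combined with the scaling factor of the matrix entry in question and the known scalings $(c_0N/2)^{1/3}g_1\to\widetilde{g}_1$, $d_2\to\widetilde{d}_2$, $\widehat{V}_1\to\widetilde{V}_1$, $(c_0N/2)^{-1/3}\widehat{V}_2\to\widetilde{V}_2$ from Lemmas \ref{limit,functions} and \ref{limit,V}, reproduces exactly the target rank-one operators in the limit. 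Combined with Lemma \ref{lem:limit&bound} this yields entrywise convergence of $\widehat{K}^{\text{pert,scaled}}$ at each fixed $(X,Y)$.

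The Hadamard control is obtained by summing the bounds from Lemmas \ref{lem:limit&bound}, \ref{d_2,g_1UpperBound}, \ref{limit,functions}, and \ref{limit,V}. Each entry of $\widehat{K}^{\text{pert,scaled}}$ is dominated by a constant multiple of the bound already established for $\widehat{K}^{\text{scaled}}$ in Lemma \ref{lem:limit&bound}, so Lemma \ref{Hadamard} gives
\[
\bigl|\mathrm{Pf}\bigl(\widehat{K}^{\text{pert,scaled}}(X_i,X_j)\bigr)_{i,j=1}^{n}\bigr|\leq C^{n}(2n)^{n/2}\prod_{i=1}^{n}e^{-(a-|\tilde{r}|-\sigma)X_i},
\]
which is integrable on $(\tilde{d},\infty)^{n}$ and summable against $1/n!$ because $a-|\tilde{r}|-\sigma>0$ by the choices of $a$ and $\sigma$ in the cited lemmas. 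Two applications of dominated convergence, one inside each $n$-fold integral and one across the sum over $n$, then deliver the claim.

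The main obstacle is purely algebraic bookkeeping: I must verify that in the harder regime $\tilde{s}<\tilde{r}<-\tilde{s}$, where $d_2$ and $\widehat{V}_2$ grow like $e^{(|\tilde{r}|+\sigma)X}$ rather than decay, the resulting exponential growth of the rank-one perturbation in each of the four matrix entries still fits inside the master bound driven by the constants $a>\max(-2\tilde{s},2|\tilde{r}|)$ and $\sigma<-(\tilde{s}+|\tilde{r}|)/2$. The earlier lemmas were stated precisely with these constraints built in so that the combined estimate goes through; once this matching is done, no new analytic input beyond dominated convergence is needed.
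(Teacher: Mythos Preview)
Your strategy matches the paper's exactly: entrywise convergence of the rescaled perturbed kernel via Lemmas \ref{lem:limit&bound}, \ref{limit,functions}, \ref{limit,V}, followed by Hadamard's bound and dominated convergence as in Lemma \ref{lem:fredholmConv}.

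There is one concrete slip. The rank-one perturbation does \emph{not} fit under the $\widehat{K}^{\text{scaled}}$ bound from Lemma \ref{lem:limit&bound}; it has strictly weaker decay. In the $(1,1)$ entry, for instance, the rescaled $g_1(X)\widehat{V}_1(Y)$ is only controlled by $Ce^{(\tilde{s}+\sigma)X-aY}$, whereas $\widehat{K}^{\text{scaled}}_{11}$ obeys $Ce^{-a(X+Y)}$; since $a>-2\tilde{s}$ forces $-a<\tilde{s}+\sigma$, the perturbation is the larger of the two. Consequently your displayed Hadamard exponent $-(a-|\tilde{r}|-\sigma)$ is not valid for the perturbed kernel. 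What the paper does is pass to a \emph{common} upper bound dictated by the perturbation: in the regime $\tilde{s}<\tilde{r}<-\tilde{s}$ one uses $\alpha=-(\tilde{s}+\sigma)$ and $\beta=|\tilde{r}|+\sigma$ in Lemma \ref{Hadamard}, giving $\prod_i e^{(\tilde{s}+|\tilde{r}|+2\sigma)X_i}$. Integrability of this is exactly the condition $\sigma<-(\tilde{s}+|\tilde{r}|)/2$ you correctly single out in your last paragraph, so the argument closes once the common bound is stated correctly. (Minor: Lemma \ref{d_2,g_1UpperBound} belongs to the $t\to 1/s$ analysis; the asymptotic bounds on $g_1,d_2$ you need here are the ones in Lemma \ref{limit,functions}.)
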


\begin{proof}
    Fix any $u >0.$ By Lemma $\ref{limit,functions}$ and Lemma $\ref{limit,V}$, we know that there exist constants $a > \max(-2\tilde{s}, 2|\tilde{r}|)$, $0<\sigma<$\scalebox{0.8}{$\begin{cases}  -\tilde{s} &\text{if }\tilde{r} < \tilde{s}\\
\min(-\tilde{s}, |\tilde{r}|, -(\tilde{s}+|\tilde{r}|)/2) &\text{if }\tilde{s}<\tilde{r} < -\tilde{s}\end{cases}$}, $C>0$ independent of $X,Y$ and $N_0\in \N$ such that for all $N \geq N_0$ and $X,Y\geq -u,$ we have 
    \begin{equation}
        \begin{aligned}
            &M_1 = \\
            &\begin{pmatrix}
                (c_0N/2)^{1/3}\ketbra{g_1}{\widehat{V}_1} & \ketbra{g_1}{\widehat{V}_2}\\
                \ketbra{-d_2}{\widehat{V}_1} & (c_0N/2)^{-1/3}\ketbra{-d_2}{\widehat{V}_2}
            \end{pmatrix}
            \leq \begin{cases}
                \begin{pmatrix}
                Ce^{(\tilde{s}+\sigma)X}e^{-aY} &  Ce^{(\tilde{s}+\sigma)X}\\
                Ce^{(\tilde{s} +\sigma)X}e^{-aY} & Ce^{(\tilde{s} +\sigma)X}
            \end{pmatrix} &\text{ if }\tilde{r} < \tilde{s},\\
            \begin{pmatrix}
            Ce^{(\tilde{s}+\sigma)X}e^{-aY} &  Ce^{(\tilde{s}+\sigma)X}e^{(|\tilde{r}|+\sigma)Y}\\
                Ce^{(|\tilde{r}| +\sigma)X}e^{-aY} & Ce^{(|\tilde{r}| +\sigma)(X+Y)}
            \end{pmatrix} &\text{ if } 
            \tilde{s}<\tilde{r} < -\tilde{s},
            \end{cases}
        \end{aligned}
    \end{equation}
    and
    \begin{equation}
        \begin{aligned}
        &M_2 = \\
        &\begin{pmatrix}(c_0N/2)^{1/3}\ketbra{\widehat{V}_1}{-g_1} & \ketbra{\widehat{V}_1}{d_2}\\\ketbra{\widehat{V}_2} {-g_1}& (c_0N/2)^{-1/3}\ketbra{\widehat{V}_2}{d_2}
            \end{pmatrix} \leq \begin{cases}
                \begin{pmatrix}
                Ce^{-aX}e^{(\tilde{s}+\sigma)Y} &  Ce^{-aX}e^{(\tilde{s}+\sigma)Y}\\
                Ce^{(\tilde{s}+\sigma)Y} & Ce^{(\tilde{s}+\sigma)Y}
            \end{pmatrix} &\text{ if }\tilde{r} < \tilde{s},\\
            \begin{pmatrix}
            Ce^{-aX}e^{(\tilde{s}+\sigma)Y} &  Ce^{-aX}e^{(|\tilde{r}|+\sigma)Y}\\
                Ce^{(|\tilde{r}| +\sigma)X}e^{(\tilde{s}+\sigma)Y} & Ce^{(|\tilde{r}|+\sigma)(X+Y)}
            \end{pmatrix} &\text{ if } 
            \tilde{s}<\tilde{r} < -\tilde{s},
            \end{cases}
        \end{aligned}
    \end{equation}
    Hence, for three kernels, we get the same upper bound:
    \begin{equation}
       \widehat{K}^{\text{scaled}}, M_1, M_2  \leq \begin{cases}
            \begin{pmatrix}
                Ce^{(\tilde{s}+\sigma)X}e^{(\tilde{s}+\sigma)Y} & Ce^{(\tilde{s}+\sigma)X}\\
                Ce^{(\tilde{s}+\sigma)Y} & C
            \end{pmatrix} &\text{ if }\tilde{r} < \tilde{s},\\
            \begin{pmatrix}
                Ce^{(\tilde{s}+\sigma)(X+Y)} & Ce^{(\tilde{s}+\sigma)X}e^{(|\tilde{r}|+\sigma)Y}\\
                Ce^{(|\tilde{r}+\sigma|X)}e^{(\tilde{s}+\sigma)Y} & Ce^{(|\tilde{r}|+\sigma)(X+Y)}
            \end{pmatrix} & \text{ if } \tilde{s} < \tilde{r} < - \tilde{s}.
        \end{cases}
    \end{equation}
    Applying Hadamard's bound Lemma $\ref{Hadamard}$ and dominated convergence theorem as in Lemma $\ref{lem:fredholmConv}$ gives the desired convergence of Fredholm Pfaffian.
\end{proof}

\subsection{Proof of Theorem $\ref{theorem:limit}$ $(1)$}\label{continuousDerivative}
\begin{proof}[Proof of Theorem~\ref{theorem:limit} $(1)$]
    By Lemmas $\ref{limit,f_r},$ $\ref{lem:limit&bound},$  $\ref{limit,functions},$ 
    $\ref{limit,constants},$
    $\ref{limit,V},$ $\ref{limit,V_1,V_2}$, we have that 
    \begin{equation}
        \lim_{N\rightarrow \infty}(c_0N/2)^{-1/3}\psi\left(\frac{2\sqrt{q}}{1-\sqrt{q}}N + (c_0N/2)^{1/3}\tilde{d}\right) = \mathcal{L}(\tilde{d}).
    \end{equation}
    We define $h_N(x) = (c_0N/2)^{-1/3}\psi\left(\frac{2\sqrt{q}}{1-\sqrt{q}}N + (c_0N/2)^{1/3}x\right)$ and $\epsilon = (c_0N/2)^{-1/3}.$
    \begin{equation}\label{finalResult}
        \begin{aligned}
        &\Pb\left(\frac{G_{N,N}^{\text{stat}} - \frac{2\sqrt{q}}{1-\sqrt{q}}N}{(c_0N/2)^{1/3}}\leq \tilde{d}\right)
        = \frac{1+\tilde{s}\epsilon}{\epsilon(\tilde{s}-\tilde{r})} \frac{h_N(\tilde{d})}{\epsilon} - \frac{2+\epsilon(\tilde{s}+ \tilde{r})}{\epsilon(\tilde{s}-\tilde{r})} \frac{h_N(\tilde{d}-\epsilon)}{\epsilon}
        + \frac{1+\tilde{r}\epsilon}{\epsilon(\tilde{s}-\tilde{r})} \frac{h_N(\tilde{d} - 2\epsilon)}{\epsilon}\\
        &= \frac{h_N(\tilde{d}) - 2h_N(\tilde{d} - \epsilon) + h_N(\tilde{d}-2\epsilon)}{(\tilde{s} - \tilde{r})\epsilon^2}
        + \frac{\tilde{s}(h_N(\tilde{d}) - h_N(\tilde{d}-\epsilon))}{(\tilde{s} - \tilde{r})\epsilon} - \frac{\tilde{r}(h_N(\tilde{d}-\epsilon) - h_N(\tilde{d}-2\epsilon))}{(\tilde{s}- \tilde{r})\epsilon}\\
        &\rightarrow 
        \frac{1}{\tilde{s}-\tilde{r}}\partial_{\tilde{d}}^2 \mathcal{L}(\tilde{d}) + \partial_{\tilde{d}}\mathcal{L}(\tilde{d}).
        \end{aligned}
    \end{equation}
    We will only show that $\lim_{N \rightarrow 0} \frac{h_N(\tilde{d}) - h_N(\tilde{d}- \epsilon)}{\epsilon} = \partial_{\tilde{d}} \mathcal{L}(\tilde{d}).$ Second order derivative will follow from a nearly identical argument.
    There are many terms in the function $h_N(\tilde{d})$, but the core difficulty lies in differentiating the Fredholm Pfaffian, which we will prove first. Other terms, such as $\widehat{A}_{d}$ and $\widehat{\mu}_d$, are comparatively easier to differentiate. We use a superscript $g^{\tilde{d}}$ to denote the dependence of $g$ on $\tilde{d}$. Notice that $g_1,d_2, \widehat{K}$ has no dependence on $\tilde{d}.$ 
    To shorten the notation, we define functions $\mathcal{G}$ and $\mathsf{g}_n$ (only used in this theorem). Fix $n \in \N.$ Let
    \begin{equation}
        \begin{aligned}
             \mathsf{g}_n(\tilde{d},X_1,\dots,X_n) &:= \mathrm{Pf}\left( \widehat{K}(X_i,X_j) + \epsilon\ket{\begin{array}{c}
             g_1 \\
             -d_2   \end{array}}\bra{\widehat{V}_1^{\tilde{d}} \quad \widehat{V}_2^{\tilde{d}}}(X_i,X_j) + \epsilon\ket{\begin{array}{c}
             \widehat{V}_1^{\tilde{d}} \\
             \widehat{V}_2^{\tilde{d}}
            \end{array}}\bra{-g_1 \quad d_2}(X_i,X_j)\right)_{i,j = 1}^n,\\
             \mathcal{G}_n(\tilde{d}) &:= \sum_{X_1 = \tilde{d}+\epsilon}^{\infty}\cdots \sum_{X_n = \tilde{d}+\epsilon}^{\infty} \mathsf{g}_n(\tilde{d},X_1,\dots,X_n).
        \end{aligned}
    \end{equation}
    For some constant $0<\sigma <$ \scalebox{0.8}{$\begin{cases}
        -\tilde{s} &\text{ if } \tilde{r} < \tilde{s}<0\\
        \min(-(\tilde{s} + |\tilde{r}|)/2, |\tilde{r}|)&\text{ if } \tilde{s}<\tilde{r}<-\tilde{s},
    \end{cases}$}, we know that  by Lemma \ref{limit,V_1,V_2}\begin{equation}\label{UpperBounds}
        \begin{aligned}
            &|\epsilon^{-n}\mathsf{g}_n(\tilde{d},X_1,\dots,X_n)| \leq
            \begin{cases}
                C^n (2n)^{n/2}\prod_{i=1}^{n} e^{(\tilde{s}+\sigma)X_i} &\text{ if }\tilde{r} < \tilde{s},\\
                C^n(2n)^{n/2}\prod_{i=1}^{n} e^{(\tilde{s}+|\tilde{r}|+2\sigma)X_i} &\text{ if } \tilde{s}<\tilde{r}<-\tilde{s}
            \end{cases}.\\
           &\lim_{N\rightarrow \infty} \epsilon^{-n}\mathsf{g}_n(\tilde{d},X_1,\dots,X_n) = \mathrm{Pf}\left(\mathcal{Q}_n^{\tilde{d}}\right),\quad
           \lim_{N\rightarrow \infty} \mathcal{G}_n(\tilde{d}) = \int_{\tilde{d}}^{\infty}\!\!\!\cdots \!\!\int_{\tilde{d}}^{\infty}\!\!\!\mathrm{Pf}(\mathcal{Q}_n^{\tilde{d}}) \prod_{i=1}^{n}d X_i,\\
        \end{aligned}
    \end{equation}
    where $\mathcal{Q}_n^{\tilde{d}}= \mathsmaller{\left(\widetilde{K}(X_i,X_j) + \ket{\begin{array}{c}
             \widetilde{g}_1 \\
             -\widetilde{d}_2   \end{array}}\bra{\widetilde{V}_1^{\tilde{d}} \quad \widetilde{V}_2^{\tilde{d}}}(X_i,X_j) +\ket{\begin{array}{c}
             \widetilde{V}_1^{\tilde{d}} \\
             \widetilde{V}_2^{\tilde{d}}
            \end{array}}\bra{-\widetilde{g}_1 \quad \widetilde{d}_2}(X_i,X_j)\right)_{i,j=1}^n}$.
    Then we rewrite
    \begin{equation}\label{derivative}
    \begin{aligned}
        &\mathcal{G}_n(\tilde{d} - \epsilon) - \mathcal{G}_n(\tilde{d}) = \sum_{X_1 = \tilde{d}}^{\infty}\cdots \sum_{X_n = \tilde{d}}^{\infty} \mathsf{g}_n(\tilde{d}-\epsilon,X_1,\dots,X_n) - \sum_{X_1 = \tilde{d}+\epsilon}^{\infty}\cdots \sum_{X_n = \tilde{d}+\epsilon}^{\infty} \mathsf{g}_n(\tilde{d},X_1,\dots,X_n)\\
        &=\sum_{i = 1}^{n}\mathcal{T}_{i} +\!\! \sum_{X_1 = \tilde{d}+\epsilon}^{\infty}\!\! \cdots\!\! \sum_{X_n = \tilde{d}+\epsilon}^{\infty}\!\! \mathsf{g}_n(\tilde{d}-\epsilon, X_1,\dots, X_n) - \mathsf{g}_n(\tilde{d},X_1,\dots,X_n),
    \end{aligned}
    \end{equation}
    where \begin{equation}
    \begin{aligned}
        &\mathcal{T}_i = \sum_{X_1 = \tilde{d}+\epsilon}^{\infty}\!\!\cdots\!\!\! \sum_{X_{i-1} = \tilde{d}+\epsilon}^{\infty}  \,\sum_{X_{i+1}= \tilde{d}}^{\infty} \cdots \sum_{X_{n}= \tilde{d}}^{\infty}  \mathsf{g}_n(\tilde{d}-\epsilon, X_1,\dots,X_{i-1},\tilde{d}, X_{i+1}, \dots,X_{n}) \text{ for } 2\leq i \leq n-1,\\
    \end{aligned}
    \end{equation}
    and \begin{equation}
    \mathcal{T}_{1} = \sum_{X_2 = \tilde{d}}^{\infty} \cdots \sum_{X_n = \tilde{d}}^{\infty}\mathsf{g}_n(\tilde{d}-\epsilon,\tilde{d},X_2\dots,X_n), \quad \mathcal{T}_{n} = \!\! \sum_{X_1 = \tilde{d}+\epsilon}^{\infty}\!\!\!\cdots\!\!\! \sum_{X_{n-1}= \tilde{d}+\epsilon}^{\infty}\!\!\!\!\mathsf{g}_n(\tilde{d}-\epsilon, X_1,\dots, X_{n-1}, \tilde{d}).
    \end{equation}
    By $\eqref{UpperBounds}$, we see that by dominated convergence theorem, \begin{equation}
    \lim_{N\rightarrow\infty }\frac{\mathcal{T}_{i}}{\epsilon} = \int_{\tilde{d}}^{\infty}\!\!\!\cdots \!\!\int_{\tilde{d}}^{\infty}\!\!\!\mathrm{Pf}\left(\mathcal{Q}_n^{\tilde{d}}|_{X_j = \tilde{d}}\right) \prod_{j=1, j\neq i}^{n}d X_j,
    \end{equation}
     where the notation $\mathcal{Q}_n^{\tilde{d}}|_{X_{i}= \tilde{d}}$ means setting the variable $X_i$ as a constant $\tilde{d}$ and keep other variables $X_j, j\neq i$ unchanged and these variables will be integrated.
    For the last term in \eqref{derivative}, we notice that the difference  $(\mathsf{g}_n(\tilde{d})-\mathsf{g}_n(\tilde{d}-\epsilon))/\epsilon$ only produces polynomial factors, while $\eqref{UpperBounds}$ provides exponential upper bounds. By the dominated convergence theorem, we get
    \begin{equation}
        \epsilon^{-1}\sum_{X_1 = \tilde{d}+\epsilon}^{\infty}\!\! \cdots\!\! \sum_{X_n = \tilde{d}+\epsilon}^{\infty}\!\! \left(\mathsf{g}_n(\tilde{d}-\epsilon, X_1,\dots, X_n) - \mathsf{g}_n(\tilde{d},X_1,\dots,X_n)\right) = - \int_{\tilde{d}}^{\infty}\!\!\!\cdots \!\!\int_{\tilde{d}}^{\infty}\!\!\!\partial_{\tilde{d}}\mathrm{Pf}\left(\mathcal{Q}^{\tilde{d}}_n\right) \prod_{i=1}^{n}d X_i.
    \end{equation}
    Combining the results, we obtain
    \begin{equation}
        \begin{aligned}
            \lim_{N\rightarrow \infty}\left(\frac{\mathcal{G}_n(\tilde{d}-\epsilon) - \mathcal{G}_n(\tilde{d})}{\epsilon}\right) 
            &= \int_{\tilde{d}}^{\infty}\!\!\!\cdots \!\!\int_{\tilde{d}}^{\infty}\!\!\!\mathrm{Pf}\left(\mathcal{Q}_n^{\tilde{d}}|_{X_1 = \tilde{d}}\right) \prod_{i=2}^{n}d X_i + \int_{\tilde{d}}^{\infty}\!\!\!\cdots \!\!\int_{\tilde{d}}^{\infty}\!\!\!\mathrm{Pf}\left(\mathcal{Q}_n^{\tilde{d}}|_{X_2 = \tilde{d}}\right)  \prod_{i=1,i \neq 2}^{n}d X_i\\
            &+\dots + \int_{\tilde{d}}^{\infty}\!\!\!\cdots \!\!\int_{\tilde{d}}^{\infty}\!\!\!\mathrm{Pf}\left(\mathcal{Q}_n^{\tilde{d}}|_{X_{n}= \tilde{d}}\right) \prod_{i=1}^{n-1}d X_i - \int_{\tilde{d}}^{\infty}\!\!\!\cdots \!\!\int_{\tilde{d}}^{\infty}\!\!\!\partial_{\tilde{d}}\mathrm{Pf}\left(\mathcal{Q}^{\tilde{d}}_n\right) \prod_{i=1}^{n}d X_i.
        \end{aligned}
    \end{equation}
    This implies that
    \begin{equation}
    \begin{aligned}
        \lim_{N\rightarrow \infty}\!\sum_{n=1}^{\infty}\!
        \frac{(-1)^n}{n!}\!\left(\!\frac{\mathcal{G}_n(\tilde{d}-\epsilon) - \mathcal{G}_n(\tilde{d})}{\epsilon}\!\right)\! =
        -\partial_{\tilde{d}}\left(\!\mathrm{Pf}\left(\!J - \widetilde{K} + \ket{\begin{array}{c}
             \widetilde{g}_1 \\
             -\widetilde{d}_2   \end{array}}\bra{\widetilde{V}_1^{\tilde{d}} \,\,\, \widetilde{V}_2^{\tilde{d}}} +\ket{\begin{array}{c}
             \widetilde{V}_1^{\tilde{d}} \\
             \widetilde{V}_2^{\tilde{d}}
            \end{array}}\bra{-\widetilde{g}_1 \,\,\, \widetilde{d}_2}\right)_{L^2((\tilde{d}, \infty))}\!\right).
    \end{aligned}
    \end{equation}
    The limit that involves the second order derivative follows similarly. Terms like $\braket{\widehat{\mathsf{J}}}{d_2}$ and $\brabarket{f^{1/s}}{\widehat{A}_{12}}{g_1}$ in $\widehat{\mathcal{A}}_{d},\widehat{\mathcal{B}}_{d}, \widehat{\mathcal{D}}_{d},\widehat{\mathcal{E}}_{d}$ involve single or double summations, and their derivatives can be justified using similar arguments, along with their upper bounds established in previous lemmas. Terms like $\widehat{\mu}_d,\widehat{\nu}_d, \widehat{\mathcal{C}}_d$ are slightly different, so we provide a representative example.
    \begin{equation}
        \begin{aligned}
            \lim_{N\rightarrow \infty}\left(\widehat{\mu}_{d} - \widehat{\mu}_{d-1}\right) = &\frac{\left({s^{-d-1}} - s^{-d}\right)}{\epsilon}\frac{\epsilon H(s)}{(1-sr)}\!\!\oint \limits_{\Gamma_{\sqrt{q}}} \!\!\frac{dw}{2\pi\I}\frac{w^{d+2}}{H(w)}\frac{(1-ws)(1- wr)}{(w-s)^2(1-w^2)} \\
            &+\frac{\epsilon H(s)}{(1-sr)s^{d}}\!\!\oint \limits_{\Gamma_{\sqrt{q}}} \!\!\frac{dw}{2\pi\I}\frac{w^{d+1}}{H(w)}\frac{(w-1)}{\epsilon}\frac{(1-ws)(1- wr)}{(w-s)^2(1-w^2)}.
        \end{aligned}
    \end{equation}
    We discuss each term separately. For the first term,
    \begin{equation}\label{1stTerm}
        \begin{aligned}
            \frac{\left({s^{-d-1}} - s^{-d}\right)H(s)}{\epsilon} \rightarrow -\tilde{s}e^{\frac{\tilde{s}^3}{3} - \tilde{d}\tilde{s}}, \quad \frac{\epsilon}{(1-sr)}\!\!\oint \limits_{\Gamma_{\sqrt{q}}} \!\!\frac{dw}{2\pi\I}\frac{w^{d+2}}{H(w)}\frac{(1-ws)(1- wr)}{(w-s)^2(1-w^2)} \rightarrow \widetilde{\mu}_{\tilde{d}}\\
        \end{aligned}
    \end{equation}
    by the steepest descent method as in Lemma \ref{steepestDescent}.
    For the second term, we have $\frac{\epsilon H(s)}{(1-sr)s^{d+1}} \rightarrow -\frac{e^{\frac{\tilde{s}^3}{3} - \tilde{d}\tilde{s}}}{(\tilde{s}+\tilde{r})},$ and after a change of variables $\omega, \tilde{s}, \tilde{r}, \tilde{d}$, we see that  \begin{equation}\label{2ndTerm}
        \begin{aligned}
            &\oint \limits_{\Gamma_{\sqrt{q}}} \!\!\frac{dw}{2\pi\I}\frac{w^{d+1}}{H(w)}\frac{(w-1)}{\epsilon}\frac{(1-ws)(1- wr)}{(w-s)^2(1-w^2)}\\
            &= \int_{\epsilon^{-1}\widetilde{\mathcal{V}}_{\text{loc},\delta}}  e^{-\frac{\omega^3}{3} + \tilde{d}\omega + \mathcal{O}(\epsilon)}\omega\left(\frac{(\omega+\tilde{s})(\omega+\tilde{r})}{(\omega-\tilde{s})^2(2\omega)} +  \mathcal{O}(\epsilon\omega)\right)\frac{d\omega}{2\pi\I} + \mathcal{O}(\epsilon) \\
            &\rightarrow \int\limits_{{}_{}\wcu\, {}_{\tilde{s},0}} \!\frac{d\omega}{2\pi\I} \, e^{ - \frac{\omega^3}{3} + \tilde{d}\omega} \frac{(\omega+\tilde{s})(\omega+\tilde{r})}{2(\omega - \tilde{s})^2},
        \end{aligned}
    \end{equation}
    where the convergence is justified by the exponential decay of $e^{-\omega^3/3}$ along the contour $\widetilde{\mathcal{V}}_{\text{loc},\delta}$ which was explained in Lemma \ref{steepestDescent}.
    Combining \eqref{1stTerm} and \eqref{2ndTerm} together, we get $\lim_{N\rightarrow \infty}\left(\widehat{\mu}_{d} - \widehat{\mu}_{d-1}\right)= \partial_{\tilde{d}}\widetilde{\mu}_{\tilde{d}}$. Similar arguments can be applied to justify derivatives of $\widehat{\mathcal{C}}_{d}$ and $\widehat{\nu}_d.$
    Putting all pieces together, we get the desired limit, $\lim_{N \rightarrow 0} \frac{h_N(\tilde{d}) - h_N(\tilde{d}- \epsilon)}{\epsilon} = \partial_{\tilde{d}} \mathcal{L}(\tilde{d}).$
\end{proof}

\section{The Maximal current phase and Low density phase}\label{Max&LowPhase}
In this section, we provide derivation of the LPP distribution along the diagonal in the Maximal current phase and Low density phase.

First, we recall the stationary geometric LPP model under the Maximal current regime:
\begin{equation} \label{MaximalModel}
  \omega_{i, j} = \begin{cases}
      \mathrm{Geo}\left( r \right), & \textrm{if } i=j=2,\\
      \mathrm{Geo}\left( r\sqrt{q} \right), & \textrm{if }i=j\geq 3,\\
      0, & \textrm{if }j=1, i= 2,\\
    \mathrm{Geo}\left( \sqrt{q} \right), & \textrm{if }j=1, i\geq 3, \\
    \mathrm{Geo}\left( \sqrt{q} \right), &\textrm{if } j=2, i\geq 3, \\
    0, & \textrm{if}\ i=j=1, \\
    \mathrm{Geo}(q), &\textrm{otherwise}
  \end{cases}
\end{equation}
where $\sqrt{q} \in (0,1)$ and $r\in(0,1).$ Let ${G}_{r,1}^{MC}(N,N)$ denote the last passage percolation from $(0,0)$ to $(N,N)$ under the above model $\eqref{MaximalModel}$. Recall the definition of $\mathcal{I}_{r,s}(\cdot)$ in Definition \ref{stationaryMeasure}.

We use the same prelimiting model as before. Let $r$ be the diagonal parameter, $t$ be the first row parameter, $s$ be the second row parameter, and $\sqrt{q}$ be the bulk parameter.
\begin{equation} \label{MaximalApprox}
  \omega_{i, j} = \begin{cases}
      \mathrm{Geo}\left( rs\right), & \textrm{if } i=j=2,\\
      \mathrm{Geo}\left( r\sqrt{q} \right), & \textrm{if }i=j\geq 3,\\
      \mathrm{Geo}(st), & \textrm{if }j=1, i= 2,\\
    \mathrm{Geo}\left( \sqrt{q}t \right), & \textrm{if }j=1, i\geq 3, \\
    \mathrm{Geo}\left( \sqrt{q}s \right), &\textrm{if } j=2, i\geq 3, \\
    \mathrm{Geo}(rt), & \textrm{if}\ i=j=1, \\
    \mathrm{Geo}(q), &\textrm{otherwise}
  \end{cases}
\end{equation}
where $\sqrt{q} \in (0,1),$ $\sqrt{q}< s \leq 1$, $0<r< s.$ 
We use a new notation $\mathbf{L}_{r,t,s}(N,N)$ to denote the LPP from $(0,0)$ to $(N,N)$ under the above model $\eqref{MaximalApprox}.$

The following proposition states the symmetry between  the diagonal parameter $r$ and the second row parameter $s.$
This symmetry property originates from \cite[(7.56)]{baik2001algebraic} and also appears in other contexts, for example, \cite[Proposition 8.1]{MacdonaldProcess}. We omit the proof.

\begin{prop} \label{symmetryProperty}
    Given any $\sqrt{q} \in (0,1),$ $s\in (\sqrt{q},1],$ $t\in (0,1/s)$, $r <\min(1/s,1/t),$ we have for any $N\geq 2,$
    \begin{equation}
        \mathbf{L}_{r,t,s}(N,N) \stackrel{(d)}{=} \mathbf{L}_{s,t,r}(N,N).
    \end{equation}
\end{prop}

By Proposition $\ref{symmetryProperty}$, the diagonal LPP distribution of the following two models are equal.

\begin{figure}[htbp]
\centering

\begin{subfigure}[t]{0.48\textwidth}
\centering
\begin{tikzpicture}
\draw[very thin, gray] (-0.5,-0.5) grid (3.5,3.5);
\foreach \x in {1,...,3} {
  \foreach \y in {0,1,...,\x} {
    \fill (\x,\y) circle (2pt);
  }
}
\fill (0,0) circle (2pt);
\draw[thick] (0,0) -- (3.5,0);
\draw[thick] (0,0) -- (3.3,3.3);
\draw[thick] (1,1) -- (3.5,1);
\fill (0,0) circle (1pt);
\node at (0,-0.2) {$rt$};
\fill (1,0) circle (1pt);
\node at (1,-0.2) {$st$};
\node at (-0.5,0) {\footnotesize$(1,1)$};
\node at (0.5,1) {\footnotesize$(2,2)$};
\node at (1.2,0.82) {\small$rs$};
\fill (1,1) circle (1pt);
\node at (3,-0.28) {\small$\sqrt{q}\,t$};
\node at (3,0.72) {\small$\sqrt{q}\,s$};
\node at (1.55,2.1) {\small$r\sqrt{q}$};
\node at (3.1,1.75) {\small$q$};
\end{tikzpicture}
\caption{Geometric LPP model $\mathbf{L}_{r,t,s}$ as described in \eqref{MaximalModel}.}
\label{fig:StationaryModelApproxA}
\end{subfigure}
\hfill
\begin{subfigure}[t]{0.48\textwidth}
\centering
\begin{tikzpicture}
\draw[very thin, gray] (-0.5,-0.5) grid (3.5,3.5);
\foreach \x in {1,...,3} {
  \foreach \y in {0,1,...,\x} {
    \fill (\x,\y) circle (2pt);
  }
}
\fill (0,0) circle (2pt);
\draw[thick] (0,0) -- (3.5,0);
\draw[thick] (0,0) -- (3.3,3.3);
\draw[thick] (1,1) -- (3.5,1);
\fill (0,0) circle (1pt);
\node at (0,-0.2) {$st$};
\fill (1,0) circle (1pt);
\node at (1,-0.2) {$rt$};
\node at (-0.5,0) {\footnotesize$(1,1)$};
\node at (0.5,1) {\footnotesize$(2,2)$};
\node at (1.2,0.82) {\small$rs$};
\fill (1,1) circle (1pt);
\node at (3,-0.28) {\small$\sqrt{q}\,t$};
\node at (3,0.72) {\small$\sqrt{q}\,r$};
\node at (1.55,2.1) {\small$s\sqrt{q}$};
\node at (3.1,1.75) {\small$q$};
\end{tikzpicture}
\caption{Geometric LPP model $\mathbf{L}_{s,t,r}$ which is equal to $\mathbf{L}_{r,t,s}$ in distribution.}
\label{fig:StationaryModelApproxB}
\end{subfigure}

\caption{Comparison of two LPP configurations.}
\label{fig:StationaryModelApproxPair}
\end{figure}

Recall the product stationary half space LPP model in \eqref{one-paramStatModel}:
\begin{equation} \label{one-paramModel}
  \omega_{i, j} = \begin{cases}
      0, & i=j=1,\\
      \mathrm{Geo}\left( x\sqrt{q} \right), & i=j>1,\\
    \mathrm{Geo}\left( \sqrt{q}/x \right), & j=1, i>2, \\
    \mathrm{Geo}(q), &\textrm{otherwise}.
  \end{cases}
\end{equation}
Recall the notation ${G}_{x}^{\text{Low}}(N,N)$ for LPP from $(1,1)$ to $(N,N)$.
Setting $r = \sqrt{q},$ it is easy to see that as $t\rightarrow 1/s$
\begin{equation}
    \mathbf{L}_{s,t,\sqrt{q}}(N,N) - \mathbf{L}_{s,t,\sqrt{q}}(1,1) \xRightarrow{(d)} G_{s}^{\text{Low}}(N,N).
\end{equation}

Thus, $\mathbf{L}_{r,t,s}$ degenerates into stationary models in both Maximal current (when $s=t=1$) and Low density phases. Recall the Pfaffian structure of
$\mathbf{L}_{r,t,s}(N,N)$ in Proposition \ref{PfKernel}. We now perform a different kernel decomposition following the idea of \cite[Proposition 3.4]{Betea_2020}.

\subsection{Reformulation of the correlation kernel}
 All symbols and functions introduced in this section are defined exclusively for Maximal current phase and Low density phase, and are independent of those used in the High density phase.
We define 
\begin{equation}
    \begin{aligned}
        &H(x) = \left(\frac{(1-\sqrt{q}/x)}{(1-\sqrt{q}x)}\right)^{N-2},\quad R(x) = \frac{1-s/x}{1-sx},\quad f^{s}(k) = \frac{s^{k+1}}{H(s)},\\
        &E(k,\ell) = \begin{cases}
            -s^{k-\ell-1}, & \textrm{if }k> \ell,\\
        0, & \textrm{if }k = \ell,\\
        s^{\ell-k-1}, & \textrm{if }k < \ell\\
        \end{cases} =-\sgn{(k-\ell)}s^{|k-\ell| -1}.
    \end{aligned}
\end{equation}
In the following proposition, $K^{\text{geo}}$ is the kernel $K$ defined in Proposition \ref{PfKernel} with diagonal parameter $s$, first row parameter $t$ and second row parameter $r.$
Definitions of $g_1$, $g_2$ can be found in \eqref{g1g2def}.

\begin{prop}
Fix any $\sqrt{q}\in (0,1),$ $s \in (\sqrt{q},1)$, $r\in (0,s)$. Let $\mathbf{L}_{s,t,r}$ be the LPP model defined in Figure \ref{fig:StationaryModelApproxB}. For any $N\in \Z_{\geq 2},$ the distribution of $\mathbf{L}_{s,t,r}(N,N)$ is given by a Fredholm Pfaffian
\begin{equation}
    \Pb(\mathbf{L}_{s,t,r}(N,N) \leq d) = \mathrm{Pf}(J- K^{\text{geo}})_{\ell^2(\{d+1,d+2,\dots\})},
\end{equation}
and we can decompose $K^{\text{geo}}$ as follows:
$$K^{\text{geo}} = \overline{K}^{\text{geo}} + (1-st)\frac{(1-tr)}{(t-r)}\mathcal{R} ,\quad \mathcal{R} = \begin{pmatrix}
     \ketbra{g_1}{f^t} - \ketbra{f^t}{g_1} & \ketbra{f^t}{g_2}\\
    -\ketbra{g_2}{f^t} & 0
\end{pmatrix},$$
where
\begin{equation}\label{oneParamKernel}
\begin{aligned}
    \overline{K}_{11}^{\text{geo}}(k,\ell) &= \frac{-1}{(2\pi\I)^2} \oint \limits_{\Gamma_{\sqrt{q},r}} dw \oint \limits_{\Gamma_{1/\sqrt{q} ,1/r}} dz \frac{w^{\ell-1}}{z^k}\frac{H(z)R(z)T(z)}{H(w)R(w)T(w)}\frac{(zw-1)(z-s)(1-sw)}{(z^2-1)(1-w^2)(z-w)},\\
    \overline{K}_{12}^{\text{geo}}(k,\ell) &= \frac{-1}{(2\pi\I)^2} \oint \limits_{\Gamma_{\sqrt{q},r,t,s}} dw \oint \limits_{\Gamma_{1/\sqrt{q},1/r}} dz \frac{w^{\ell-1}}{z^k} \frac{H(z)R(z)T(z)}{H(w)R(w)T(w)}\frac{(zw-1)(z-s)}{(z^2-1)(w-s)(z-w)} = -\overline{K}_{21}^{\text{geo}}(\ell,k),\\
    \overline{K}_{22}^{\text{geo}}(k,\ell) &= \frac{-1}{(2\pi\I)^2} \oint \limits_{\Gamma_{\sqrt{q}}} dw\!\!\!\!\! \oint \limits_{\Gamma_{1/\sqrt{q}, 1/r,1/s,1/t}}\!\!\!\!\!\! dz \frac{w^{\ell-1}}{z^k}\frac{h_{22}^{\text{geo}}(z,w)}{z-w} + \frac{-1}{(2\pi\I)^2} \oint \limits_{\Gamma_{r}} dw \!\!\!\!\!\oint \limits_{\Gamma_{1/\sqrt{q}, 1/s,1/t}}\!\!\!\!\! dz \frac{w^{\ell-1}}{z^k}\frac{h_{22}^{\text{geo}}(z,w)}{z-w} \\
    &+\frac{-1}{(2\pi\I)^2} \oint \limits_{\Gamma_{s}} dw\!\!\!\!\! \oint \limits_{\Gamma_{1/\sqrt{q},1/r,1/t}}\!\!\!\!\! dz \frac{w^{\ell-1}}{z^k}\frac{h_{22}^{\text{geo}}(z,w)}{z-w} + \frac{-1}{(2\pi\I)^2} \oint \limits_{\Gamma_{t}} dw \!\!\!\!\!\oint \limits_{\Gamma_{1/\sqrt{q},1/r,1/s}}\!\!\!\!\! dz \frac{w^{\ell-1}}{z^k}\frac{h_{22}^{\text{geo}}(z,w)}{z-w}+ E(k,\ell)\\
\end{aligned}
\end{equation}
with
\begin{equation}
    \begin{aligned}
        h_{22}^{\text{geo}}(z,w) &:= \frac{H(z)R(z)T(z)}{H(w)R(w)T(w)}\frac{(zw-1)}{(w-s)(1-sz)}.\\
    \end{aligned}
\end{equation}
\end{prop}
\begin{proof}
    This reformulation of the kernel is very similar to the two-parameter case, as presented in Lemmas~\ref{lem:rewrite11}, \ref{lem:rewrite12}, and \ref{lem:rewrite22}. We omit the detailed derivation here. The only difference is that we do not isolate all the $t$-poles.
\end{proof}

For the next lemma, we recall definitions of $F_{r,1}^{MC}$ in \eqref{MaxCDF} and $F_{r}^{LD}$ in \eqref{LowCDF}.
\begin{lem}\label{DeriveMax&Low}
    Fix any $\sqrt{q} \in (0,1).$ Let $N \in \Z_{\geq 2}$. Assume that the limit $${F}_{s,r}(d,N) := \lim_{t\rightarrow 1/s} \frac{1}{1-st}\Pb(\mathbf{L}_{s,t,r}(N,N) \leq d)$$ exists for any $s\in (\sqrt{q},1),$ $r \in (0,s),$ $d\in Z_{\geq 0},$ and let ${F}_{s,r}(d,N) = 0$ for $d < 0.$ If the limit ${F}_{s,r}(d,N)$ is analytic in $s \in (\sqrt{q}, 1+\epsilon)$ for some $\epsilon>0$ when $r \in (0,1)$ is fixed, then for $N \in \Z_{\geq 1},$
    \begin{equation}\label{formulaMax}
        \begin{aligned}
            &F_{r,1}^{MC}(d,N)=\Pb({G}_{r,1}^{MC}(N+1,N+1) \leq d) =  \\
            &\frac{1}{1-r} {F}_{1,r}(d,N+1) - \frac{1+r}{1-r}{F}_{1,r}(d-1,N+1) + \frac{r}{1-r}{F}_{1,r}(d-2,N+1).\\
            \end{aligned}\end{equation}
            If the limit ${F}_{s,r}(d,N)$ is analytic in $r\in (0,s)$ when $s \in (\sqrt{q},1)$ is fixed, then for $N\in \Z_{\geq 2},$
                \begin{equation}
        \begin{aligned}\label{formulaLow}
        &F_{\overline{r}}^{LD}(d,N)=\Pb({G}_{\overline{r}}^{\text{Low}}(N,N) \leq d) =   
            {F}_{\overline{r},\sqrt{q}}(d,N) - {F}_{\overline{r},\sqrt{q}}(d-1,N). 
        \end{aligned}
    \end{equation}
\end{lem}

\begin{proof}
    The Maximal current phase formula is obtained by applying the shift argument \eqref{eq:shift} to remove geometric random variables at $(1,1)$ and $(2,1)$ from $\mathbf{L}_{r,t,s}$, taking the limit of $t\rightarrow 1/s$ and then setting the first and second row parameters as $1$, i.e., $s = 1/s = 1$. By Proposition \ref{symmetryProperty}, we have that $\mathbf{L}_{r,t,s} \stackrel{(d)}{=} \mathbf{L}_{s,t,r}$. Hence, \eqref{formulaMax} gives the desired distribution formula for Maximal current phase. 
    The Low density phase formula \eqref{formulaLow} is obtained by applying the shift argument to remove the geometric random variable at $(1,1)$, taking $t \rightarrow 1/s$, and then setting the diagonal parameter to be $\overline{r}$, i.e., $s = \overline{r}$ and the second row parameter to be $\sqrt{q},$ i.e., $r = \sqrt{q}$. We restrict to $N\in \Z_{\geq 2}$ for $F_{\overline{r}}^{LD}$ simply because $G_{\overline{r}}^{Low}(1,1) = 0.$
\end{proof}

Similar as before, we aim to derive the limit $\lim_{t\rightarrow 1/s} \frac{1}{1-st}\mathrm{Pf}(J - K^{\text{geo}}).$
 Recall that $J(x,y) = \delta_{x,y}\begin{pmatrix}
            0 & 1\\ -1 & 0
        \end{pmatrix}.$
Let \begin{equation}
    G = J^{-1}K^{\text{geo}}, \quad \overline{G} = J^{-1}\overline{K}^{\text{geo}}, \quad T = J^{-1}\mathcal{R}.
\end{equation}
Then we know that $T$ can be rewritten as 
\begin{equation}
    \begin{aligned}
        T = \begin{pmatrix}
            \ketbra{g_2}{f^s} & 0\\
            \ketbra{g_1}{f^s}- \ketbra{f^s}{g_1} & \ketbra{f^s}{g_2} 
        \end{pmatrix}= \ketbra{X_1}{Y_1} + \ketbra{X_2}{Y_2} 
    \end{aligned}
\end{equation}
with 
\begin{equation}
\begin{aligned} \label{eq:S_fact}
  X_1 = \ket{ \begin{array}{c}  g_2 \\ g_1 \end{array} }, \quad & X_2 &= \ket{ \begin{array}{c} 0 \\ f^s \end{array} },\quad
  Y_1 = \bra{{f^s} \quad 0}, \quad & Y_2 &= \bra{-g_1 \quad g_2}.
\end{aligned}
\end{equation}
By using exact same argument as in \cite[Section $3.2.3$]{Betea_2020}, we get 
\begin{equation}\label{Newidentity}
    \frac{1}{1-st}\text{Pf}(J - K^{\text{geo}}) = \text{Pf}(J - \overline{K}^{\text{geo}}) \left(\frac{1}{1-st} - \frac{(1-tr)}{(t-r)}\braket{Y_2}{X_2} - \frac{(1-tr)}{(t-r)}\braket{Y_2}{(\Id - \overline{G})^{-1}\overline{G}X_2}\right).
\end{equation}

\begin{thm}\label{Max&LowResult_Finite}
    For any $\sqrt{q}\in (0,1)$, $s \in (\sqrt{q},1)$, $r\in (0,s)$ and $N \in \Z_{\geq 2}$ and $d\in \Z_{\geq 0}$, we have
    \begin{equation}
        \lim_{t\rightarrow 1/s} \frac{1}{1-st}\mathrm{Pf}(J-K^{\text{geo}})_{\ell^{2}(\{d+1,d+2,\dots\})} = \Theta(d),
    \end{equation}
    where
    \begin{equation}
        \Theta(d) := \mathrm{Pf}(J - \overline{\mathsf{K}}^{\text{geo}})e^{s,r}(d) - \mathrm{Pf}(J - \overline{\mathsf{K}}^{\text{geo}}) + \mathrm{Pf}\left(J - \overline{\mathsf{K}}^{\text{geo}} - \ketbra{\begin{array}{c}
            \overline{c}\phi_2\\\overline{c}\phi_1
        \end{array}}{-g_1 \quad g_2} - \ketbra{\begin{array}{c}
            -g_1\\
            g_2
        \end{array}}{-\overline{c}\phi_2 \,\,\, -\overline{c}\phi_1}\right),
    \end{equation}
    and $\Theta(d) = 0$ for $d<0$. The Pfaffian is taken over ${\ell^{2}(\{d+1,\dots\}}).$
\end{thm}

 The proof of this theorem will be presented in detail in later sections. We now define each function used in the theorem. All functions introduced below are used exclusively in the Maximal current phase and Low density phase. It should not be confused with those appearing in the High density phase. The kernel $\overline{\mathsf{K}}^{\text{geo}}$ is defined in Theorem $\ref{MaximalKernel}.$
\begin{equation}
\begin{aligned}
     &H(x) = \left(\frac{1-\sqrt{q}/x}{1-\sqrt{q}x}\right)^{N-2}, \quad\quad  R(x) = \frac{(1-r/x)}{(1-rx)}, \quad\quad f^x(k) = \frac{x^{k+1}}{H(x)},\\
     &e^{s,r}(d) = \frac{(s-r)}{(1-sr)}\frac{H(s)}{s^{d+1}}\oint \limits_{\Gamma_{\sqrt{q},s,r}} \frac{dw}{2\pi\I} \frac{w^{d+2}(1-wr)}{H(w)(w-s)^2(w-r)},
\end{aligned}
\end{equation}

\begin{equation}
    \begin{aligned}
         &\mathsf{g}_4(k) = -\!\!\!\!\oint \limits_{\Gamma_{1/\sqrt{q}, 1/r,1/s,s} }\!\!\!\!\!\!\frac{dz}{2\pi\I}\frac{sH(z)(z-r)(z^2-1)}{z^{k+2}(1-zr)(z-s)^2(1-sz)}, \quad \mathsf{g}_3(k) = \oint \limits_{\Gamma_{1/\sqrt{q},1/r}}
        \frac{dz}{2\pi\I}\frac{s(z-r)H(z)}{z^{k+2}(1-zr)(z-s)},\\
        &\mathsf{h}(d,k) = \begin{cases}
            H(s)\frac{s^{k-2d-2} - s^{-k}}{(s^2-1)} + s^{-k-2}(k-d)H(s) &\text{if }  s\neq 1,\\
            2k-2d-1 &\text{if } s = 1,
        \end{cases} \quad\quad \quad \overline{c} = \frac{(s-r)}{(1-sr)}.
    \end{aligned}
\end{equation}
\begin{equation}\label{g1g2def}
    \begin{aligned}
        &g_1(k) = \oint \limits_{\Gamma_{1/\sqrt{q},1/r}} \frac{dz}{2\pi\I} \frac{H(z)}{z^{k+2}}\frac{(z-r)(z-s)}{(1-rz)(z^2-1)},\quad\quad
        g_2(k) = \oint \limits_{\Gamma_{\sqrt{q},s,r}} \frac{dw}{2\pi\I} \frac{w^{\ell+1}}{H(w)}\frac{(1-wr)}{(w-r)(w-s)},
    \end{aligned}
\end{equation}

\begin{equation}
    \begin{aligned}
        &\overline{\phi_1}(k) = \frac{-1}{(2\pi\I)^2} \frac{H(s)}{s^{d+1}}\oint \limits_{\Gamma_{\sqrt{q},r}}\!\! dw\!\!\!\!\!\! \oint \limits_{\Gamma_{1/\sqrt{q},1/r,s}}\!\!\!\!\!\! dz \frac{w^{d+1}}{z^{k+1}} \frac{H(z)R(z)}{H(w)R(w)}\frac{(zw-1)}{(s-w)(s-z)(ws-1)(z-w)},\\
        &\overline{\phi_2}(k) = \frac{-1}{(2\pi\I)^2} \frac{H(s)}{s^{d+1}}\oint\limits_{\Gamma_{\sqrt{q},r}}\!\!\! dw \!\!\!\oint\limits_{\Gamma_{1/\sqrt{q},1/r}}\!\!\!dz \frac{w^{d+1}}{z^{k+1}} \frac{H(z)R(z)}{H(w)R(w)}\frac{(zs-1)(zw-1)}{(s-w)(ws-1)(z^2-1)(z-w)}.\\
    \end{aligned}
\end{equation}

\begin{equation}
        \begin{aligned}
            \phi_1(k) &= \overline{\phi_1}(k) - \frac{(1-sr)}{(s-r)}\mathsf{g}_4(k) - \mathsf{h}(d,k), \quad\quad
            \phi_2(k) = \overline{\phi_2}(k) + \frac{(1-sr)}{(s-r)}\mathsf{g}_3(k).\\
        \end{aligned}
\end{equation}

\begin{remark}
    The method used to obtain distribution formulas in the Maximal current phase and Low density phase also applies to a portion of the High density phase, namely $s \in (\sqrt{q},1)$ and $0<r<s$. For the remaining High density region, i.e., $s<r<1/s$, one must instead use the approach that yields $F_{r,s}^{HD}.$
\end{remark}

\begin{thm}\label{MaximalKernel}
    The kernel $\overline{K}^{\text{geo}}$ is analytic for $t \in (0,1/r).$ The limiting kernel $\overline{\mathsf{K}}^{\text{geo}} := \lim_{t \rightarrow 1/s} \overline{K}^{\text{geo}}$ has the following entries:
        \begin{equation}
        \begin{aligned}
            \overline{\mathsf{K}}_{11}^{\text{geo}} &:= \frac{-1}{(2\pi\I)^2} \oint \limits_{\Gamma_{\sqrt{q},r}} dw \oint \limits_{\Gamma_{1/\sqrt{q},1/r}} dz \frac{w^{\ell}}{z^{k+1}} \frac{H(z)R(z)}{H(w)R(w)} \frac{(zs-1)(s-w)(zw-1)}{(z^2-1)(1-w^2)(z-w)},\\
            \overline{\mathsf{K}}_{12}^{\text{geo}} &:= \frac{-1}{(2\pi\I)^2} \oint \limits_{\Gamma_{\sqrt{q},r,1/s}} dw \oint \limits_{\Gamma_{1/\sqrt{q},1/r}} dz \frac{w^{\ell}}{z^{k+1}} \frac{H(z)R(z)}{H(w)R(w)} \frac{(zs-1)(zw-1)}{(z^2-1)(ws-1)(z-w)},
            \end{aligned}
    \end{equation}   
    \begin{equation}
    \begin{aligned}
    \overline{\mathsf{K}}_{22}^{\text{geo}} &:= \frac{-1}{(2\pi\I)^2} \oint \limits_{\Gamma_{\sqrt{q}}} dw \oint \limits_{\Gamma_{1/\sqrt{q},1/r,s}} dz \,\overline{h}_{22}^{\text{geo}}(z,w)
    +\frac{-1}{(2\pi\I)^2} \oint \limits_{\Gamma_{r}} dw \oint \limits_{\Gamma_{1/\sqrt{q},s}} dz \, \overline{h}_{22}^{\text{geo}}(z,w)\\
    &
            + \frac{-1}{(2\pi\I)^2} \oint \limits_{\Gamma_{1/s}} dw \oint \limits_{\Gamma_{1/\sqrt{q},1/r}} dz\, \overline{h}_{22}^{\text{geo}}(z,w) -\sgn(k-\ell) s^{-|k-\ell|-1}
        \end{aligned}
    \end{equation}
    where $$\overline{h}_{22}^{\text{geo}} = \frac{w^{\ell}H(z)R(z)(zw-1)}{z^{k+1}H(w)R(w)(s-z)(ws-1)(z-w)}.$$
\end{thm}

\begin{proof}
    $\overline{K}^{\text{geo}}_{11}$ (respectively $\overline{K}^{\text{geo}}_{12}$) is analytic for $t \in (r,1/r)$ , as we can choose the contour for $z$ to be arbitrarily close to $1/\sqrt{q},1/r$ as needed and the contour for $w$ to be arbitrarily close to $r, \sqrt{q}$ (respectively $\sqrt{q},r,s,t$). The pole for $w = r$ in $\overline{K}_{12}^{\text{geo}}$ disappears after taking the limit.
    For $\overline{K}^{\text{geo}}_{22}$, poles at $(z,w)= (1/s,\sqrt{q}), (1/s,r)$ and poles at $(z,w) = (1/\sqrt{q},s), (1/r,s)$ vanish as $t\rightarrow 1/s.$ Poles at $(z,w) = (1/t,s)$ and $(z,w) = (1/s,t)$ are evaluated as (in the following sum, the first term is for $(1/t,s)$ and second term is for $(1/s,t)$)
    \begin{equation}\label{twoPoles}
    \begin{aligned}
        &\frac{1}{H(s)H(t)R(s)R(t)}\left({-s^{\ell}t^{k}}\frac{(1-t^2)}{(s-t)} + {s^{k}t^{\ell}}\frac{(1-t^2)}{(s-t)} \right)\\
        &\rightarrow -s^{\ell-k-1} + s^{k-\ell-1} = \sgn (k-\ell) \left( s^{|k-\ell| -1} - s^{-|k-\ell|-1} \right).
    \end{aligned}
    \end{equation}
    Hence, adding $E(k,\ell)$ and $\eqref{twoPoles}$ yields
    $-\sgn(k-\ell) s^{-|k-\ell|-1}.$
\end{proof}

\begin{prop}
    For fixed $\sqrt{q}\in(0,1),$ $s\in (\sqrt{q},1),$ and $r \in (0,s),$ $\mathrm{Pf}(J-\overline{K}^{\text{geo}})$ is analytic for $t \in (0,1/r)$ and its limit is given by
    \begin{equation}
        \lim_{t \rightarrow 1/s} \mathrm{Pf}(J - \overline{K}^{\text{geo}})_{\ell^2(\{d+1,d+2,\dots\})} = \mathrm{Pf}(J - \overline{\mathsf{K}}^{\text{geo}})_{\ell^2(\{d+1,d+2,\dots\})}.
    \end{equation}
\end{prop}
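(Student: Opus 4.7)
The strategy is essentially parallel to that of Proposition~\ref{Fredconv} in the main text. The plan is to establish the analyticity of each entry $\overline{K}^{\text{geo}}_{ij}(k,\ell)$ as a function of $s$ on the relevant open interval, combined with uniform exponential decay estimates that will allow us to invoke Hadamard's inequality (Lemma~\ref{Hadamard}) and the dominated convergence theorem at the level of the Fredholm Pfaffian series expansion.

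First I would fix a small $\epsilon>0$ so that $\sqrt{q}+\epsilon/2 < \min(1,r)$ (and similarly in a neighborhood of $s=1/r$ the contours stay clear of any other poles), and take the $w$-contour as the circle $|w-\sqrt{q}|=\epsilon/2$ and the $z$-contour as $|z-1/\sqrt{q}|=\epsilon/2$ whenever these are the relevant double poles. The entries $\overline{K}^{\text{geo}}_{11}$ and $\overline{K}^{\text{geo}}_{12}$ involve only simple double-contour integrals around $(1/\sqrt{q},\sqrt{q})$ (plus the additional $r$ and $s$ poles on the $w$-contour for the $12$ entry), and since the integrand depends analytically on $s$ for $s$ in a neighborhood of $1/r$, we immediately get analyticity and uniform bounds of the form
\begin{equation*}
|\overline{K}^{\text{geo}}_{11}(k,\ell)| \leq C\, e^{-k\log(1/\sqrt{q}-\epsilon/2)+\ell\log(\sqrt{q}+\epsilon/2)}, \qquad
|\overline{K}^{\text{geo}}_{12}(k,\ell)| \leq C\, e^{-k\log(1/\sqrt{q}-\epsilon/2)+\ell\log(\max(r,s,\sqrt{q}+\epsilon/2))},
\end{equation*}
uniformly in $s$ in a small neighborhood of $1/r$.

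The more subtle entry is $\overline{K}^{\text{geo}}_{22}$. Here the issue is the two simple pole pairs $(1/s,r)$ and $(1/r,s)$, whose residues blow up individually as $s\to 1/r$ but combine with $E(k,\ell)$ to a finite expression (as shown in the computation \eqref{twoPoles} in the proof of Theorem~\ref{OneParamKernel}). The key step is therefore to rewrite $\overline{K}^{\text{geo}}_{22}$ as the sum of the double integrals that remain after $s\to 1/r$ (with contours around $(1/\sqrt{q},\sqrt{q})$, $(1/\sqrt{q},s)$, $(1/s,\sqrt{q})$, $(1/\sqrt{q},r)$, $(1/r,\sqrt{q})$) plus the explicit finite quantity from \eqref{twoPoles} plus $E(k,\ell)$, and to argue the analyticity of each piece separately in a neighborhood of $s=1/r$. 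This decomposition makes the dependence on $s$ manifestly analytic and gives a bound
\begin{equation*}
|\overline{K}^{\text{geo}}_{22}(k,\ell)| \leq \begin{cases} C & \text{if } r\leq 1,\\ C\,e^{(k+\ell)\log r} & \text{if } r>1,\end{cases}
\end{equation*}
uniformly in $s$ near $1/r$, by combining the bounds on each of the double integrals (dominated by the closest poles) with the explicit decay $|E(k,\ell)|\leq C\,r^{|k-\ell|-1}$. I expect this bookkeeping near the coincident poles to be the main technical obstacle, since one must be careful that the apparent singularities cancel in a uniform way, analogous to the role played by $\braket{f^t}{f^s}$ and $\brabarket{f^t}{E}{f^s}$ in Lemma~\ref{lem:analytic}.

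Given the three uniform bounds, Hadamard's inequality (Lemma~\ref{Hadamard}) applied to the antisymmetric $2\times 2$-matrix kernel with $\alpha=\log(1/\sqrt{q}-\epsilon/2)$ and $\beta=\log(\max(r,s,\sqrt{q}+\epsilon/2))$ yields
\begin{equation*}
\bigl|\mathrm{Pf}(\overline{K}^{\text{geo}}(k_i,k_j))_{i,j=1}^n\bigr| \leq (2n)^{n/2} C^n \prod_{i=1}^n e^{-(\alpha-\beta)k_i},
\end{equation*}
and since $\alpha>\beta$ for $r,s\in(\sqrt{q},1/\sqrt{q})$, the tail sums $\sum_{k_i=d+1}^\infty e^{-(\alpha-\beta)k_i}$ converge and the series expansion of $\mathrm{Pf}(J-\overline{K}^{\text{geo}})_{\ell^2(\{d+1,\dots\})}$ is absolutely convergent uniformly in $s$ near $1/r$. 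Analyticity in $s$ then follows term by term. Finally, the pointwise limit $\lim_{s\to 1/r}\overline{K}^{\text{geo}}=\overline{\mathsf{K}}^{\text{geo}}$ established in Theorem~\ref{OneParamKernel} combined with the uniform Hadamard bound allows me to invoke the dominated convergence theorem to commute the limit past the infinite sum of multiple integrals, giving $\lim_{s\to 1/r}\mathrm{Pf}(J-\overline{K}^{\text{geo}})=\mathrm{Pf}(J-\overline{\mathsf{K}}^{\text{geo}})$ as claimed.
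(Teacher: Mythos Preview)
Your overall strategy---uniform entrywise bounds, Hadamard's inequality (Lemma~\ref{Hadamard}), then dominated convergence---is exactly the paper's approach. However, your treatment of $\overline{K}^{\text{geo}}_{22}$ contains two errors.

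First, the residues at $(1/s,r)$ and $(1/r,s)$ do \emph{not} blow up as $s\to 1/r$. From \eqref{twoPoles} each carries the factor $(1-s^2)/(r-s)$; as $s\to 1/r$ the denominator tends to $r-1/r\neq 0$ for $r\neq 1$, and at $r=1$ the ratio resolves to $1+s\to 2$. So there is no singularity to cancel, and the analogy with the genuine $(1-st)^{-1}$ divergence handled in Lemma~\ref{lem:analytic} is misplaced: $\overline{K}^{\text{geo}}$ is already analytic through $s=1/r$ entrywise, as stated in Theorem~\ref{OneParamKernel}.

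Second, your bound $|\overline{K}^{\text{geo}}_{22}(k,\ell)|\leq C$ for $r\leq 1$ is false. For $r<1$ and $s$ near $1/r>1$, the residue pair contributes a term of order $s^{|k-\ell|}\sim (1/r)^{|k-\ell|}$, and indeed in the limit the $22$-entry contains $-\sgn(k-\ell)r^{-|k-\ell|-1}$, which grows in $|k-\ell|$ when $r<1$.

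The paper avoids both issues by taking a much cruder bound. For $r,s\in[\sqrt{q}+\epsilon,1/\sqrt{q}-\epsilon]$ every quantity $r,s,1/r,1/s$ appearing as a base in the $22$-entry is bounded above by $1/\sqrt{q}-\epsilon$, so one gets directly
\[
|\overline{K}^{\text{geo}}_{22}(k,\ell)|\leq Ce^{(k+\ell)\log(1/\sqrt{q}-\epsilon)}
\]
uniformly in $s$. This is all that is needed, since the $11$-bound has exponent $\alpha=\log(1/\sqrt{q}-\epsilon/2)$, which is strictly larger than $\beta=\log(1/\sqrt{q}-\epsilon)$, and Hadamard's inequality then gives an absolutely summable majorant for the Fredholm series.
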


\begin{proof}
Fix $0<\epsilon$ such that $1/r>1/s+\epsilon$, $r\in (0, s-\epsilon)$ and $t \in (0,1/s + \epsilon)$. Choose the contours for $z,w$ to be small circles around their poles. Without loss of generality, we assume that $r > \sqrt{q} + \epsilon.$
Then we get the following upper bounds on each entry of the kernel:
    \begin{equation}\label{BoundsforK}
    \begin{aligned}
    &|\overline{K}^{\text{geo}}_{11}(k,\ell)| \leq C e^{-(k+\ell)\log(1/r)},\quad
    |\overline{K}^{\text{geo}}_{12}(k,\ell)| \leq Ce^{-k\log(1/r) + \ell\log(1/s+ \epsilon)},\\
    &|\overline{K}^{\text{geo}}_{21}(k,\ell)| \leq Ce^{k\log(1/s+ \epsilon) - \ell\log(1/r)},\quad
    |\overline{K}^{\text{geo}}_{22}(k,\ell)| \leq Ce^{(k+\ell)\log(1/s+ \epsilon)}.\\
    \end{aligned}
    \end{equation}
    Then applying Hadamard's bound and the dominated convergence theorem gives the desired limit.
\end{proof}

\begin{lem}\label{explosion-one}
    The term $\frac{1}{1-ts} - \braket{Y_2}{X_2}$ is analytic for $t \in (0, 1/r)$ with
    \begin{equation}
        e^{s,r}(d) = \lim_{t\rightarrow 1/s} \left(\frac{1}{1-st} - \frac{(1-tr)}{(t-r)}\braket{Y_2}{X_2}\right).
    \end{equation}
\end{lem}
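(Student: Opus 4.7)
The plan is to convert the inner product $\braket{Y_2}{X_2}=\braket{g_2}{f^s}$ into a single contour integral, and then to localize all of the singular behavior in $s$ at $s=1/r$ by extracting the residue at $w=r$. First, since $g_2(k)$ is defined by a contour around $\sqrt{q}$ and $r$ with $|w|<1/s$, I interchange the sum over $k\ge d+1$ with the integral (the geometric series converges uniformly on the contour) to obtain
\begin{equation}
\braket{Y_2}{X_2}\;=\;\braket{g_2}{f^s}\;=\;\frac{1}{H(s)}\oint_{\Gamma_{\sqrt{q},r}}\frac{dw}{2\pi\I}\,\frac{(sw)^{d+1}}{H(w)(w-r)(1-sw)}.
\end{equation}
Splitting the contour as $\Gamma_{\sqrt{q},r}=\Gamma_{\sqrt{q}}+\Gamma_{r}$ and evaluating the simple residue at $w=r$ gives
\begin{equation}
\braket{Y_2}{X_2}\;=\;\frac{(sr)^{d+1}}{H(s)H(r)(1-sr)}\;+\;\frac{s^{d+1}}{H(s)}\oint_{\Gamma_{\sqrt{q}}}\frac{dw}{2\pi\I}\,\frac{w^{d+1}}{H(w)(w-r)(1-sw)}.
\end{equation}

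Next, I subtract from $\frac{1}{1-rs}$. The second integral above is manifestly analytic in $r,s\in(\sqrt{q},1/\sqrt{q})$ since the contour $\Gamma_{\sqrt{q}}$ can be taken small enough to exclude both $r$ and $1/s$ uniformly on compacts. All singular behavior at $s=1/r$ is collected in
\begin{equation}
\frac{1}{1-rs}-\frac{(sr)^{d+1}}{H(s)H(r)(1-sr)}\;=\;\frac{1}{1-rs}\left(1-\frac{(sr)^{d+1}}{H(s)H(r)}\right).
\end{equation}
Using the functional equation $H(1/x)=1/H(x)$, which follows immediately from the definition of $H$, one sees that $(sr)^{d+1}/(H(s)H(r))\to 1$ as $s\to 1/r$, so this quotient is of $0/0$ type. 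Applying L'H\^opital's rule yields a finite limit, and a direct differentiation shows that this limit is exactly
\begin{equation}
\lim_{s\to 1/r}\frac{1}{1-rs}\left(1-\frac{(sr)^{d+1}}{H(s)H(r)}\right)\;=\;\frac{H(r)}{r^{d}}\cdot\operatorname{Res}_{w=r}\!\frac{w^{d+1}}{H(w)(w-r)^{2}},
\end{equation}
where the derivative $(d+1)-rH'(r)/H(r)$ emerging from L'H\^opital is matched to the double-pole residue by the identity $h'(1/r)=r^{2}h'(r)$ for $h=\log H$ (again a consequence of $h(1/x)=-h(x)$).

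Finally, taking $s\to 1/r$ in the remaining contour integral and using $1-sw=-(w-r)/r$ and $1/H(1/r)=H(r)$ transforms it into $\frac{H(r)}{r^d}\oint_{\Gamma_{\sqrt{q}}}\frac{dw}{2\pi\I}\,\frac{w^{d+1}}{H(w)(w-r)^{2}}$. Combining this with the residue at $w=r$ from the previous step reconstitutes the full contour $\Gamma_{\sqrt{q},r}$, giving $e^r(d)$ as defined in \eqref{defH}. The only subtle step will be bookkeeping the contour placement so that the manipulations hold throughout the open set $r,s\in(\sqrt{q},1/\sqrt{q})$ rather than only on the smaller set $rs<1$ used to set up the initial sum; however, once the reformulated expression is written the analytic continuation to the full square is automatic, so the core technical work is the L'H\^opital computation and its identification with the double-pole residue.
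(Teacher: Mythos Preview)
Your proof is correct but takes a different route from the paper. The paper's argument is a one-line contour trick: instead of splitting off the pole at $w=r$, it \emph{enlarges} the contour from $\Gamma_{\sqrt{q},r}$ to $\Gamma_{\sqrt{q},r,1/s}$. The extra residue at $w=1/s$ is exactly $-\frac{1}{1-rs}$ (using $H(s)H(1/s)=1$), so the combination $\frac{1}{1-rs}-\braket{g_2}{f^s}$ becomes a single integral over the larger contour whose integrand is manifestly analytic in $s$ near $1/r$; taking $s\to 1/r$ merges the two simple poles at $r$ and $1/s$ into the double pole in the definition of $e^r(d)$, with no L'H\^opital needed.

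Your approach instead isolates the singularity by pulling out $\text{Res}_{w=r}$, applies L'H\^opital to the resulting $0/0$ quotient, and then has to identify the answer $(d+1)-rH'(r)/H(r)$ with the double-pole residue at $w=r$ before recombining with the $\Gamma_{\sqrt{q}}$ piece. This is more hands-on but has the virtue of making the mechanism of cancellation completely explicit; the paper's contour enlargement is slicker but hides the same computation inside the pole merger. Both arguments extend analyticity past $rs=1$ in the same way (the explicit expression after either manipulation no longer has a pole there), so neither has a real advantage on that point.
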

\begin{proof}
    This is similar to \cite[Lemma 3.9]{Betea_2020}. We see that
    \begin{equation}
        \frac{(1-tr)}{(t-r)}\braket{Y_2}{X_2} = \frac{(1-tr)}{(t-r)}\braket{g_2}{f^t} = \frac{(1-tr)}{(t-r)}\oint_{\Gamma_{\sqrt{q},s,r}} \frac{dw}{2\pi\I} \frac{(wt)^{d+2}(1-wr)}{(1-tw)(w-r)(w-s)H(t)H(w)},
    \end{equation}
    which is not analytic as $t \rightarrow 1/s$ due to the pole at $w = s$. We instead fix a larger contour that encloses $\sqrt{q},r,s,1/t$ such that we can freely move $t$ to $1/s$ and then subtract the extra pole at $w = 1/t$ that was intentionally added.
    We found that $\frac{(1-tr)}{(t-r)}\text{Res}(w,1/t) = -1/(1-st)$ cancels the term $(1-st)^{-1}$ and the limit is
    \begin{equation}
    \begin{aligned}
        &\lim_{t\rightarrow 1/s}\frac{1}{1-st} - \frac{(1-tr)}{(t-r)}\braket{Y_2}{X_2}\\
        &= \lim_{t\rightarrow 1/s} -\frac{(1-tr)}{(t-r)}\oint_{\Gamma_{\sqrt{q},s,r,1/t}} \frac{dw}{2\pi\I} \frac{(wt)^{d+2}(1-wr)}{(1-tw)(w-r)(w-s)H(t)H(w)} = e^{s,r}(d).
    \end{aligned}
    \end{equation}
\end{proof}

Next, we need to define a few functions that will be used in the following Proposition.
\begin{equation}
    \begin{aligned}
        &g_3(k) := -\!\!\!\!\!\oint \limits_{\Gamma_{1/\sqrt{q},1/r}}\!\!\!\!\! \frac{dz}{2\pi\I} \frac{H(z)}{z^{k+2}}\frac{(z-r)}{(1-zr)(1-zt)},
        \quad g_4(k) := \!\!\!\!\!\!\!\oint \limits_{\Gamma_{1/\sqrt{q},1/r,/1/s,s,1/t}} \!\!\!\!\!\!\!\!\!\frac{dz}{2\pi\I} \frac{H(z)}{z^{k+2}}\frac{(z-r)(z^2-1)}{(1-zr)(1-zt)(z-s)(1-zs)},\\
        &g_5(k) := \!\!\!\oint \limits_{\Gamma_{1/\sqrt{q},1/r}}\!\!\! \frac{dz}{2\pi\I} \frac{H(z)}{z^{k+2}}\frac{(z-t)(z-r)(1-sz)}{(1-zt)(1-zr)(z^2-1)}, \quad g_6(k) :=-\!\!\!\!\oint \limits_{\Gamma_{1/\sqrt{q},1/r,1/t}}\!\!\!\! \frac{dz}{2\pi\I} \frac{H(z)}{z^{k+2}} \frac{(z-t)(z-r)}{(1-zt)(1-zr)(z-s)} .
    \end{aligned}
\end{equation}

\begin{remark}\label{WhySneq1}
   If we did not want to consider the $s = t =1$ case, then the next proposition is unnecessary. However, the maximal current phase occurs precisely at $s=t=1.$ This proposition provides a technique to remove all terms in the kernel that have a pre-factor $(s-t)^{-1}.$ An analogous issue arises in the two-parameter stationary setting: we have not found a way to eliminate the $(t-s)^{-1}$ terms. As a result, our formula for two-parameter stationary case ($F_{r,s}^{HD}$) cannot handle $s=1$.
\end{remark}

\begin{prop}
    For $\sqrt{q} \in (0,1),$ $s\in (\sqrt{q},1)$, $r \in (0,s)$, the kernel $\overline{K}^{\text{geo}}$ can be decomposed into
    \begin{equation}
    \begin{aligned}
        \overline{K}^{\text{geo}} = \widetilde{K}^{\text{geo}} + \begin{pmatrix}
            0 & 0\\
            0 & E
        \end{pmatrix} + \widetilde{O} + \widetilde{P}
    \end{aligned}
    \end{equation}
    where 
    \begin{equation}
        \widetilde{K}^{\text{geo}}_{11} = \overline{K}^{\text{geo}}_{11}, \quad \widetilde{K}^{\text{geo}}_{21} = \overline{K}^{\text{geo}}_{21},\\
    \end{equation}
    \begin{equation}
    \begin{aligned}
    \widetilde{K}_{12}^{\text{geo}}(k,\ell) &= \frac{-1}{(2\pi\I)^2} \oint \limits_{\Gamma_{\sqrt{q},r}} dw \oint \limits_{\Gamma_{1/\sqrt{q},1/r}} dz \frac{w^{\ell-1}}{z^k} \frac{H(z)R(z)T(z)}{H(w)R(w)T(w)}\frac{(zw-1)(z-s)}{(z^2-1)(w-s)(z-w)},\\
        \widetilde{K}_{22}^{\text{geo}}(k,\ell) &= \frac{-1}{(2\pi\I)^2} \oint \limits_{\Gamma_{\sqrt{q}}} dw \oint \limits_{\Gamma_{1/\sqrt{q}, 1/r,1/s,1/t}} dz \frac{w^{\ell-1}}{z^k}\frac{H(z)R(z)T(z)}{H(w)R(w)T(w)}\frac{(zw-1)}{(w-s)(1-sz)(z-w)}\\
        &+ \frac{-1}{(2\pi\I)^2} \oint \limits_{\Gamma_{r}} dw \oint \limits_{\Gamma_{1/\sqrt{q},1/s,1/t}} dz \frac{w^{\ell-1}}{z^k}\frac{H(z)R(z)T(z)}{H(w)R(w)T(w)}\frac{(zw-1)}{(w-s)(1-sz)(z-w)},
    \end{aligned}
    \end{equation}
    and
    \begin{equation}
    \begin{aligned}
        \widetilde{O} = \ketbra{\begin{array}{c}
             -g_1\\
             g_2
        \end{array}}{ 0 \quad \frac{(1-t^2)(1-tr)}{(s-t)(t-r)}f^t - \frac{(1-st)(1-sr)}{(s-t)(s-r)}f^s},
    \end{aligned}
\end{equation}
and
\begin{equation}
    \begin{aligned}
        \widetilde{P} = \ketbra{\begin{array}{c}
             \frac{(1-st)(1-sr)}{(s-r)}g_3\\
             -\frac{(1-st)(1-sr)}{(s-r)}g_4 - s^{-1}f^{1/s}
        \end{array}}{0 \quad f^s }.
    \end{aligned}
\end{equation}
\end{prop}

\begin{proof}We see that
    \begin{equation}
        (\widetilde{O} + \widetilde{P})_{12} =  \frac{(1-t^2)}{(t-s)}\frac{(1-tr)}{(t-r)}g_1(k)f^t(\ell) + \frac{(1-st)}{(s-t)}\frac{(1-sr)}{(s-r)} g_5(k)f^s(\ell).
    \end{equation}
    Direct computation gives
    \begin{equation}
        g_5(k) - g_1(k) = (s-t)g_3(k).
    \end{equation}
    Therefore,
    \begin{equation}
        (\widetilde{O} + \widetilde{P})_{12} = \frac{(1-t^2)}{(t-s)}\frac{(1-tr)}{(t-r)}\ketbra{g_1}{f^t} + \frac{(1-st)(1-sr)}{(s-t)(s-r)}\ketbra{g_1}{f^s} + \frac{(1-st)(1-sr)}{(s-r)}\ketbra{g_3}{f^s}.
    \end{equation}
    We know that 
    \begin{equation}
    \begin{aligned}
        (\widetilde{O} + \widetilde{P})_{22} &= \oint \limits_{\Gamma_{t}} dw \oint \limits_{\Gamma_{1/\sqrt{q},1/r,1/s}} dz \cdots + \oint \limits_{\Gamma_{s}} dw \oint \limits_{\Gamma_{1/\sqrt{q},1/r,1/t}} dz \cdots\\
        &= \frac{(1-t^2)(1-tr)}{(s-t)(t-r)}\ketbra{g_2}{f^t} - \frac{(1-st)(1-sr)}{(s-t)(s-r)}\ketbra{g_6}{f^s},
    \end{aligned}
    \end{equation}
    where $\cdots$ represents the integrand of $\overline{K}_{22}^{\text{geo}}$.
    We have the identity
    \begin{equation}
        \begin{aligned}
            g_6 - g_2 = (s-t) \left( g_4 + \frac{(s-r)}{(1-sr)(1-st)s}f^{1/s} \right).
        \end{aligned}
    \end{equation} 
    Taking a contour that includes $1/\sqrt{q},1/s,1/r,1/t$ enables us to compute the difference $g_6 - g_2$, which gives $ (r-s)g_4$ minus the pole at $z = s$ in $g_4.$
    Therefore, we get
    \begin{equation}
        \begin{aligned}
            (\widetilde{O} + \widetilde{P})_{22} &= \frac{(1-t^2)(1-tr)}{(t-r)(s-t)} \ketbra{g_2}{f^t} - \frac{(1-st)(1-sr)}{(s-t)(s-r)}\ketbra{g_2}{f^s} - \frac{(1-st)(1-sr)}{(s-r)}\ketbra{g_4}{f^s}\\
            &- \frac{1}{s}\ketbra{f^{1/s}}{f^s}.
        \end{aligned}
    \end{equation}
\end{proof}

Given the decomposition, we have that 
\begin{equation}
    \begin{aligned}
        \overline{G} = \widehat{G} + \begin{pmatrix}
            0 & \ketbra{-g_2}{\frac{(1-t^2)(1-tr)}{(s-t)(t-r)}f^t - \frac{(1-st)(1-sr)}{(s-t)(s-r)}f^s}\\
            0 & \ketbra{-g_1}{\frac{(1-t^2)(1-tr)}{(s-t)(t-r)}f^t - \frac{(1-st)(1-sr)}{(s-t)(s-r)}f^s}
        \end{pmatrix}
    \end{aligned},
\end{equation}
where 
\begin{equation}\label{Ghat}
    \widehat{G} = \begin{pmatrix}
        -\widetilde{K}^{\text{geo}}_{21} & -\widetilde{K}^{\text{geo}}_{22}- E\\
        \widetilde{K}^{\text{geo}}_{11} & \widetilde{K}^{\text{geo}}_{12}
    \end{pmatrix} + \begin{pmatrix}
        0 & \ketbra{\frac{(1-st)(1-sr)}{(s-r)}g_4 + s^{-1}f^{1/s}}{f^s}\\
        0 & \ketbra{\frac{(1-st)(1-sr)}{(s-r)}g_3}{f^s}
    \end{pmatrix}.
\end{equation}

By a similar argument as in \cite[Lemma 3.10]{Betea_2020}, we know that 
\begin{equation}
    \braket{Y_2}{(\Id - \overline{G})^{-1} \overline{G}X_2 } = \braket{Y_2}{(\Id - \overline{G})^{-1} \widehat{G}X_2 }.
\end{equation}

\begin{lem}
    Fix any $\sqrt{q} \in (0,1),$ $s\in (\sqrt{q},1),$ $r\in (0,s)$. For any $t \in (0,1/r),$ we have
    \begin{equation}\label{BoundsForg}
        |g_1(k)| \leq Ce^{k\log(\max(\sqrt{q}+\epsilon/2,r))}, \quad |g_2(k)| \leq Ce^{k\log(s)}
    \end{equation}
    for some constant $C$ independent of $x$.
\end{lem}
\begin{proof}
    For $g_1$, the upper bound comes from poles at $z = 1/\sqrt{q}$ and $z = 1/r$. For $g_2$, the upper bound comes from the pole $w = s.$ We note that $g_1$ and $g_2$ are independent of $t.$
\end{proof}

The following lemma corresponds to \cite[Lemma $3.14$]{Betea_2020}.
\begin{lem}
    We have $\widehat{G}X_2$ is analytic for $t \in (0,1/r)$ with its limit
    \begin{equation}
        \lim_{t \rightarrow 1/s} \widehat{G}X_2 = \ket{ \begin{array}{c}  -\phi_1 \\ \phi_2 \end{array} }.
    \end{equation}
    Moreover, we have the upper bounds:
    \begin{equation}\label{BoundsforG}
        \begin{aligned}
            |(\widehat{G}X_2)_{1}(k)| \leq Ce^{k\log(1/s)}, \quad |(\widehat{G}X_2)_{2}(k)| \leq  Ce^{k \log(\max(\sqrt{q} + \epsilon, r))},
        \end{aligned}
    \end{equation}
    for some constant $C$ independent of $k$.
\end{lem}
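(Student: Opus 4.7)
The plan is to expand $\widehat{G}X_2$ componentwise, localize the obstruction to analyticity at $rs=1$, and exhibit its cancellation. Because $X_2=\ket{\begin{array}{c}0\\f^s\end{array}}$ only activates the second column of the block form \eqref{Ghat},
\begin{equation*}
(\widehat{G}X_2)_1 = -(\widetilde{K}^{\text{geo}}_{22}+E)f^s + \bigl((1-rs)g_4+r^{-1}f^{1/r}\bigr)\braket{f^r}{f^s},\qquad (\widehat{G}X_2)_2 = \widetilde{K}^{\text{geo}}_{12}f^s + (1-rs)g_3\braket{f^r}{f^s}.
\end{equation*}
Using the closed form $\braket{f^r}{f^s}=(rs)^{d+1}/[(1-rs)H(r)H(s)]$ (valid for $rs<1$ and extended meromorphically elsewhere), the factors $(1-rs)$ in front of $g_3,g_4$ annihilate the pole at $rs=1$. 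The pointwise limits $g_3(k)\to \mathsf{g}_3(k)$ and $g_4(k)\to \mathsf{g}_4(k)$ are obtained by noting that at $s=1/r$ the factor $(z-s)$ inside $S(z)$ cancels the removable $(z-1/r)$ in $g_3$, and the two simple poles of $g_4$ at $z=r,1/s$ coalesce into a double pole. Combined with $(rs)^{d+1}/[H(r)H(s)]\to 1$ (using $H(1/r)=1/H(r)$), these pieces account for the $\mathsf{g}_3$ in $\phi_2$ and the $-\mathsf{g}_4$ in $-\phi_1$.

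For the second component it remains to show $\widetilde{K}^{\text{geo}}_{12}f^s(k)\to \widetilde{\mathsf{K}}^{\text{geo}}_{12}f^{1/r}(k)$. Summing $f^s(\ell)$ against $w^{\ell-1}$ yields $s(sw)^d/[(1-sw)H(s)]$, whose new pole at $w=1/s\to r$ stays outside the small circle $\Gamma_{\sqrt{q}}$, so the limit passes inside the double integral. Adding $\mathsf{g}_3(k)$ produces $\phi_2$.

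The main obstacle is the first component, where the would-be singular term $r^{-1}f^{1/r}(k)\braket{f^r}{f^s}=r^{d-k}s^{d+1}/[(1-rs)H(s)]$ must be neutralized by a matching singularity inside $-(\widetilde{K}^{\text{geo}}_{22}+E)f^s$. Splitting
\begin{equation*}
Ef^s(k)=-\tfrac{r^{k-1}}{H(s)}\sum_{\ell=d+1}^{k-1}(s/r)^\ell+\tfrac{s^{k+1}}{H(s)(1-rs)},
\end{equation*}
the pole in the second piece combines with $r^{-1}f^{1/r}\braket{f^r}{f^s}$ as $s^{d+1}(r^{d-k}-s^{k-d})/[(1-rs)H(s)]$, whose numerator vanishes at $s=1/r$ because $r^{d-k}=(1/r)^{k-d}$; L'H\^{o}pital then yields the regular limit $r^{-k-1}(k-d)H(r)$, precisely the second summand of $\mathsf{h}(d,k)$. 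The finite geometric sum limits to $H(r)(r^{k-2d-1}-r^{-k+1})/(r^2-1)$, the first summand of $\mathsf{h}(d,k)$. For $-\widetilde{K}^{\text{geo}}_{22}f^s(k)$ itself, summation against $w^{\ell-1}$ again generates only a pole at $w=1/s\to r$ outside $\Gamma_{\sqrt{q}}$; inside the $z$-contour $\Gamma_{1/\sqrt{q},1/r,1/s}$ the poles at $z=1/r,1/s$ coalesce smoothly as the explicit $(z-s)$ in $S(z)/S(w)$ together with $(1-sz)\to -(z-r)/r$ simplify the integrand to that of $\widetilde{\mathsf{K}}^{\text{geo}}_{22}$, so pushing the limit inside yields $-\widetilde{\mathsf{K}}^{\text{geo}}_{22}f^{1/r}(k)$. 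Collecting all pieces reproduces $-\phi_1$.

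For the exponential bounds \eqref{BoundsforG}, fix the contours $|w-\sqrt{q}|=\epsilon/2$ and $|z-1/\sqrt{q}|=\epsilon/2$, together with small circles around $r,1/r,1/s$ whenever enclosed. After the cancellation above, every integrand is uniformly bounded in $r,s\in[\sqrt{q}+\epsilon,1/\sqrt{q}-\epsilon]$, so $|w|^k\le(\sqrt{q}+\epsilon/2)^k$ and $|z|^{-k}\le(1/\sqrt{q}-\epsilon/2)^{-k}$ drive the decay of $(\widehat{G}X_2)_2$ and the growth of $(\widehat{G}X_2)_1$ respectively; combined with the bounds \eqref{BoundsForg} on $g_1,g_2$ and the crude estimate $|f^{1/r}(k)|\lesssim (1/\sqrt{q}-\epsilon)^{-k}$, this gives \eqref{BoundsforG} uniformly as $s\to 1/r$. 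The heart of the proof is the two-way pole cancellation between $r^{-1}f^{1/r}\braket{f^r}{f^s}$ and the tail of $Ef^s$; once verified by the L'H\^{o}pital expansion described above, the remaining limits and estimates follow from dominated convergence and standard contour bounds.
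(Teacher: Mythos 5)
Your proof follows essentially the same route as the paper's: the same componentwise expansion of $\widehat{G}X_2$, the same closed form for $\braket{f^r}{f^s}$, the same cancellation of the $(1-rs)^{-1}$ pole between $r^{-1}f^{1/r}\braket{f^r}{f^s}$ and the tail of $Ef^s$ followed by L'H\^{o}pital to produce $\mathsf{h}(d,k)$, and the same contour-based bounds. One correction to your final paragraph: the growth $e^{k\log(1/\sqrt{q}-\epsilon)}$ of $(\widehat{G}X_2)_1$ does not come from $|z|^{-k}$ on the contour near $1/\sqrt{q}$ (that factor \emph{decays} in $k$), but rather from the poles at $z=1/r,1/s,r$ inside the $z$-contours of $\widetilde{K}_{22}^{\text{geo}}$ and $g_4$ and from the explicit terms $f^{1/r}(k)\propto r^{-k}$ and $\mathsf{h}(d,k)$, all of which are $O(\max(r,s)^k)$; correspondingly $|f^{1/r}(k)|$ is bounded by $C(1/\sqrt{q}-\epsilon)^{k}$, not $C(1/\sqrt{q}-\epsilon)^{-k}$.
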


\begin{proof}
By \eqref{Ghat}, we know that
    \begin{equation}\label{one-paramGX_2}
        \begin{aligned}
            \widehat{G}X_2 = \ket{\begin{array}{c}  -\left(\widetilde{K}_{22}^{\text{geo}}+E\right)f^t + \frac{(1-st)(1-sr)}{(s-r)}g_4\braket{f^s}{f^t} + s^{-1}f^{1/s}\braket{f^s}{f^t}\\ 
            \widetilde{K}_{12}^{\text{geo}}f^t + \frac{(1-st)(1-sr)}{(s-r)}g_3\braket{f^s}{f^t} \end{array}}.
        \end{aligned}
    \end{equation}
First, we compute $\braket{f^s}{f^t}$ and get
\begin{equation}
    \braket{f^s}{f^t} = \sum_{\ell = d+1}^{\infty} \frac{(st)^{\ell+1}}{H(s)H(t)} = \frac{(st)^{d+2}}{(1-st)H(s)H(t)}.
\end{equation}
Since both $\widetilde{K}_{12}^{\text{geo}}$ and $\widetilde{K}_{22}^{\text{geo}}$ have poles at $w = \sqrt{q}$ and $w = r$ in the $\ell$ variable, the leading order is $(\max(\sqrt{q}+\epsilon/2,r))^{\ell}$. This allows us to take their inner product with $f^t$ and derive limits when $t\rightarrow 1/s$. So we get that
\begin{equation}
    \lim_{t \rightarrow 1/s} \widetilde{K}_{12}^{\text{geo}}f^t = \widetilde{\mathsf{K}}_{12}^{\text{geo}}f^{1/s}, \quad \lim_{t \rightarrow 1/s} \widetilde{K}_{22}^{\text{geo}}f^t = \widetilde{\mathsf{K}}_{22}^{\text{geo}}f^{1/s}.
\end{equation}
There are two terms in \eqref{one-paramGX_2} that contain the prefactor $(1-st)$ which will cancel the factor $1/(1-st)$ generated from $\braket{f^s}{f^t}$. 
Functions $g_4$ and $g_3$ are analytic and we get their limits as $t\rightarrow 1/s,$
\begin{equation}
    \begin{aligned}
        \frac{(1-st)(1-sr)}{(s-r)}g_4 \braket{f^s}{f^t} \rightarrow \frac{(1-sr)}{(s-r)}\mathsf{g}_4,\quad
        \frac{(1-st)(1-rs)}{(s-r)}g_3 \braket{f^r}{f^s} \rightarrow \frac{(1-sr)}{(s-r)}\mathsf{g}_3.
        \end{aligned} 
\end{equation}
Then we have the last term to analyze. Recall that $E(k,k) = 0.$
\begin{equation}\label{Efs}
    \begin{aligned}
        &-Ef^t + s^{-1}f^{1/s}\braket{f^s}{f^t} = \sum_{\ell= d+1}^{\infty}\sgn(k-\ell)s^{|k-\ell|-1} \frac{t^{\ell+1}}{H(t)} + s^{-1}\frac{H(s)}{s^{k+1}}\frac{(st)^{d+2}}{(1-st)H(s)H(t)}\\
        &=\frac{t^{d+2}s^{k-d-1} - t^{k+1}}{(s-t)H(t)} + \frac{s^{d-k}t^{d+2} - t^{k+2}}{(1-st)H(t)}  \rightarrow \mathsf{h}(d,k).
    \end{aligned}
\end{equation}
When $s = t = 1,$ we take the limit of $s \rightarrow 1$ of $\mathsf{h}$ and get $2k-2d-1.$

Next, we analyze the upper bounds of each term. Fix $0<\epsilon \ll 1$ such that $(1/\sqrt{q}+\epsilon/2)^{-1} \leq \sqrt{q}+\epsilon/2.$ Choose the contours for poles at $\sqrt{q}$ and $1/\sqrt{q}$ to be $|w-\sqrt{q}| = \epsilon/2$ and $|z-1/\sqrt{q}| = \epsilon/2.$ Then we have
\begin{equation}\label{boundforK}
    \begin{aligned}
    \big|\widetilde{K}_{12}^{\text{geo}}f^t(k) \big| \leq Ce^{k\log(\max(\sqrt{q}+ \epsilon/2, r))}, \quad  \big|\widetilde{K}_{22}^{\text{geo}}f^t(k) \big| \leq Ce^{k\log(t)},
    \end{aligned}
\end{equation}
because $\widetilde{K}_{12}^{\text{geo}}$ only has poles at $z=1/\sqrt{q},1/r$  and the dominating term for $\widetilde{K}_{22}^{\text{geo}}$ is $t^k$ since $t>s>\max(r,\sqrt{q}).$
We also know that
\begin{equation}\label{boundforg}
    \begin{aligned}
        \bigg|\frac{(1-st)(1-sr)}{(s-r)}g_4 \braket{f^r}{f^s}\bigg| \leq Ce^{k\log(1/s)},\quad \bigg|\frac{(1-st)(1-sr)}{(s-r)}g_3 \braket{f^r}{f^s}\bigg| \leq Ce^{k\log(\max(r,\sqrt{q}+\epsilon/2))}
    \end{aligned}
\end{equation}
by a similar analysis of the poles of $g_4$ and $g_3.$

Lastly, by $\eqref{Efs},$ we see that 
\begin{equation}\label{boundforE}
    \big|-Ef^t + s^{-1}f^{1/s}\braket{f^s}{f^t} \big| \leq Ce^{k\log(1/s)}.
\end{equation}
Hence, combining $\eqref{boundforK}$, $\eqref{boundforg}$, and $\eqref{boundforE}$, we get the desired upper bounds on $\widehat{G}X_2$.
\end{proof}

Now, we prove the theorem of the finite time distribution. 
\begin{proof}[Proof of Theorem~\ref{Max&LowResult_Finite}] For simplicity of the notation, we denote $c_1 = \frac{(1-tr)}{(t-r)}$ which becomes $\frac{(s-r)}{(1-sr)}$ in the limit.
    We use the identity in Lemma \ref{lem:keychange} to rewrite $\eqref{Newidentity}$ into
    \begin{equation}
        \begin{aligned}
            &\mathrm{Pf}(J - \overline{K}^{\text{geo}})\left(\frac{1}{1-st} - c_1\braket{Y_2}{X_2}\right)- \mathrm{Pf}(J - \overline{K}^{\text{geo}})
            + \mathrm{Pf}\left(J - \overline{K}^{\text{geo}} - c_1B - c_1C\right).
        \end{aligned}
    \end{equation}
    where 
    \begin{equation}
        B = \ketbra{\begin{array}{c}
                  (\overline{G}X_2)_2 \\
                 -(\overline{G}X_2)_1 
            \end{array}}{-g_1 \quad g_2}, \quad C = \ketbra{\begin{array}{c}
                  -g_1 \\
                 g_2 
            \end{array}}{-(\overline{G}X_2)_2\quad (\overline{G}X_2)_1}.
    \end{equation}
    Let $\mathpzc{A}(k,\ell) = \begin{pmatrix}
        \mathpzc{A}_{11}(k,\ell) & \mathpzc{A}_{12}(k,\ell)\\
        \mathpzc{A}_{21}(k,\ell) & \mathpzc{A}_{22}(k,\ell)
    \end{pmatrix}$ represent the entries of the kernel $\overline{K}^{\text{geo}}$, $B$, and $C$.
    By using $\eqref{BoundsforK}$, $\eqref{BoundsForg}$, and $\eqref{BoundsforG}$, we see that
    \begin{equation}
    \begin{aligned}
        |\mathpzc{A}_{11}(k,\ell)| &\leq Ce^{(k+\ell)\log(\max(\sqrt{q} + \epsilon/2, r))},\\
        |\mathpzc{A}_{12}(k,\ell)| &\leq Ce^{k\log(\max(\sqrt{q} + \epsilon/2, r))+ \ell \log(1/s)},\\
        |\mathpzc{A}_{21}(k,\ell)| &\leq Ce^{k \log(1/s)+\ell\log(\max(\sqrt{q} + \epsilon/2, r))},\\
        |\mathpzc{A}_{22}(k,\ell)| &\leq Ce^{(k+\ell)\log(1/s)}.
    \end{aligned}     
    \end{equation}
    Hence, we can apply Hadamard's bound to prove convergence of Fredholm Pfaffian due to the fact that $s > \max(\sqrt{q} + \epsilon/2, r).$
\end{proof}

\begin{proof}[Proof of Theorem~\ref{FiniteTimeFormulaTheorem} $(2)$ (Maximal current phase)] Since the formula provided in Theorem \ref{Max&LowResult_Finite} has a well-defined limit as $s \rightarrow 1,$ we see that this is an immediate result of Theorem \ref{Max&LowResult_Finite} and Lemma \ref{DeriveMax&Low}.
\end{proof}

\begin{proof}[Proof of Theorem~\ref{FiniteTimeFormulaTheorem} $(3)$ (Low density phase)] This is an immediate result of Theorem \ref{Max&LowResult_Finite} and Lemma \ref{DeriveMax&Low}. It is also easy to check that $F_{\overline{r}}^{LD}(d,1) = \theta(d,1) - \theta(d-1,1)=1$ for all $d\in \Z_{\geq 0}$. We recall that for fixed $N,$ $\overline{H}(x) = \left(\frac{1-\sqrt{q}/x}{1-\sqrt{q}x}\right)^{N-1}$ and ${H}(x) = \left(\frac{1-\sqrt{q}/x}{1-\sqrt{q}x}\right)^{N-2}.$
We set $(s,r) = (\overline{r},\sqrt{q})$ for the formula in Theorem \ref{Max&LowResult_Finite}.
For example, we have
\begin{equation}
    \begin{aligned}
        \overline{c} \overline{\phi_2}(k)\bigg|_{(s,r) = (\overline{r},\sqrt{q})} &= \frac{-1}{(2\pi\I)^2}\frac{(\overline{r}-\sqrt{q})}{(1-\sqrt{q}\overline{r})}\frac{H(\overline{r})}{\overline{r}^{d+1}}\oint \limits_{\Gamma_{\sqrt{q}}} dw \oint \limits_{\Gamma_{1/\sqrt{q}}} dz \frac{w^{d+1}}{z^{k+1}}\frac{\overline{H}(z)}{\overline{H}(w)}\frac{(z\overline{r}-1)(zw-1)}{(\overline{r}-w)(w\overline{r}-1)(z^2-1)(z-w)}\\
        &= \frac{-1}{(2\pi\I)^2}\frac{\overline{H}(\overline{r})}{\overline{r}^{d}}\oint \limits_{\Gamma_{\sqrt{q}}} dw \oint \limits_{\Gamma_{1/\sqrt{q}}} dz \frac{w^{d+1}}{z^{k+1}}\frac{\overline{H}(z)}{\overline{H}(w)}\frac{(z\overline{r}-1)(zw-1)}{(\overline{r}-w)(w\overline{r}-1)(z^2-1)(z-w)}\\
        &= \Omega_2^{L}(k).
    \end{aligned}
\end{equation}
\end{proof}

\section{Asymptotic analysis for Maximal current phase}
Consider the scaling of $k,l,z,w,d,r$ as in $\eqref{scale}$. Maximal current phase requires that $r < 1,$ i.e., $\tilde{r} < 0.$ We will replace $k,l$ with the corresponding new variables $X,Y$ in the following lemmas. Recall that $c_0 = \frac{2\sqrt{q}(1+\sqrt{q})}{(1-\sqrt{q})^3}.$
\begin{lem}\label{MaxKernelLimit}
    For any given $L>0,$ the following limits hold uniformly for $X,Y \in [-L,L]:$
    \begin{equation}
        \begin{aligned}
            &\lim_{N\rightarrow \infty}(c_0N/2)^{2/3}\overline{\mathsf{K}}_{11}^{M}(X,Y) = \overline{\mathpzc{K}}_{11}^{M}(X,Y),\quad \lim_{N\rightarrow \infty} (c_0N/2)^{1/3}\overline{\mathsf{K}}_{12}^{M}(X,Y) = \overline{\mathpzc{K}}_{12}^{M}(X,Y),\\
            &\lim_{N\rightarrow \infty}(c_0N/2)^{1/3}\overline{\mathsf{K}}_{21}^{M}(X,Y) = \overline{\mathpzc{K}}_{21}^{M}(X,Y),\quad \lim_{N\rightarrow \infty} \overline{\mathsf{K}}_{22}^{M}(X,Y) = \overline{\mathpzc{K}}_{22}^{M}(X,Y).\\ 
        \end{aligned}
    \end{equation}
    Furthermore, there exist constants $0<b<-\tilde{r}$, $C>0$, and $N_0\in \N$ such that for all $N \geq N_0$ and $X,Y > -u,$ we have
    \begin{equation}
        \begin{aligned}
            \overline{K}_{\text{scaled}}^{M}(X,Y) := &\begin{pmatrix}
                (c_0N/2)^{2/3}\overline{K}_{11}^{M} & (c_0N/2)^{1/3}\overline{K}_{12}^{M}\\
                (c_0N/2)^{1/3}\overline{K}_{21}^{M} & \overline{K}_{22}^{M}
            \end{pmatrix}(X,Y) \leq
                \begin{pmatrix}
                    Ce^{-b(X+Y)} & Ce^{-bX}\\
                    Ce^{-bY} & C
                \end{pmatrix}.
        \end{aligned}
    \end{equation}
\end{lem}

\begin{proof}
    The scaling of each kernel entry remains the same as in Lemma~\ref{lem:limit&bound}.  We apply the steepest descent method as in Lemma~\ref{steepestDescent} to get the limit. We use the $\overline{\mathsf{K}}_{12}^M$ to give an example of the upper bound. The pole at $z = 1/r$ is the dominating term for $X$ and gives the upper bound $e^{-bX}$ for some $0<b < -\tilde{r}$. The pole at $w = 1$ is the dominating term for $Y$ and gives the upper bound $C$ for some $C>0$ independent of $X,Y.$ Similar analysis of poles applies to other entries of the kernel.
\end{proof}

\begin{lem}\label{MaxFunctionlimit}
    For any given $L>0,$ the following limits hold uniformly for $X \in [-L,L]:$
    \begin{equation}
        \begin{aligned}
            &\lim_{N\rightarrow \infty}(c_0N/2)^{-1/3}\overline{\phi}_{1}^{M}(X) = \widetilde{\phi}_{1}^{M}(X),\quad \lim_{N\rightarrow \infty}\overline{\phi}_{2}^{M}(X) = \widetilde{\phi}_{2}^{M}(X),\\
            &\lim_{N\rightarrow \infty}(c_0N/2)^{1/3}{g}_{1}^{M}(X) = \widetilde{g}_{1}^{M}(X),\quad \lim_{N\rightarrow \infty}{g}_{2}^{M}(X) = \widetilde{g}_{2}^{M}(X),\\
            &\lim_{N\rightarrow \infty}(c_0N/2)^{-1/3}{\mathsf{g}}_{4}^{M}(X) = \widetilde{\mathsf{g}}_{4}^{M}(X),\quad \lim_{N\rightarrow \infty}{\mathsf{g}}_{3}^{M}(X) = \widetilde{\mathsf{g}}_{3}^{M}(X),\\
            &\lim_{N\rightarrow \infty} (c_0N/2)^{-1/3}\mathsf{h}_{M}(\tilde{d},X) = \widetilde{\mathsf{h}}_{M}(\tilde{d},X), \quad \lim_{N\rightarrow \infty} (c_0N/2)^{-1/3}e_M^{1,r}(\tilde{d}) = \widetilde{\mathpzc{e}}_M^{\tilde{r}}(\tilde{d}).
        \end{aligned}
    \end{equation}
\end{lem}

\begin{proof}
    We use the steepest descent method to get corresponding limits for functions that involve contour integrals. The limit of $(c_0N/2)^{-1/3}\mathsf{h}_M$ is obvious.
\end{proof}

\begin{lem}\label{MaxFunctionBounds}
    Fix any $L>0,$ $\tilde{r} < 0.$ There exist constants $-\tilde{r}>b>0$, $C>0,$ $N_0 \in \N$ such that for all $N \geq N_0$, $X\geq -L$, we have 
        \begin{equation}
        \begin{aligned}
            &|(c_0N/2)^{1/3}g_1^M(X)| \leq Ce^{-b X}, \quad |g_2^{M}(X)| \leq C,\quad |(c_0N/2)^{-1/3}\Omega_1^{M}(X)| \leq C,\quad |\Omega_2^{M}(X)| \leq Ce^{-bX},\\
            &|\mathsf{g}_3^M(X)| \leq Ce^{-b X},\quad |(c_0N/2)^{-1/3}\mathsf{g}_4^M(X)| \leq C, \quad |(c_0N/2)^{-1/3}\mathsf{h}_M(\tilde{d},X)| \leq 2|X|,\\
            & |(c_0N/2)^{-1/3}{\phi}_1^M| \leq C|X|, \quad  |(c_0N/2)^{-1/3}{\phi}_2^M| \leq Ce^{-bX}.
        \end{aligned}
    \end{equation}
\end{lem}

\begin{proof}
    All upper bounds follow from analogous pole analysis. We give one example. For $\mathsf{g}_3^M$, the dominant contribution comes from the pole at $ z = 1/r$, which yields an upper bound $e^{-bX}$ for some $0<b<-\tilde{r}.$
\end{proof}

\begin{proof}[Proof of Theorem~\ref{theorem:limit} $(2)$]
By Lemma $\ref{MaxKernelLimit}$ and $\ref{MaxFunctionlimit}$, we have the point-wise limit of each function and each kernel entry. To show convergence of the Fredholm Pfaffian, we use upper bounds for the kernel as in Lemma $\ref{MaxKernelLimit}$, Lemma $\ref{MaxFunctionBounds}$ and choose any $-\tilde{r}>b>0$ which allow us to apply Hadamard's bound and dominated convergence theorem. The convergence of the discrete difference in shift argument formula $\eqref{formulaMax}$ to partial derivatives can be justified by the same method as in section $\ref{continuousDerivative}.$
\end{proof}

\section{Asymptotic analysis for Low density phase}\label{appendixB} 
We keep the scaling for $k,l,z,w,d,r,s$ the same as in $\eqref{scale}$.
We should expect that the limit under the critical scaling of the product stationary geometric LPP model will coincide with that of the product stationary exponential LPP model. We will use functions already defined in \cite{Betea_2020}. For example, in \cite[(2.25)]{Betea_2020}, a function $\mathpzc{e}^{\delta,u}(S)$ is introduced, which we adapt to our setting by taking $\delta = -\tilde{r}$, $u = 0$ and $S = \tilde{d}.$ All other functions follow the same parameter choices: $\delta = -\tilde{r}$, $u = 0$ and $S = \tilde{d}.$

There are, however, two important differences to note in our usage.
First, the limit of $\overline{\mathsf{K}}_{22}^{L}$ does not match its expected counterpart, namely  $\overline{\mathcal{A}}_{22}$ in \cite[(2.27)]{Betea_2020}. Second, the limit of $(c_0N/2)^{-1/3}\phi_1$ does not coincide with the expected limit, $\mathpzc{h}_{1}^{-\tilde{r}, 0}$ in \cite[(2.30)]{Betea_2020}. We will explain these two limits in the following Lemmas.

We first state the pointwise limit of each entry of the kernel. We use the same functions $\overline{\mathcal{A}}_{11}$, $\overline{\mathcal{A}}_{12}$, $\overline{\mathcal{A}}_{21}$, $\widetilde{\mathcal{A}}_{12}$, $\widetilde{\mathcal{A}}_{22}$ as in \cite[(2.27), (2.29)]{Betea_2020}, all with the same parameter choices: $\delta = -\tilde{r},$ $u=0$. The limit of $\overline{\mathsf{K}}_{22}^{L}$ is defined separately. We will now use $X,Y$ as new variables.
\begin{lem}\label{KernelPtwiseLimit}
    For any given $L >0,$ the following limits hold uniformly for $X,Y \in [-L,L]$:
    \begin{equation}
    \begin{aligned}
        \lim_{N\rightarrow \infty} (c_0N/2)^{2/3}\overline{\mathsf{K}}_{11}^{L}(k,\ell) &= \overline{\mathcal{A}}_{11}(X,Y), \quad \lim_{N\rightarrow \infty} (c_0N/2)^{1/3}\overline{\mathsf{K}}_{12}^{L}(k,\ell) = \overline{\mathcal{A}}_{12}(X,Y),\\
        \lim_{N\rightarrow \infty} (c_0N/2)^{1/3}\overline{\mathsf{K}}_{21}^{L}(k,\ell) &= \overline{\mathcal{A}}_{21}(X,Y), \quad \lim_{N\rightarrow \infty} \overline{\mathsf{K}}_{22}^{L}(k,\ell) = \overline{\mathsf{A}}_{22}^{L}(X,Y),\\
        \lim_{N \rightarrow \infty} (c_0N/2)^{1/3}\widetilde{\mathsf{K}}_{12}^{L}(k,\ell) &= \widetilde{\mathcal{A}}_{12}(X,Y),\quad
        \lim_{N \rightarrow \infty} \widetilde{\mathsf{K}}_{22}^{L}(k,\ell) = \widetilde{\mathcal{A}}_{22}(X,Y).
    \end{aligned} 
    \end{equation}
\end{lem}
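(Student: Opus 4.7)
The proof proceeds by applying the steepest descent technique developed in Lemma~\ref{steepestDescent} to each of the six contour integrals appearing in the statement, together with a direct asymptotic computation for the sign term in $\overline{\mathsf{K}}_{22}^{\text{geo}}$. The same function $h_0$ with $h_0^\prime(1)=h_0^{\prime\prime}(1)=0$, $h_0^{\prime\prime\prime}(1)=c_0$ governs the exponential behavior of the integrands, so the steepest descent paths $\gamma_1 \cup \gamma_2$ for $w$ (enclosing $\sqrt{q}$) and $\tau_1 \cup \tau_2 \cup \tau_3$ for $z$ (enclosing $1/\sqrt{q}$) constructed in Lemma~\ref{steepestDescent} apply verbatim. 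After the local change of variables $z = 1 + (c_0N/2)^{-1/3}\zeta$ and $w = 1+(c_0N/2)^{-1/3}\omega$ with the Taylor expansion $N h_0(z) \to \zeta^3/3$, each integrand transforms into the product of $e^{\zeta^3/3 - \omega^3/3 - X\zeta + Y\omega}$ and an explicit rational function of $(\zeta, \omega, \tilde{r})$. The scaling factor in front of each entry is dictated by counting orders of vanishing at the critical point $z=w=1$: each of the two differentials contributes $(c_0N/2)^{-1/3}$, and each simple zero of the form $(z-r)$, $(zw-1)$, $(1-wr)$, $(z-w)$, $(z^2-1)$, or $(1-w^2)$ at $z=w=r=1$ contributes $(c_0N/2)^{\pm 1/3}$ depending on whether it appears in numerator or denominator. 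This accounting explains the prefactors $(c_0N/2)^{2/3}, (c_0N/2)^{1/3}, (c_0N/2)^{1/3}, 1, (c_0N/2)^{1/3}, 1$ appearing in the statement.

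For the five entries $\overline{\mathsf{K}}_{11}^{\text{geo}}, \overline{\mathsf{K}}_{12}^{\text{geo}}, \overline{\mathsf{K}}_{21}^{\text{geo}}, \widetilde{\mathsf{K}}_{12}^{\text{geo}}, \widetilde{\mathsf{K}}_{22}^{\text{geo}}$, one may deform the integration contours onto the steepest descent paths, modifying only an $\eta(c_0N/2)^{-1/3}$ neighborhood of $1$ with $\eta > |\tilde{r}|$. The bookkeeping of whether the poles at $r$ and $1/r$ (which lie inside this same $\mathcal{O}(N^{-1/3})$ window) are enclosed after deformation is fixed by the prescriptions for $\Gamma_{r,1/\sqrt{q}}$, $\Gamma_{1/r}$, $\Gamma_{1/\sqrt{q},1/r,1/s}$ in the original definitions; in scaled coordinates this produces the contour labels ${}\zcd{}_{\tilde{r}}$, ${}_{-\tilde{r}}\wcu{}$, and so on in the limiting expressions $\overline{\mathcal{A}}_{11}, \overline{\mathcal{A}}_{12}, \overline{\mathcal{A}}_{21}, \widetilde{\mathcal{A}}_{12}, \widetilde{\mathcal{A}}_{22}$. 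The two approximation errors $E_1, E_2$ and the tail estimates on $(\mathcal{U}\setminus \mathcal{U}_{\mathrm{loc}, \delta}) \cup (\mathcal{V}\setminus \mathcal{V}_{\mathrm{loc}, \delta})$ from Lemma~\ref{steepestDescent} transfer directly, yielding uniform convergence on $[-L, L]^2$.

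The genuinely new step is the $22$-entry. The double-integral pieces of $\overline{\mathsf{K}}_{22}^{\text{geo}}$ converge, by the same steepest descent analysis, to the two double integrals appearing in $\overline{\mathsf{A}}_{22}^{\text{geo}}$. The sign term $-\sgn(k-\ell)r^{-|k-\ell|-1}$ has no analogue in the exponential case and must be handled directly. Using $k-\ell = (c_0N/2)^{1/3}(X-Y)$ and $\log r = (c_0N/2)^{-1/3}\tilde{r} + \mathcal{O}(N^{-2/3})$,
\begin{equation*}
-\sgn(k-\ell)\,r^{-|k-\ell|-1} = -\sgn(X-Y)\exp\bigl(-(|k-\ell|+1)\log r\bigr) \longrightarrow -\sgn(X-Y)\,e^{-\tilde{r}|X-Y|},
\end{equation*}
which is exactly the third summand of $\overline{\mathsf{A}}_{22}^{\text{geo}}$. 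This accounts for the discrepancy between our $\overline{\mathsf{A}}_{22}^{\text{geo}}$ and the expected $\overline{\mathcal{A}}_{22}$ of \cite{Betea_2020}.

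The main obstacle will be consistent contour bookkeeping in the $\mathcal{O}(N^{-1/3})$ neighborhood of $1$, namely on which side of the steepest descent paths the poles $\zeta = \tilde{r}$ and $\omega = -\tilde{r}$ must lie after deformation. This is determined entirely by the pole specifications of the original kernels, and once the $\zcd$/$\wcu$ labels in $\overline{\mathsf{A}}_{22}^{\text{geo}}, \widetilde{\mathcal{A}}_{12}, \widetilde{\mathcal{A}}_{22}$ are matched to the corresponding deformed contours, the remainder of the argument is a routine application of the estimates in Lemma~\ref{steepestDescent}.
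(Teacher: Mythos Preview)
Your approach is correct and essentially identical to the paper's: both invoke the steepest descent machinery of Lemma~\ref{steepestDescent} for the double contour integrals, count scaling powers exactly as you do, and handle the sign term in $\overline{\mathsf{K}}_{22}^{\text{geo}}$ by direct computation of $-\sgn(k-\ell)r^{-|k-\ell|-1}\to -\sgn(X-Y)e^{-\tilde r|X-Y|}$.

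One small misattribution: you write that the sign term ``has no analogue in the exponential case'' and that its limit ``accounts for the discrepancy'' between $\overline{\mathsf{A}}_{22}^{\text{geo}}$ and $\overline{\mathcal{A}}_{22}$ of \cite{Betea_2020}. In fact the sign term \emph{does} appear in the exponential limit of \cite{Betea_2020} (it is their $\mathcal{E}_2$-type contribution). The paper identifies the actual source of the discrepancy as the \emph{absence} of $\mathcal{E}_1$ from \cite[(2.28)]{Betea_2020}, which arises there from an off-diagonal shift factor $(1-\sqrt{q}z)^n$ that is not present here because we work on the diagonal ($n=0$). This does not affect the correctness of your limit computations, only the explanatory remark.
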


\begin{proof}
    The scaling of each kernel entry remains the same as in Lemma~\ref{lem:limit&bound}.  We apply the steepest descent method as in Lemma~\ref{steepestDescent} to get the limit. We observe that $\overline{\mathsf{A}}_{22}^{\text{geo}}$ matches $\overline{\mathcal{A}}_{22}(X,Y)$ with $\delta = - \tilde{r}$ and $u = 0$ , except that one term, namely $\mathcal{E}_1$ in \cite[(2.28)]{Betea_2020}, is removed. This is because $\mathcal{E}_1$ originates from the factor $(1-\sqrt{q}z)^n$ where $n$ represents the shift away from the diagonal. Since we are focusing on the diagonal distribution, this term does not appear in our setting.
\end{proof}

Next, we discuss the scaling limit of all other functions used in the formula. Definitions of all limiting functions, $\mathpzc{g}_1^{-\tilde{r},0}, \mathpzc{g}_2^{-\tilde{r},0}, \mathpzc{g}_3^{-\tilde{r},0}, \mathpzc{g}_4^{-\tilde{r},0}, \mathpzc{f}^{\tilde{r},0}, \mathpzc{j}^{-\tilde{r},0},\mathpzc{e}^{-\tilde{r},0}, \mathpzc{h}^{-\tilde{r},0}$ in the next lemma can be found in \cite[(2.25), (2.26), (2.30)]{Betea_2020}.
\begin{lem}\label{FunctionPtwiseLimit}
    For any given $L >0,$ the following limits hold uniformly for $X \in [-L,L]$:
    \begin{equation}
    \begin{aligned}
        &\lim_{N\rightarrow \infty} (c_0N/2)^{1/3} g_1^L(X) = \mathpzc{g}_1^{-\tilde{r},0} (X)  , \quad \lim_{N\rightarrow \infty} g_2^L(X) = \mathpzc{g}_2^{-\tilde{r},0} (X),\\
        &\lim_{N\rightarrow \infty} \mathsf{g}_3^L(X) = \mathpzc{g}_3^{-\tilde{r},0} (X), \quad \lim_{N\rightarrow \infty} (c_0N/2)^{-1/3}\mathsf{g}_4^L(X) = \mathpzc{g}_4^{-\tilde{r},0} (X),\\
        &\lim_{N\rightarrow \infty} f_L^{1/r}(X) = \mathpzc{f}^{\tilde{r},0}(X), \quad \lim_{N\rightarrow \infty} (c_0N/2)^{-1/3}\mathsf{h}_L(\tilde{d}, X) = \mathpzc{j}^{-\tilde{r},0}(\tilde{d},X), \\
        &\lim_{N\rightarrow \infty} (c_0N/2)^{-1/3}e_L^{r}(\tilde{d}) = \mathpzc{e}^{-\tilde{r},0}(\tilde{d}),\quad \lim_{N \rightarrow \infty} \phi_2^L(X) =  \mathpzc{h}_2^{-\tilde{r},0}(X),\\
        &\lim_{N \rightarrow \infty} (c_0N/2)^{-1/3}\phi_1^L(X) = \widetilde{\mathsf{h}}_{1,\text{geo}}^{-\tilde{r},0}(X).
    \end{aligned}
    \end{equation}
\end{lem}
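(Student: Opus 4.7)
The plan is to handle the functions in three groups: explicit (non-integral) functions, single contour integrals, and the composite quantities $\phi_1, \phi_2$ which mix summations against the kernel with the previous items.

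First, for $f^{1/r}(X)$ and $\mathsf{h}(\tilde{d},X)$, which are given by closed-form expressions, I would directly substitute the critical scaling $r = 1 + (c_0N/2)^{-1/3}\tilde{r}$, $k = \tfrac{2\sqrt{q}}{1-\sqrt{q}}N+(c_0N/2)^{1/3}X$, and Taylor expand. For $f^{1/r}(k) = (1/r)^k/H(1/r)$, writing $\log(1/r) = -(c_0N/2)^{-1/3}\tilde{r}+O((c_0N/2)^{-2/3})$ and using the expansion of $h_0$ in \eqref{derivatives,h0} (which underlies $H$) gives the Airy-type exponential $\mathpzc{f}^{\tilde{r},0}(X)$ in the limit, exactly as was done for $f^r, f^s, f^{1/s}$ in Lemma \ref{limit,f_r}. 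For $\mathsf{h}(\tilde{d},k)$, I would expand each of the two terms $H(r)\tfrac{r^{k-2\tilde{d}-1} - r^{-k+1}}{r^2-1}$ and $r^{-k-1}(k-\tilde{d})H(r)$ separately; since $r \to 1$ on the scale $(c_0N/2)^{-1/3}$, the $(r^2-1)^{-1}$ pre-factor supplies the extra $(c_0N/2)^{1/3}$ that converts the difference $r^{k-2\tilde{d}-1}-r^{-k+1}$ into an exponential difference of Airy arguments.

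Second, for the contour integrals $g_1, g_2, \mathsf{g}_3, \mathsf{g}_4, e^r$, I would apply the steepest-descent apparatus already established in Lemma \ref{steepestDescent}. Concretely, I would deform each $z$-contour to the descent path $(\tau_1\cup\tau_2\cup\tau_3)\cup\overline{(\cdots)}$ used there (or, for $g_2$ and the $w$-pole at $r$, the descent path $(\gamma_1\cup\gamma_2)\cup\overline{(\cdots)}$ for $w$), modify a $(c_0N/2)^{-1/3}$-neighborhood of $1$ exactly as in Figure \ref{fig:modify1/3}, change variables $z = 1+(c_0N/2)^{-1/3}\zeta$, and use the Taylor expansion $N h_0(z) - (c_0N/2)^{1/3}X\log z = \zeta^3/3 - X\zeta + O((c_0N/2)^{-1/3})$ together with the expansion of the rational pre-factor. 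The scaling factors $(c_0N/2)^{1/3}$ or $(c_0N/2)^{-1/3}$ in the statement are determined by counting how many of $dz$, $(z^2-1)$, $(z-r)$, $(1-rz)$, $(z-w)$ contribute $(c_0N/2)^{-1/3}$ under the substitution, exactly as in the analysis of $\mathsf{Q}$ in Lemma \ref{steepestDescent} and of $\widehat{P}, \widehat{Q}, \widehat{\mathsf{J}}$ in Lemma \ref{limit,functions}. The errors $E_1, E_2$ from the local Taylor approximation, and the exponential smallness on the tails $(\mathcal{U}\setminus\mathcal{U}_{\text{loc},\delta})\cup(\mathcal{V}\setminus\mathcal{V}_{\text{loc},\delta})$, are controlled by the same argument verbatim; the uniformity over $X\in[-L,L]$ drops out of the same dominated-convergence step.

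Third, for $\phi_2(X) = \widetilde{\mathsf{K}}_{12}^{\text{geo}} f^{1/r} + \mathsf{g}_3$ and $\phi_1(X) = \widetilde{\mathsf{K}}_{22}^{\text{geo}} f^{1/r} - \mathsf{g}_4 - \mathsf{h}(\tilde{d},\cdot)$, I already have the pointwise limits of $\widetilde{\mathsf{K}}_{12}^{\text{geo}}, \widetilde{\mathsf{K}}_{22}^{\text{geo}}$ from Lemma \ref{KernelPtwiseLimit} and of $f^{1/r}, \mathsf{g}_3, \mathsf{g}_4, \mathsf{h}$ from the above. The content is then to pass the limit through the summation $\sum_{\ell=d+1}^\infty \widetilde{\mathsf{K}}_{\ast 2}^{\text{geo}}(k,\ell) f^{1/r}(\ell)$, rewriting it as a Riemann sum $\epsilon\sum_{Y\in\tilde{d}+\epsilon\mathbb{Z}_{>0}}(\epsilon^{-1}\widetilde{\mathsf{K}}_{\ast 2}^{\text{geo}}(X,Y))f^{1/r}(Y)$ with $\epsilon=(c_0N/2)^{-1/3}$. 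The main obstacle will be supplying a uniform integrable dominator for this Riemann sum: I would prove, by the same contour-modification argument as in the analysis of \eqref{UpperBoundf_r} and \eqref{BoundsforK}, upper bounds of the shape $|\widetilde{\mathsf{K}}_{12}^{\text{geo}}(k,\ell)|\le C e^{-\alpha k+\beta\ell}$, $|\widetilde{\mathsf{K}}_{22}^{\text{geo}}(k,\ell)|\le C e^{\beta k+\beta\ell}$, and $|f^{1/r}(\ell)|\le C e^{(-\tilde{r}+\sigma)\ell}$ valid uniformly in $N\ge N_0$ and $X,Y\ge -L$; choosing $\sigma$ small enough that $\beta+(-\tilde{r}+\sigma)<0$ (which is possible because $\tilde{r}>0$ or $\tilde{r}$ negative but small compared to the decay margin provided by the pole structure of $\widetilde{\mathsf{K}}_{\ast 2}^{\text{geo}}$) gives the dominating function. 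Dominated convergence then yields $\widetilde{\mathsf{K}}_{22}^{\text{geo}}f^{1/r}\to\int\widetilde{\mathcal{A}}_{22}(X,V)\mathpzc{f}^{\tilde{r},0}(V)\,dV$ and similarly for the $12$ case, which together with the scalar limits of $\mathsf{g}_3, \mathsf{g}_4, \mathsf{h}$ assemble into $\mathpzc{h}_2^{-\tilde{r},0}$ and $\widetilde{\mathsf{h}}_{1,\text{geo}}^{-\tilde{r},0}$ respectively. Matching the pole structure in the definitions of $\widetilde{\mathcal{A}}_{22}$ and $\mathpzc{h}_2^{-\tilde{r},0}, \widetilde{\mathsf{h}}_{1,\text{geo}}^{-\tilde{r},0}$ is where one has to be most careful; this is where the discrepancy with $\mathpzc{h}_1^{-\tilde{r},0}$ from \cite{Betea_2020} arises, exactly in the same way that $\overline{\mathsf{A}}_{22}^{\text{geo}}$ differs from $\overline{\mathcal{A}}_{22}$ by the omission of the term $\mathcal{E}_1$ discussed in Lemma \ref{KernelPtwiseLimit}.
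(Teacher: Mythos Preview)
Your proposal is correct and follows essentially the same approach as the paper: steepest descent (as in Lemma~\ref{steepestDescent}) for the contour-integral functions, a direct Taylor expansion for the explicit functions $f^{1/r}$ and $\mathsf{h}$, and then assembling $\phi_1,\phi_2$ from the pieces via dominated convergence of the Riemann sums. The paper's own proof is much terser---it writes out only the $\mathsf{h}$ computation explicitly and cites the steepest-descent machinery for the rest---but the content is the same; one small imprecision in your dominator argument is that the integrability of $\widetilde{\mathsf{K}}_{\ast 2}^{\mathrm{geo}}(X,Y)f^{1/r}(Y)$ in $Y$ holds because the $w$-pole of $\widetilde{\mathsf{K}}_{\ast 2}^{\mathrm{geo}}$ at $\sqrt{q}$ gives decay $e^{-\kappa Y}$ for \emph{any} $\kappa>0$, so one simply chooses $\kappa>|\tilde{r}|+\sigma$ regardless of the sign of $\tilde{r}$.
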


\begin{proof}
    We use steepest descent method for functions involving contour integrals. To see the limit of $e^r$, we need a change of variable. To study the limit of $\mathsf{h}_L$, we observe that
    \begin{equation}
        \begin{aligned}
            &(c_0N/2)^{-1/3}\mathsf{h}_L(\tilde{d},X) = \frac{\overline{H}(r)}{r^{d}}\frac{r^{k-d-1} - r^{d-k+1}}{(c_0N/2)^{1/3}(r^2-1)} + \frac{\overline{H}(r)}{r^{d}}r^{d-k-1}\left((c_0N/2)^{-1/3}(k-d)\right)\\
            &\rightarrow \left(e^{\frac{\tilde{r}^3}{3} - \tilde{r}\tilde{d}}\right)\left(\frac{e^{\tilde{r}(X-\tilde{d})} - e^{\tilde{r}(\tilde{d}-X)}}{2\tilde{r}} + (X-\tilde{d})e^{\tilde{r}(\tilde{d}-X)} \right) \\
            &= \mathpzc{f}^{\tilde{r},0}(\tilde{d})\left( \frac{\sinh(\tilde{r}(X-\tilde{d}))}{\tilde{r}} + (X-\tilde{d})e^{-\tilde{r}(X-\tilde{d})}\right) = \mathpzc{j}^{-\tilde{r},0}(\tilde{d},X).
        \end{aligned}
    \end{equation}

    For the limit of $(c_0N/2)^{-1/3}\phi_1,$ it is different from $\mathpzc{h}_1^{-\tilde{r},0}$ in \cite[(2.30)]{Betea_2020} because we do not include the term $\int_{\tilde{d}}^{\infty} dV \mathcal{E}_1(Y,V)\mathpzc{f}^{-\delta,u}(V)$. As we explained in the previous lemma, we do not have the term $\mathcal{E}_1$ because this term comes from an off-diagonal shift.
\end{proof}

We have almost the same upper bound for each function as in the stationary exponential LPP model. The follow lemma is a modified version of \cite[Lemma 4.2]{Betea_2020} and \cite[Lemma 4.3]{Betea_2020} for our setting.
\begin{lem}\label{KernelBounds}
    Fix any $L>0$. Fix any given $\kappa>0,$ there exist constants $\sigma>0$, $C>0$ independent of $X,Y$, $N_0 \in \N$ such that for all $N \geq N_0$ the following upper bounds hold for all $X,Y\geq -L$
    \begin{equation}
        \begin{aligned}
            &|(c_0N/2)^{2/3}\overline{\mathsf{K}}_{11}^{L}(X,Y)| \leq Ce^{-\kappa(X+Y)},\quad
            |(c_0N/2)^{1/3}\overline{\mathsf{K}}_{12}^{L}(X,Y)| \leq C\left(e^{-\kappa(X+Y)} + e^{-\kappa X + (|\tilde{r}|+\sigma)Y}\right),\\
            &|(c_0N/2)^{1/3}\overline{\mathsf{K}}_{21}^{L}(X,Y)| \leq C\left(e^{-\kappa(X+Y)} + e^{-\kappa Y + (|\tilde{r}|+\sigma)X}\right), \quad \\
            &|\overline{\mathsf{K}}_{22}^{L}(X,Y)| \leq C\left(e^{-\kappa X + (|\tilde{r}|+\sigma)Y} + e^{-\kappa Y + (|\tilde{r}|+\sigma)X} + e^{(|\tilde{r}|+\sigma)|X-Y|}\right),\\
            &|{\Omega}_{2}^{L}(X,Y)| \leq Ce^{-\kappa(X+Y)},\quad
            |(c_0N/2)^{-1/3}{\Omega}_{1}^{L}(X,Y)| \leq C\left(e^{-\kappa(X+Y)} + e^{(|\tilde{r}|+\sigma)X - \kappa Y}\right).\\
        \end{aligned}
    \end{equation}
\end{lem}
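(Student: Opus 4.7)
The plan is to carry out the same steepest--descent contour deformation used in Lemma~\ref{steepestDescent}, but now track the bounds more precisely as $X,Y\to\infty$. Write each kernel entry after the change of variables $z=1+(c_0N/2)^{-1/3}\zeta$, $w=1+(c_0N/2)^{-1/3}\omega$, so that the dominant exponential factor becomes $e^{N(h_0(z)-h_0(w))-(c_0N/2)^{1/3}(X\log z-Y\log w)}$ with $h_0(1)=h_0'(1)=h_0''(1)=0$ and $h_0'''(1)=c_0>0$. For the $z$--contour choose the steepest--descent path $\tau_1\cup\tau_2\cup\tau_3$ from Lemma~\ref{steepestDescent} enclosing $1/\sqrt q$ (and, for the $\overline{\mathsf K}{}^{\text{geo}}_{22}$ integrals, also $r$ or $1/r$ as required), and for the $w$--contour the path $\gamma_1\cup\gamma_2$ enclosing $\sqrt q$ (plus $1/r$ when needed). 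Modify both contours in an $\mathcal{O}(N^{-1/3})$ ball around $1$ so that they pass through $1\pm\eta(c_0N/2)^{-1/3}$ for some large $\eta>\max(-2\tilde s,2|\tilde r|)$. Exactly as in Lemma~\ref{steepestDescent}, this produces a uniform factor $e^{-\kappa(X+Y)}$ for any fixed $\kappa<\eta$, since $\operatorname{Re}\log z\ge\kappa(c_0N/2)^{-1/3}$ on the modified $z$--path and $\operatorname{Re}\log w\le-\kappa(c_0N/2)^{-1/3}$ on the modified $w$--path, while $\operatorname{Re}(h_0(z)-h_0(w))\le0$ globally on the descent contours.

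This immediately yields the clean bound $Ce^{-\kappa(X+Y)}$ for $\overline{\mathsf K}{}^{\text{geo}}_{11}$, $\widetilde{\mathsf K}{}^{\text{geo}}_{12}$ and the leading piece of the other entries. The remaining terms come from \emph{residues} at the auxiliary poles $w=1/r$ (in $\overline{\mathsf K}{}^{\text{geo}}_{12}$ and $\overline{\mathsf K}{}^{\text{geo}}_{22}$) and $z=r$ (in $\overline{\mathsf K}{}^{\text{geo}}_{22}$ and $\widetilde{\mathsf K}{}^{\text{geo}}_{22}$) that must be enclosed. A residue at $w=1/r$ contributes a factor $(1/r)^\ell$, and since $r=1+(c_0N/2)^{-1/3}\tilde r$ one has $(1/r)^\ell\le e^{(|\tilde r|+\sigma)Y}$ for $N$ large and any chosen $\sigma>0$. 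A residue at $z=r$ contributes a factor $r^{-k}\le e^{(|\tilde r|+\sigma)X}$. Combining these with the $e^{-\kappa X}$ or $e^{-\kappa Y}$ decay from the surviving single contour gives the mixed bounds $e^{-\kappa X+(|\tilde r|+\sigma)Y}$ and $e^{(|\tilde r|+\sigma)X-\kappa Y}$ asserted for $\overline{\mathsf K}{}^{\text{geo}}_{12}$, $\overline{\mathsf K}{}^{\text{geo}}_{21}$, $\overline{\mathsf K}{}^{\text{geo}}_{22}$ and $\widetilde{\mathsf K}{}^{\text{geo}}_{22}$.

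The final ingredient is the sign piece $-\sgn(k-\ell)r^{-|k-\ell|-1}$ in $\overline{\mathsf K}{}^{\text{geo}}_{22}$. Under the scaling it equals, up to $1+o(1)$,
\begin{equation*}
\mp r^{-|k-\ell|-1}=\mp\bigl(1+(c_0N/2)^{-1/3}\tilde r\bigr)^{-(c_0N/2)^{1/3}|X-Y|-1},
\end{equation*}
which is bounded by $Ce^{(|\tilde r|+\sigma)|X-Y|}$ for any $\sigma>0$ once $N\ge N_0$, contributing the third term in the $\overline{\mathsf K}{}^{\text{geo}}_{22}$ bound. Summing the contributions from the pure double integral and from each enclosed pole gives the three--term bound displayed in the lemma.

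The main obstacle is the careful bookkeeping of poles in $\overline{\mathsf K}{}^{\text{geo}}_{22}$: the contours must simultaneously enclose $1/\sqrt q$, $r$ (for $z$) and $\sqrt q$, $1/r$ (for $w$), while remaining steepest--descent away from $z=w=1$ and respecting $\eta>2|\tilde r|$ so that the local $\mathcal{O}(N^{-1/3})$ modification does not accidentally pick up spurious residues. Everything else is routine manipulation of the bounds already established in Lemmas~\ref{lem:limit&bound} and~\ref{limit,functions}; once the contours are fixed, the estimates follow by replacing $|z|,|w|$ with their extremal values on the modified paths, exactly as in the proof of Lemma~\ref{steepestDescent}.
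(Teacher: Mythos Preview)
Your proposal is correct and follows essentially the same route as the paper: apply the steepest--descent argument of Lemma~\ref{steepestDescent} (as in Lemma~\ref{lem:limit&bound}) to the double integrals over $\Gamma_{1/\sqrt q}\times\Gamma_{\sqrt q}$ to obtain the $e^{-\kappa(X+Y)}$ decay, and attribute the mixed $e^{(|\tilde r|+\sigma)}$ terms to the residues at the auxiliary poles $z=r$, $w=1/r$ together with the explicit sign piece in $\overline{\mathsf K}{}^{\text{geo}}_{22}$. One small imprecision: when you write ``a residue at $z=r$ contributes a factor $r^{-k}\le e^{(|\tilde r|+\sigma)X}$'', the bound only holds for the full combination $H(r)r^{-k}$, not for $r^{-k}$ alone---the paper makes this explicit by noting that $\sigma$ arises from the error in $H(r)/r^k\to e^{\tilde r^3/3-X\tilde r}$.
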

\begin{proof}
    We use similar arguments as in Lemma \ref{lem:limit&bound}.
    For poles at $1/\sqrt{q}$ and $\sqrt{q}$, we get terms like $e^{-kX}$ and $e^{-kY}$. For poles at $r$ and $1/r$, we get terms like $e^{(|\tilde{r}|+\sigma)X}$, where $\sigma$ comes from the error term in the convergence of $H(r)/r^{k} \rightarrow e^{\frac{\tilde{r}^3}{3} - X\tilde{r}}$ which is not present in the exponential case.
\end{proof}

The following lemma is a modified version of \cite[Lemma 4.1]{Betea_2020} and \cite[Corollary 4.5]{Betea_2020} adapted to our setting.
\begin{lem}\label{FunctionBounds}
    Fix any $L>0.$ For any $\kappa>0,$ there exist constants $\sigma>0$, $C>0,$ $N_0 \in \N$ such that for all $N \geq N_0$, $X\geq -L$, we have 
        \begin{equation}
        \begin{aligned}
            &|(c_0N/2)^{1/3}g_1^L(X)| \leq Ce^{-\kappa X}, \quad |g_2^L(X)| \leq C\left(e^{(|\tilde{r}|+\sigma)X} + e^{-\kappa X}\right),\quad |\phi_2^L(X)| \leq Ce^{-\kappa X},\\
            &|\mathsf{g}_3^L(X)| \leq Ce^{-\kappa X},\quad |(c_0N/2)^{-1/3}\mathsf{g}_4^L(X)| \leq C\left(e^{-\kappa X} + e^{(|\tilde{r}|+\sigma)X} + |X|e^{(|\tilde{r}|+\sigma)X}\right).\\
        \end{aligned}
    \end{equation}
    Independent of $\kappa$, there exists $\sigma>0,$ $C>0$, $N_1\in \N$ such that for all $N \geq N_1,$ $X\geq -L$, we have 
    \begin{equation}
    \begin{aligned}
        &|f_L^{1/r}(X)| \leq Ce^{(|\tilde{r}|+\sigma)X}, \quad |(c_0N/2)^{-1/3}\mathsf{h}_L(\tilde{d},X)| \leq C|X|e^{(|\tilde{r}|+\sigma)X},\\
        &|(c_0N/2)^{-1/3}\phi_1^{L}(X)| \leq C|X|e^{(|\tilde{r}|+\sigma)X}.
    \end{aligned}
    \end{equation}
\end{lem}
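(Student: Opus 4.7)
The plan is to establish each inequality by the same steepest-descent machinery developed in Lemma~\ref{steepestDescent} and Lemma~\ref{lem:limit&bound}, with the $z$- or $w$-contour modified in an $\kappa(c_0N/2)^{-1/3}$-neighborhood of $1$, and by carefully tracking which residues at $r$ and $1/r$ get crossed when the contours are deformed. The bounds naturally split into two groups: those carrying a free parameter $\kappa$ (these correspond to functions whose contours can be pushed through $1$ without encountering the $r$-pole in a way that forces extra residue contributions), and the $\kappa$-independent bounds of the form $e^{(|\tilde{r}|+\sigma)X}$ (where the dominant asymptotic genuinely comes from a residue at $r$ or $1/r$).

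For $g_1$ and $\mathsf{g}_3$, the integrand has no pole at $r$ or $1/r$ that lies inside the contour, so I would deform the $z$-contour onto the steepest descent path $\tau_1\cup\tau_2\cup\tau_3$ from Lemma~\ref{steepestDescent}, modified in a ball of radius $\kappa(c_0N/2)^{-1/3}$ around $z=1$; this immediately produces the factor $z^{-k} = e^{-X\log(z)(c_0N/2)^{1/3}} \leq Ce^{-\kappa X}$. For $f^{1/r}$ the bound is a direct Taylor expansion of $H(r)/r^k$ with the extra $\sigma$ absorbing the subleading corrections. For $\mathsf{h}(\tilde{d},X)$ the explicit formula gives the bound after expanding $r^{\pm(k-d)}$ around $r=1$; the $|X|$ factor comes from the term $(k-d)r^{-k-1}H(r)$. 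For $g_2$ and $\mathsf{g}_4$, the integration contour encloses the pole at $r$ (and at $1/r$ for $\mathsf{g}_4$), whose residues produce the dominant term of size $e^{(|\tilde{r}|+\sigma)X}$; the remaining part after residue extraction can be pushed onto a steepest descent path to yield the decaying component. For $\mathsf{g}_4$ the extra linear factor $|X|$ arises because, as $s\to 1/r$, the poles at $r$ and $1/r$ collide into a double pole at $w=1$ in the scaling limit, producing a derivative with respect to the spectral variable that contributes $X$.

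The main obstacle is the bound $|\phi_2(X)| \leq Ce^{-\kappa X}$, because individually $\widetilde{\mathsf{K}}_{12}^{\text{geo}}f^{1/r}$ involves $f^{1/r}$ which grows like $e^{(|\tilde{r}|+\sigma)X}$, so the required decay must come from an algebraic cancellation with $\mathsf{g}_3$. The strategy is to first carry out the geometric summation over $\ell$ in $\widetilde{\mathsf{K}}_{12}^{\text{geo}}f^{1/r}(k) = \sum_{\ell \geq d+1}\widetilde{\mathsf{K}}_{12}^{\text{geo}}(k,\ell)f^{1/r}(\ell)$, which is valid whenever the $w$-contour satisfies $|w/r|<1$ and produces the factor $(w/r)^{d+1}/(1-w/r)$ multiplied by $H(r)$. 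The resulting double contour integral has an additional pole at $w=r$; I would then deform the $w$-contour outward, past the pole at $w=1/r$, and the residue there — after using the explicit form of $\mathsf{g}_3$ — will cancel $\mathsf{g}_3$ exactly. What remains is a double contour integral in which the $w$-contour can be routed around $1$ avoiding $r$ and $1/r$, at which point the usual steepest-descent modification by $\kappa(c_0N/2)^{-1/3}$ yields the desired $e^{-\kappa X}$ bound.

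For $\phi_1 = \widetilde{\mathsf{K}}_{22}^{\text{geo}}f^{1/r} - \mathsf{g}_4 - \mathsf{h}(\tilde{d},\cdot)$ the same principle applies but the cancellation is not complete: after performing the geometric summation and deforming the $w$-contour through $1/r$, the residue combined with $\mathsf{g}_4$ and $\mathsf{h}(\tilde{d},\cdot)$ leaves a term whose behaviour is governed by a double-pole residue at $w=1$ in the critical scaling, producing the $|X|e^{(|\tilde{r}|+\sigma)X}$ bound rather than pure exponential decay. Uniformity in $N\geq N_1$ and $X\geq -L$ follows in each case from standard compactness of the modified contours and boundedness of the integrands away from the poles, exactly as in the proof of Lemma~\ref{lem:limit&bound}.
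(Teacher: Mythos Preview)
Your treatment of $g_1$, $g_2$, $\mathsf{g}_3$, $\mathsf{g}_4$, $f^{1/r}$ and $\mathsf{h}(\tilde d,\cdot)$ is correct and is precisely what the paper does: deform onto the steepest-descent contours of Lemma~\ref{steepestDescent}, modified in a $\kappa(c_0N/2)^{-1/3}$-neighborhood of $1$, and pick up the residues at $r$, $1/r$ where relevant. One small correction: the factor $|X|$ in the bound for $\mathsf{g}_4$ comes directly from the \emph{existing} double pole at $z=r$ in the definition (the $(z-r)^2$ in the denominator), not from a collision of simple poles in the scaling limit.

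Your discussion of $\phi_2$, however, rests on a variable confusion that makes the argument far more elaborate than necessary. In
\[
\widetilde{\mathsf{K}}_{12}^{\text{geo}}f^{1/r}(k) \;=\; \sum_{\ell\geq d+1}\widetilde{\mathsf{K}}_{12}^{\text{geo}}(k,\ell)\,f^{1/r}(\ell),
\]
the function $f^{1/r}$ is evaluated at $\ell$, \emph{not} at $k$. The $k$-dependence of the result comes entirely from the $z$-contour around $1/\sqrt{q}$, exactly as in $\mathsf{g}_3$. By the kernel bound $|(c_0N/2)^{1/3}\widetilde{\mathsf{K}}_{12}^{\text{geo}}(X,Y)|\leq Ce^{-\kappa(X+Y)}$ from Lemma~\ref{KernelBounds} (choose $\kappa>|\tilde r|+\sigma$ so the $\ell$-sum converges), one gets $|\widetilde{\mathsf{K}}_{12}^{\text{geo}}f^{1/r}(X)|\leq Ce^{-\kappa X}$ \emph{directly}, and likewise for $\mathsf{g}_3$; the bound on $\phi_2$ is then the triangle inequality. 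No cancellation with $\mathsf{g}_3$ is needed, and the contour deformation past $w=1/r$ that you describe is irrelevant to the $X$-behaviour. The same remark applies to $\phi_1$: the three summands $\widetilde{\mathsf{K}}_{22}^{\text{geo}}f^{1/r}$, $\mathsf{g}_4$, $\mathsf{h}(\tilde d,\cdot)$ each individually obey the stated bound (the worst being $|X|e^{(|\tilde r|+\sigma)X}$, contributed by $\mathsf{g}_4$ and $\mathsf{h}$), so again no algebraic cancellation is required. This is how the paper proceeds --- it simply invokes the termwise bounds of Lemma~\ref{KernelBounds}.
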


\begin{proof}
    For $\mathsf{g}_4$, the upper bound for a pole of order two gives $|X|e^{|\tilde{r}X|}.$ Upper bounds for other functions follow a similar argument as in Lemma \ref{KernelBounds}.
\end{proof}

Lastly, we prove the theorem of the large time scaling limit.
\begin{proof}[Proof of Theorem~\ref{theorem:limit} $(3)$]
By Lemma $\ref{KernelPtwiseLimit}$ and $\ref{FunctionPtwiseLimit}$, we have the point-wise limit of each function and each kernel entry. To show convergence of the Fredholm Pfaffian, we use Lemma $\ref{KernelBounds}$, Lemma $\ref{FunctionBounds}$ and choose any $\kappa>|\tilde{r}|+\sigma$ which allow us to apply Hadamard's bound and dominated convergence theorem. The convergence of the discrete difference in shift argument formula $\eqref{formulaLow}$ to partial derivative can be justified by the same method as in section $\ref{continuousDerivative}.$
\end{proof}

\appendix
\section{Numerical evaluations for High density phase}
We first provide two examples ($N=2$ and $N=3, r=0$) of direct calculation that give the exact CDF of the half-space LPP model.

When $N=2,$ $r,s\neq 0,$ we express each function appearing in formula \ref{finiteDis} as follows.
\begin{equation}
    \begin{aligned}
        &H(x) = 1, \quad f^{x}(k) = x^{k+1}, \quad E(k,\ell) = \begin{cases}
            -r^{k-\ell-1}, & \textrm{if }k> \ell,\\
        0, & \textrm{if }k = \ell,\\
        r^{\ell-k-1}, & \textrm{if }k < \ell,\\
        \end{cases}\\
        &\widehat{\mu}_{d} = 0, \quad \widehat{\nu}_d = \frac{(1-sr)}{(s-r)(1-s^2)}\frac{r^{d+2}}{s^{d+1}}, \quad \widehat{A}_{11} = 0, \quad \widehat{A}_{12} = \widehat{A}_{21} = 0,\quad
        \widehat{A}_{22} = 0, \quad \widehat{K} = \begin{pmatrix}
            0 & 0\\
            0 & E
        \end{pmatrix},\\
        &
        g1(k) = -(1-sr)f^{s}(k), \quad d_2(k) = \frac{(1-s^2)}{(s-r)}f^s(k) - \frac{(1-sr)}{(s-r)}f^r(k), \quad \widehat{\mathsf{J}} = 0, \quad \widehat{\mathcal{A}}_d =\widehat{\mathcal{C}}_d = \widehat{D}_{d} = 0,\\
        & \widehat{\mathcal{B}}_d = \widehat{\mathcal{E}}_d = -\frac{s(s-r)}{(1-s^2)(1-sr)}(sr)^{d+2}, \quad
        \widehat{V}_1(\ell) = 0, \quad \widehat{V}_2(\ell) = \frac{(s-r)s}{(1-sr)(1-s^2)}f^r(\ell).\\
    \end{aligned}
\end{equation}
Then we compute all components of the formula $\eqref{finiteDis}$ as follows.
\begin{equation}
\begin{aligned}
    &\mathrm{Pf}(J - \widehat{K}) = 1, \quad \widehat{\mu}_d +\widehat{\nu}_d +\widehat{\mathcal{A}}_d +\widehat{\mathcal{B}}_d+\widehat{\mathcal{C}}_d+\widehat{\mathcal{D}}_d+\widehat{\mathcal{E}}_d =\frac{(1-sr)}{(s-r)(1-s^2)}\frac{r^{d+2}}{s^{d+1}} -\frac{2s(s-r)}{(1-s^2)(1-sr)}(sr)^{d+2}. \\
    &- \mathrm{Pf}(J - \widehat{K})
    + 
    \mathrm{Pf}\left(J - \widehat{K} - \ket{\begin{array}{c}
         g_1 \\
         -d_2
    \end{array}}\bra{\widehat{V}_1 \quad \widehat{V}_2} - \ket{\begin{array}{c}
         \widehat{V}_1 \\
         \widehat{V}_2
    \end{array}}\bra{-g_1 \quad d_2}\right)\\
    &= -\mathrm{Pf}(J-\widehat{K}) \brabarket{\widehat{V}_1 \quad \widehat{V}_2} {(\Id - J^{-1}\widehat{K})^{-1}}{\begin{array}{c}
         d_2 \\
         g_1
    \end{array}} = -\braket{\widehat{V}_2}{g_1} = \frac{s(s-r)}{(1-s^2)(1-sr)}(sr)^{d+2}.
\end{aligned}
\end{equation}
Therefore,
\begin{equation}
   \psi(d,2) = d+2 - \frac{s(1-r^2)}{(1-sr)(s-r)}+ \frac{(1-sr)}{(s-r)(1-s^2)}\frac{r^{d+2}}{s^{d+1}} -\frac{s(s-r)}{(1-s^2)(1-sr)}(sr)^{d+2}.
\end{equation}
Then applying the shift argument \ref{finiteTimeFormulaTheorem} to $\psi$ yields the following.
When $d\geq 2,$
\begin{equation}\label{vanish}
    \begin{aligned}
        \text{RHS of }\ref{finiteTimeFormulaTheorem} &=\frac{(1-sr)}{(1-s^2)(s-r)^2}\left(\frac{r^{d+2}}{s^{d}} - \frac{r^{d+2}}{s^d} - \frac{r^{d+1}}{s^{d-1}}
        + \frac{r^{d+1}}{s^{d-1}}\right)\\
        &- \frac{1}{(1-s^2)(1-sr)}(sr)^{d+1}\left(rs^3 - rs - s^2 + 1\right)\\
        &+ \frac{1}{(s-r)}\left( sd - r(d-1) - s(d-1) + r(d-2) \right)\\
        &= 1- (sr)^{d+1} = \Pb(\text{Geom}(rs) \leq d) = \Pb(G(2,2) \leq d).
    \end{aligned}
\end{equation}
When $d= 1,$
\begin{equation}
    \begin{aligned}
        \text{RHS of }\ref{finiteTimeFormulaTheorem} &= \frac{1}{(s-r)}(s-2r) - \frac{s(1-r^2)}{(1-sr)(s-r)^2}(-r) + \frac{(1-sr)}{(1-s^2)(s-r)^2}\left(-r^2\right)\\
        &- \frac{s}{(1-s^2)(1-sr)}(sr)^2(rs^2-r-s) = 1 - (sr)^2 = \Pb(G(2,2) \leq 1).
    \end{aligned}
\end{equation}
When $d = 0,$
\begin{equation}
    \begin{aligned}
        \text{RHS of }\ref{finiteTimeFormulaTheorem} &= \frac{2s}{(s-r)} - \frac{s^2(1-r^2)}{(1-sr)(s-r)^2} + \frac{(1-sr)r^2}{(s-r)^2(1-s^2)} - \frac{s^2}{(1-s^2)(1-sr)}(sr)^2\\
        &=1-sr =\Pb(G(2,2) = 0).
\end{aligned}
\end{equation}

When 
$N=3$ and 
$r=0,$ the formula remains manually computable. Although the statement of Theorem \ref{finiteTimeFormulaTheorem} excludes the case 
$r=0$, this exclusion is purely due to a lack of boundary interaction, making the case uninteresting from the perspective of the stationary LPP model. Nevertheless, we consider $r = 0$
here because it allows for a straightforward verification of our formula through direct computation.
We express each function appearing in formula \ref{finiteDis} as follows.
\begin{equation}
    \begin{aligned}
        &\widehat{A}_{11} = \widehat{A}_{12} = \widehat{A}_{21} = \widehat{A}_{22} = 0, \quad  \widehat{K} = \begin{pmatrix}
            0 & 0\\
            0 & E
        \end{pmatrix}, \quad E(k,\ell) = \begin{cases}
            -1, & \textrm{if }k= \ell+1,\\
        1, & \textrm{if }k = \ell-1,\\
        0, & \textrm{otherwise },\end{cases}\\
        &g_1(k) = G_s(k) - f^s(k), \quad d_2(k) = \frac{(1-s^2)}{s}f^s(k) - R_s(k),\quad 
        \widehat{\mu}_d = \frac{1}{(s-\sqrt{q})}\frac{\sqrt{q}^{d+3}}{s^{d+2 }},\\ &\widehat{\nu}_d = \frac{(1-q)}{(1-s^2)(s-\sqrt{q})}\frac{\sqrt{q}^{d+2}}{s^{d+1}},\quad 
        \widehat{\mathcal{A}}_d = \widehat{\mathcal{D}}_d = \frac{s}{(1-s^2)}\braket{G_{1/s}}{d_2},\quad
        \widehat{B}_d = \widehat{E}_d = \frac{s}{1-s^2}\braket{R_{1/s}}{g_1},\\
        &\widehat{\mathcal{C}}_d = -\frac{s}{(1-s^2)}\frac{(1-s\sqrt{q})}{(s-\sqrt{q})}\frac{\sqrt{q}^{d+3}}{s^{d+3}},\quad \widehat{V}_1(k) = \frac{s}{(1-s^2)}G_{1/s}(k), \\
        &\widehat{V}_{2}(k) = \frac{s}{(1-s^2)}R_{1/s}(k) + \frac{2s}{(1-s^2)}\bra{G_{1/s}}E, \quad \mathrm{Pf}(J-\widehat{K}) = 1,\\
    \end{aligned}
\end{equation}
Notice that $\braket{G_{s}}{R_{1/s}} = \braket{G_{1/s}}{R_s}$. Immediately, we see that 
\begin{equation}
    \begin{aligned}
        \widehat{\mathcal{A}}_{d} + \widehat{\mathcal{B}}_{d} + \widehat{\mathcal{D}}_{d} + \widehat{\mathcal{E}}_{d}
        =2\braket{G_{1/s}}{f^s} - \frac{2s}{(1-s^2)}\braket{R_{1/s}}{f^s}.
    \end{aligned}
\end{equation}
We also have that
\begin{equation}
    \begin{aligned}
        &- \mathrm{Pf}(J - \widehat{K})
        + 
        \mathrm{Pf}\left(J - \widehat{K} - \ket{\begin{array}{c}
             g_1 \\
             -d_2
        \end{array}}\bra{\widehat{V}_1 \quad \widehat{V}_2} - \ket{\begin{array}{c}
             \widehat{V}_1 \\
             \widehat{V}_2
        \end{array}}\bra{-g_1 \quad d_2}\right)\\
        &= -\mathrm{Pf}(J-\widehat{K})\brabarket{\widehat{V}_1\quad \widehat{V}_2}{(\Id - J^{-1}\widehat{K})^{-1}}{\begin{array}{c}
             d_2 \\
             g_1
        \end{array}}\\
        &= -\brabarket{\widehat{V}_1\quad \widehat{V}_2}{\begin{pmatrix}
            1 & -E\\
            0 & 1
        \end{pmatrix}}{\begin{array}{c}
             d_2 \\
             g_1
        \end{array}}\\
        &= -\braket{G_{1/s}}{f^s} + \frac{s}{(1-s^2)}\braket{R_{1/s}}{f^s} + \frac{s}{(1-s^2)}\brabarket{G_{1/s}}{E}{f^s}.
        \end{aligned}
\end{equation}
Then $\psi$ becomes
\begin{equation}
\begin{aligned}
    &\psi(d,3) = d+1 - \frac{\sqrt{q}(1/s+s-2\sqrt{q})}{(1-\sqrt{q}/s)(1-\sqrt{q}s)}
    +\braket{G_{1/s}}{f^s} - \frac{s}{(1-s^2)}\braket{R_{1/s}}{f^s}\\
    &+ \frac{s}{(1-s^2)}\brabarket{G_{1/s}}{E}{f^s} + \widehat{\mu}_d + \widehat{\nu}_d + \widehat{\mathcal{C}}_d\\
    & =d+1 - \frac{\sqrt{q}(1/s+s-2\sqrt{q})}{(1-\sqrt{q}/s)(1-\sqrt{q}s)}  -\frac{s^2(1-\sqrt{q}/s)}{(1-s\sqrt{q})(1-s^2)}(s\sqrt{q})^{d+2} +  \frac{s(1-\sqrt{q}s)}{(s-\sqrt{q})(1-s^2)}\frac{\sqrt{q}^{d+2}}{s^{d+2}}.
\end{aligned}
\end{equation}
Applying the shift argument to $\psi$ gives for $d \geq 1$
\begin{equation}
\begin{aligned}
    \text{RHS of }\ref{finiteTimeFormulaTheorem} &= d-(d-1) -\frac{s^2(1-\sqrt{q}/s)}{(1-s\sqrt{q})(1-s^2)}(s\sqrt{q})^{d+1} (s\sqrt{q}-1) +\frac{s(1-\sqrt{q}s)}{(s-\sqrt{q})(1-s^2)}\frac{\sqrt{q}^{d+1}}{s^{d+1}}\left(\frac{\sqrt{q}}{s}-1\right)\\
    &= 1 + \frac{s^2(1-\sqrt{q}/s)}{(1-s^2)}(s\sqrt{q})^{d+1} - \frac{(1-s\sqrt{q})}{(1-s^2)}\frac{\sqrt{q}^{d+1}}{s^{d+1}} \\
    &= \Pb(\text{Geom}(s\sqrt{q})+ \text{Geom}(\sqrt{q}/s) \leq d) = \Pb(G(3,3)\leq d).
\end{aligned}
\end{equation}

For $d = 0,$
\begin{equation}
\begin{aligned}
    \text{RHS of }\ref{finiteTimeFormulaTheorem} &= 1 - \frac{\sqrt{q}(1/s+s-2\sqrt{q})}{(1-\sqrt{q}/s)(1-\sqrt{q}s)} -\frac{s^2(1-\sqrt{q}/s)}{(1-s\sqrt{q})(1-s^2)}(s\sqrt{q})^{2} +  \frac{s(1-\sqrt{q}s)}{(s-\sqrt{q})(1-s^2)}\frac{\sqrt{q}^{2}}{s^{2}}\\
    &= 1 + \frac{s^2(1-\sqrt{q}/s)}{(1-s^2)}(s\sqrt{q}) - \frac{(1-s\sqrt{q})}{(1-s^2)}\frac{\sqrt{q}}{s} \\
    &= \Pb(\text{Geom}(s\sqrt{q}) + \text{Geom}(\sqrt{q}/s) = 0).
\end{aligned}
\end{equation}

The case $N = 3, s,r\neq 0$ is already quite difficult to verify manually. Therefore, we present the numerical evaluation of our formula using MATLAB and compare it with the CDF obtained through simulations of the two-parameter stationary half-space geometric LPP in Figure \ref{(3,3)}. The two curves are found to be very close and show strong agreement. We computed each function explicitly. For example, brackets such as $\braket{\widehat{P}}{\widehat{A}_{12}}$ are evaluated explicitly. We found the following code particularly useful for estimating Fredholm Pfaffian, \href{https://michaelwimmer.org/downloads.html}{Click here}.

\begin{figure}[htbp]
    \centering
\begin{subfigure}[b]{0.45\textwidth}
\includegraphics[width=1.04\textwidth]{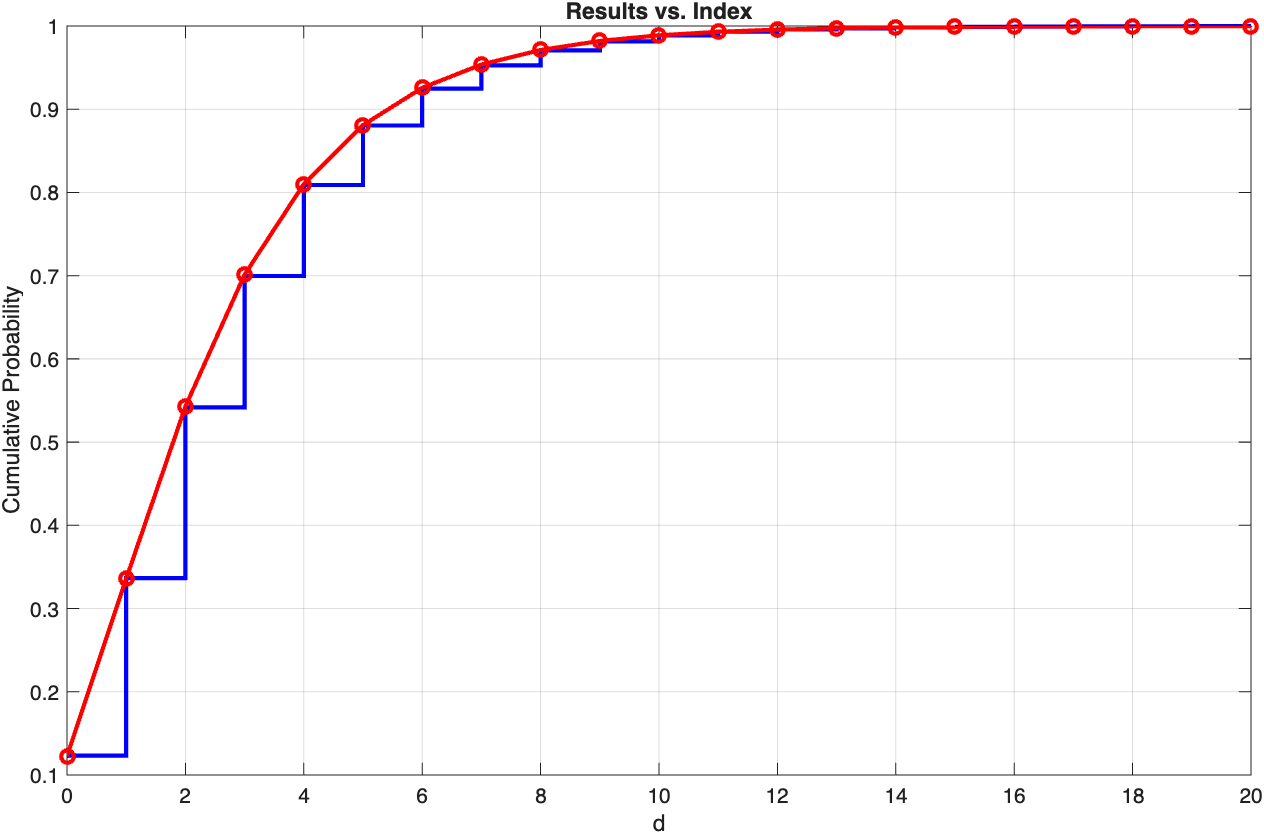}
        \caption{{CDF of $G(3,3)$ with $s = 0.8,$ $r = 0.4,$ $\sqrt{q} = 0.5.$}}
        \label{fig:subfig1}
    \end{subfigure}
    \hspace{0.02\textwidth} 
    \begin{subfigure}[b]{0.45\textwidth}
    \includegraphics[width=1.04\textwidth]{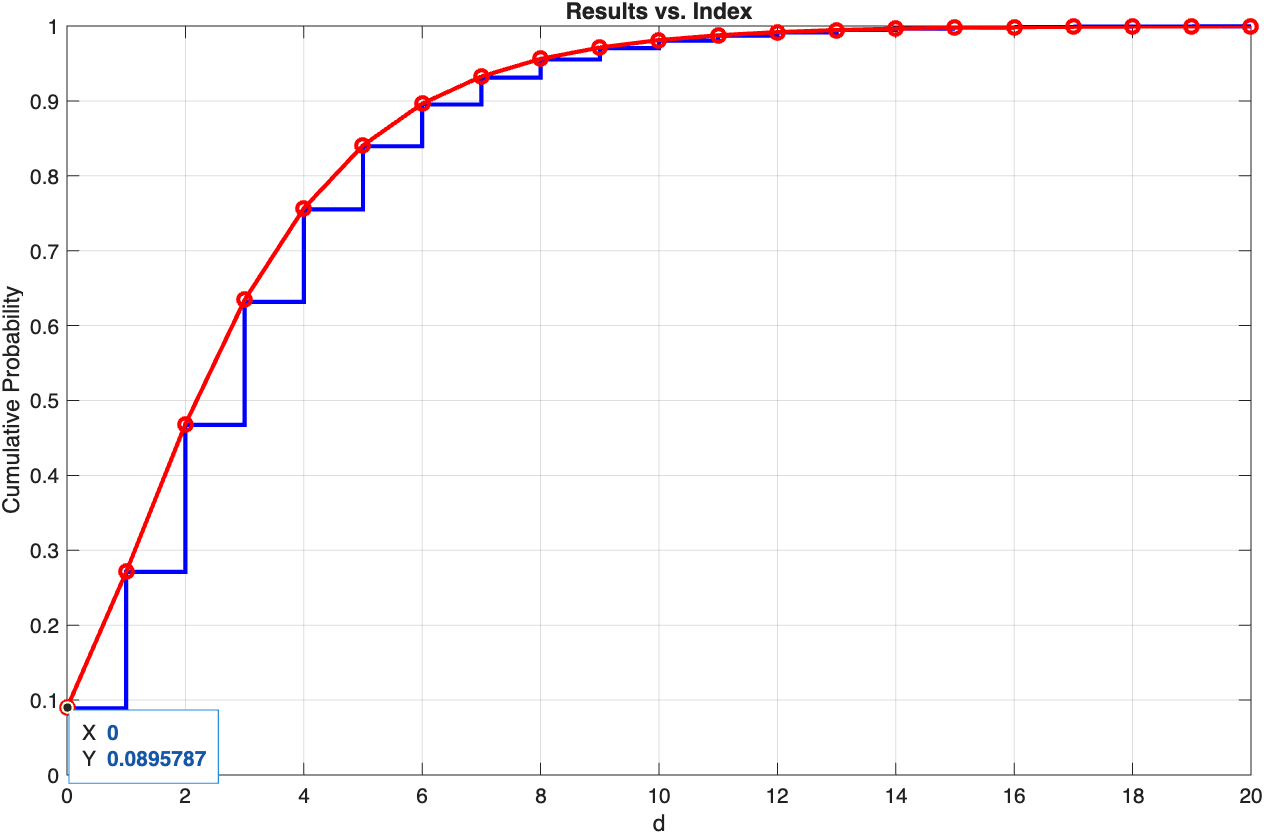}
        \caption{{CDF of $G(3,3)$ with $s = 0.6,$ $r = 0.8,$ $\sqrt{q} = 0.4.$}}
        \label{fig:subfig2}
    \end{subfigure}
    \caption{The red curves in both cases represent the approximate numerical evaluation of our formula for $N=3$. The left figure corresponds to the case $r<s$, while the right figure corresponds to $r>s$. The horizontal axis corresponds to $d$, and the vertical axis shows the exact probability $\Pb(G_{r,s}^{\text{stat}}(3,3) \leq d).$ The blue staircase plots in both cases are the empirical CDFs obtained from simulations of the half-space two-parameter stationary geometric LPP. We generated 20,000 samples of the model and recorded the LPP values at $(3,3)$. The CDF was constructed by counting the proportion of samples less than or equal to each 
    $d$. In Figure (B), the label indicates the probability $\Pb(G(3,3) = 0) = (1-sr)(1-r\sqrt{q})(1-s\sqrt{q})(1-\sqrt{q}/s) = 0.52*0.68*0.76/3 \approx  0.0896.$}
    \label{(3,3)}
\end{figure}

We further present the CDF of our finite time distribution formula for $G_{r,s}^{\text{stat}}(5,5)$ in Figure \ref{(5,5)} and make a few comments. First, we only computed up to $10,$ i.e., $\Pb(G_{r,s}^{\text{stat}}(5,5) \leq d)$ for $d\in \{0,1,2,\dots 10\}$. This is because to get a somewhat precise estimate of the Fredholm Pfaffian, we need at least a $30 \times 30$ matrix (i.e., $K(i,j)$ for $i,j = 1,2,\dots,15$) to make the error within $0.001$. To construct such a large matrix whose entries are as complicated as $\widehat{V}_1$ and $\widehat{V}_2$ is very time consuming. Increasing the size of the approximation matrix can reduce the error of the estimation. Unlike $N=3$, we did not compute each function explicitly, which can cause some error. In particular, all brackets $\braket{\cdot}{\cdot}$ and $\brabarket{\cdot}{\cdot}{\cdot}$ are approximated by finite sums.   

\begin{figure}
    \centering
    \includegraphics[width=0.75\textwidth]{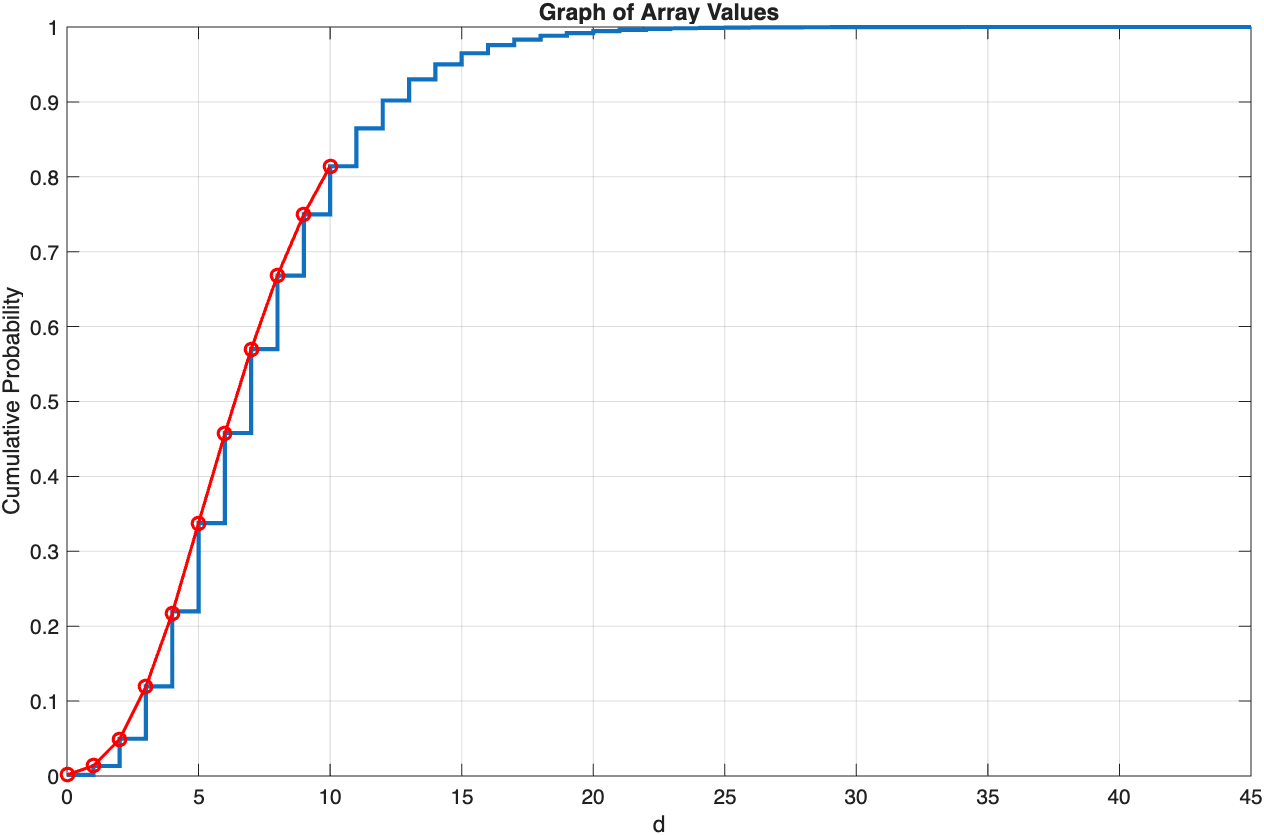}
    \caption{The red curve represents the approximate numerical evaluation of our formula with parameters $N=5$, $s=0.8$, $r=0.4$, $q=0.25.$ The horizontal axis corresponds to $d= \{0,1,2\dots,10\}$, and the vertical axis shows the exact probability $\Pb(G_{r,s}^{\text{stat}}(5,5) \leq d).$ The blue staircase plot is the empirical CDF obtained from simulating the half-space geometric LPP. We generated 50,000 samples of the model 20 times, recorded the LPP values at $(5,5)$ and averaged the results. Again, the CDF was constructed by counting the proportion of samples less than or equal to each 
    $d$.}
    \label{(5,5)}
\end{figure}

We provide a list of numbers from Matlab estimation to show that the error is within $0.001$.
\begin{equation}
    \begin{aligned}
        &\psi(0,5) = 8.360566982592577e-04, \quad \psi(1,5) = 0.008001718755237,\\ &\psi(2,5) = 0.036091649689770, \quad
        \psi(3,5) = 0.109783132398338\\
        & \psi(4,5) = 0.255788772227839,\quad
        \psi(5,5) = 0.496704192344632,\\
        &\psi(6,5) = 0.845600964274872, \quad \psi(7,5) = 1.304995854772828,\\
        &\psi(8,5) = 1.868854964827310, \quad \psi(9,5) = 2.525613136190934,\\
        &\psi(10,5) = 3.261146168402837.
    \end{aligned}
\end{equation}

Then the CDF calculated by applying shift argument to $\psi$ is
\begin{equation}
    \begin{aligned}
        &\Pb(G_{r,s}^{\text{stat}}(5,5) \leq 0) = 2\psi(0,5) = 0.001672113396519,\\
        & \Pb(G_{r,s}^{\text{stat}}(5,5) \leq 1) = 2\psi(1,5) - 3\psi(0,5) = 0.013495267415696,\\
        & \Pb(G_{r,s}^{\text{stat}}(5,5) \leq 2) = 2\psi(2,5) - 3\psi(1,5) + \psi(0,5) = 0.049014199812088,\\
        &\Pb(G_{r,s}^{\text{stat}}(5,5) \leq 3) = 2\psi(3,5) - 3\psi(2,5) + \psi(1,5) =0.119293034482603,\\
        &\Pb(G_{r,s}^{\text{stat}}(5,5) \leq 4) = 2\psi(4,5) - 3\psi(3,5) + \psi(2,5) = 0.218319796950434,\\
        &\Pb(G_{r,s}^{\text{stat}}(5,5) \leq 5) = 2\psi(5,5) - 3\psi(4,5) + \psi(3,5) = 0.335825200404085,\\
     &\Pb(G_{r,s}^{\text{stat}}(5,5) \leq 6) = 2\psi(6,5) - 3\psi(5,5) + \psi(4,5) = 0.456878123743687,\\
     \end{aligned}
\end{equation}
\begin{equation}
\begin{aligned}
   &\Pb(G_{r,s}^{\text{stat}}(5,5) \leq 7) = 2\psi(7,5) - 3\psi(6,5) + \psi(5,5) =0.569893009065672,\\
        &\Pb(G_{r,s}^{\text{stat}}(5,5) \leq 8) = 2\psi(8,5) - 3\psi(7,5) + \psi(6,5) =0.668323329611008,\\
&\Pb(G_{r,s}^{\text{stat}}(5,5) \leq 9) = 2\psi(9,5) - 3\psi(8,5) + \psi(7,5) =0.749657232672766,\\
        &\Pb(G_{r,s}^{\text{stat}}(5,5) \leq 10) = 2\psi(10,5) - 3\psi(9,5) + \psi(8,5) = 0.814307893060182,
    \end{aligned}
\end{equation}
and the CDF obtained from the LPP simulation is
\begin{equation}
\begin{aligned}
   &\Pb(G_{r,s}^{\text{stat}}(5,5) \leq 0) = 0.0016726,\\
   &\Pb(G_{r,s}^{\text{stat}}(5,5) \leq 1) = 0.0134631,\\
   &\Pb(G_{r,s}^{\text{stat}}(5,5) \leq 2) =0.0495106,\\
   &\Pb(G_{r,s}^{\text{stat}}(5,5) \leq 3) =0.1186529,\\
   &\Pb(G_{r,s}^{\text{stat}}(5,5) \leq 4) =0.2181491,\\
   &\Pb(G_{r,s}^{\text{stat}}(5,5) \leq 5) =0.3356810,\\
   &\Pb(G_{r,s}^{\text{stat}}(5,5) \leq 6) =0.4566831,\\
   &\Pb(G_{r,s}^{\text{stat}}(5,5) \leq 7) =0.5699657,\\
   &\Pb(G_{r,s}^{\text{stat}}(5,5) \leq 8) =0.6684062,\\
   &\Pb(G_{r,s}^{\text{stat}}(5,5) \leq 9) =0.7496881,\\
   &\Pb(G_{r,s}^{\text{stat}}(5,5) \leq 10) =0.8142741.\\
\end{aligned}
\end{equation}
This empirical CDF was obtained by generating a sample of size $500,000$ from the LPP model, repeating the simulation $20 $ times, and averaging the results.

\section{Numerical evaluations for Maximal current phase}
In this section, we provide Matlab estimation of the finite time distribution formula for Maximal current phase.
In the following figure, we evaluate $F_{r,1}^{MC}(d,3)$ for $d \in \{0,1,\dots, 15\}$. The red dots show the direct evaluation of \eqref{MaxCDF} using Matlab. The blue staircase shows the simulation of half-space geometric LPP under the Maximal current phase \ref{MaximalModel}.

\begin{figure}
    \centering
    \includegraphics[width=0.6\textwidth]{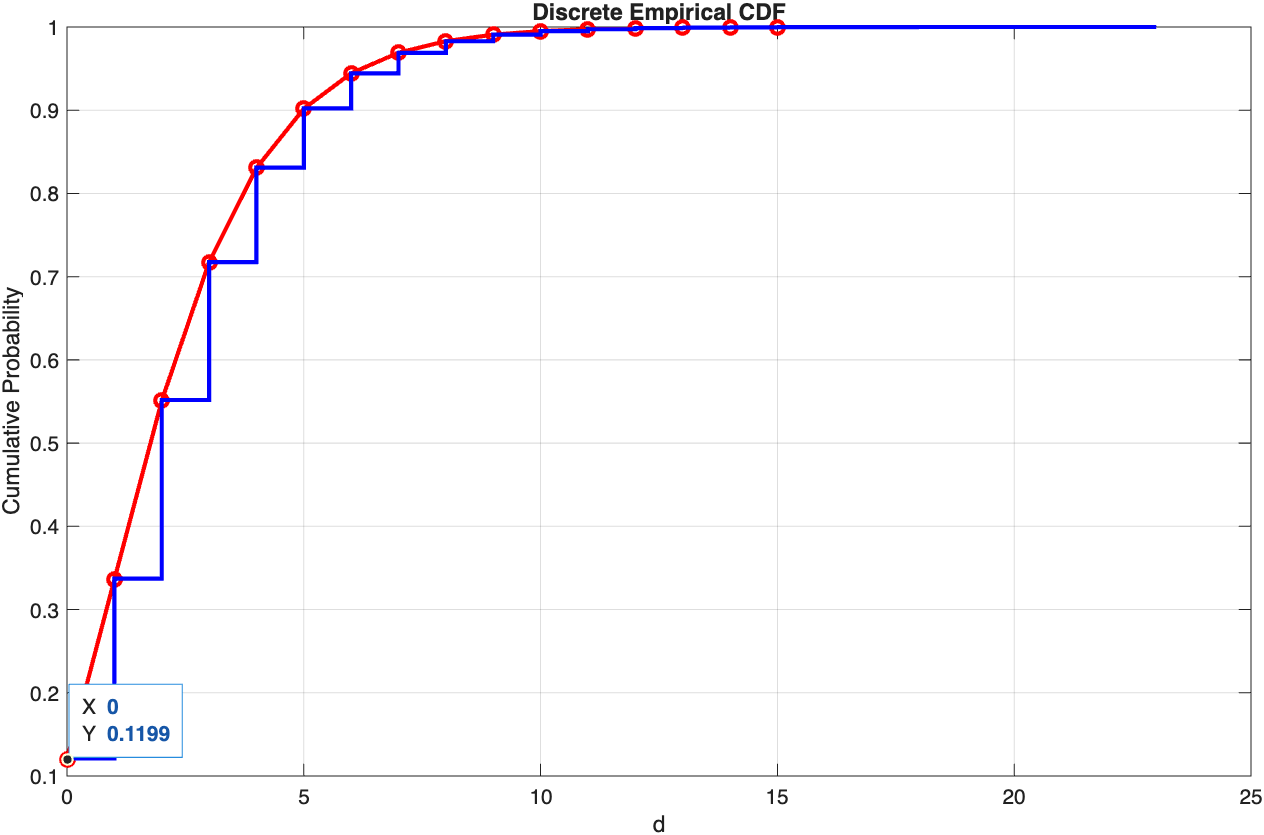}
    \caption{The red curve represents the approximate numerical evaluation of our formula $F_{r,1}^{MC}$ with parameters $N=3$, $r=0.4$, $q=0.25.$ The horizontal axis corresponds to $d= \{0,1,2\dots,15\}$, and the vertical axis shows the exact probability $\Pb(G_{r,1}^{\text{stat}}(3,3) \leq d).$ The blue staircase plot is the empirical CDF obtained from simulating the model \ref{MaximalModel}. We generated 50,000 samples of the model, recorded the LPP values at $(3,3)$ and averaged the results. Again, the CDF was constructed by counting the proportion of samples less than or equal to each 
    $d$. The label in the graph show the probability when $d = 0,$ that is $F_{r,1}^{MC}(0,3) = (1-\sqrt{q})^2 (1-r\sqrt{q})(1-r) = (0.5)^2*0.8*0.6 = 0.12.$}
\end{figure}

\bibliographystyle{plain} 
\bibliography{reference}

@article{gueudre2012,
  title={Directed polymer near a hard wall and {KPZ} equation in the half-space},
  author={Gueudr{\'e}, Thomas and Le Doussal, Pierre},
  journal={Europhysics Letters},
  volume={100},
  number={2},
  pages={26006},
  year={2012},
  publisher={IOP Publishing}
}

@article{das2025convergence,
  title={Convergence to stationary measures for the half-space log-gamma polymer},
  author={Das, Sayan and Serio, Christian},
  journal={Journal of Functional Analysis},
  volume={289},
  number={4},
  pages={110982},
  year={2025},
  publisher={Elsevier}
}

@article{borodin2016directed,
  title={Directed random polymers via nested contour integrals},
  author={Borodin, Alexei and Bufetov, Alexey and Corwin, Ivan},
  journal={Annals of Physics},
  volume={368},
  pages={191--247},
  year={2016},
  publisher={Elsevier}
}

@article{imamura2022solvable,
  title={Solvable models in the {KPZ} class: approach through periodic and free boundary {Schur} measures},
  author={Imamura, Takashi and Mucciconi, Matteo and Sasamoto, Tomohiro},
  journal = {The Annals of Probability},
  year = {to appear},
}

@article{KPZBarraquand,
   title={Steady state of the {KPZ} equation on an interval and {Liouville} quantum mechanics},
   volume={137},
   ISSN={1286-4854},
   url={http://dx.doi.org/10.1209/0295-5075/ac25a9},
   DOI={10.1209/0295-5075/ac25a9},
   number={6},
   journal={Europhysics Letters},
   publisher={IOP Publishing},
   author={Barraquand, Guillaume and Le Doussal, Pierre},
   year={2022},
   month=mar, pages={61003} }

@article{MacdonaldProcess,
   title={Half-space {Macdonald} processes},
   volume={8},
   ISSN={2050-5086},
   url={http://dx.doi.org/10.1017/fmp.2020.3},
   DOI={10.1017/fmp.2020.3},
   journal={Forum of Mathematics, Pi},
   publisher={Cambridge University Press (CUP)},
   author={Barraquand, Guillaume and Borodin, Alexei and Corwin, Ivan},
   year={2020} }

@article{Sasamoto_2004,
   title={Fluctuations of the One-Dimensional Polynuclear Growth Model in Half-Space},
   volume={115},
   ISSN={0022-4715},
   url={http://dx.doi.org/10.1023/B:JOSS.0000022374.73462.85},
   DOI={10.1023/b:joss.0000022374.73462.85},
   number={3/4},
   journal={Journal of Statistical Physics},
   publisher={Springer Science and Business Media LLC},
   author={Sasamoto, T. and Imamura, T.},
   year={2004},
   month=may, pages={749–803} }

@article{Krajenbrink_2020,
   title={Replica {Bethe} {Ansatz} solution to the {Kardar-Parisi-Zhang} equation on the  half-line},
   volume={8},
   ISSN={2542-4653},
   url={http://dx.doi.org/10.21468/SciPostPhys.8.3.035},
   DOI={10.21468/scipostphys.8.3.035},
   number={3},
   journal={SciPost Physics},
   publisher={Stichting SciPost},
   author={Krajenbrink, Alexandre and Le Doussal, Pierre},
   year={2020},
   month=mar }

@article{OpenKPZStationary,
  title={Stationary measure for the open {KPZ} equation},
  author={Corwin, Ivan and Knizel, Alisa},
  journal={Communications on Pure and Applied Mathematics},
  volume={77},
  number={4},
  pages={2183--2267},
  year={2024},
  publisher={Wiley Online Library}
}

@article{baik2001algebraic,
  title={Algebraic aspects of increasing subsequences},
  author={Baik, Jinho and Rains, Eric M},
  journal = {Duke Mathematical Journal},
  year={2001},
  volume  = {109},
  pages   = {1--65}
}

@article{stripStationary,
  title={Stationary measures for integrable polymers on a strip},
  author={Barraquand, Guillaume and Corwin, Ivan and Yang, Zongrui},
  journal={Inventiones mathematicae},
  volume={237},
  number={3},
  pages={1567--1641},
  year={2024},
  publisher={Springer}
}

@article{Barraquand_2023,
   title={Stationary measures of the {KPZ} equation on an interval from {Enaud–Derrida’s} matrix product ansatz representation},
   volume={56},
   ISSN={1751-8121},
   url={http://dx.doi.org/10.1088/1751-8121/acc0eb},
   DOI={10.1088/1751-8121/acc0eb},
   number={14},
   journal={Journal of Physics A: Mathematical and Theoretical},
   publisher={IOP Publishing},
   author={Barraquand, Guillaume and Le Doussal, Pierre},
   year={2023},
   month=mar, pages={144003} }

@article{Betea_2020,
   title={Stationary Half-Space Last Passage Percolation},
   volume={377},
   ISSN={1432-0916},
   url={http://dx.doi.org/10.1007/s00220-020-03712-5},
   DOI={10.1007/s00220-020-03712-5},
   number={1},
   journal={Communications in Mathematical Physics},
   publisher={Springer Science and Business Media LLC},
   author={Betea, Dan and Ferrari, Patrik L. and Occelli, Alessandra},
   year={2020},
   month=mar, pages={421–467} }

@article{KPZ,
  title={The {Kardar-Parisi-Zhang} equation and universality class},
  author={Corwin, Ivan},
  journal={Random matrices: Theory and applications},
  volume={1},
  number={01},
  pages={1130001},
  year={2012},
  publisher={World Scientific}
}

@article{Ferrari_2006TASEP,
   title={Scaling Limit for the Space-Time Covariance of the Stationary Totally Asymmetric Simple Exclusion Process},
   volume={265},
   ISSN={1432-0916},
   url={http://dx.doi.org/10.1007/s00220-006-1549-0},
   DOI={10.1007/s00220-006-1549-0},
   number={1},
   journal={Communications in Mathematical Physics},
   publisher={Springer Science and Business Media LLC},
   author={Ferrari, Patrik L. and Spohn, Herbert},
   year={2006},
   month=mar, pages={1–44} }

@article{BI23,
  title={Stationary measures for the log-gamma polymer and {KPZ} equation in half-space},
  author={Barraquand, Guillaume and Corwin, Ivan},
  journal={The Annals of Probability},
  volume={51},
  number={5},
  pages={1830--1869},
  year={2023},
  publisher={Institute of Mathematical Statistics}
}

@article{Baik_2018,
   title={Pfaffian {Schur} processes and last passage percolation in a half-quadrant},
   volume={46},
   ISSN={0091-1798},
   url={http://dx.doi.org/10.1214/17-AOP1226},
   DOI={10.1214/17-aop1226},
   number={6},
   journal={The Annals of Probability},
   publisher={Institute of Mathematical Statistics},
   author={Baik, Jinho and Barraquand, Guillaume and Corwin, Ivan and Suidan, Toufic},
   year={2018},
   month=nov }

@article{Baik_2010,
   title={Limit process of stationary {TASEP} near the characteristic line},
   volume={63},
   ISSN={1097-0312},
   url={http://dx.doi.org/10.1002/cpa.20316},
   DOI={10.1002/cpa.20316},
   number={8},
   journal={Communications on Pure and Applied Mathematics},
   publisher={Wiley},
   author={Baik, Jinho and Ferrari, Patrik L. and Péché, Sandrine},
   year={2010},
   month=mar, pages={1017–1070} }

@article{StationaryKPZ,
   title={Half-Space Stationary {Kardar–Parisi–Zhang} Equation},
   volume={181},
   ISSN={1572-9613},
   url={http://dx.doi.org/10.1007/s10955-020-02622-z},
   DOI={10.1007/s10955-020-02622-z},
   number={4},
   journal={Journal of Statistical Physics},
   publisher={Springer Science and Business Media LLC},
   author={Barraquand, Guillaume and Krajenbrink, Alexandre and Le Doussal, Pierre},
   year={2020},
   month=aug, pages={1149–1203} }

@article{KPZbeyondBrownian,
   title={Half-space stationary {Kardar–Parisi–Zhang} equation beyond the {Brownian} case},
   volume={55},
   ISSN={1751-8121},
   url={http://dx.doi.org/10.1088/1751-8121/ac761d},
   DOI={10.1088/1751-8121/ac761d},
   number={27},
   journal={Journal of Physics A: Mathematical and Theoretical},
   publisher={IOP Publishing},
   author={Barraquand, Guillaume and Krajenbrink, Alexandre and Le Doussal, Pierre},
   year={2022},
   month=jun, pages={275004} }

@article{StatAiryprocess,
  title={The half-space {Airy} stat process},
  author={Betea, Dan and Ferrari, PL and Occelli, Alessandra},
  journal={Stochastic Processes and their Applications},
  volume={146},
  pages={207--263},
  year={2022},
  publisher={Elsevier}
}

@article{Xincheng2024,
  title={{TASEP} in half-space},
  author={Zhang, Xincheng},
  journal={arXiv preprint arXiv:2409.09974},
  year={2024}
}

@article{FixedPoint,
  title={The {KPZ} fixed point},
  author={Matetski, Konstantin and Quastel, Jeremy and Remenik, Daniel},
  journal={Acta Mathematica},
  volume={227},
  number={1},
  pages={115--203},
  year={2021},
  publisher={Lehigh University Bethlehem, Penn., USA}
}

@article{nestoridi2024approximating,
  title={Approximating the stationary distribution of the {ASEP} with open boundaries},
  author={Nestoridi, Evita and Schmid, Dominik},
  journal={Communications in Mathematical Physics},
  volume={405},
  number={8},
  pages={176},
  year={2024},
  publisher={Springer}
}

@article{hegde2024large,
  title={Large deviation principle for the stationary measures of open asymmetric simple exclusion processes},
  author={Hegde, Milind and Yang, Zongrui},
  journal={arXiv preprint arXiv:2412.12026},
  year={2024}
}

@article{yang2025limits,
  title={Limits of open {ASEP} stationary measures near a boundary},
  author={Yang, Zongrui},
  journal={The Annals of Applied Probability},
  volume={35},
  number={5},
  pages={3242--3270},
  year={2025},
  publisher={Institute of Mathematical Statistics}
}

@article{wang2025asymmetric,
  title={From asymmetric simple exclusion processes with open boundaries to stationary measures of open {KPZ} fixed point: the shock region},
  author={Wang, Yizao and Yang, Zongrui},
  journal={Electronic Journal of Probability},
  volume={30},
  pages={1--38},
  year={2025},
  publisher={The Institute of Mathematical Statistics and the Bernoulli Society}
}

\end{document}